\documentclass[12pt]{amsart}

\usepackage{amsmath}
\usepackage{amssymb}
\usepackage{mathdots}
\usepackage{tikz}
\usepackage{adjustbox}
\usepackage[all]{xy}

\numberwithin{equation}{section}

\newtheorem{thm}{Theorem}[section]

\newtheorem{lem}[thm]{Lemma}
\newtheorem{prop}[thm]{Proposition}
\newtheorem{cor}[thm]{Corollary}
\newtheorem{rem}[thm]{Remark}
\newtheorem{exam}[thm]{Example}
\newtheorem{exam-nota}[thm]{Example-Notation}
\newtheorem{rem-nota}[thm]{Remark-Notation}
\newtheorem{nota}[thm]{Notation}
\newtheorem{dfn}[thm]{Definition}

\newtheorem{dfn-nota}[thm]{Definition-Notation}

\newtheorem{dfn-lem}[thm]{Lemma-Definition}
\newtheorem{dfn-prop}[thm]{Proposition-Definition}

\newcommand{\beqa}{\begin{eqnarray*}}
\newcommand{\eeqa}{\end{eqnarray*}}

\newcommand{\cZ}{{\mathcal Z}}

\newcommand{\alphatilde}{{\tilde{\alpha}}}

\newcommand{\Wn}{{W}_n}
\newcommand{\cW}{{ W}}

\newcommand{\fa}{\mbox{${\mathfrak a}$}}

\newcommand{\fg}{\mbox{${\mathfrak g}$}}

\newcommand{\fl}{\mbox{${\mathfrak l}$}}

\newcommand{\fh}{\mbox{${\mathfrak h}$}}

\newcommand{\fp}{\mbox{${\mathfrak p}$}}
\newcommand{\fr}{\mbox{${\mathfrak r}$}}
\newcommand{\fb}{\mbox{${\mathfrak b}$}}

\newcommand{\eps}{\epsilon}

\newcommand{\PR}{\mbox{${\mathbb P}$}}

\newcommand{\C}{\mbox{${\mathbb C}$}}

\newcommand{\Ad}{{\rm Ad}}

\newcommand{\fgl}{\mathfrak{gl}}

\newcommand{\B}{\mathcal{B}}

\newcommand{\ms}{m(s_{\alpha})}

\newcommand{\he}{\hat{e}}

\newcommand{\F}{\mathcal{F}}

\newcommand{\Borbitspace}{B_{n-1}\backslash\B_{n}}
\newcommand{\Sh}{\mathcal{S}h}
\newcommand{\calO}{\mathcal{O}}

\newcommand{\Sp}{\mathcal{S}p}

\newcommand{\BQ}{B\backslash Q_{1,i+1}}
\begin{document}
\title{Orbits on a product of two flags and a line and the Bruhat order, II}
\author[M. Colarusso]{Mark Colarusso}
\address{Department of Math and Stats, University of South Alabama, Mobile, AL, 36608}
\email{mcolarusso@southalabama.edu}
\author[S. Evens]{Sam Evens}
\address{Department of Mathematics, University of Notre Dame, Notre Dame, IN, 46556}
\email{sevens@nd.edu}
\subjclass[2020]{14M15, 14L30, 20G20, 05E14}
\keywords{algebraic group actions, flag variety, Bruhat order}

\maketitle
\begin{abstract}
Let $G=GL(n)$ be the $n\times n$ complex general linear group and let $\B_{n}$ be its flag variety.  A Borel subgroup $B$ of $G$ acts on $\B_{n}\times \mathbb{P}^{n-1}$ diagonally with finitely many orbits.  In this paper, we give an embedding of the $B$-orbits on $\B_{n}\times \mathbb{P}^{n-1}$ into the $B$-orbits on the flag variety $\B_{n+1}$ of $GL(n+1)$ and show that this correspondence respects closure relations and preserves monoid actions.  As a consequence both closure relations and monoid actions on the set of all $B$-orbits on $\B_{n}\times\mathbb{P}^{n-1}$ can be understood via the Bruhat order on the symmetric group on $n+1$ letters by using our results in \cite{Shpairs}.  This amplifies work of Magyar \cite{Magyar} by making the closure relation more transparent and allows us to compute the monoid action using Demazure products.   If $S_i$ is the stabilizer in $B$ of the line through the ith standard basis vector, we give an embedding of the $S_i$-orbits on $\B_n$ into the $B$-orbits in a single $G$-orbit in $\B_{n+1},$ and this embedding plays an essential role in the above results.  We extend results from our papers \cite{CE21I}, \cite{CE21II}, and \cite{Shpairs}, and in particular show that for $S_i$-orbits on $\B_n,$ the closure ordering is given by the Richardson-Springer standard order.
\end{abstract}

\tableofcontents

\section{Introduction}
Let $G=G_{n}=GL(n)$ be the $n\times n$ complex general linear group and let $\B_{n}$ be its flag variety.  It is well known that $G$ has finitely many orbits on the triple product 
$\B_{n}\times\B_{n}\times\mathbb{P}^{n-1}$ under the diagonal action.  These orbits have been an object of study for some time (see \cite{Magyar}, \cite{Travkin} for example) and are of broad interest since they are related to the study of mirabolic $\mathcal{D}$-modules which play an important role in the study of category $\mathcal{O}$ for rational Cherednik algebras \cite{FG}.  Magyar parameterizes these orbits in terms of decorated permutations and shows that the closure ordering on these orbits is given  in part by a numerical criterion on the set of decorated permutations.  This paper is motivated largely by our desire to understand the closure relations on these orbits in a simpler fashion and obtain a clearer picture of their geometry.  We build on our previous work in \cite{Shpairs} and \cite{Siorbits} to show that $G$-orbits on $\B_{n}\times\B_{n}\times \mathbb{P}^{n-1}$ are parameterized by a certain subset of $W_{n+1} \times W_{n+1},$ where $W_{n+1}$ is the symmetric group on $n+1$ letters, and moreover, the closure  relation on the orbits is the restriction of the product of the Bruhat orders on $W_{n+1}.$   We establish analogous results which compare certain monoid actions on $G$-orbits on $\B_{n}\times\B_{n}\times\mathbb{P}^{n-1}$ to familiar monoid actions on $W_{n+1} \times W_{n+1}$ defined using Demazure products.  These monoid actions give useful insight into how to relate the geometry of two orbit closures.  The results on the monoid actions are crucial to establishing our new description of the closure relations and the monoid action does not appear in Magyar's earlier work, but does appear in Travkin's work, although for a different purpose.



In more detail, let $G_{n+1}:=GL(n+1)$ and let $\B_{n+1}$ be its flag variety.  Embed $G$ in the top lefthand corner of $G_{n+1}$.  In our recent work \cite{CE21I}, \cite{CE21II}, and \cite{Shpairs}, we study the 
geometry and combinatorics of the $B$-orbits on $\B_{n+1}$, where $B\subset G$ is the standard Borel subgroup of invertible $n\times n$ upper triangular matrices.  For an algebraic group $A$ acting on a variety $Y$, we let $A\backslash Y$ denote the set of $A$-orbits on $Y.$  In the current paper, we establish a correspondence between  $G\backslash(\B_{n}\times\B_{n}\times\mathbb{P}^{n-1}),$  and a certain subset of $B\backslash\B_{n+1}$, which generalizes some of the results from \cite{CE21I}.  
To construct this correspondence, we make use of the fact that the group $G$ acts on $\B_{n+1}$ with finitely many orbits.  These orbits are labelled as $Q_{i,j}$, where $1\leq i\leq j\leq n+1$ (see Proposition \ref{prop:typeAflag} below for more details).  Consider the open subvariety of $\B_{n+1}$ given by 
\begin{equation}\label{eq:introXdefn}
\mathfrak{X}:=\displaystyle\bigcup_{i=1}^{n} Q_{1,i+1}.
\end{equation}
 One of the basic results of the paper is: 
\begin{thm}\label{thm:introthmbig}(see Theorems \ref{thm:globalcorresp} and \ref{thm:globalclosure})
There is a one-to-one correspondence between the set of $G$-orbits on the product $\B_{n}\times\B_{n}\times\mathbb{P}^{n-1}$ and the $B$-orbits on $\mathfrak{X}$, i.e.,
\begin{equation}\label{eq:bigintrocorresp}
G\backslash(\B_{n}\times\B_{n}\times\mathbb{P}^{n-1})\longleftrightarrow B\backslash\mathfrak{X},\; \calO_{\Delta}\mapsto \calO_{\Delta}^{op}. 
\end{equation}
Further, the correspondence in (\ref{eq:bigintrocorresp}) preserves closure relations, so that if $\calO_{\Delta}^{\prime},\, \calO_{\Delta}\in G\backslash(\B_{n}\times\B_{n}\times\mathbb{P}^{n-1})$ with corresponding orbits $\calO_{\Delta}^{\prime\, op}, \, \calO_{\Delta}^{op}\in B\backslash\mathfrak{X} $, then 
\begin{equation}\label{eq:introclosure}
\calO_{\Delta}^{\prime}\subset\overline{\calO_{\Delta}}\Leftrightarrow\calO_{\Delta}^{\prime\, op}\subset \overline{\calO_{\Delta}^{op}}.
\end{equation}
\end{thm}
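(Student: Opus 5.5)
The approach is to reduce both sides to orbits of a single group on a single variety, using the standard principle that $G$-orbits on $G/H\times Y$ correspond to $H$-orbits on $Y$, compatibly with closures. On the left, $G$ acts transitively on the first factor $\B_{n}=G/B$. Hence $G\backslash(\B_{n}\times\B_{n}\times\PR^{n-1})$ is in bijection with $B\backslash(\B_{n}\times\PR^{n-1})$, via $\calO_\Delta\mapsto \calO_\Delta\cap(\{B\}\times\B_n\times\PR^{n-1})$. Closures correspond, because the map $G\times^{B}(\B_n\times\PR^{n-1})\to \B_n\times\B_n\times\PR^{n-1}$ is an isomorphism and $G\to G/B$ is open. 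So it suffices to build a $B$-equivariant identification between $\B_n\times\PR^{n-1}$ and $\mathfrak{X}$, or at least a bijection of $B$-orbits that respects closures.

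On the right, I will use Proposition \ref{prop:typeAflag}, which describes the $G$-orbits $Q_{i,j}$ on $\B_{n+1}$. The union $\mathfrak{X}$ of the $Q_{1,i+1}$ should be exactly the set of flags $F_1\subset\cdots\subset F_n\subset\C^{n+1}$ with $F_1\not\subset\C^n$ and $F_n\ne\C^n$, or some similar open condition. Given such a flag, I will project along $e_{n+1}$, or equivalently intersect with $\C^n$, to get a flag in $\C^n$. I will also record a line $\ell\in\PR^{n-1}$ coming from $F_1$, namely the image of $F_1$ under the projection $\C^{n+1}\to\C^n$. The aim is a $G$-equivariant morphism $\pi:\mathfrak{X}\to \B_n\times\PR^{n-1}$. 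Its fibres should be orbits of the unipotent radical, or affine spaces on which the relevant stabilizers act transitively. If instead $\pi$ is a $G$-equivariant bundle with $G$-homogeneous structure, I would rather write $\mathfrak{X}\cong G\times^{H}Z$ and pass to $B\times H$ double cosets. In either case, $B$-orbits on $\mathfrak{X}$ biject with $B$-orbits on $\B_n\times\PR^{n-1}$. The orbit $\calO^{op}_\Delta$ is then $\pi^{-1}$ of the $B$-orbit attached to $\calO_\Delta$, possibly after twisting by an involution such as $w_0$ or inverse, which is what the superscript ``op'' hints at.

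For closures, I will show that $\pi$ is a smooth surjective morphism, ideally a fibre bundle with irreducible fibres. Smooth morphisms are open, so $\pi^{-1}(\overline{Y})=\overline{\pi^{-1}(Y)}$ for any $B$-stable $Y$. Combined with the first reduction, this gives (\ref{eq:introclosure}). Because $\mathfrak{X}$ is open in $\B_{n+1}$, closures in $\mathfrak{X}$ are just the restrictions of closures in $\B_{n+1}$.

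The main obstacle is the middle step: checking that $\mathfrak{X}$ really is a union of the $Q_{1,i+1}$ on which $\pi$ is well defined, $G$-equivariant and smooth with the right fibres. The delicate case is the line datum when $F_1$ meets $\C^n$ nontrivially, and one must see how this behaves differently on each $Q_{1,i+1}$. I would first compute the stabilizer in $G$ of a base point in each $Q_{1,i+1}$ and compare it with the stabilizer of a base point of $\B_n\times\PR^{n-1}$, which is $B$ intersected with $S_i$. The aim is to identify $Q_{1,i+1}\cong G\times^{B\cap S_i}(\text{affine space})$, which matches the abstract's emphasis on $S_i$-orbits on $\B_n$. That decomposition then also gives the bijection orbit by orbit: the $G$-orbits on $\B_n\times\PR^{n-1}$ with line in relative position $i$ correspond to $S_i$-orbits on $\B_n$, and these in turn correspond to $B$-orbits on $Q_{1,i+1}$.
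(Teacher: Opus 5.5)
Your overall strategy is sound and genuinely different from the paper's. You build a $G$-equivariant smooth surjection $\pi:\mathfrak{X}\to\B_n\times\PR^{n-1}$ and use that its fibres lie in single $B$-orbits. But the step you flag as the main obstacle is not just unverified: several of its specifics are wrong, and the mechanism that makes it work is missing.

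(i) The correct description is $\mathfrak{X}=\{F_\bullet: F_1\not\subset\C^n,\ F_1\neq\C e_{n+1}\}$. Your extra condition $F_n\neq\C^n$ is implied by $F_1\not\subset\C^n$, so your set also contains the closed orbit $Q_{1,1}$. That orbit is exactly where the line datum $p(F_1)$ is zero; the case ``$F_1$ meets $\C^n$'' is already excluded.

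(ii) Projecting along $e_{n+1}$ is not equivalent to intersecting with $\C^n$. On $\mathfrak{X}$ the projected chain begins with $p(F_1)=\ell$, so $(p(F_\bullet),\ell)$ always lands in the incidence locus $\{V_1=\ell\}$; it is not even continuous on $\mathfrak{X}$. The right map is $\pi(F_\bullet)=((F_{k+1}\cap\C^n)_{k=1}^{n},\,[p(F_1)])$.

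(iii) Its fibres are not affine spaces or unipotent orbits. Over $(V_\bullet,\ell)$ the fibre is $\{\C(u+e_{n+1}) : u\in\ell\setminus 0\}\cong\C^*$. The preimage of a $B$-orbit is a single $B$-orbit because the centre $Z\subset B$ of $G$ acts simply transitively on each fibre. This is the global form of $Z\cdot\mathrm{Stab}_G(\mathcal{E}_{1,i+1})=S_i$ from (\ref{eq:stabclaim}), so $Q_{1,i+1}\cong G\times^{S_i}\C^*$, not $G\times^{S_i}(\text{affine space})$. In fact, writing $F_1=\C(u+e_{n+1})$, the map $F_\bullet\mapsto(F_\bullet\cap\C^n,u)$ is a $G$-isomorphism $\mathfrak{X}\cong\B_n\times(\C^n\setminus\{0\})$. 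Under it $\pi$ becomes $\mathrm{id}\times(\C^n\setminus\{0\}\to\PR^{n-1})$, so smoothness, openness and the orbit bijection are immediate.

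(iv) The theorem is about the specific map $\calO_\Delta\mapsto\calO_\Delta^{op}$, and your bijection (slicing at the first factor) is not it. From $\pi(g^{-1}\mathcal{E}_{1,i+1})=(g^{-1}\mathcal{E}_n,[g^{-1}e_i])$ one gets $\calO_\Delta^{op}=\pi^{-1}\{(x,\ell): (x,\fb,\ell)\in\calO_\Delta\}$. So op is your bijection precomposed with the swap of the two $\B_n$ factors. On decorated permutations your map is $(w,\Delta)\mapsto(w\sigma,[\Delta:n+1])$ and the swap is $(w,\Delta)\mapsto(w^{-1},w^{-1}(\Delta))$; the composite is exactly $\phi$ of Proposition \ref{p:marked}. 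The swap is a $G$-equivariant automorphism, so closure relations survive. This has to be computed rather than hedged, since the involutions you list are not interchangeable: left multiplication by $w_0$, for instance, reverses the Bruhat order.

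With these repairs your argument is a complete and much shorter proof. The paper builds the bijection one $Q_{1,i+1}$ at a time via (\ref{eq:stabclaim}) and $g\mapsto g^{-1}$. That gives closure compatibility geometrically only inside each $Q_{1,i+1}$ (Remark \ref{r:corres}). Across different $i$ the paper argues combinatorially:
\begin{enumerate}
\item it computes Shareshian pairs of $\calO_\Delta^{op}$ (Theorem \ref{thm:localShareshian}, by induction along monoid actions);
\item it converts them to $\phi$ (Proposition \ref{p:marked});
\item it checks via Lemma \ref{l:compare} that $\phi$ preserves Magyar's rank conditions;
\item it applies Theorem \ref{thm:Magyarclosure} on both sides.
\end{enumerate}
Your route gets closure compatibility in one stroke, without Magyar's theorem. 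It also supplies the uniform, decomposition-free description of the correspondence that the paper says would be interesting. What it does not give by itself are the explicit Shareshian-pair formulas, which the paper needs for Corollary \ref{c:introcor} and the monoid-action comparisons.
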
 

In \cite{Magyar}, Magyar parameterizes the $G$-orbits on $\B_{n}\times\B_{n}\times\mathbb{P}^{n-1}$ using \emph{decorated} permutations.  These are pairs $(w,\Delta)$ where $w\in W_{n}$ is an element of the symmetric group on $n$ letters and $\Delta=\{j_{1}<\dots <j_{k}\}\subset \{1,\dots, n\}$ is a subsequence such that $w^{-1}(j_{k})<\dots<w^{-1}(j_{1})$.  Magyar describes the closure relation amongst orbits in part using a numerical statistic involving both $w$ and the subsequence $\Delta$ (see Equation (\ref{eq:littledelta})).  Using Theorem \ref{thm:introthmbig} and our work in \cite{Shpairs}, we obtain a more transparent combinatorial description of the closure relations.  By results in \cite{Shpairs}, $B$-orbits on $\B_{n+1}$ can be parameterized by certain pairs $(w, u)\in W_{n+1}\times W_{n+1}$ and the closure relation on $B\backslash\B_{n+1}$ corresponds to the product Bruhat order on $W_{n+1}\times W_{n+1}$.  In more detail, given a $B$-orbit $\cZ\in B\backslash\B_{n+1}$, the corresponding pair of Weyl group elements $\widetilde{\Sh}(\cZ)=(w, u)$ is referred to as the \emph{standardized Shareshian pair} of the orbit $\cZ$. Theorem 3.4 of \cite{Shpairs} asserts that for $\cZ^{\prime},\, \cZ\in B\backslash\B_{n+1}$ with $\widetilde{\Sh}(\cZ^{\prime})=(x,y)$ and $\widetilde{\Sh}(\cZ)=(w,u)$, we have 
\begin{equation}\label{eq:introShorder}
\cZ^{\prime}\subset\overline{\cZ}\Leftrightarrow \widetilde{\Sh}(\cZ^{\prime})\leq \widetilde{\Sh}(\cZ)\Leftrightarrow x\leq w\mbox{ and } y\leq u,
\end{equation}
where $\leq$ denotes the Bruhat order on $W_{n+1}$.  Since $B\backslash\mathfrak{X}\subset B\backslash \B_{n+1}$, Theorem \ref{thm:introthmbig} implies:
\begin{cor}\label{c:introcor}
There exists an embedding of the poset of $G$-orbits $G\backslash(\B_{n}\times\B_{n}\times \mathbb{P}^{n-1})$ into the poset of standardized Shareshian pairs in $W_{n+1}\times W_{n+1}$ equipped with the product Bruhat order as in (\ref{eq:introShorder}).  That is to say, for $G$-orbits $\mathcal{O}_{\Delta}^{\prime}$ and $\mathcal{O}_{\Delta}$ corresponding via (\ref{eq:bigintrocorresp}) to the $B$-orbits on $\mathfrak{X}$, ${\mathcal{O}_{\Delta}^{\prime}}^{op}$ and $\mathcal{O}_{\Delta}^{op}$ respectively, with $\widetilde{\Sh}({\mathcal{O}_{\Delta}^{\prime}}^{op})=(x,y)$ and $\widetilde{\Sh}(\mathcal{O}_{\Delta}^{op})=(w,u)$, we have 
$$
\calO_{\Delta}^{\prime}\subset\overline{\calO_{\Delta}}\Leftrightarrow\calO_{\Delta}^{\prime\, op}\subset\overline{\calO_{\Delta}^{op}}\Leftrightarrow\widetilde{\Sh}(\mathcal{O}_{\Delta}^{\prime\, op})\leq \widetilde{\Sh}(\mathcal{O}_{\Delta}^{op})\Leftrightarrow x\leq w\mbox{ and } y\leq u.
$$   
\end{cor}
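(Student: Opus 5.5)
The corollary follows by composing Theorem \ref{thm:introthmbig} with Theorem 3.4 of \cite{Shpairs}, recalled in (\ref{eq:introShorder}). Nothing new is needed beyond checking that these two results fit together.

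First, define the map on posets. To $\calO_{\Delta}\in G\backslash(\B_{n}\times\B_{n}\times\mathbb{P}^{n-1})$ assign $\widetilde{\Sh}(\calO_{\Delta}^{op})\in W_{n+1}\times W_{n+1}$. This is well defined because $\calO_{\Delta}\mapsto\calO_{\Delta}^{op}$ lands in $B\backslash\mathfrak{X}$. Moreover $\mathfrak{X}$ is a union of $G$-orbits, hence $B$-stable, so every $B$-orbit on $\mathfrak{X}$ is a $B$-orbit on $\B_{n+1}$ and $B\backslash\mathfrak{X}\subset B\backslash\B_{n+1}$. Thus $\widetilde{\Sh}$ applies to $\calO_{\Delta}^{op}$.

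Next, injectivity. The map $\calO_{\Delta}\mapsto\calO_{\Delta}^{op}$ is a bijection onto $B\backslash\mathfrak{X}$ by Theorem \ref{thm:introthmbig}. The map $\widetilde{\Sh}$ is injective on $B\backslash\B_{n+1}$ by \cite{Shpairs}; alternatively, this follows from (\ref{eq:introShorder}), since equal pairs give mutual containment of closures and hence equal orbits. So the composite is injective.

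Finally, the chain of equivalences. The first equivalence is (\ref{eq:introclosure}). The second and third are (\ref{eq:introShorder}) applied to $\cZ^{\prime}=\calO_{\Delta}^{\prime\,op}$ and $\cZ=\calO_{\Delta}^{op}$, which is legitimate because both lie in $B\backslash\B_{n+1}$. Closures may be taken in $\B_{n+1}$ throughout. This is consistent with Theorem \ref{thm:introthmbig}, where closures are in $\mathfrak{X}$, because $\mathfrak{X}$ is open in $\B_{n+1}$: for orbits contained in $\mathfrak{X}$, containment in the closure relative to $\mathfrak{X}$ is the same as containment in the closure in $\B_{n+1}$. This topological compatibility is the only point requiring care, and openness of $\mathfrak{X}$ settles it.

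Since the composite map is injective and both preserves and reflects the order, it is an order embedding.
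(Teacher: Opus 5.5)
Your proposal is correct and matches the paper's argument: the corollary comes from composing the closure-preserving bijection of Theorem \ref{thm:introthmbig} with Theorem 3.4 of \cite{Shpairs}, as recorded in (\ref{eq:introShorder}), using $B\backslash\mathfrak{X}\subset B\backslash\B_{n+1}$. Your topological remark is fine, but openness of $\mathfrak{X}$ is not needed: the closure of a set in any subspace is always the ambient closure intersected with that subspace.
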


 The $G$-orbits on $\B_{n}\times\B_{n}\times\mathbb{P}^{n-1}$ come equipped with a monoid action by two copies of the standard simple roots $\Pi_{\fg}$ of $\fg$, one for each factor of $\B_{n}$ in the product $\B_{n}\times\B_{n}\times\mathbb{P}^{n-1}$.  This monoid action plays a key role in the proof of (\ref{eq:introclosure}) and does not appear in the earlier work of Magyar in \cite{Magyar}.  In \cite{CE21I}, we construct an \emph{extended monoid} action on $B\backslash\B_{n+1}$ using both simple roots of $\fg$ and of $\fg_{n+1}$.  This monoid action preserves $B$-orbits on the open subvariety $\mathfrak{X}$ and is easily be computed in terms of Shareshian pairs using Demazure products.  Recall for a simple reflection $s\in W_{n+1}$ and permutation $w\in W_{n+1}$ the Demazure product of $w$ and $s$ is given by:
\begin{equation}\label{eq:introrightDem}
w\star s:=\left\{ \begin{array}{ll} w & \text{ if } \ell(ws)< \ell(w) \\
ws & \text{ if } \ell(ws)>\ell(w) \end{array} \right.
\end{equation}
One can also define $s\star w$ analogously using $sw$ instead of $ws$.  
The Demazure product on the right and left generates right and left monoid actions respectively of the simple reflections on $W_{n+1}$.  These monoid actions are then used to construct the two-sided weak order on $W_{n+1}$.  The second major result of \cite{Shpairs} is to show that the extended monoid action on $B\backslash\B_{n+1}$ can be computed in terms of Shareshian pairs using Demazure products (see Theorem 4.8 of \emph{loc. cit.}).  This fact along with following theorem allows us to completely understand the monoid action of $\Pi_{\fg}\coprod\Pi_{\fg}$ on $G\backslash(\B_{n}\times\B_{n}\times\mathbb{P}^{n-1})$ in terms of the easily computed Demazure product on $W_{n+1}$.

\begin{thm}\label{thm:intromonoid}\emph{(see Theorem \ref{thm:monoidcorresp} and  Propositions \ref{p:orbitchange}, \ref{p:firstweirdmonoid},\,and \ref{p:unstablecorrespondence})}
The orbit correspondence in Equation (\ref{eq:bigintrocorresp}) intertwines the monoid action of $\Pi_{\fg}\coprod\Pi_{\fg}$ on $G\backslash(\B_{n}\times\B_{n}\times\mathbb{P}^{n-1})$ with the extended monoid action of $\Pi_{\fg}\coprod(\Pi_{\fg_{n+1}}\setminus\{\alpha_{1}\})$ acting on $B\backslash\mathfrak{X}$ after switching the factors of $\Pi_{\fg}$ and performing a shift in the second factor. 
\end{thm}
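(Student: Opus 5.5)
We plan to prove Theorem \ref{thm:intromonoid} by factoring the correspondence (\ref{eq:bigintrocorresp}) through the intermediate sets $S_{i}\backslash\B_{n}$ and comparing monoid actions at each stage. Each $G$-orbit $\calO_{\Delta}$ on $\B_{n}\times\B_{n}\times\mathbb{P}^{n-1}$ meets $\{B\}\times\B_{n}\times\mathbb{P}^{n-1}$ in a single $B$-orbit on $\B_{n}\times\mathbb{P}^{n-1}$. That $B$-orbit contains a point $(\fb',[e_{i}])$, and it is determined by the $S_{i}$-orbit of $\fb'$, where $S_{i}$ is the stabilizer of $[e_{i}]$. The same $i$ should index the $G$-orbit $Q_{1,i+1}\subset\mathfrak{X}$ containing $\calO_{\Delta}^{op}$. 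Under these identifications we get two bijections: $G\backslash(\B_n\times\B_n\times\PR^{n-1})\cong B\backslash(\B_n\times\PR^{n-1})\cong\coprod_i S_i\backslash\B_n$, and $S_i\backslash\B_n\cong B\backslash Q_{1,i+1}$. The second comes from the embedding of $S_{i}$-orbits into a single $G$-orbit described in the abstract, with $Q_{1,i+1}\cong G\times_{H}\B_{H}$-type fibration and stabilizer of $e_i$-data matching $S_i$.

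Next we identify the two monoid actions on the $G$-side. The first copy of $\Pi_{\fg}$ acts through the first $\B_{n}$ factor, so after restricting to $\{B\}\times\B_n\times\PR^{n-1}$ it is the action $\calO\mapsto P_{\alpha}\calO$ on $B$-orbits on $\B_{n}\times\mathbb{P}^{n-1}$. The second copy acts through the second flag factor by the usual $m(s_\alpha)$ action on $B\backslash\B_n$, twisted by the line. On $B\backslash\mathfrak{X}$ we have the extended monoid action of \cite{CE21I}:
\begin{itemize}
\item $\Pi_{\fg}$ acts via $\calO\mapsto P_{\alpha}\calO$, with $P_\alpha\subset G$;
\item $\Pi_{\fg_{n+1}}$ acts via the right action on $\B_{n+1}$, i.e. $P_{\beta}$-saturation along the fibration $\B_{n+1}\to G_{n+1}/P_{\beta}$.
\end{itemize}
The plan is to check that the left $P_{\alpha}$ action on the $B\backslash(\B_n\times\PR^{n-1})$ side matches the $\Pi_{\fg}$ action on $B\backslash\mathfrak{X}$. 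We expect this to hold because the embedding $S_{i}\backslash\B_{n}\to B\backslash Q_{1,i+1}$ is induced by a $G$-equivariant map after the factor switch.

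We then match the second $\Pi_{\fg}$ with $\Pi_{\fg_{n+1}}\setminus\{\alpha_1\}$, sending $\alpha_{j}\mapsto\alpha_{j+1}$. For this we use an explicit representative flag in $Q_{1,i+1}$ of the form $\mathfrak{v}\oplus\mathbb{C}e_{n+1}$ placed after the first step. We then show that $\mathbb{P}^1$-fibres for $\alpha_{j+1}$ in $\B_{n+1}$ map isomorphically onto $\mathbb{P}^1$-fibres for $\alpha_j$ in the second $\B_n$. Equivalently, the projection $Q_{1,i+1}\to \B_{n}\times\mathbb{P}^{n-1}$ is $P$-equivariant for the relevant partial-flag fibrations, and $\alpha_1$ is omitted because it changes the $G$-orbit out of $\mathfrak X$. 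Finally, we verify case by case (the ``orbit change'' and ``unstable'' cases of Propositions \ref{p:orbitchange}, \ref{p:firstweirdmonoid}, \ref{p:unstablecorrespondence}) that saturation by $P_{\alpha_{j+1}}$ either stays in $Q_{1,i+1}$ or moves to $Q_{1,i}$ / $Q_{1,i+2}$. These moves must match the change of the line $[e_i]\mapsto[e_{i\pm1}]$ on the $G$-side.

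The main obstacle is the cases where the monoid action changes the $G$-orbit $Q_{1,i+1}$, equivalently changes the index $i$ of the line. There the fibration argument does not apply directly, because the $\mathbb{P}^{1}$ meets two $G$-orbits. We would handle these by computing with explicit representatives: we would take flags in standard form from \cite{CE21I}, compute $P_{\alpha}$-saturations directly, and read off which orbit is dense. For orbits that stay put, we would compare dimensions using the known dimension formulas on both sides to confirm that the dense orbit corresponds.
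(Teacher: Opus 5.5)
\textbf{The factor matching is reversed.} You match the first-factor action $\calO\mapsto P_\alpha\calO$ on $B\backslash(\B_n\times\PR^{n-1})$ with the left action of $P_\alpha\subset G$ on $B\backslash\mathfrak{X}$. You then match the second-factor action with $\Pi_{\fg_{n+1}}\setminus\{\alpha_1\}$ via $\alpha_j\mapsto\alpha_{j+1}$. Both pairings are backwards, and this is a genuine error: the checks you propose would fail.

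The correspondence of Theorem \ref{thm:firstbig} is $Bg\,\mathrm{Stab}_G(\mathcal{E}_{1,i+1})\mapsto S_ig^{-1}B$, and the inversion exchanges left and right multiplication.
\begin{itemize}
\item The left saturation $P_\alpha gmB_{n+1}/B_{n+1}$ becomes $S_ig^{-1}P_\alpha/B$. This is the \emph{second}-factor action $m(s_\alpha)*_2$, with no shift; see (\ref{eq:leftinter}).
\item The right action of $\alpha_j\in\Pi_{n+1,cpt}$ becomes the \emph{first}-factor action of $\alpha_{j-1}$. The reason is the identity $G\cap\Ad(m)P^{n+1}_{\alpha_j}=\mathrm{Stab}_{P_{\alpha_{j-1}}}(e_i)$ of (\ref{eq:groupclaim}), which your plan does not contain.
\end{itemize}
Geometrically, translate $G\cdot(\fb,\fb',[e_i])$ so that the \emph{second} flag is $\fb$. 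Then the point of $Q_{1,i+1}$ records the first flag together with the line, and $B$ is the stabilizer of the second flag. So the $\mathbb{P}^1$-fibres of $\B_{n+1}$ should be compared with fibres in the first $\B_n$ (with the line), not the second.

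You can see the failure already in Example \ref{eq:2x2example}:
\begin{itemize}
\item $m(s_\alpha)*_1(id,\{1\})=(s,\{2\})$, whose partner is $(tst,s^*t^*)=m(s_\beta)*(ts,s^*)$, a right action.
\item The left action gives $m(s_\alpha)*_L(ts,s^*)=(sts,t^*s^*)$, whose partner is $(s,\{1\})=m(s_\alpha)*_2(id,\{1\})$.
\end{itemize}
Your plan is also internally inconsistent. The second-factor action never changes the line $[e_i]$. Yet you pair it with the right action, under which $\alpha_{i+1}$ moves $Q_{1,i+1}$ into $Q_{1,i+2}$.

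\textbf{Several auxiliary claims are also wrong.}
\begin{itemize}
\item $\alpha_1$ is not excluded because it leaves $\mathfrak{X}$. By Proposition \ref{p:Kmonoidaction} it is real or complex unstable for $Q_{1,i+1}$, so it preserves each $B\backslash Q_{1,i+1}$. It is excluded because the shifted first factor $\{\alpha_1,\dots,\alpha_{n-1}\}\mapsto\{\alpha_2,\dots,\alpha_n\}$ never reaches it; it is the hidden action of Proposition \ref{p:alpha1corres}.
\item Monoid actions never lower dimension, so there are no moves to $Q_{1,i}$ or $[e_{i-1}]$. The only orbit-changing case is first-factor $\alpha_i$ ($[e_i]\to[e_{i+1}]$) against right $\alpha_{i+1}$ ($Q_{1,i+1}\to Q_{1,i+2}$).
\item Proposition \ref{p:orbitchange} handles that case with a dimension count \emph{plus} an explicit containment in $G\cdot(P_{\alpha_i}\cdot\fb,\fb',[e_i])$, computed separately in the complex stable and noncompact cases. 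Equal dimensions alone do not identify the dense orbit, so your ``compare dimensions'' step is insufficient as stated.
\item The remaining pair, first-factor $\alpha_{i-1}$ against right $\alpha_i$, is also invisible to a fibration argument. The paper handles it through Shareshian pairs, using Theorem \ref{thm:localShareshian} and the injectivity of $\widetilde{\Sh}_i$, in Propositions \ref{p:firstweirdmonoid} and \ref{p:unstablecorrespondence}.
\end{itemize}
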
 
\noindent Taken together, Theorems \ref{thm:introthmbig} and \ref{thm:intromonoid} imply that understanding the closure relations and monoid actions on $G\backslash (\B_{n}\times\B_{n}\times\mathbb{P}^{n-1})$ essentially reduce to understanding the product of the Bruhat orders and weak orders on $W_{n+1}\times W_{n+1}$ respectively. 

As was mentioned above, the complete set of simple roots of both $\fg$ and $\fg_{n+1}$ act on the orbits $B\backslash\mathfrak{X}$.  The correspondence in (\ref{eq:bigintrocorresp}) implies that there is a hidden action by $\alpha_{1}\in \Pi_{\fg_{n+1}}$ on $G\backslash (\B_{n}\times \B_{n}\times\mathbb{P}^{n-1})$ not covered by the standard monoid actions of the two copies $\Pi_{\fg}\coprod\Pi_{\fg}$ described in Theorem \ref{thm:intromonoid}.  We describe this hidden monoid action in Proposition \ref{p:alpha1corres} and provide an example of it in Example \ref{ex:Q13orbits}.  

The correspondence in (\ref{eq:bigintrocorresp}) is constructed
as follows.  First, observe that there is a natural correspondence:
\begin{equation} \label{eq:introtripledouble}
G\backslash(\B_{n}\times\B_{n}\times\mathbb{P}^{n-1}) \longleftrightarrow B\backslash (\B_{n}\times\mathbb{P}^{n-1})
\end{equation}
given by choosing the base point $\fb$ in the first factor. 
We can then study the $B$-orbits $B\backslash (\B_{n}\times\mathbb{P}^{n-1})$ by considering the projection to the second factor.  
Let $\{e_{1},\dots,  e_{n}\}$ be the standard basis of $\C^{n}$, and for $v\in \C^n$ nonzero, let $[v]=\C \cdot  v\in\mathbb{P}^{n-1}$ be the corresponding point in projective space.  The $n$ $B$-orbits on $\mathbb{P}^{n-1}$ are then given by $\mathcal{O}_{i}:=B\cdot[e_{i}]$ for $i=1,\dots, n$.  Denote by $S_{i}:=\mbox{Stab}_{B}[e_{i}]$ the stabilizer in $B$ of $[e_{i}]$.  We then have natural correspondences
\begin{equation}\label{eq:allthecorres}
B\backslash(B_{n}\times\mathbb{P}^{n-1})\longleftrightarrow\coprod_{i=1}^{n} B\backslash (\B_{n}\times\mathcal{O}_{i}) \longleftrightarrow \coprod_{i=1}^{n} S_{i}\backslash\B_{n}.
\end{equation}
The correspondence in (\ref{eq:bigintrocorresp}) is constructed by first establishing a correspondence between $S_{i}\backslash\B_{n}$ and $B\backslash Q_{1,i+1}$, where $Q_{1,i+1}\subset \mathfrak{X}$ is a $G$-orbit on $\B_{n+1}$.

\begin{thm}\label{thm:introlocalcorres}\emph{[Theorem \ref{thm:firstbig} and Remark \ref{r:corres}]}
There is a one-to-one correspondence between the orbit posets:
$$
S_{i}\backslash\B_{n}\longleftrightarrow B\backslash Q_{1,i+1}, \; Q\mapsto Q^{op}.
$$
\end{thm}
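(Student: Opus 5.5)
The plan is to realize $Q_{1,i+1}$ as a homogeneous space $G/H$ and then apply the standard double coset duality
$$B\backslash G/H\longleftrightarrow H\backslash G/B,\qquad BgH\mapsto Hg^{-1}B.$$
Afterwards I identify $H$-orbits on $\B_n=G/B$ with $S_i$-orbits.

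\textbf{Step 1: a base point and its stabilizer.} I use the description of the $G$-orbits $Q_{j,k}$ from Proposition \ref{prop:typeAflag}. I expect that a flag $F_\bullet\in\B_{n+1}$ lies in $Q_{1,i+1}$ exactly when two conditions hold: $F_1\not\subset\C^n$, and $i+1$ is the smallest index with $e_{n+1}\in F_{i+1}$. (I would double-check this indexing convention against Proposition \ref{prop:typeAflag}.) Guided by this, I take the base point $x_0$ to be the flag
$$F_1=[e_i+e_{n+1}],\qquad F_k=\mathrm{span}(e_1,\dots,e_{k-1})\oplus F_1 \text{ for } 2\le k\le i+1,$$
$$F_k=\mathrm{span}(e_1,\dots,e_{k-1},e_{n+1}) \text{ for } k\ge i+1.$$
The two descriptions agree at $k=i+1$, because $e_i\in F_{i+1}$ forces $e_{n+1}\in F_{i+1}$.

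Now let $g\in G$. It fixes $e_{n+1}$ and preserves $\C^n$. From this I will check three facts.
\begin{itemize}
\item $g$ stabilizes $F_1$ if and only if $ge_i=e_i$ on the nose, since $g(e_i+e_{n+1})=ge_i+e_{n+1}$.
\item $g$ stabilizes each $F_k$ if and only if it stabilizes $F_k\cap\C^n$ and $F_k+\C e_{n+1}$.
\item These intersections and sums trace out the standard full flag in $\C^n$.
\end{itemize}
Together these give
$$H:=\mathrm{Stab}_G(x_0)=\{b\in B:\ be_i=e_i\}.$$
By Proposition \ref{prop:typeAflag}, $Q_{1,i+1}=G\cdot x_0\cong G/H$.

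\textbf{Step 2: from $H$ to $S_i$.} The group $S_i$ consists of the $b\in B$ with $be_i\in\C^\times e_i$. So $S_i=H\cdot Z$, where $Z$ is the group of scalar matrices in $G$. Since $Z$ acts trivially on $\B_n$, the $H$-orbits and the $S_i$-orbits on $\B_n$ coincide.

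\textbf{Step 3: the bijection.} Combining the duality with Step 2 gives
$$B\backslash Q_{1,i+1}\;\cong\; B\backslash G/H\;\longleftrightarrow\; H\backslash G/B\;=\;S_i\backslash\B_n.$$
Explicitly, for $Q=S_i\cdot gB$ we set $Q^{op}:=B\cdot g^{-1}x_0$. This is well defined because $S_i\,g\,B=HZ\,g\,B$ and $Z$ is central. It is clearly bijective, with inverse $B\cdot hx_0\mapsto S_i\cdot h^{-1}B$.

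\textbf{Step 4: orbit posets.} This is the only step with real content, since "orbit posets" asks that closure relations be preserved. Both orbit sets are in bijection with $(B\times H)$-orbits on $G$, where $(b,h)\cdot g=bgh^{-1}$, and the inversion map on $G$ is an isomorphism. The projections $G\to G/H$ and $G\to G/B$ are open, surjective and equivariant. So for a union $Y$ of double cosets, the closure of $Y$ in $G$ is the preimage of the closure of its image. This transports closure relations on $G/H$ to closure relations among $B\times H$-double cosets in $G$.

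Inversion then carries these to $H\times B$-double cosets, and the same argument descends to $G/B$. Because $Q_{1,i+1}$ is locally closed, closures in $\B_{n+1}$ and closures taken inside $Q_{1,i+1}$ determine the same containment relations among orbits inside it. So $Q'\subset\overline{Q}$ if and only if $Q'^{op}\subset\overline{Q^{op}}$.

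The main obstacle I expect is Step 1. One has to pin down the correct representative $x_0$ of $Q_{1,i+1}$ and check that the stabilizer is exactly $\{b\in B: be_i=e_i\}$ rather than a conjugate. The conjugation is harmless but would change the explicit formula for $Q^{op}$.
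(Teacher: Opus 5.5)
Your proposal is correct and is essentially the paper's proof. Your $x_0$ is exactly the flag $\mathcal{E}_{1,i+1}$ of (\ref{eq:Fioneflag}), and the steps match: $\mathrm{Stab}_G(\mathcal{E}_{1,i+1})=\mathrm{Stab}_B(e_i)$ by intersecting with $\C^n$, then $Z\cdot\mathrm{Stab}_B(e_i)=S_i$, then inversion, with your Step 4 spelling out the closure transport that Remark \ref{r:corres} only asserts. One small slip: the ``if and only if'' in your second bullet is false as stated for $k\le i$ (e.g.\ $ge_i=2e_i$ preserves $F_k\cap\C^n$ and $F_k+\C e_{n+1}$ but not $F_k$); only its forward direction is needed, and the inclusion $\{b\in B:\ be_i=e_i\}\subset H$ is a direct check once $ge_i=e_i$ is known.
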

 The bijection in Equation (\ref{eq:bigintrocorresp}) is given by composing the bijection of Theorem \ref{thm:introlocalcorres} with the correspondences in (\ref{eq:introtripledouble}) and (\ref{eq:allthecorres}), using the decomposition of $\mathfrak{X}$ into $G$-orbits.   It would be interesting to interpret the bijection of (\ref{eq:bigintrocorresp}) in a uniform way without using the decomposition into orbits.
 
 The proofs of the above results depend on a careful analysis of $S_i$-orbits, which also establishes new results on $S_i$-orbits of independent interest.  By our results in \cite{Siorbits}, the $S_i$-orbits on $\B_n$ come equipped with a generalization of the extended monoid action for $B$-orbits on $\B_{n+1}$.  A major step in proving Theorem \ref{thm:intromonoid} consists of showing that the correspondence in Theorem \ref{thm:introlocalcorres} intertwines this monoid action with the monoid action on $B\backslash Q_{1,i+1}$ by $\Pi_{\fg}\coprod \Pi_{n+1, cpt}$, where $\Pi_{n+1, cpt}$ are the so-called compact imaginary roots for the $G$-orbit $Q_{1,i+1}$ (Theorem \ref{thm:monoidcorresp}).   Using this fact along with our work in \cite{CE21II}, we then show that every $S_{i}$-orbit is related in the weak order defined by the extended monoid action to a zero dimensional $S_{i}$-orbit which implies that the closure ordering on $S_{i}\backslash\B_{n}$ is given by the standard ordering of Richardson-Springer (Theorem \ref{thm:Siweak} and Corollary \ref{c:standardorder}).   As usual, we identify $\B_n$ with flags in $\C^n.$    Consider the $i$-cycle ${\sigma}_{i}:= (i,\,i-1,\dots, 2, \,1) \in W_n.$   For $w\in W_n,$ we also denote by $w$ the permutation matrix in $G$ of $w.$  Let $\mathcal{E}_n$ be the flag in $\B_n$ stabilized by $B$ and let $\mathcal{E}^{i} = \sigma_i \cdot \mathcal{E}_{n}$ with stabilizer $B^{i}=\sigma_i B \sigma_i^{-1}$ in $G.$  In Theorem 3.1 of \cite{Siorbits}, we prove that the $S_{i}$-orbits on $\B_{n}$ are of the form $Q=(B\cdot w \cdot \mathcal{E}_{n}) \cap (B^{i}\cdot u^{i} \cdot \mathcal{E}^{i})$  for unique permutations $w$ and $u^{i}$ in $W_{n}.$   We call $(w,u^{i})$ the $i$-\emph{Shareshian pair} of $Q.$   Using the fact that every $S_{i}$-orbit is related in the weak order to a zero dimensional orbit and an inductive argument using the monoid action, we can compute the $i$-Shareshian pair of an $S_{i}$-orbit $Q$ in terms of the Shareshian pair of the corresponding $B$-orbit $Q^{op}\in B\backslash Q_{1,i+1}$ (Theorem \ref{thm:localShareshian}).   Theorem \ref{thm:localShareshian} allows us to express the correspondence in (\ref{eq:bigintrocorresp}) in terms of Magyar's parameterization of $G\backslash(\B_{n}\times\B_{n}\times\mathbb{P}^{n-1})$ using decorated permutations (Theorem \ref{thm:globalSh} and Proposition \ref{p:marked}).  We then combine the basic result from \cite{Magyar} with some estimates of our own to prove (\ref{eq:introclosure}) (Proposition \ref{p:orderpreserving} and Theorem \ref{thm:globalclosure}). Using Theorem \ref{thm:localShareshian} along with Propositions \ref{p:orbitchange} and \ref{p:firstweirdmonoid}-\ref{p:alpha1corres}, we can understand the remaining monoid actions on  $G\backslash(\B_{n}\times\B_{n}\times\mathbb{P}^{n-1})$ and complete the proof of Theorem \ref{thm:intromonoid}.

    As another application of our results to $S_i$-orbits, in Corollary \ref{c:closure} of this paper, we prove that if we consider $S_i$-orbits $Q$ and $Q^{\prime}$ with $i$-Shareshian pairs $(w,u^{i})$ and $(y,v^{i}),$ then $Q^{\prime} \subset \overline{Q}$ if and only $y \le w$ and $\sigma_{i}^{-1}v^{i}\sigma_{i} \le  \sigma_{i}^{-1}u^{i}\sigma_{i}$ in the Bruhat order.  From this result, we deduce in Corollary \ref{c:Schubertintersect} that $\overline{Q} = \overline{B\cdot w(\mathcal{E}_{n})} \cap \overline{B^{i}\cdot u^{i}(\mathcal{E}^{i})},$ which means that the closure of $Q$ is the closure of the intersection of two Schubert varieties defined using different Borel subgroups.

 This paper is organized as follows.  In Section \ref{s:prelim}, we introduce notation and make some basic observations about $G$-orbits on $\B_{n} \times \B_{n} \times \mathbb{P}^{n-1}.$  In Section \ref{s:Gorbits}, we prove Theorem \ref{thm:introlocalcorres} and use it to deduce   Theorem  \ref{thm:globalcorresp}, which is a key ingredient in the proof of Theorem \ref{thm:introthmbig}.  In Section \ref{s:monoidactions}, we prove a number of results comparing monoid actions on different sides of the correspondence in (\ref{eq:bigintrocorresp})
and deduce consequences about the weak and standard orders of Richardson-Springer on $S_{i}\backslash\B_{n}$.  In Section \ref{s:Shareshian}, we prove further properties of $S_i$-orbits on $\B_{n}$ by using the notion of $i$-Shareshian pairs and our results on the monoid action. In Section \ref{s:global}, we discuss decorated permutations and use results of Section \ref{s:Shareshian} to express the correspondence in (\ref{eq:bigintrocorresp}) in terms of decorated permutations.  We use this to prove that (\ref{eq:bigintrocorresp}) preserves closure relations.
  We conclude the paper by providing graphs of the posets $G\backslash(\B_{n}\times \B_{n}\times\mathbb{P}^{n-1})$ for $n=2$ and $n=3,$ which illustrate the utility of our results.

 We would like to thank Martha Precup for useful discussions concerning the results of this paper.

\section{Preliminaries and Conventions}\label{s:prelim}
\subsection{Conventions}\label{ss:conven}
 All algebraic groups and varieties are complex, and the Lie algebra of an algebraic group is labelled by the corresponding fraktur letter.  For an algebraic group $A$ with Lie algebra $\fa$, we denote the adjoint action of $A$ on $\fa$ by $g\cdot x:=\Ad(g)x$ for $g\in A$ and $x\in\fa$.   If an algebraic group $A$ acts on a variety $X,$ we let $A\backslash X$ denote the set of $A$-orbits on $X.$ We regard $A\backslash X$ as a poset by the rule that for $Q_1, Q_2 \in A\backslash X$, $Q_1 \le Q_2$ iff $Q_1 \subset \overline{Q_2}.$  
 
 For $m > 0,$  let $\{e_{1},\dots, e_{m}\}$ denote the standard basis of $\C^{m}$. For a nonzero vector $v$ of $\C^m,$ we let $[v]$ be the corresponding point in $\mathbb{P}^{m-1}.$   For an ordered linearly independent sequence of nonzero vectors $(v_1, v_2, \dots, v_m)$, let $V_i$ be the span of $\{ v_1, v_2, \dots, v_i \}$ for $i=1, \dots, m,$
and let $(v_1 \subset v_2 \subset \dots \subset v_m)$ denote the flag $V_1 \subset V_2 \subset \dots \subset V_m.$  Let 
\[
\mathcal{E}_m = (e_1 \subset e_2 \subset \dots \subset e_m)
\] 
denote the standard flag in $\C^m.$   Let $B_m$ be the Borel subgroup of $GL(m)$ of upper triangular matrices of $GL(m)$, and let $\B_{m}$ be the flag variety of $GL(m)$.  As usual, we identify $\B_{m}=GL(m)/B_m$ with the variety of flags of $\C^m$ and also with the Borel subalgebras of $\fg\fl(m),$ using the natural linear action on flags and the adjoint action on the variety of Borel subalgebras.

 Let $H_{m}\subset G_{m}$ be the standard Cartan subgroup of $m\times m$ invertible diagonal matrices.  We identify the Weyl group $W_m=N_{G_{m}}(H_{m})/H_{m}$ of $G_{m}$ with the symmetric group on $m$ letters.  If $\tilde{\fb}\in \B_{m}$ is an $H_{m}$-stable Borel subalgebra of $\fg_{m}$ which stabilizes the flag $\tilde{\mathcal{E}}$, then $\Ad(\dot{w}) \tilde{\fb}$ is independent of the choice of representative $\dot{w}$ of $w\in W_{m}$.  We write $\Ad(w)\tilde{\fb}$ or simply $w(\tilde{\fb})$ for the Borel subalgebra stabilizing the flag $w(\tilde{\mathcal{E}})$.

 Let $\Pi_{\fg_{m}}=\{ \alpha_1, \dots, \alpha_{m-1} \}$ be the simple roots for the $\fh_{m}$-action on $\fb_{m}$, and let $S=\{ s_{1}, \dots, s_{m-1} \}$ be the corresponding simple reflections in $W_{m}.$

For a fixed integer $n,$ we let $U_n$ be the span of $\{ e_1, \dots, e_n \}$ in $\C^{n+1},$ and we let $G\cong GL(n)$ be the subgroup of $GL(n+1)$ that stabilizes the subspace $U_n$ and the fixes the vector $e_{n+1},$ which embeds $G$ into $GL(n+1)$ as the upper left hand corner.  We let $B=B_{n}.$ 
   For $i=1, \dots, n,$ we call the vector
\begin{equation}\label{e:hatvectordef}
\hat{e}_{i}=e_i + e_{n+1}
\end{equation}
a hat vector of index $i.$

\subsection{$B$-orbits on $\B_n \times \mathbb{P}^{n-1}$}

 We seek to understand the orbits of $G$ on 
the product $\B_{n}\times\B_{n}\times\mathbb{P}^{n-1}$ under the diagonal action.  
 The orbits of  $G\backslash (\B_{n}\times\B_{n}\times\mathbb{P}^{n-1})$ are in one-to-one correspondence with 
$B$-orbits on the product $\B_{n}\times \mathbb{P}^{n-1}$ by choosing the base point $\fb$  in the first factor: 
\begin{equation}\label{eq:Gdeltacorres}
G\backslash(\B_{n}\times\B_{n}\times\mathbb{P}^{n-1})\longleftrightarrow B\backslash (\B_{n}\times \mathbb{P}^{n-1}) \mbox{ given by } G\cdot (\fb,\fb^{\prime},[v])\leftrightarrow B\cdot (\fb^{\prime}, [v]).
\end{equation}  
 There are $n$ $B$-orbits on $\mathbb{P}^{n-1}$  given by $\mathcal{O}_{i}:=B\cdot[e_{i}]$ for $i=1,\dots, n.$
  We can decompose the set $B\backslash (\B_{n}\times\mathbb{P}^{n-1})$ as
\begin{equation}\label{eq:orbitspacedecomp}
B\backslash (\B_{n}\times\mathbb{P}^{n-1})=\bigcup_{i=1}^{n} B\backslash (\B_{n}\times\mathcal{O}_{i}).
\end{equation}
Fix an $i$ with $1\leq i \leq n$ and consider the $B$-orbits on $\B_{n}\times\mathcal{O}_{i}$.  Let
$S_{i}:=\mbox{Stab}_{B}[e_{i}]$ and note that 
\begin{equation}\label{eq:dimfacts}
\dim(\mathcal{O}_{i})=i-1,\, \dim(S_i)=\dim(B)-i+1, \text{and} \dim(B\cdot e_i)=i.
\end{equation}
  Then we have a one-to-one correspondence 
\begin{equation}\label{eq:Sicorres}
B\backslash (\B_{n}\times\mathcal{O}_{i})\longleftrightarrow S_{i}\backslash \B_{n} \mbox{ given by } B\cdot(\fb^{\prime},[e_{i}])\longleftrightarrow S_{i}\cdot\fb^{\prime}.
\end{equation}
\begin{rem}\label{r:dimcorr}
In these correspondences, the stabilizers coincide, i.e.,
\begin{equation}
(G)_{(\fb, \fb^{\prime}, [e_i])}=B_{(\fb^{\prime},[e_i])}=(S_i)_{\fb^{\prime}}.
\end{equation} 
It follows from (\ref{eq:dimfacts}) that
\begin{equation}\label{e:dimcorr}
\dim(G\cdot (\fb, \fb^{\prime}, [e_i]))=\dim(B\cdot (\fb^{\prime},[e_i])) + \dim(G/B) = \dim(S_i \cdot \fb^{\prime}) + \dim(G/B) + i-1.
\end{equation}
\end{rem}
\noindent One purpose of this paper is to study the collection of orbits $S_{i}\backslash\B_{n}$.  
\begin{rem}\label{r:specialcase}
We note that if $i=1$, then $S_{1}=B$ and the $B$-orbits on $\B_{n}$ are given by the Bruhat decomposition and their closure ordering is given by the Bruhat ordering on $W_{n}$.  If $i=n$, then $S_{n}=B_{n-1}$ (up to centre) and the structure of $B_{n-1}\backslash\B_{n}$ is described extensively in \cite{Hashi}, \cite{CE21I}, \cite{CE21II}, and \cite{Shpairs}. 
\end{rem}

\section{Correspondence Between $G$-orbits on $\B_{n}\times\B_{n}\times\mathbb{P}^{n-1}$ and $B$-orbits on $\mathfrak{X}$}\label{s:Gorbits}
 In Theorem 3.10 of \cite{CE21I}, we prove that the $B$-orbits on the flag variety $\B_{n+1}$ of $G_{n+1}$ contained in the open $G$-orbit on the flag variety $\B_{n+1}$ are in one-to-one correspondence with $B_{n-1}$-orbits on $\B_{n}.$ 
 In this section, we prove Theorem \ref{thm:introlocalcorres}, which is a generalization of Theorem 3.10 in \cite{CE21I} by showing that for any $i\in\{1,\dots, n\},$ the $S_{i}$-orbits on $\B_{n}$ correspond to $B$-orbits on a $G$-orbit we call $Q_{1,i+1}$ in the flag variety of $\B_{n+1}$ of $G_{n+1}$.  


To prove Theorem \ref{thm:introlocalcorres}, we need to use specific representatives of the $G$-orbits on $\B_{n+1}$.  We now briefly remind the reader of the classification of these orbits.  For a more detailed explanation of the theory, see \cite{Collingwood}, Example 10.5 in \cite{RSexp}, and \cite{CEexp} amongst other sources. 
\subsection{Classification of $G$-orbits on $\B_{n+1}$}
Using the embedding of $G=GL(n)$ in $GL(n+1)$ from Section \ref{ss:conven},  we get a $G$-action on $\B_{n+1}.$   These orbits are given as follows.


\begin{prop}[\cite{CEexp}, Section 4.4] \label{prop:typeAflag}
Let $\fg_{n+1}=\fgl(n+1)$ and $\fg=\fgl(n)$.  
\begin{enumerate}
\item The number of $G$-orbits on $\B_{n+1}$ is ${n+2\choose 2}$.  
\item For $i=1,\dots, n+1$, let $w_{i}$  be the cycle $(n+1, n, \dots, i)$ in the symmetric group $W_{n+1}$, 
and let $\fb_{i,i}:=w_{i}(\fb_{n+1})$.  The distinct closed $G$-orbits on $\B_{n+1}$ are the $G$-orbits $Q_{i,i}=G\cdot \fb_{i,i}$ so there are exactly $n+1$ closed $G$-orbits.  Further, the Borel subalgebra $\fb_{i,i}$ is 
the stabilizer of the flag:
\begin{equation}\label{eq:flagi}
\mathcal{E}_{i,i}:=(e_{1}\subset\dots \subset e_{i-1}\subset\underbrace{e_{n+1}}_{i}\subset e_{i}\subset \dots \subset e_{n}). 
\end{equation}
\item The non-closed $G$-orbits are of the form $Q_{i,j}=B\cdot \fb_{i,j}$ 
with $1\leq i< j\leq n+1$ and where the Borel subalgebra $\fb_{i,j}$ is the stabilizer of the flag 
\begin{equation}\label{eq:flagij}
\mathcal{E}_{i,j}:=(e_{1}\subset\dots\subset e_{i-1} \subset \underbrace{\hat{e}_{j-1}}_{i}\subset e_{i}\subset\dots\subset e_{j-2}\subset\underbrace{e_{n+1}}_{j}\subset e_{j}\subset\dots\subset e_{n}),
\end{equation}
where $\hat{e}_{j-1}=e_{j-1}+e_{n+1}$ is defined in Equation (\ref{e:hatvectordef}).
Further, the codimension of $Q_{i,j}$ is $n-(j-i)$.
In particular, the unique open orbit is $Q_{1,n+1}$, and it contains the Borel subalgebra 
which stabilizes the flag:
\begin{equation}\label{eq:openflag}
\mathcal{E}_{1,n+1}:=(\hat{e}_{n}\subset e_{1}\subset\dots\subset e_{n-1}\subset e_{n+1}). 
\end{equation}
\end{enumerate}
\end{prop}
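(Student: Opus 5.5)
The plan is to reduce the classification of $G$-orbits on $\B_{n+1}$ to an elementary problem in linear algebra. Write $\C^{n+1}=U_n\oplus\C e_{n+1}$, where $G$ acts on $U_n$ as $GL(n)$ and fixes $e_{n+1}$. A flag $\F=(F_1\subset\dots\subset F_{n+1})$ is then recorded by two indices that are visibly $G$-invariant:
\[
i(\F)=\min\{k: F_k\not\subset U_n\},\qquad j(\F)=\min\{k: e_{n+1}\in F_k\}.
\]
These satisfy $1\le i\le j\le n+1$, which already gives ${n+2\choose 2}$ possible pairs. The orbits are the level sets: I will show that each pair $(i,j)$ is realized by exactly one $G$-orbit, with representative $\mathcal{E}_{i,j}$ ($\mathcal{E}_{i,i}$ when $i=j$).

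\textbf{Realization.} I check directly that $\mathcal{E}_{i,j}$ has invariants $(i,j)$. In $\mathcal{E}_{i,j}$ the first vector outside $U_n$ is $\hat{e}_{j-1}$, in position $i$. The vector $e_{n+1}$ first enters the span at position $j$, because $e_{n+1}=\hat e_{j-1}-e_{j-1}$ and $e_{j-1}$ only joins at step $j$.

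\textbf{Uniqueness.} Given $\F$ with invariants $(i,j)$, I build an adapted basis. Let $W_k=F_k\cap U_n$. For $k<i$ we have $F_k=W_k$. For $k\ge i$, projection along $e_{n+1}$ gives $\dim W_k=k-1$ when $k<j$ and $\dim W_k=k-1$ also when $k\ge j$ (since then $F_k=W_k\oplus\C e_{n+1}$). Thus the $W_k$ form a complete flag in $U_n$ once the repeated step at $k=i$ is dropped, and $G$ moves this flag to the standard one.

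Now choose $v\in F_i\setminus W_{i}$ and normalize it to $v=u+e_{n+1}$ with $u\in U_n$. The condition $e_{n+1}\notin F_k$ for $k<j$ while $e_{n+1}\in F_j$ says precisely that $u\in W_j\setminus W_{j-1}$, where $W_j$ is the $(j-1)$-dimensional space. It remains to move $u$ to a multiple of $e_{j-1}$ modulo lower terms. For this I use the stabilizer $B$ of the standard flag in $U_n$. Since $u$ modulo $F_i$ can be adjusted by the choice of $v$ (coset freedom), I reduce to $u$ having nonzero $e_{j-1}$-coefficient with the remaining components in $\mathrm{span}(e_i,\dots,e_{j-2})$ removable by unipotent elements of $B$. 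A diagonal element then rescales the coefficient to $1$, which yields $\mathcal{E}_{i,j}$. When $i=j$ we get $F_i=W_{i-1}\oplus\C e_{n+1}$ and directly $\mathcal{E}_{i,i}$. I also identify $\mathcal{E}_{i,i}$ with $w_i(\mathcal{E}_{n+1})$ by reading off the cycle.

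\textbf{Closedness and dimensions.} I compute the stabilizer of $\mathcal{E}_{i,j}$ in $G$. It consists of elements of $B$ fixing the line through $\hat e_{j-1}$ modulo $\mathrm{span}(e_1,\dots,e_{i-1})$, i.e.\ elements sending $e_{j-1}$ into $e_{j-1}+\mathrm{span}(e_1,\dots,e_{i-1})$. This cuts down $\dim B$ by $j-i$ (the $e_i,\dots,e_{j-2}$ entries of column $j-1$, together with the diagonal relation), so
\[
\dim Q_{i,j}=\dim G/B+(j-i).
\]
Since $\dim\B_{n+1}=\dim G/B+n$, the codimension is $n-(j-i)$. For $i=j$ the stabilizer is all of $B$, so $Q_{i,i}\cong G/B$ is projective and hence closed. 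For $i<j$ the orbit has larger dimension than the closed ones, and it is not closed because its boundary contains $Q_{i,i}$: letting the $e_{j-1}$-component scale to $0$ sends $\hat e_{j-1}$ to $e_{n+1}$. The unique open orbit is the one of maximal dimension, namely $(1,n+1)$.

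\textbf{Main obstacle.} The delicate step is the normalization of $u$. There I must carefully use both the freedom in choosing $v$ within $F_i$ (modulo $W_{i-1}$) and the elements of $B$ that fix the flag $W_\bullet$, without disturbing each other.
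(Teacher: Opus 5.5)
Your proof is correct, and it takes a genuinely different route from the paper. The paper gives no argument of its own here. It quotes the result from \cite{CEexp}, where it sits inside the general Richardson--Springer theory for the symmetric pair $(GL(n+1),GL(n)\times GL(1))$. In that framework, the closed orbits are those through the $\theta$-stable Borel subalgebras $w(\fb_{n+1})$ with $w$ running over $W_n\backslash W_{n+1}$; this is where the cycles $w_i$ come from (cf.\ Remark \ref{r:shortestreps}). The remaining orbits are generated from the closed ones by the monoid action, and the Cayley transforms are what produce the hat vectors (Remark \ref{r:Cayley}).

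You instead classify directly by the two $G$-invariants $i(\F)\le j(\F)$, normalize the induced flag $W_\bullet$ of $U_n$ together with the line $\C v$, and read off stabilizers. Your route buys self-containment and explicit data:
\begin{itemize}
\item a membership test for $Q_{i,j}$ valid for every flag, from which Lemma \ref{l:oldlemma} follows immediately;
\item the stabilizer $\{g\in B: ge_{j-1}\in e_{j-1}+\mathrm{span}(e_1,\dots,e_{i-1})\}$, whose case $i=1$ is exactly the identity (\ref{eq:stabclaim}) that the paper verifies by hand in proving Theorem \ref{thm:firstbig}.
\end{itemize}
The structural route buys the monoid action and the full closure order on $G\backslash\B_{n+1}$ (Proposition \ref{p:Kmonoidaction}, Remark \ref{r:Kclosure}), which the paper relies on later. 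Your degeneration $te_{j-1}+e_{n+1}\to e_{n+1}$ yields only the particular relation $Q_{i,i}\subset\overline{Q_{i,j}}$. That is all you need for non-closedness, but it is not the whole order.

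Two small points. First, the freedom in choosing $v$ is modulo $W_i=\mathrm{span}(e_1,\dots,e_{i-1})$, not modulo $F_i$. Second, the step you single out as the main obstacle is not delicate. After moving $W_\bullet$ to the standard flag, $F_k=W_k+\C v$ for every $k\ge i$. So the single element $b\in B$ with $be_{j-1}=u$ and $be_m=e_m$ for $m\neq j-1$ already satisfies $b\cdot\mathcal{E}_{i,j}=\F$. It is invertible because the $e_{j-1}$-coefficient of $u$ is nonzero. There is therefore no interplay between the choice of $v$ and the action of $B$ to manage.
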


\begin{rem}\label{r:Kclosure}
It is well-known that 
$$
Q_{i,j}\subset\overline{Q_{k,\ell}}\Leftrightarrow i\geq k\mbox{ and } j\leq \ell.
$$
\end{rem}\par\noindent

 The following diagram indicates the $GL(n)$-orbits on $\B_{n+1}$ for the case where $n=3$, together with the order relation given by closure, where $Q^{\prime} \subset \overline{Q}$ if and only if there is a sequence of lines going down from $Q^{\prime}$ to $Q.$  For general $n$, the diagram has the same shape, with $n+1$ closed orbits, $n$ orbits one line below, and so forth, until we have one orbit on the last line.

\hspace{11em}
\begin{equation}\label{eq:Kdiag}
\begin{tikzpicture} 
  [scale=1.3,auto=center,every node/.style={circle,fill=white!20}] 
\node (a1) at (5,4) {$Q_{1,1}$};
\node (a2) at (6,4) {$Q_{2,2}$};
\node (a3) at (7,4) {$Q_{3,3}$};
\node (a4) at (8,4) {$Q_{4,4}$};
\node (a5) at (5.5,3) {$Q_{1,2}$};
\node (a6) at (6.5,3) {$Q_{2,3}$};
\node (a7) at (7.5,3) {$Q_{3,4}$};
\node (a8) at (6,2) {$Q_{1,3}$};
\node (a9) at (7,2) {$Q_{2,4}$};
\node (a10) at (6.5,1) {$Q_{1,4}$};

\draw (a1) -- (a5); 
  \draw (a2) -- (a5);  
  \draw (a2) -- (a6);  
  \draw (a3) -- (a6);  
  \draw (a3) -- (a7);  
  \draw (a4) -- (a7);  
  \draw (a5) -- (a8);  
\draw (a6) -- (a8);  
\draw (a6) -- (a9); 
\draw (a7) -- (a9);  
\draw (a8) -- (a10);  
\draw (a9) -- (a10);  
 
\end{tikzpicture}
\end{equation}

\subsection{Correspondence between $S_{i}\backslash\B_{n}$ and $B\backslash Q_{1,i+1}$}

We can now state the first basic result of the paper. 

\begin{thm}\label{thm:firstbig}
Let $i\geq 1$ and consider the $G$-orbit $Q_{1,i+1}=G\cdot \mathcal{E}_{1,i+1}\subset \B_{n+1}$, 
where we take $i=1$ and $j=i+1$ in part (3) of Proposition \ref{prop:typeAflag}, so that the flag
\begin{equation}\label{eq:Fioneflag}
\mathcal{E}_{1,i+1}=(\hat{e}_{i}\subset e_{1}\subset\dots\subset e_{i-1}\subset\underbrace{e_{n+1}}_{i+1}\subset e_{i+1}\subset \dots \subset e_{n}), 
\end{equation}
with $\hat{e}_{i}=e_{i}+e_{n+1}$ (see (\ref{e:hatvectordef})).  Then there is a one-to-one correspondence between $B$-orbits contained in the $G$-orbit $Q_{1,i+1}$ and $S_{i}$-orbits on $\B_{n}$, 
\begin{equation}\label{eq:firstcorresp}
B\backslash Q_{1,i+1}\longleftrightarrow S_{i}\backslash\B_{n}, \text{ given by }
Bg\, \mbox{\emph{Stab}}_{G}(\mathcal{E}_{1,i+1}) \to S_ig^{-1}B.
\end{equation}
\end{thm}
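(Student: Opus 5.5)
The plan is to use the standard double coset yoga. Let $K_i:=\mathrm{Stab}_G(\mathcal{E}_{1,i+1})$. Then $Q_{1,i+1}\cong G/K_i$, so $B\backslash Q_{1,i+1}\leftrightarrow B\backslash G/K_i$. On the other side, $S_i\backslash \B_n = S_i\backslash G/B$. Inversion $g\mapsto g^{-1}$ gives a bijection $B\backslash G/K_i \to K_i\backslash G/B$, $BgK_i\mapsto K_ig^{-1}B$. Therefore the whole theorem reduces to one claim: $K_i$ is conjugate in $G$ to $S_i$. More precisely, I expect $K_i=S_i$ on the nose, and then the map in (\ref{eq:firstcorresp}) is exactly the inversion map.

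\emph{Step 1: compute $K_i$.} Let $g\in G$. Then $g$ fixes $e_{n+1}$ and preserves $U_n$. It stabilizes $\mathcal{E}_{1,i+1}$ iff it preserves each subspace of the flag. The intersections of the flag's subspaces with $U_n$ are $0, \langle e_1\rangle, \dots,\langle e_1,\dots,e_{i-1}\rangle$, then $\langle e_1,\dots ,e_i\rangle$ (from $V_{i+1}\cap U_n$), then $\langle e_1,\dots,e_{i+1}\rangle$, and so on. So $g$ preserves the standard flag of $U_n$, and hence $g\in B$.

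Next consider the line $[\hat e_i]$. Write $g\hat e_i = ge_i + e_{n+1}$. This lies in $\C\hat e_i$ iff $ge_i=e_i$, which is stronger than $[ge_i]=[e_i]$.

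\emph{Step 2: the scalar issue, which I expect to be the main obstacle.} Check the remaining subspaces. $V_k=\langle \hat e_i, e_1,\dots,e_{k-1}\rangle$ for $k\le i$ is preserved automatically once $g\hat e_i=\hat e_i$ and $g\in B$. For $k\ge i+1$ the subspaces are spans of standard vectors and are preserved by $B$. Thus
\[
K_i=\{b\in B: be_i=e_i\},
\]
which is $S_i$ intersected with the condition that the $(i,i)$ entry equals $1$. This differs from $S_i$ by the central torus factor, in the spirit of the "up to centre" caveat in Remark \ref{r:specialcase}.

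To handle this, I would note that $S_i=K_i\cdot Z$, where $Z$ is the centre of $G$ (scalar matrices): given $s\in S_i$ with $se_i=ce_i$, we have $c^{-1}s\in K_i$. Since $Z$ acts trivially on $\B_n$, the $S_i$-orbits and $K_i$-orbits on $\B_n=G/B$ coincide. Equivalently, $K_ig^{-1}B=S_ig^{-1}B$ as double cosets, because $Z\subset B$. Alternatively, one can replace $G$ by $GL(n)$ acting on $\B_{n+1}$ and note that $Z\cdot \mathcal{E}_{1,i+1}$ still lies in $Q_{1,i+1}$. Since $Z\subset B$, the double coset $BgK_i$ equals $BgK_iZ$, so nothing changes on the left side either.

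\emph{Step 3: well-definedness and bijectivity.} With $K_iZ=S_i$ and $Z\subset B$ central, the map $BgK_i\mapsto S_ig^{-1}B$ is well defined. Indeed, replacing $g$ by $bgk$ gives $k^{-1}g^{-1}b^{-1}$, which lies in the same $S_i$-$B$ double coset. The inverse map $S_ihB\mapsto Bh^{-1}K_i$ is well defined for the same reason: for $s=kz$ we get $Bh^{-1}z^{-1}k^{-1}K_i=Bh^{-1}K_i$, since $z^{-1}\in B$ commutes with everything. The two maps are mutually inverse. Finally, the identification $B\backslash Q_{1,i+1}= B\backslash G/K_i$ via $g\mapsto g\cdot\mathcal{E}_{1,i+1}$ gives the stated correspondence.
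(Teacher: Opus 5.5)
Your proposal is correct and follows the paper's proof. You identify $\mbox{Stab}_G(\mathcal{E}_{1,i+1})$ with $\mbox{Stab}_B(e_i)$ by intersecting the flag with $U_n$ and using that $g$ fixes $e_{n+1}$, note that $Z\cdot \mbox{Stab}_B(e_i)=S_i$, and pass between $B\backslash G/K_i$ and $S_i\backslash G/B$ by inversion, exactly as the paper does. Your opening guess that $K_i$ equals, or is conjugate to, $S_i$ is false, since $K_i$ has codimension one in $S_i$; your Step 2 correctly replaces it with $S_i=K_iZ$, and your explicit well-definedness check modulo the central $Z\subset B$ is slightly more careful than the paper's one-line appeal to inversion.
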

\begin{proof}
We claim that 
\begin{equation}\label{eq:stabclaim}
\mbox{Stab}_{G}(\mathcal{E}_{1,i+1})=\mbox{Stab}_{B}(e_{i}).
\end{equation}
Suppose first that $g\in G$ fixes the flag $\mathcal{E}_{1,i+1}$ in (\ref{eq:Fioneflag}).  For $j=1,\dots, n$, let $U_{j}=\mbox{span}\{e_{1},\dots, e_{j}\}$.  Recall that $G=\{g\in G_{n+1}:\, g\cdot e_{n+1}=e_{n+1}\mbox{ and } g\cdot U_{n}=U_{n}\}.$  Since $g\in \mbox{Stab}_{G}(\mathcal{E}_{1,i+1})$, it must fix the line 
$\C(\hat{e}_{i})=\C(e_{i}+e_{n+1})$.  But since $g\cdot e_{n+1}=e_{n+1}$, we deduce that $g\cdot e_{i}=e_{i}.$
Further, for any $j\in\{1,\dots, i-1\}$, $g\cdot U_{j}\subset \C(e_{i}+e_{n+1})+U_{j}$ and 
for $j\in \{i+1,\dots, n\}$, $g\cdot U_{j}\subset (U_{j}+\C e_{n+1})$.  But $U_{n}\cap (\C(e_{i}+e_{n+1})+ U_{j})=U_{j}$ and $U_{n}\cap (U_{j}+\C e_{n+1})=U_{j}$, so it follows that $g\cdot U_{j}=U_{j}$ for all $j=1,\dots, n$.  Thus, $g\in \mbox{Stab}_{B}(e_{i})$.  Conversely, any element of the group $\mbox{Stab}_{B}(e_{i})$ can be easily seen to stabilize the flag $\mathcal{E}_{1,i+1}$.   This establishes (\ref{eq:stabclaim}) and it follows easily that $Z\cdot \mbox{Stab}_{G}(\mathcal{E}_{1,i+1})=\mbox{Stab}_{B}[e_{i}]=S_{i},$ where $Z$ is the centre of $G.$

Hence,  the $B$-orbits in the $G$-orbit $Q_{1,i+1}$ are in one-to-one correspondence with the elements of the double coset poset  $B\backslash G/ \mbox{Stab}_{B}(e_{i})$.  This double coset poset is then in one-to-one correspondence with the poset of $S_{i}$-orbits on $G/B=\B_{n}$ via the self map of $G$ given by inversion $g\mapsto g^{-1}$, which implies the Theorem.

\end{proof}
\begin{nota}\label{n:op}
For $\cZ\in B\backslash Q_{1,i+1}$, we denote the corresponding $S_{i}$-orbit on $\B_{n}$ given by Theorem \ref{thm:firstbig} by $\cZ^{op}$.  
\end{nota}

\begin{rem}\label{r:corres}
 We can realize this correspondence geometrically as follows.  Let $q: G\to Q_{1,i+1}\cong G/(\mbox{Stab}_{B}(e_{i}))$ and $\pi: G\to \B_{n}$ be the natural projections, and let $\psi: G\to G$ be the inversion map, i.e., $\psi(g)=g^{-1}$.  Then 
\begin{equation*}
\cZ^{op}=\pi(\psi(q^{-1}(\cZ))).
\end{equation*}
More generally, if $\mathcal{Y}\subset Q_{1,i+1}$ is any $B$-stable subvariety, it 
corresponds to an $S_{i}$-stable subvariety of $\B_{n}$ in the same manner:
\begin{equation}\label{eq:Yop}
\mathcal{Y}^{op}:=\pi(\psi(q^{-1}(\mathcal{Y}))).
\end{equation}
Using this last equation and Equation (\ref{eq:dimfacts}), we note that 
\begin{equation}\label{eq:opdim}
\begin{split}
\dim \mathcal{Y}^{op}&=\dim \mathcal{Y}+\dim(\mbox{Stab}_{B}(e_{i}))-\dim B\\
&=\dim\mathcal{Y}-i.
\end{split}
\end{equation}
Further, the correspondence preserves closure relations.  That is to say, 
\begin{equation}\label{eq:closurecorres}
(\overline{\mathcal{Y}})^{op}=\overline{\mathcal{Y}^{op}}.
\end{equation}
\end{rem}

Combining the correspondence in Equation (\ref{eq:firstcorresp}) with the one in Equation (\ref{eq:Sicorres}), we obtain one-to-one correspondences between the following orbit posets for any $i=1,\dots, n$:
\begin{equation}\label{eq:biggercorresp}
\begin{split}
 B\backslash(\B_{n}\times \mathcal{O}_{i})\longleftrightarrow S_{i}\backslash\B_{n}\longleftrightarrow B\backslash Q_{1,i+1},\\
 B\cdot (gB,[e_i]) \longleftrightarrow S_i\cdot gB \longleftrightarrow B\cdot g^{-1}B_{1,i+1},
\end{split}  
\end{equation}
where $B_{1,i+1}\subset GL(n+1)$ is the Borel subgroup of $GL(n+1)$ stabilizing the flag $\mathcal{E}_{1,i+1}.$   Combining (\ref{eq:biggercorresp}) with Equation (\ref{eq:orbitspacedecomp}), we obtain a one-to-one correspondence between the set of all $B$-orbits on $\B_{n}\times\mathbb{P}^{n-1}$ and all $B$-orbits on the subvariety $\mathfrak{X}$ of $\B_{n+1}$ given by the following union of $G$-orbits on $\B_{n+1}$:
\begin{equation}\label{eq:frakX}
\mathfrak{X}:=\displaystyle\bigcup_{i=1}^{n} Q_{1,i+1}.
\end{equation}
The variety $\mathfrak{X}$ is the union of the orbits on the leading left edge of the Bruhat graph with the exception of the closed orbit $Q_{1,1}$ (see (\ref{eq:Kdiag}) for $n=3$ case).  The following theorem is a consequence of the preceding remarks.  

\begin{thm}\label{thm:globalcorresp}
There is a one-to-one correspondence between $B$-orbits on $\B_{n}\times \mathbb{P}^{n-1}$ and $B$-orbits on the open subvariety $\mathfrak{X}=\cup_{i=1}^{n} Q_{1,i+1}$.  Combining this fact with the correspondence in (\ref{eq:Gdeltacorres}), we obtain correspondences 
\begin{equation}\label{eq:secondcorresp}
G\backslash(\B_{n}\times\B_{n}\times\mathbb{P}^{n-1})\longleftrightarrow B\backslash (\B_{n}\times\mathbb{P}^{n-1} )\longleftrightarrow B\backslash \mathfrak{X}.
\end{equation}
\end{thm}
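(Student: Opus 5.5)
The plan is to assemble the bijection piece by piece from correspondences already established, and then check separately the one new claim in the statement, namely that $\mathfrak{X}$ is open in $\B_{n+1}$.

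First, by (\ref{eq:orbitspacedecomp}), every $B$-orbit on $\B_{n}\times\mathbb{P}^{n-1}$ lies in exactly one of the $B$-stable pieces $\B_{n}\times\mathcal{O}_{i}$. This holds because the projection to $\mathbb{P}^{n-1}$ is $B$-equivariant and the $\mathcal{O}_i$ are the distinct $B$-orbits on $\mathbb{P}^{n-1}$. Hence
\[
B\backslash(\B_{n}\times\mathbb{P}^{n-1})=\coprod_{i=1}^{n}B\backslash(\B_{n}\times\mathcal{O}_{i}).
\]
Similarly, $\mathfrak{X}$ is a disjoint union of the $G$-orbits $Q_{1,i+1}$, $i=1,\dots,n$. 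These orbits are pairwise distinct by Proposition \ref{prop:typeAflag}, and they are $B$-stable since $B\subset G$. So every $B$-orbit in $\mathfrak{X}$ lies in exactly one $Q_{1,i+1}$, and
\[
B\backslash\mathfrak{X}=\coprod_{i=1}^{n}B\backslash Q_{1,i+1}.
\]

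Next, for each fixed $i$, the chain (\ref{eq:biggercorresp}) gives a bijection $B\backslash(\B_{n}\times\mathcal{O}_{i})\leftrightarrow B\backslash Q_{1,i+1}$. It is obtained by composing (\ref{eq:Sicorres}) with Theorem \ref{thm:firstbig}. The correspondence (\ref{eq:Sicorres}) is the standard fact that for a homogeneous space $\mathcal{O}_i=B/S_i$, the map $S_i\cdot\fb'\mapsto B\cdot(\fb',[e_i])$ is a bijection: every orbit meets the fibre over $[e_i]$, and two points of that fibre are $B$-conjugate iff they are $S_i$-conjugate. Taking the disjoint union over $i$ yields the first assertion. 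Composing with (\ref{eq:Gdeltacorres}) gives (\ref{eq:secondcorresp}); this last step is just the statement that $G$ acts transitively on the first factor $\B_n$ with stabilizer $B$.

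The remaining point, which needs an actual check, is that $\mathfrak{X}$ is open. By Remark \ref{r:Kclosure}, $Q_{k,\ell}\subset\overline{Q_{1,i+1}}$ iff $k\ge 1$ and $\ell\le i+1$. The complement of $\mathfrak{X}$ in $\B_{n+1}$ is the union of $Q_{1,1}$ and all $Q_{k,\ell}$ with $k\ge 2$. I would show this complement is closed. Suppose $Q_{k,\ell}\subset\overline{Q_{k',\ell'}}$ with $k'\ge2$. Then $k\ge k'\ge 2$, so $Q_{k,\ell}$ lies in the complement. The closure of $Q_{1,1}$ is itself, since it is a closed orbit. Because there are finitely many orbits, the closure of a finite union of orbits is the union of their closures. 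Hence the complement is a closed union of orbits and $\mathfrak{X}$ is open.

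I do not expect a serious obstacle; the only care needed is this openness check via Remark \ref{r:Kclosure} and confirming that all the bijections are compatible in the disjoint union.
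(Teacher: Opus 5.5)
Your proposal is correct and follows the paper's route: the paper derives this theorem directly from the decomposition (\ref{eq:orbitspacedecomp}), the per-$i$ bijections (\ref{eq:biggercorresp}) obtained from (\ref{eq:Sicorres}) and Theorem \ref{thm:firstbig}, and then composition with (\ref{eq:Gdeltacorres}). The one addition is your openness check for $\mathfrak{X}$ using Remark \ref{r:Kclosure}. The paper simply asserts that $\mathfrak{X}$ is open, and your argument that its complement $Q_{1,1}\cup\bigcup_{k\ge 2}Q_{k,\ell}$ is closed is valid.
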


 It would be interesting to interpret the bijection between $G\backslash(\B_{n}\times\B_{n}\times\mathbb{P}^{n-1})$ and $  B\backslash \mathfrak{X}$ in a uniform way without using the decomposition into $S_i$-orbits.

\begin{rem}\label{rem:orbitlabels}
We have introduced bijections between orbits in four different contexts.  To make the context clear, we use the notations
\begin{equation} \label{eq:three}  
\mathcal{O}_{\Delta,\fb^{\prime}}:=G\cdot (\fb, \fb^{\prime}, [e_{i}])\longleftrightarrow \mathcal{O}_{B,\fb^{\prime}}:=B\cdot (\fb^{\prime}, [e_{i}])\longleftrightarrow Q_{\fb^{\prime}}:=S_{i}\cdot \fb^{\prime},
\end{equation}
and for a $S_i$-orbit $Q=Q_{g\cdot \fb}$ on $\B_n,$ we denote the corresponding $B$-orbit on $Q_{1,i+1}$ by $Q_{g\cdot \fb}^{op}={\cZ}_{g^{-1}\cdot \fb_{1,i+1}}:=Bg^{-1}\cdot \fb_{1,i+1}.$  More generally, we use labels $\mathcal{O}_{\Delta, \{-\}}$ for diagonal $G$-orbits, $\mathcal{O}_{\{-\}}$ for $B$-orbits on $\B_n \times \mathbb{P}^{n-1},$ $Q_{\{-\}}$ for $S_i$-orbits, and ${\cZ}_{\{-\}}$ for $B$-orbits on $\mathfrak{X}$.
\end{rem}

\begin{rem}\label{rem:oppusage}
For an $S_i$-orbit $Q_i$ on $\B_{n}$ and a $B$-orbit ${\cZ}_{i}$ on ${\mathfrak{X}}$ which correspond to each other in the above correspondence, we denote ${\cZ}_{i}=Q_{i}^{op}$ and $Q_{i}={\cZ}_{i}^{op}$ interchangeably.  The reader can use the orbit labels $Q$ and $\cZ$ to distinguish between these two usages.  If $Q_{i}$ also corresponds to a $G$-orbit $\mathcal{O}_{\Delta,i}$ or a $B$-orbit $\mathcal{O}_{i}$ on $\B_n \times \mathbb{P}^{n-1},$ then we also write ${\cZ}_{i}=\mathcal{O}_{\Delta, i}^{op}$ and ${\cZ}_{i}=\mathcal{O}_{i}^{op}$ or $\mathcal{O}_{\Delta, i} ={\cZ}_{i}^{op},$ etc.,  in different instances.    The reader should use the orbit labels $Q,$ $\mathcal{O}_{\Delta},$ $\mathcal{O},$ and $\cZ$ to determine the specific context.
\end{rem}


\section{Orbit Correspondence and Monoid Actions}\label{s:monoidactions}
\subsection{Background on Monoid Actions}\label{ss:monoidbackground}

For more details on the subsequent material, we refer the reader to \cite{RS}, \cite{Vg}, \cite{CEexp}, \cite{CE21I} and other sources.
Let $R$ be a connected reductive algebraic group, let $\B=\B_{R}$ be the flag 
variety of $R$, and let $M$ be an algebraic subgroup of $R$ acting on $\B$ with finitely many orbits.  Identify $\B \cong R/B_{R}$, for a Borel subgroup $B_{R}\subset R$ and let $\fb_{\fr}=\mbox{Lie}(B_{R})\subset\fr$.  
Let $\Pi_{\fr}$ be the set of simple roots defined by the Borel subalgebra $\fb_{\fr}$, and let $S_{R}$ be the simple reflections of the Weyl group $W$ of $R$ corresponding to $\Pi_{\fr}$.  For $\alpha\in \Pi_{\fr}$, let $\mathcal{P}_{\alpha}$ be the variety of all parabolic subalgebras of $\fr$ of type $\alpha$ and consider the $\mathbb{P}^{1}$-bundle $\pi_{\alpha}:\B \to {\mathcal{P}}_{\alpha}$.  
For $\alpha \in \Pi_{\fr}$ with corresponding reflection $s=s_{\alpha} \in W$, we define an operator $m(s)$ on the set of orbits $M\backslash \B$ following the above sources.  For $Q_M \in M\backslash \B$, let $m(s)*Q_{M}$ be the unique $M$-orbit open and dense in $\pi_{\alpha}^{-1}(\pi_{\alpha}(Q_{M})).$  Note that $\pi_{\alpha}: \pi_{\alpha}^{-1}(\pi_{\alpha}(Q_{M}))\to \pi_{\alpha}(Q_{M})$ is an $M$-equivariant 
$\mathbb{P}^{1}$-bundle. Thus, $\dim(\pi_{\alpha}^{-1}(\pi_{\alpha}(Q_{M})))=\dim(\pi_{\alpha}(Q_{M}))+1,$ and since $Q_M \subset \pi_{\alpha}^{-1}(\pi_{\alpha}(Q_{M}))$, it follows that the orbit 
\begin{equation}\label{eq:basicmonoid} 
Q_{M} \neq m(s)*Q_{M}  \text{ if and only if } \dim(m(s)*Q_{M})=\dim(Q_{M})+1.
\end{equation}


Computation of $m(s)*Q_{M}$ depends on the \emph{type of the root} $\alpha$ for the orbit $Q_{M}$, which is determined as follows.  For $\fb^{\prime}\in Q_M$, let $\fp_{\alpha}^{\prime} = \pi_{\alpha}(\fb^{\prime})$ and let $B^{\prime}$ and $P_{\alpha}^{\prime}$ be the corresponding parabolic subgroups of $R$, and let $V_{\alpha}^{\prime}$ be the the solvable radical of $P_{\alpha}^{\prime}$.  Consider the group 
$S_{\alpha}^{\prime}:= P_{\alpha}^{\prime}/V_{\alpha}^{\prime}$ isogenous to $SL(2)$ and its subgroup
\begin{equation}\label{eq:Malpha}
M_{\alpha, \fb^{\prime}}:= (M \cap P_{\alpha}^{\prime})/(M\cap V_{\alpha}^{\prime}).
\end{equation}
Using the standard identification $\pi_{\alpha}^{-1}(\pi_{\alpha}(Q)) \cong M\times_{M\cap P_{\alpha}^{\prime}} P_{\alpha}^{\prime}/B^{\prime},$ we see that $M$-orbits in
 $\pi_{\alpha}^{-1}(\pi_{\alpha}(Q))$ correspond to $M_{\alpha, \fb^{\prime}}$-orbits in 
$P^{\prime}_{\alpha}/B^{\prime}\cong {\PR}^1.$  From the classification of subgroups of $SL(2)$ with finitely many orbits on ${\PR}^1,$ we are in one of the following three cases.

\begin{dfn}\label{d:roottype}
\par\noindent (1) If $M_{\alpha, \fb^{\prime}}$ is solvable and contains the unipotent radical of a Borel subgroup of $S_{\alpha}^{\prime}$, then $\alpha$ is called a complex root for $Q_M$.  If $M_{\alpha, \fb^{\prime}}\cdot \fb^{\prime}=\fb^{\prime}$, then $\alpha$ is complex stable for $Q_M$ and otherwise $\alpha$ is complex unstable for $Q_M$.
\par\noindent (2) If $M_{\alpha, \fb^{\prime}}=S_{\alpha}^{\prime}$, then $\alpha$ is called a compact imaginary root for $Q_M.$
\par\noindent (3) Suppose $M_{\alpha, \fb^{\prime}}$ is one-dimensional and reductive.
If $M_{\alpha,\fb^{\prime}}\cdot \fb^{\prime}=\fb^{\prime}$, then $\alpha$ is called a noncompact imaginary root for $Q_M$, while if $M_{\alpha,\fb^{\prime}}\cdot \fb^{\prime}\not=\fb^{\prime},$ then $\alpha$ is called a real root for $Q_M.$
\end{dfn}

In \cite{RS}, the monoid action is only discussed when $M$ has the same component group as the fixed points of an involution of $R$, and the terminology in this definition comes from the action of the involution on root spaces, or more precisely the action of an associated real form of $R$.   However, as many authors have observed, including \cite{Knoporbits}, the construction works in the same way when $M$ has finitely many orbits on the flag variety.  
 It follows easily from arguments in \cite{RScomp} that the monoid action depends only on the nature of the groups $M_{\alpha, \fb^{\prime}}.$   This is the perspective we use in this paper. 
It is well-known and easy to prove that the type of the root depends only on the orbit $Q_M$ and not on the point $\fb^{\prime}.$ The following result is also well-known. 

\begin{prop}\label{p:stableandnc} \emph{[Lemma 2.1.4 and Lemma 2.4.3 of \cite{RScomp}]}

Let $Q\in M\backslash\B$ with $Q=M\cdot \Ad(v)\fb_{\fr}$ and $\alpha\in \Pi_{\fr}$.
\begin{enumerate}
\item If $\alpha$ is complex stable for $Q$, then $\pi_{\alpha}^{-1}(\pi_{\alpha}(Q))$ consists of two $M$-orbits:
$$
\pi_{\alpha}^{-1}(\pi_{\alpha}(Q))=Q\cup M\cdot \Ad(vs_{\alpha})\fb_{\fr},
$$
and $\ms*Q= M\cdot \Ad(vs_{\alpha})\fb_{\fr}$. 
\item If $\alpha$ is non-compact imaginary for $Q$ then $\pi_{\alpha}^{-1}(\pi_{\alpha}(Q))$ consists of either two (resp. three) $M$-orbits depending on whether $M_{\alpha, \fb^{\prime}}$ is the normalizer of a torus (resp. a torus). The open orbit
$\ms*Q=M\cdot \Ad(vu_{\alpha})\fb_{\fr}$, where $u_{\alpha}\in R$ is the Cayley transform with respect to the root $\alpha$ as defined in Equation (41) of \cite{CEexp} and $\dim Q=\dim (M\cdot \Ad(vs_{\alpha})\fb_{\fr})$.   
\item In all other cases, $m(s_{\alpha})*Q=Q.$
\end{enumerate}
\end{prop}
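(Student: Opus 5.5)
The proof reduces everything to the rank-one picture over a fixed fibre of $\pi_{\alpha}$. Fix $\fb^{\prime}=\Ad(v)\fb_{\fr}\in Q$. We identify $\pi_{\alpha}^{-1}(\pi_{\alpha}(Q))\cong M\times_{M\cap P_{\alpha}^{\prime}}P^{\prime}_{\alpha}/B^{\prime}$. Under this identification, $M$-orbits in the preimage correspond bijectively to $M_{\alpha,\fb^{\prime}}$-orbits on $P^{\prime}_{\alpha}/B^{\prime}\cong\PR^{1}$, and the correspondence preserves dimension up to a common constant. Hence $\ms*Q$ corresponds to the open $M_{\alpha,\fb^{\prime}}$-orbit on $\PR^1$, and $Q$ itself corresponds to the orbit of the point $\fb^{\prime}$. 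We then run through the cases of Definition \ref{d:roottype}.

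Case (1): $\alpha$ complex stable. $M_{\alpha,\fb^{\prime}}$ is solvable, contains a unipotent radical $N$ of a Borel of $S^{\prime}_{\alpha}$, and fixes the point $\fb^{\prime}$. The fixed point of $N$ on $\PR^1$ is unique, so $M_{\alpha,\fb^{\prime}}$ lies in that Borel. The orbits on $\PR^1$ are then exactly $\{\fb^{\prime}\}$ and its complement $\mathbb{A}^1$, which is a single $N$-orbit. So there are two $M$-orbits, and $Q$ is the closed one. The point $\Ad(vs_{\alpha})\fb_{\fr}$ lies in the same fibre $P^{\prime}_{\alpha}/B^{\prime}$, since $vP_{\alpha}=vs_{\alpha}P_{\alpha}$, and it differs from $\fb^{\prime}$. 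It therefore lies in the open orbit, which gives $\ms*Q=M\cdot\Ad(vs_{\alpha})\fb_{\fr}$.

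Case (2): $\alpha$ noncompact imaginary. $M_{\alpha,\fb^{\prime}}$ is a one-dimensional reductive group fixing $\fb^{\prime}$, so it is a torus $T$ or its normalizer $N(T)$. A torus in $SL(2)$ has two fixed points and one open orbit on $\PR^1$, giving three $M$-orbits. The normalizer swaps the two fixed points, giving two $M$-orbits. The two $T$-fixed points are $\fb^{\prime}$ and its image under a representative of $s_{\alpha}$ normalizing $T$. We may choose $v$ so that $\Ad(v)\fh$ is the torus attached to $M_{\alpha,\fb^{\prime}}$; this choice is the main obstacle and is discussed below. With that choice the second fixed point is $\Ad(vs_{\alpha})\fb_{\fr}$. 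Since both fixed points give zero-dimensional orbits on $\PR^1$, we get $\dim Q=\dim M\cdot\Ad(vs_{\alpha})\fb_{\fr}$.

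The Cayley transform $u_{\alpha}$ of \cite{CEexp}, equation (41), lies in the $SL(2)$ attached to $\alpha$. It moves $\fb_{\fr}$ to a point of the fibre that is not fixed by $\Ad(v^{-1})T$. Concretely, $u_{\alpha}$ conjugates the diagonal torus to the "rotated" torus, whose fixed points are $[1:\pm i]$-type lines. Therefore $\Ad(vu_{\alpha})\fb_{\fr}$ lies in the open orbit, and $\ms*Q=M\cdot\Ad(vu_{\alpha})\fb_{\fr}$.

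Case (3): all other cases. If $\alpha$ is complex unstable, then $\fb^{\prime}$ is not fixed, so it lies in the dense $N$-orbit of $\PR^1$. If $\alpha$ is compact imaginary, $S^{\prime}_{\alpha}$ acts transitively on $\PR^1$. If $\alpha$ is real, $\fb^{\prime}$ is not fixed by the torus, so it lies in the open orbit. In each case $Q$ is itself the open orbit in the preimage, so $\ms*Q=Q$ by (\ref{eq:basicmonoid}).

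The main obstacle is the compatibility step in case (2): ensuring that the representative $v$ can be chosen with $\Ad(v)\fh$ equal to the torus of $M_{\alpha,\fb^{\prime}}$. Only then are the fixed points of $M_{\alpha,\fb^{\prime}}$ exactly $\Ad(v)\fb_{\fr}$ and $\Ad(vs_{\alpha})\fb_{\fr}$, and only then does $u_{\alpha}$ land in the open orbit. I would handle this by first conjugating within $P^{\prime}_{\alpha}$ so that a maximal torus of $M\cap B^{\prime}$ projects onto the torus of $M_{\alpha,\fb^{\prime}}$, following \cite{RScomp}. After that, the result is a rank-one computation in $SL(2)$.
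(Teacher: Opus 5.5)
Your proposal is correct and takes the same route the paper relies on. The paper gives no proof of its own: it cites Richardson--Springer and explicitly adopts the viewpoint that everything is governed by the rank-one group $M_{\alpha,\fb^{\prime}}$ acting on $P^{\prime}_{\alpha}/B^{\prime}\cong\PR^{1}$, which is exactly your reduction.

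The obstacle you flag in part (2) is genuine, not cosmetic: for an arbitrary representative $v$ the claims about $\Ad(vs_{\alpha})\fb_{\fr}$ and $\Ad(vu_{\alpha})\fb_{\fr}$ can fail. Take $M$ the diagonal torus in $GL(2)$, $Q=\{\fb_{\fr}\}$, and $v$ a nontrivial upper unipotent matrix. You therefore must replace $v$ by $nv$ with $n\in B^{\prime}$, not a general element of $P^{\prime}_{\alpha}$, which would move $\fb^{\prime}$. Choose $n$ so that $\Ad(nv)\fh$ contains a maximal torus of $M\cap B^{\prime}$; that torus projects onto $T$. Finally, in the normalizer case, ``fixing $\fb^{\prime}$'' must be read as ``fixed by the identity component $T$'', since $N(T)$ itself fixes no point of $\PR^{1}$.
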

\begin{rem}\label{r:solvablecase} 
When $M$ is a solvable group, the group $M_{\alpha, \fb^{\prime}}$ in (\ref{eq:Malpha}) is always solvable.  In this setting, the root $\alpha$ is \emph{never} compact imaginary, and in the non-compact imaginary case,  $M_{\alpha,\fb^{\prime}}$ is a torus so that $\pi_{\alpha}^{-1}(\pi_{\alpha}(Q))$ always consists of three $M$-orbits. 
\end{rem}

\begin{rem}\label{r:Cayley}
Let $\fg=\fgl(n)$ and $\fb_{\fr}=\fb$.  Suppose that 
$\Ad(g)\fb\in\B_{n}$ stabilizes the flag $\F=(v_{1}\subset \dots\subset v_{i-1}\subset v_{i}\subset v_{i+1}\subset \dots\subset v_{n}).$  If $\alpha=\alpha_{i},$
 then a computation with flags 
show that $\Ad(gu_{\alpha})\fb$ stabilizes the flag 
$\F^{\prime}=(v_{1}\subset\dots\subset v_{i-1}\subset v_{i}+v_{i+1}\subset v_{i+1}\subset \dots \subset v_{n}).$ 
\end{rem}


Using the monoid action, we can define partial orderings on the set of $M$-orbits 
$M\backslash\B$ that can be used to study the closure relations between orbits.  
Given a sequence $\vec{s}=(s_{1},\dots, s_{k})$ of elements in $S_{R}$ and 
an $M$-orbit $Q\in M\backslash\B$, we let 
\begin{equation} \label{eq:Ssequence}
m(\vec{s})*Q:=m(s_{k})*\dots *m(s_{1})*Q \mbox{ if } k > 0, m(\vec{s})*Q=Q \mbox{ if } k = 0.
\end{equation}
 Then $\mathfrak{M}:=\{ m(\vec{s}): \vec{s} \mbox{ a sequence} \} $ is a finite monoid with $1$, which follows from the well-known fact that the operators $m(s)$ satisfy  braid relations and also the relations $m(s)^{2}=m(s)$ for $s\in S.$  It follows that this monoid generates an action by the $0$-Hecke algebra on the group algebra of $W.$

\begin{dfn}\label{d:weak}
The \emph{weak order } $\leq_w$ is defined by the property that if $Q, Q^{\prime} \in M\backslash\B$, then $Q \leq_w Q^{\prime}$ if and only if $Q^{\prime}=m(\vec{s})*Q$ for some sequence $\vec{s}$ as above.  This is the weakest partial order such that $Q$ is less than or equal to $m(s)*Q$ for each $s\in S_{R}$.
\end{dfn}

The weak order can then be used to define the standard order of Richardson-Springer constructed in \cite{RS}, which is a combinatorially defined order which coincides in many cases with the ordering on orbits given by inclusion of orbit closures.  We refer to the reader to Section 6.1 of \cite{CE21II} or \cite{RS} for more details.

\subsection{Monoid Actions on $G$-orbits on $\B_{n+1}$}\label{ss:Gmonoid}

We apply the general theory of Section \ref{ss:monoidbackground} to the case where $R=G_{n+1}$ and $M=G=GL(n)$.  The $G$-orbits on $\B_{n+1}$ are described in Proposition \ref{prop:typeAflag} above.   We let $\fb_{\fr}=\fb_{n+1}$  be the standard Borel subalgebra of $(n+1)\times (n+1)$ upper triangular matrices.  The monoid action by $\Pi_{\fg_{n+1}}$ on the set of orbits $G\backslash\B_{n+1}$ is described in detail in Examples 4.24 and 4.30 of \cite{CEexp}.  However, we only need to consider monoid actions for the orbits $Q_{1,i+1}$ with $i\in\{1,\dots, n\}$ for this paper. Let $s_k=s_{\alpha_k}$ for a simple root $\alpha_k.$
\begin{prop}\label{p:Kmonoidaction}\emph{(cf. Part (1) of Proposition 4.8 of \cite{CE21I})}
Consider the $G$-orbit $Q_{1,i+1}$ through the flag $\mathcal{E}_{1,i+1}$ in (\ref{eq:Fioneflag}) for $i\in \{1,\dots, n\}$.  
\begin{enumerate}
\item The root $\alpha_{1}$ is real for $Q_{1,2}$ and complex unstable for $Q_{1,i+1}$ for all $i>1$.
\item The root $\alpha_{i}$ is complex unstable for $Q_{1,i+1}$ for $i>1$.
\item The root $\alpha_{i+1}$ is complex stable for $Q_{1,i+1}$ for $1\leq i<n$ and $m(s_{i+1})*Q_{1,i+1}=Q_{1,i+2}.$
\item If $j\neq 1,\,i, \, \text{or } i+1,$ then $\alpha_{j}$ is compact imaginary for $Q_{1,i+1}$.  
\end{enumerate}
\end{prop}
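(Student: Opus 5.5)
We determine the type of each simple root $\alpha_j$ for $Q_{1,i+1}$ by computing the group $M_{\alpha_j,\fb_{1,i+1}}$ of (\ref{eq:Malpha}) directly at the base point $\fb_{1,i+1}$, i.e.\ at the flag $\mathcal{E}_{1,i+1}$ of (\ref{eq:Fioneflag}); the type depends only on the orbit. Write $\mathcal{E}_{1,i+1}=(v_1\subset\dots\subset v_{n+1})$ with $v_1=\hat e_i$, $v_k=e_{k-1}$ for $2\le k\le i$, $v_{i+1}=e_{n+1}$, and $v_k=e_{k-1}$ for $k\ge i+2$. Let $V_k=\mathrm{span}\{v_1,\dots,v_k\}$.

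The fiber $\pi_{\alpha_j}^{-1}\pi_{\alpha_j}(\mathcal{E}_{1,i+1})$ is the $\PR^1$ of lines $L$ with $V_{j-1}\subset L\subset V_{j+1}$, that is, of lines in the $2$-dimensional space $V_{j+1}/V_{j-1}$. The group $M_{\alpha_j}$ is the image of the $G$-stabilizer of the partial flag obtained by omitting $V_j$, acting on this quotient. By the proof of Theorem~\ref{thm:firstbig}, the stabilizer of the full flag is $\mathrm{Stab}_B(e_i)$, and every element of $G$ fixes $e_{n+1}$ and preserves $U_n$. These two facts drive all the computations.

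\emph{Part (4).} Take $j\notin\{1,i,i+1\}$. Then $V_{j+1}/V_{j-1}$ is spanned by the images of $e_{j-1}$ and $e_j$, both in $U_n$ and neither involving $e_i$ or $e_{n+1}$. Already the Levi factor $GL(2)$ on $\mathrm{span}\{e_{j-1},e_j\}$ inside $G$ fixes $e_i$ and $e_{n+1}$, so it preserves every other $V_k$. It therefore maps onto the full $GL(2)$ of the quotient, and $\alpha_j$ is compact imaginary.

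\emph{Part (3).} Take $j=i+1$ with $i<n$. The quotient $V_{i+2}/V_i$ is spanned by $e_{n+1}$ and $e_{i+1}$. Since $G$ fixes $e_{n+1}$ and preserves $U_n$, its stabilizer fixes the line of $e_{n+1}$. It also moves $e_{i+1}$ by upper triangular elements (the coefficient of $e_{n+1}$ stays zero, but adding multiples of $e_i$ is killed modulo $V_i$). So the image is contained in the diagonal torus of the quotient, and it realizes scaling of $e_{i+1}$. This rules out compact and real. Since $\mathrm{codim}\,Q_{1,i+2}<\mathrm{codim}\,Q_{1,i+1}$ by Proposition~\ref{prop:typeAflag}, the root cannot be fixing either.

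The main point is to identify the image precisely enough to distinguish complex stable from noncompact imaginary. I expect the image is one-dimensional and fixes the point $V_{i+1}$. In that case I would instead exhibit directly that $s_{i+1}\cdot\mathcal{E}_{1,i+1}$, obtained by swapping $e_{n+1}$ and $e_{i+1}$, equals $\mathcal{E}_{1,i+2}$. This conjugation matches the shape of (\ref{eq:flagij}). By Proposition~\ref{p:stableandnc}(1), this gives $m(s_{i+1})*Q_{1,i+1}=Q_{1,i+2}$ provided the root is complex stable. To confirm complex stability I would check that the image contains the unipotent element $e_{n+1}\mapsto e_{n+1}$, $e_{i+1}\mapsto e_{i+1}+te_{n+1}$ modulo $V_i$. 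This element is not in $G$ itself, but it can be produced modulo $V_i$ using $\hat e_i$, because $e_{n+1}\equiv -e_i$ modulo $V_1$. So the unipotent element that sends $e_{i+1}\mapsto e_{i+1}+te_i$ acts as the needed shear. Hence the image is a Borel subgroup fixing $V_{i+1}$.

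\emph{Parts (1) and (2).} For $j=1$ the quotient is $V_2$. If $i>1$, $V_2=\mathrm{span}\{\hat e_i,e_1\}$; if $i=1$, $V_2=\mathrm{span}\{\hat e_1,e_{n+1}\}$. For $j=i>1$ the quotient is spanned by $e_{i-1}$ and $e_{n+1}$ modulo $V_{i-1}$. In each case I compute the image using the elements of $G$ that preserve the remaining subspaces. For $i=1$, scaling $e_1$ moves $\hat e_1$ within $V_2$ while fixing $e_{n+1}$, so the image is a torus that does not fix $V_1$; hence $\alpha_1$ is real. In the remaining cases the image should be solvable, contain a unipotent, and move the base point, making the root complex unstable. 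Counting dimensions against the orbits $Q_{2,i+1}$ and $Q_{1,i}$, which lie in the closure of $Q_{1,i+1}$, corroborates this. This bookkeeping is the most delicate step.
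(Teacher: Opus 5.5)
\textbf{Verdict: genuine gap.} Your strategy is sound and self-contained: compute, at the base flag $\mathcal{E}_{1,i+1}$, the image of $G\cap P'_{\alpha_j}$ acting on the two-dimensional space $V_{j+1}/V_{j-1}$. The paper gives no argument of its own; it defers to Proposition 4.8 of \cite{CE21I} and Examples 4.24 and 4.30 of \cite{CEexp}, i.e.\ to the known description of $GL(n)\times GL(1)$-orbits on $\mathcal{B}_{n+1}$. Your Part (4) and the case $i=1$ of Part (1) are correct.

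\emph{Parts (1) for $i>1$ and (2).} The complex-unstable claims are not established. You only say the image ``should be'' solvable, contain a unipotent and move the base point. The proposed dimension count cannot decide this: a codimension-one orbit ($Q_{2,i+1}$, resp.\ $Q_{1,i}$) in $\pi_{\alpha}^{-1}(\pi_{\alpha}(Q_{1,i+1}))$ is equally consistent with $\alpha$ being real. The missing computation is short.

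For $j=1$, $i>1$: every retained $V_k$ ($k\ge 2$) meets $U_n$ in $U_{k-1}$, so $G\cap P'_{\alpha_1}\subset B$. Preserving $V_2=\mathrm{span}\{\hat{e}_i,e_1\}$ then forces $ge_i\in e_i+\mathbb{C}e_1$. On the basis $(\hat{e}_i,e_1)$ the image is $\hat{e}_i\mapsto\hat{e}_i+be_1$, $e_1\mapsto ae_1$, with $a\in\mathbb{C}^{\times}$ and $b\in\mathbb{C}$ arbitrary. This is a Borel subgroup fixing $[e_1]\neq[\hat{e}_i]=V_1$, so $\alpha_1$ is complex unstable.

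For $j=i>1$: retaining $V_1$ forces $ge_i=e_i$, and the other retained $V_k$ force $gU_k=U_k$ for all $k\neq i-1$. Hence $G\cap P'_{\alpha_i}=\mathrm{Stab}_{P_{\alpha_{i-1}}}(e_i)$, and $ge_{i-1}\equiv ae_{i-1}+ce_i$ modulo $U_{i-2}$ with $c$ arbitrary. Since $\hat{e}_i\in V_{i-1}$, we have $e_i\equiv -e_{n+1}$ modulo $V_{i-1}$. So on the basis $(e_{i-1},e_{n+1})$ the image is $e_{i-1}\mapsto ae_{i-1}-ce_{n+1}$, $e_{n+1}\mapsto e_{n+1}$. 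This is a Borel subgroup fixing $[e_{n+1}]$ but not the base point $[e_{i-1}]=V_i/V_{i-1}$, so $\alpha_i$ is complex unstable.

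\emph{Part (3).} The conclusion is right, but the first step is wrong. The image is \emph{not} contained in the diagonal torus and is not one-dimensional. Multiples of $e_i$ are not killed modulo $V_i$; rather $e_i\equiv -e_{n+1}$, as you note a few lines later. Had the image really been a one-dimensional torus fixing $V_{i+1}$, the root would be noncompact imaginary, not complex stable.

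The clean argument: $V_1,\dots,V_i,V_{i+2},\dots$ are all retained, so $G\cap P'_{\alpha_{i+1}}=\mathrm{Stab}_B(e_i)$. Its image on the basis $(e_{n+1},e_{i+1})$ is $e_{n+1}\mapsto e_{n+1}$, $e_{i+1}\mapsto b_{i+1,i+1}e_{i+1}-b_{i,i+1}e_{n+1}$, which is the full Borel subgroup fixing the base point.

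Two smaller corrections:
\begin{enumerate}
\item The flag obtained by interchanging the $(i+1)$-st and $(i+2)$-nd vectors of $\mathcal{E}_{1,i+1}$ is $(\hat{e}_i\subset e_1\subset\dots\subset e_{i-1}\subset e_{i+1}\subset e_{n+1}\subset e_{i+2}\subset\dots\subset e_n)$. It is not equal to $\mathcal{E}_{1,i+2}$; it is carried to $\mathcal{E}_{1,i+2}$ by the permutation matrix of $(i,i+1)\in G$. That is all Proposition \ref{p:stableandnc}(1) needs to give $m(s_{i+1})*Q_{1,i+1}=Q_{1,i+2}$.
\item Your earlier appeal to codimensions to rule out $m(s_{i+1})*Q_{1,i+1}=Q_{1,i+1}$ is circular at that point. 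It can be dropped once complex stability is shown.
\end{enumerate}
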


\subsection{Monoid Actions on $S_{i}$-orbits}\label{ss:Simonoid}

The general theory of Section \ref{ss:monoidbackground} can also be applied to the setting of $G$-orbits on 
$\B_{n}\times\B_{n}\times\mathbb{P}^{n-1}$ and $S_{i}$-orbits on $\B_{n}$.  The $G$-orbits on $\B_{n}\times\B_{n}\times\mathbb{P}^{n-1}$ come equipped with a monoid action by $\Pi_{\fg}\times\Pi_{\fg}$, one copy of $\Pi_{\fg}$ for each factor of $\B_{n}$.  For $\alpha \in \Pi_{\fg},$ we denote the monoid action by the first and second factors by $\ms*_{1}$ and $\ms*_{2}$ respectively. 
We first consider the monoid action  $\ms*_{1}$ on a $G$-orbit $\mathcal{O}_{\Delta,\fb^{\prime}}$ on $\B_{n}\times\B_{n}\times\mathbb{P}^{n-1}$ in the notation of Remark \ref{rem:orbitlabels}.  Consider also the corresponding $B$-orbit $\mathcal{O}_{B,\fb^{\prime}}$ and $S_i$-orbit $Q_{\fb^{\prime}}=S_{i}\cdot \fb^{\prime}.$

By definition,
\begin{equation}\label{eq:firstfactorfirst}
\ms*_{1} \mathcal{O}_{\Delta,\fb^{\prime}}=\mbox{ open } G-\mbox{orbit in } G\cdot (P_{\alpha}\cdot\fb, \fb^{\prime}, [e_{i}])\longleftrightarrow \mbox{ open } B-\mbox{orbit in } P_{\alpha}\cdot (\fb^{\prime}, [e_{i}]).
\end{equation}
In general, the monoid action in (\ref{eq:firstfactorfirst}) does not preserve $S_{i}$-orbits.  In \cite{Siorbits}, we study the restriction of this action to a special subset of roots
\begin{equation}\label{eq:extraroots}
\mathfrak{S}_i:=\{\alpha_{1},\dots, \alpha_{i-2},\alpha_{i+1},\dots, \alpha_{n-1}\},
\end{equation}
and show in Equation 5.7 of \emph{loc. cit.} that for $\alpha\in \mathfrak{S}_i$ and $Q_{\fb^{\prime}}$ an $S_{i}$-orbit, we have 
\begin{equation}\label{eq:leftmonoidSi}
\ms*_{1} \mathcal{O}_{\Delta,\fb^{\prime}}\longleftrightarrow \ms*_{L}Q_{\fb^{\prime}}:=\mbox{ open } S_{i}-\mbox{orbit in } \mbox{Stab}_{P_{\alpha}}[e_{i}] \cdot\fb^{\prime}.
\end{equation}

We now consider the monoid action coming from the second factor. Let $\alpha\in\Pi_{\fg}$, and let $P_{\alpha}^{\prime}$ denote the parabolic of type $\alpha$ containing the Borel subgroup $B^{\prime}$ with Lie algebra $\fb^{\prime}.$ It follows from definitions that $\ms*_{2} \mathcal{O}_{\Delta,\fb^{\prime}}$ is the open $G$- orbit in  $G\cdot (\fb, P_{\alpha}^{\prime}\cdot \fb^{\prime}, [e_{i}])$ which corresponds to the open $B$-orbit in
$B\cdot (P_{\alpha}^{\prime}\cdot\fb^{\prime}, [e_{i}]),$ which corresponds to the open $S_i$-orbit in $S_{i}P_{\alpha}^{\prime}\cdot\fb^{\prime}.$
But the last orbit is exactly $\ms*Q_{\fb^{\prime}}$ from our discussion in Section \ref{ss:monoidbackground}.  
Summarizing, 
\begin{equation}\label{eq:secondfactormonoid}
\ms*_{2} \mathcal{O}_{\Delta,\fb^{\prime}}\longleftrightarrow \ms* Q_{\fb^{\prime}}=  \mbox{ the open } S_{i}-\mbox{orbit in } S_{i}P_{\alpha}^{\prime}\cdot\fb^{\prime}.
\end{equation}


\begin{dfn}\label{d:leftandright}\emph{(cf. Definition 5.3 of \cite{Siorbits})}
We refer to the monoid action by simple roots $\Pi_{\fg}$ on $S_{i}\backslash\B_{n}$ given in (\ref{eq:secondfactormonoid}) as the \emph{right monoid} action on $S_{i}$-orbits.  We refer to the monoid action by $\mathfrak{S}_i$ given in Equation (\ref{eq:leftmonoidSi}) as the \emph{left monoid} action on $S_{i}$-orbits. Taken together, we refer to the monoid action by $\mathfrak{S}_i\coprod \Pi_{\fg}$ as the \emph{extended monoid action}.  To emphasize the distinction between the left and right actions, we denote the monoid action in (\ref{eq:secondfactormonoid}) by $\ms*_R := \ms*.$
\end{dfn}

\begin{rem}\label{r:extended}
In the special case where $i=n$, the group $S_{i}$ coincides up to centre with $B_{n-1}\subset G_{n-1}$, the standard upper triangular Borel subgroup of $G_{n-1}$, and the set of roots 
$\mathfrak{S}_i=\{\alpha_{1},\dots,\alpha_{n-2}\}=\Pi_{\fg_{n-1}}$ are the simple roots of  the subalgebra $\fg_{n-1}\subset\fg$.  For $\alpha \in \mathfrak{S}_i,$ the corresponding standard parabolic subgroup $P^{n-1}_{\alpha}$ of $G_{n-1}$  is (up to centre) the subgroup  $\mbox{Stab}_{P_{\alpha}}[e_{n}].$  Thus, in this case the left monoid action of (\ref{eq:leftmonoidSi}) coincides with the left monoid action via roots of $\fg_{n-1}$ on $\Borbitspace$ defined in Section 4 of \cite{CE21I}.  Taken together, Equations (\ref{eq:secondfactormonoid}) and (\ref{eq:leftmonoidSi}) coincide with the extended monoid action by simple roots $\Pi_{\fg_{n-1}}\coprod\Pi_{\fg}$ on $\Borbitspace$ constructed in \emph{loc. cit.}.
\end{rem}

The following Remark generalizes Remark 4.4 of \cite{Shpairs} to the setting of $S_{i}$-orbits on $\B_{n}$ for arbitrary $i$. 
\begin{rem}\label{r:onlythree}
For the extended monoid action on $S_{i}\backslash\B_{n}$, a root $\alpha\in\mathfrak{S}_{i}\coprod \Pi_{\fg}$ is never compact imaginary in the sense of Definition \ref{d:roottype}, and in the non-compact case, the group $M_{\alpha,\fb^{\prime}}$ of (\ref{eq:Malpha}) is always a torus (see part (2) of Proposition \ref{p:stableandnc}).  For $\alpha\in \Pi_{\fg}$, the assertion follows immediately from Remark \ref{r:solvablecase}, since $S_{i}\subset G$ is a solvable group.  For $\alpha\in\mathfrak{S}_{i}$, the group $M_{\alpha, \fb^{\prime}}$ is a subquotient of $P_{\alpha}\cap B^{\prime}$ and therefore is also solvable and the assertion follows. 
\end{rem}

The following observation will be useful later in the paper.  Let $\alpha\in\Pi_{\fg_{n+1}}$ and $\cZ \in B\backslash\B_{n+1}$ with $\cZ\subset Q_{G}$, where $Q_{G}$ is a $G$-orbit on $\B_{n+1}$. Then by Proposition 4.7 of \cite{CE21I} along with Remark \ref{r:extended} for $\fg=\fgl(n+1)$ and $i=n+1$, we have 
\begin{equation}\label{p:old4.7}
 \ms*_{R}\cZ\subset \ms*Q_{G}.
\end{equation}  


\subsection{The Monoid Action and the Correspondence}\label{ss:monoidcorresp}

In this section, we relate the monoid action on $S_{i}\backslash\B_{n}$ to a partially defined monoid action on $B\backslash Q_{1,i+1}$ in the correspondence given by Theorem \ref{thm:firstbig}.
To explain this, we first choose $m\in G_{n+1}$ so that $m\cdot\mathcal{E}_{n+1}=\mathcal{E}_{1,i+1}$ where $\mathcal{E}_{1,i+1}$ is the flag in Equation (\ref{eq:Fioneflag}), so that $Q_{1,i+1}=G m B_{n+1}/B_{n+1}$  For $g\in G,$ consider the $B$-orbit $\cZ=Bgm B_{n+1}/B_{n+1}$  in $Q_{1,i+1}$.  By Equation (\ref{eq:leftmonoidSi}) and Remark \ref{r:extended}, there is a left monoid action for each $\alpha \in \Pi_{\fg}$ on $\BQ$ given by 
 \begin{equation}\label{eq:BQleftmonoid}
 \ms*_{L} \cZ\mbox{ is the open } B-\mbox{orbit in } P_{\alpha}gm B_{n+1}/B_{n+1}.
 \end{equation}
 By Equation (\ref{p:old4.7}) and part (3) of Proposition \ref{p:stableandnc}, we also have a right monoid action by the subset of standard simple roots of $\fg_{n+1}$ consisting of roots which are compact imaginary for the $G$-orbit $Q_{1,i+1}$.  We denote the set of compact imaginary roots of the $G$-orbit $Q_{1,i+1}$ by $\Pi_{n+1, cpt}\subset\Pi_{\fg_{n+1}}$, the index $i$ being clear from context.  By Proposition \ref{p:Kmonoidaction}, we have 
 \begin{equation}\label{eq:compactroots}
  \Pi_{n+1, cpt}= \Pi_{\fg_{n+1}} - \{\alpha_{1},\,\alpha_{i},\, \alpha_{i+1} \}. 
 \end{equation}
For $\alpha\in \Pi_{n+1, cpt}$ it follows from Equation (\ref{eq:secondfactormonoid}) and Remark \ref{r:extended} that
 \begin{equation}\label{eq:BQrightmonoid}
 \ms*_{R} \cZ\mbox{ is the open } B-\mbox{orbit in } Bgm P_{\alpha}^{n+1}/B_{n+1},
 \end{equation}
 where $P_{\alpha}^{n+1}\subset G_{n+1}$ is the standard parabolic subgroup of $G_{n+1}$ corresponding to the root $\alpha$. 
 The following result generalizes Theorem 4.11 of \cite{CE21I} from the setting of $B_{n-1}\backslash\B_{n}$ to $S_{i}\backslash\B_{n}$ for arbitrary $i$.
 \begin{thm}\label{thm:monoidcorresp}
 Let $\cZ=Bgm B_{n+1}/ B_{n+1}$ be a $B$-orbit in $Q_{1,i+1}$, and let ${\cZ}^{op}=S_{i} g^{-1} B/B$ be the corresponding $S_{i}$-orbit on $\B_{n}$ as in Theorem \ref{thm:firstbig}.  
If $\alpha \in \Pi_{\fg}$, then 
 \begin{equation}\label{eq:leftinter}
(\ms*_{L}\cZ)^{op}=\ms*_{R}{\cZ}^{op},
 \end{equation}
 where on the left side the monoid action is the left monoid action on $B\backslash Q_{1,i+1}$ given in (\ref{eq:BQleftmonoid}), and on the right side the monoid action is the right monoid action on $S_{i}$-orbits given in (\ref{eq:secondfactormonoid}).  
 For $\alpha\in\Pi_{n+1,cpt}$ with $\alpha=\alpha_{j}$ for $j\neq 1,\, i,\, i+1$ (see (\ref{eq:compactroots})), we have
 \begin{equation}\label{eq:rightinter}
(m(s_{j})*_{R}\cZ)^{op}=m(s_{j-1})*_{L} {\cZ}^{op},
 \end{equation}
 where on the left-hand side, the monoid action is the right monoid action by $\Pi_{n+1,cpt}$ on $B\backslash Q_{1,i+1}$ given in (\ref{eq:BQrightmonoid}), and on the right-hand side, the monoid action on ${\cZ}^{op}$ is the left monoid action by roots of the set $\mathfrak{S}_i$ in (\ref{eq:extraroots}) and defined by Equation (\ref{eq:leftmonoidSi}).
 
 Moreover, the correspondences in (\ref{eq:leftinter}) and (\ref{eq:rightinter}) preserve root types.  That is to say $\alpha$ is complex stable, complex unstable, non-compact, etc. for $\cZ$ if and only if $\alpha$ is complex stable, complex unstable, non-compact, etc. for ${\cZ}^{op}$ in (\ref{eq:leftinter}) and similarly for $\alpha_{j}$ and $\alpha_{j-1}$ in (\ref{eq:rightinter}). 
 \end{thm}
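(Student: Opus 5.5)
The plan is to work throughout at the level of double cosets, using the identification from the proof of Theorem \ref{thm:firstbig}. Write $K:=\mbox{Stab}_{G}(\mathcal{E}_{1,i+1})=\mbox{Stab}_{B}(e_i)$. The $B$-orbits in $Q_{1,i+1}$ are the double cosets $BgK$, and $S_i$-orbits on $\B_n$ are double cosets $S_i hB$ (note $ZK=S_i$). Inversion $\psi(g)=g^{-1}$ interchanges left and right multiplication, and the monoid operators on both sides are defined as "the open orbit in a union of orbits obtained by multiplying by a parabolic on one side." So the strategy is to show that, under $\psi$, the set appearing in each definition on one side maps to the set appearing in the corresponding definition on the other side. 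Since $\psi$ and the projections $q,\pi$ of Remark \ref{r:corres} are open maps, and the correspondence preserves closures by (\ref{eq:closurecorres}), it will then carry the open dense orbit to the open dense orbit.

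For (\ref{eq:leftinter}), $\ms*_{L}\cZ$ is the open $B$-orbit in $P_\alpha gmB_{n+1}/B_{n+1}$. Its preimage in $G$ under $q$ is $P_\alpha gK$. Applying $\psi$ gives $Kg^{-1}P_\alpha$, and projecting to $\B_n$ gives $S_i g^{-1}P_\alpha B/B$. This is exactly $S_iP'_\alpha\cdot\fb'$ for $\fb'=\Ad(g^{-1})\fb$, where $P'_\alpha=g^{-1}P_\alpha g$. By (\ref{eq:secondfactormonoid}), the open $S_i$-orbit in this set is $\ms*_R\cZ^{op}$. This part should be routine.

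For (\ref{eq:rightinter}), the key step is a group-theoretic identity. I would show that for $j\notin\{1,i,i+1\}$,
\[
G\cap mP^{n+1}_{\alpha_j}m^{-1}=\mbox{Stab}_{P_{\alpha_{j-1}}}(e_i),
\]
at least up to centre, with the analogous identity for unipotent radicals. To do this, take $mP^{n+1}_{\alpha_j}m^{-1}$ to be the stabilizer of the partial flag obtained from $\mathcal{E}_{1,i+1}$ by deleting its $j$-th subspace. Then repeat the argument of (\ref{eq:stabclaim}):
\begin{itemize}
\item an element of $G$ must fix $\hat e_i$, hence fix $e_i$;
\item it must preserve $U_k$ for every $k$ except the single index that was dropped.
\end{itemize}
Because the hat vector occupies position $1$ and $e_{n+1}$ occupies position $i+1$, deleting position $j$ of $\mathcal{E}_{1,i+1}$ drops $U_{j-1}$. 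This explains the shift $j\mapsto j-1$, and also why $j-1\in\mathfrak{S}_i$ exactly when $j\neq1,i,i+1$.

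Granting the identity, the $B$-orbits in $BgmP^{n+1}_{\alpha_j}/B_{n+1}$ correspond to the set $Bg\,(G\cap mP_{\alpha_j}^{n+1}m^{-1})=Bg\,\mbox{Stab}_{P_{\alpha_{j-1}}}(e_i)$. Here I use that $\alpha_j$ is compact imaginary, so $G$ already acts transitively on the fibre, and $gmP^{n+1}_{\alpha_j}/B_{n+1}\subset Q_{1,i+1}$. Inverting gives $\mbox{Stab}_{P_{\alpha_{j-1}}}[e_i]\,g^{-1}B$, and the open $S_i$-orbit in it is $m(s_{j-1})*_L\cZ^{op}$ by (\ref{eq:leftmonoidSi}).

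For the preservation of root types, I would compare the groups $M_{\alpha,\fb'}$ of (\ref{eq:Malpha}) on the two sides. In both cases the fibre $\pi_\alpha^{-1}\pi_\alpha(\cdot)$ decomposes into orbits in a manner transported bijectively by $\psi$, and this bijection preserves dimension shifts by (\ref{eq:opdim}). By Remark \ref{r:onlythree} and the solvability of $B$, only the complex stable, complex unstable, noncompact (torus) and real types occur. These are distinguished by two pieces of data: the number of orbits in the fibre (two for complex, three for noncompact or real, one for complex unstable), and whether the given orbit is the open one or one of the smaller ones. Both pieces of data are preserved by the correspondence, so root types are preserved.

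The main obstacle is the stabilizer identity $G\cap mP^{n+1}_{\alpha_j}m^{-1}=\mbox{Stab}_{P_{\alpha_{j-1}}}(e_i)$, especially the bookkeeping near the positions $1$ and $i+1$. I would verify it by an explicit flag computation in the style of (\ref{eq:stabclaim}), treating separately the cases $2\le j\le i-1$ and $i+2\le j\le n$.
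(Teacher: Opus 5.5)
Your proposal is correct and follows the paper's proof essentially step by step: inversion of double cosets $P_\alpha gK \mapsto K g^{-1}P_\alpha$ gives (\ref{eq:leftinter}); the stabilizer identity $G\cap \Ad(m)P^{n+1}_{\alpha_j}=\mbox{Stab}_{P_{\alpha_{j-1}}}(e_i)$, combined with compact imaginarity of $\alpha_j$, gives (\ref{eq:rightinter}); and root types are read off from how the relevant fibre decomposes into orbits, which the correspondence transports.

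One small slip: for a complex unstable root the fibre consists of two orbits, with the given orbit the open one, not one orbit. The pair (number of orbits, whether the given orbit is open) still separates the four possible types, so your argument stands.
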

 
 \begin{proof}
First, suppose $\alpha\in\Pi_{\fg}$.  Then it follows from 
(\ref{eq:BQleftmonoid}) that 
$\ms*_{L}\cZ$ is the open $B$-orbit in $\mathcal{Y}_{\cZ}:=P_{\alpha} gm B_{n+1}/ B_{n+1},$ where $m\in G_{n+1}$ satisfies $m\cdot \mathcal{E}_{n+1}=\mathcal{E}_{1,i+1}$, and $\mathcal{E}_{1,i+1}$ is the flag in (\ref{eq:Fioneflag}).    By Equation (\ref{eq:firstcorresp}), this orbit corresponds to the open $S_{i}$-orbit in the variety $\mathcal{Y}_{\cZ}^{op}=S_{i} g^{-1} P_{\alpha}/B$.  But this is exactly $\ms* {\cZ}^{op}$ by Equation (\ref{eq:secondfactormonoid}) and Equation (\ref{eq:leftinter}) follows.  

We now prove Equation (\ref{eq:rightinter}).  As above, we let $\cZ=Bgm B_{n+1}/B_{n+1}$ and let $\alpha\in\Pi_{n+1,cpt}$ so that $\alpha=\alpha_{j}$ with $j\not=1,\, i,\, i+1.$  By Equation (\ref{eq:BQrightmonoid}), $m(s_{j})*\cZ$ is the open $B$-orbit in the variety $\mathcal{X}_{\cZ}:=B gm P^{n+1}_{\alpha_{j}} /B_{n+1}$. But since $\alpha_{j}$ is compact imaginary for $Q_{1,i+1}$, we know that $mP_{\alpha_{j}}^{n+1}/B_{n+1}$ is a single $(G\cap \Ad(m)P^{n+1}_{\alpha_{j}})$-orbit, so that 
\begin{equation}\label{eq:firstiso}
mP^{n+1}_{\alpha_{j}}/B_{n+1}\cong (G\cap \Ad(m)P_{\alpha_{j}}^{n+1})/(G\cap \Ad(m)B_{n+1}).
\end{equation}
Thus, the variety $\mathcal{X}_{\cZ}$ becomes 
 \begin{equation}\label{eq:XZ}
 \mathcal{X}_{\cZ}=BgmP^{n+1}_{\alpha_{j}}\cdot \mathcal{E}_{n+1}=B g(G\cap\Ad(m) P_{\alpha_{j}}^{n+1})\cdot \mathcal{E}_{1,i+1}.
 \end{equation}
 We claim that 
 \begin{equation}\label{eq:groupclaim}
 G\cap \Ad(m)P_{\alpha_{j}}^{n+1}=\mbox{Stab}_{P_{\alpha_{j-1}}}(e_{i}),
 \end{equation}
 where $P_{\alpha_{j-1}}\subset G$ is the standard parabolic subgroup of $G$ corresponding to the root $\alpha_{j-1}$.   Suppose first that $2\leq j\leq i-1$.  Then by using Equation (\ref{eq:Fioneflag}), we see that the parabolic subgroup $\Ad(m)P_{\alpha_{j}}^{n+1}$ of $G_{n+1}$ stabilizes the partial flag in $\C^{n+1}$:
 $$
 \hat{e}_{i}\subset e_{1}\subset e_{2}\subset\dots\subset \{e_{j-1}, e_{j}\}\subset \dots \subset \underbrace{e_{n+1}}_{i+1}\subset \dots \subset e_{n}. 
 $$
 A linear algebra computation similar to the one in the proof of Theorem \ref{thm:firstbig} shows that $\Ad(m)P_{\alpha_{j}}^{n+1}\cap G$ fixes the vector $e_{i}$ and stabilizes the partial flag in $\C^{n}$;
 $$
 e_{1}\subset e_{2}\subset\dots\subset \{e_{j-1},e_{j}\}\subset \dots\subset e_{n}.  
 $$
Thus, $\Ad(m)P_{\alpha_{j}}^{n+1}\cap G\subset \mbox{Stab}_{P_{\alpha_{j-1}}}(e_{i}).$  The reverse inclusion is clear, and the proof in the case when $j\in \{i+2,\dots ,n\}$ is similar.  This establishes the claim in (\ref{eq:groupclaim}).  Since $\mbox{Stab}_{P_{\alpha_{j-1}}}[e_{i}]=Z\cdot \mbox{Stab}_{P_{\alpha_{j-1}}}(e_{i})$ where $Z\subset G$ is the centre, Equation (\ref{eq:XZ}) and Remark \ref{r:corres} yield
$$
\mathcal{X}_{\cZ}^{op}=(G\cap \Ad(m)P_{\alpha_{j}}^{n+1}) g^{-1}\cdot\mathcal{E}_{n}=(\mbox{Stab}_{P_{\alpha_{j-1}}}[e_{i}])g^{-1}\cdot \mathcal{E}_{n}.
$$  
Equation (\ref{eq:rightinter}) now follows from Equation (\ref{eq:leftmonoidSi}) and the observation that for $\alpha_{j}\in \Pi_{n+1, cpt}$, $\alpha_{j-1}\in\mathfrak{S}_{i}$ (see Equations (\ref{eq:extraroots}) and (\ref{eq:compactroots})).

It follows from Proposition \ref{p:stableandnc} and Remarks \ref{r:extended} and \ref{r:onlythree} that $\alpha\in\Pi_{\fg}$ is complex stable for $\cZ$ if and only if the variety $\mathcal{Y}_{\cZ}$ consists of two $B$-orbits with $\cZ$ codimension 1 in the closure of the other orbit.  By Remark \ref{r:corres}, the latter is equivalent to the statement that the variety $\mathcal{Y}_{\cZ}^{op}$ consists of two $S_{i}$-orbits with $\cZ^{op}$ being codimension 1 in the closure of the other orbit.  Again, by Proposition \ref{p:stableandnc} and Remark \ref{r:onlythree}, this is equivalent to the statement that $\alpha$ is complex stable for $\cZ^{op}$.  The other cases are handled similarly.


\end{proof}

\begin{rem}
When $i=n$, 
 the result specializes to Theorem 4.11 of \cite{CE21I}.
\end{rem}

\subsection{Monoid Actions Carrying $S_{i}$-orbits to $S_{i+1}$-orbits}

In the previous sections, we discuss monoid actions by subsets of roots $\Pi_{\fg}\coprod\Pi_{\fg}$ on $G\backslash(\B_{n}\times\B_{n}\times\mathbb{P}^{n-1})$ which move from one $S_{i}$-orbit to another.
  We now discuss a monoid action on $G\backslash(\B_{n}\times\B_{n}\times\mathbb{P}^{n-1})$ that allows us to move from an $S_{i}$-orbits to an $S_{i+1}$-orbit.
By Proposition \ref{p:Kmonoidaction}, the simple root $\alpha_{i+1}$ is complex stable
for the $G$-orbit $Q_{1,i+1}$ and $m(s_{i+1})*Q_{1,i+1}=Q_{1,i+2}$.  
Equation (\ref{p:old4.7}) then implies that 
\begin{equation}\label{eq:monoidmove}
m(s_{i+1})*_{R}(B\backslash Q_{1,i+1})\subset B\backslash Q_{1,i+2}.  
\end{equation}
We will see below that this monoid action corresponds to a monoid action on $G\backslash (\B_{n}\times\B_{n}\times \mathbb{P}^{n-1})$ by a simple root corresponding to the first factor of $\B_{n}$ as in (\ref{eq:firstfactorfirst}).  

 For $\fb^{\prime} \in \B_n,$ consider the $G$-orbit $\mathcal{O}_{\Delta,\fb^{\prime}}=G\cdot (\fb,\fb^{\prime}, [e_{i}])$ given in (\ref{eq:three}), and the corresponding $B$-orbit $\mathcal{O}_{\Delta,\fb^{\prime}}^{op}\in B\backslash \mathfrak{X}$ on $\mathfrak{X}$ given by Remark \ref{rem:orbitlabels}.


\begin{prop}\label{p:orbitchange}
 Let $\fb^{\prime} \in \B_n.$
\begin{enumerate}
\item The $G$-orbit $m(s_{i})*_{1}\mathcal{O}_{\Delta,\fb^{\prime}}=G\cdot (\fb, \tilde{\fb}, [e_{i+1}])$ for some $\tilde{\fb}\in\B_{n}$.  In particular, $m(s_{i})*_{1}\mathcal{O}_{\Delta,\fb^{\prime}}\neq \mathcal{O}_{\Delta,\fb^{\prime}}$.  
\item Further,
 \begin{equation}\label{eq:cplxstableinter}
(m(s_{i})*_{1}\mathcal{O}_{\Delta,\fb^{\prime}})^{op}=m(s_{i+1})*_{R}\mathcal{O}_{\Delta, \fb^{\prime}}^{op}.
\end{equation}
Moreover, the type of $\alpha_{i}$ for $\mathcal{O}_{\Delta,\fb^{\prime}}$ is the same as the type of $\alpha_{i+1}$ for $\mathcal{O}_{\Delta,\fb^{\prime}}^{op}$. 

\end{enumerate}
\end{prop}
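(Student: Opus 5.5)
The plan is to compute both sides of (\ref{eq:cplxstableinter}) using explicit representatives, and to reduce the identification to a statement about a single $\mathbb{P}^1$-fibre in $\B_{n+1}$.

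\textbf{Part (1).} Write $\fb^{\prime}=g\cdot\fb$. By (\ref{eq:firstfactorfirst}), $m(s_i)*_1\mathcal{O}_{\Delta,\fb^{\prime}}$ corresponds to the open $B$-orbit in $P_{\alpha_i}\cdot(\fb^{\prime},[e_i])$. Projecting to $\mathbb{P}^{n-1}$, we have $P_{\alpha_i}\cdot[e_i]\subset\mathcal{O}_i\cup\mathcal{O}_{i+1}$, and $\mathcal{O}_{i+1}\cap P_{\alpha_i}\cdot[e_i]$ is open in it. Hence the open $B$-orbit in $P_{\alpha_i}\cdot(\fb^{\prime},[e_i])$ lies over $\mathcal{O}_{i+1}$.

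To get a concrete representative, take $y=\exp(E_{i+1,i})\in P_{\alpha_i}$, so that $y e_i=e_i+e_{i+1}$. Take $b=1-E_{i,i+1}\in B$, so that $b(e_i+e_{i+1})=e_{i+1}$. The open orbit is then $B\cdot(byg\cdot\fb,[e_{i+1}])$, which gives (1) with $\tilde{\fb}=byg\cdot\fb$. The orbit differs from $\mathcal{O}_{\Delta,\fb^{\prime}}$ because the line lies in $\mathcal{O}_{i+1}$ rather than $\mathcal{O}_i$. (To be safe, I will check genericity: any $x\in P_{\alpha_i}$ with $x[e_i]\in\mathcal{O}_{i+1}$ lies in $BysB\cdot\mathrm{Stab}$, so $y$ can be used.)

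\textbf{Part (2), setup.} By Remark \ref{rem:orbitlabels} and (\ref{eq:biggercorresp}), the left-hand side of (\ref{eq:cplxstableinter}) is $Bg^{-1}z\cdot\mathcal{E}_{1,i+2}$, where $z=y^{-1}b^{-1}\in G$. The right-hand side is the open $B$-orbit in
\[
\mathcal{X}:=Bg^{-1}mP^{n+1}_{\alpha_{i+1}}B_{n+1}/B_{n+1}.
\]
The $\mathbb{P}^1$-fibre $mP^{n+1}_{\alpha_{i+1}}B_{n+1}/B_{n+1}$ through $\mathcal{E}_{1,i+1}$ consists of the flags obtained from (\ref{eq:Fioneflag}) by replacing the $(i+1)$st step $e_{n+1}$ with a line $[ae_{n+1}+ce_{i+1}]$, modulo the first $i$ steps.

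By Proposition \ref{p:Kmonoidaction}(3), $\alpha_{i+1}$ is complex stable for $Q_{1,i+1}$. So the fibre meets $Q_{1,i+1}$ only at $\mathcal{E}_{1,i+1}$, and the rest of the fibre lies in $Q_{1,i+2}$. Moreover $\mathrm{Stab}_G(\mathcal{E}_{1,i+1})=\mathrm{Stab}_B(e_i)$ acts transitively on this affine line. Indeed, elements sending $e_{i+1}\mapsto e_{i+1}+te_i$ do so, because $e_i\equiv-e_{n+1}$ modulo $\hat e_i$. Consequently $\mathcal{X}=\cZ\cup Bg^{-1}\cdot F_c$ for any $c\neq0$, where $F_c$ is the fibre point with $(i+1)$st step $[e_{n+1}+ce_{i+1}]$. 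In particular $\mathcal{X}$ consists of exactly two $B$-orbits.

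\textbf{Part (2), main step.} Equation (\ref{eq:cplxstableinter}) therefore reduces to showing that $z\cdot\mathcal{E}_{1,i+2}$ lies in the fibre and is not equal to $\mathcal{E}_{1,i+1}$, possibly after replacing it by $s\cdot z\cdot\mathcal{E}_{1,i+2}$ with $s\in\mathrm{Stab}_B(e_i)$. I would verify this by an explicit flag computation. Here $\mathcal{E}_{1,i+2}=(\hat e_{i+1}\subset e_1\subset\cdots\subset e_i\subset e_{n+1}\subset e_{i+2}\subset\cdots)$, and $z^{-1}=by$ fixes $e_{n+1}$ and acts on $\mathrm{span}\{e_i,e_{i+1}\}$ by a $2\times 2$ matrix. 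One checks that $z$ sends $\hat e_{i+1}$ to a multiple of $\hat e_i$ plus a multiple of $e_{i+1}$, and that the resulting flag agrees with some $F_c$, $c\neq 0$, up to $\mathrm{Stab}_B(e_i)$.

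This is the step I expect to be the main obstacle. If the bookkeeping with $y,b$ is awkward, the fallback is a containment-plus-dimension argument. First, $(m(s_i)*_1\mathcal{O}_{\Delta,\fb^{\prime}})^{op}\subset Q_{1,i+2}$ by Part (1). Second, it lies in $\mathcal{X}$, since the whole construction takes place inside $P_{\alpha_i}$ and this corresponds under $g\mapsto g^{-1}$ to $P^{n+1}_{\alpha_{i+1}}$ acting on the right. Since $\mathcal{X}\cap Q_{1,i+2}$ is a single $B$-orbit, the two sides agree.

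\textbf{Root types.} For the type statement, I would argue as at the end of the proof of Theorem \ref{thm:monoidcorresp}. By Part (1), $\alpha_i$ is never a root fixing the orbit on the left, and by the two-orbit structure above $\alpha_{i+1}$ is complex stable on the right. On the left, $P_{\alpha_i}\cdot(\fb^{\prime},[e_i])$ also consists of two $B$-orbits with the original of codimension one, by the $\mathbb{P}^1$-bundle count and (\ref{e:dimcorr}). Since $B$ is solvable (Remark \ref{r:solvablecase}), Proposition \ref{p:stableandnc} shows that $\alpha_i$ is complex stable for $\mathcal{O}_{\Delta,\fb^{\prime}}$ as well.
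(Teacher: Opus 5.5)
Your proposal has a genuine gap. The reduction rests on the claim that $\mathcal{X}=\cZ\cup Bg^{-1}\cdot F_c$, i.e.\ that $\mathcal{X}$ always consists of exactly two $B$-orbits, and this is false.

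\textbf{Where the argument fails.} You are right that $\mathrm{Stab}_G(\mathcal{E}_{1,i+1})=\mathrm{Stab}_B(e_i)$ acts transitively on the affine line $F\setminus\{\mathcal{E}_{1,i+1}\}$. However, an element $s$ of that group acts on the right of $g^{-1}$, and in general $Bg^{-1}sF_c\neq Bg^{-1}F_c$. The group that actually governs the $B$-orbits in $\mathcal{X}=Bg^{-1}\cdot F$ is the stabilizer in $B$ of $g^{-1}\cdot\mathcal{E}_{1,i+1}$. Transported to $F$, this is $gBg^{-1}\cap\mathrm{Stab}_B(e_i)$, and it can fix a second point of the affine line. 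That is exactly the non-compact imaginary case, where $\mathcal{X}$ has three $B$-orbits by Remark \ref{r:solvablecase}. Complex stability of $\alpha_{i+1}$ for the $G$-orbit $Q_{1,i+1}$ does not descend to the $B$-orbits inside it.

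\textbf{A counterexample in the paper.} Take Example \ref{eq:2x2example} with $n=2$, $i=1$. There $\beta=\alpha_2$ is non-compact for the orbit with Shareshian pair $(sts,t^{*}s^{*})$. Correspondingly, for $\fb^{\prime}$ with $(\fb^{\prime},[e_1])\in\mathcal{O}_{(s,\{1\})}$, the variety $P_{\alpha_1}\cdot(\fb^{\prime},[e_1])$ consists of three $B$-orbits: $\mathcal{O}_{(s,\{1\})}$, $\mathcal{O}_{(id,\{2\})}$ and $\mathcal{O}_{(s,\{1,2\})}$.

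\textbf{Consequences.} Several steps of your proposal depend on the false two-orbit claim:
\begin{enumerate}
\item Your main step identifies the left side with the open orbit only because $\mathcal{X}\cap Q_{1,i+2}$ was supposed to be a single orbit. In the non-compact case it also contains an orbit of dimension $\dim\cZ$.
\item Your fallback uses the same single-orbit claim, so it fails for the same reason.
\item Your root-type argument concludes that both roots are always complex stable, which contradicts the example above.
\end{enumerate}
In addition, your concrete representative is not generic. With $n=2$, $i=1$ and $\fb^{\prime}$ the flag $[e_2-e_1]$, one gets $by\cdot\fb^{\prime}=\fb$ and the orbit $\mathcal{O}_{(id,\{2\})}$, not the open one. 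Part (1) itself is fine via your projection argument.

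\textbf{What the paper does instead.} What is missing is a way to single out the open orbit among possibly two $B$-orbits in $\mathcal{X}\cap Q_{1,i+2}$. The paper does this with dimensions. It proves (\ref{eq:relatedim}), $\dim\mathcal{O}_{\Delta,\hat{\fb}}=\dim\mathcal{O}^{op}_{\Delta,\hat{\fb}}+\dim\B_n-1$, uniformly in the index $j$. Hence $(m(s_{i+1})*_R\mathcal{O}^{op}_{\Delta,\fb^{\prime}})^{op}$ and $m(s_i)*_1\mathcal{O}_{\Delta,\fb^{\prime}}$ have the same dimension. It then only needs the containment in $G\cdot(P_{\alpha_i}\cdot\fb,\fb^{\prime},[e_i])$, which it checks separately in two cases:
\begin{itemize}
\item when $\alpha_{i+1}$ is complex stable, via $s_{i+1}$;
\item when $\alpha_{i+1}$ is non-compact, via the Cayley transform $u_{\alpha_{i+1}}$ and an explicit $b\in B$.
\end{itemize}

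\textbf{How your approach can be repaired.} Your flag computation can be salvaged in the same spirit, and without a case split.
\begin{enumerate}
\item For every $h\in P_{\alpha_i}$ with $he_i=e_{i+1}$, the flag $h^{-1}\cdot\mathcal{E}_{1,i+2}$ differs from $\mathcal{E}_{1,i+1}$ only in step $i+1$, and it is not $\mathcal{E}_{1,i+1}$.
\item Conversely, every other point of the fibre arises in this way.
\item This gives a bijection between the $B$-orbits of $P_{\alpha_i}\cdot(\fb^{\prime},[e_i])$ and those of $\mathcal{X}$, with a uniform dimension shift given by (\ref{eq:relatedim}).
\item The open orbits therefore correspond, and the orbit counts agree (two versus three).
\end{enumerate}
The matching orbit counts give the root-type statement without assuming the type is always complex stable.
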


\begin{proof}
Let $\alpha = \alpha_i.$  For (1), it follows from Equation (\ref{eq:firstfactorfirst}) that $\ms*_{1}\mathcal{O}_{\Delta,\fb^{\prime}}$ corresponds to the open $B$-orbit in $P_{\alpha}\cdot(\fb^{\prime}, [e_{i}])$.  Note that 
$P_{\alpha}\cdot [e_{i}]=B\cdot [e_{i}]\bigcup B\cdot [e_{i+1}]$, with $B\cdot [e_{i+1}]$ open in $P_{\alpha}\cdot[e_{i}].$   Let $\pi_{\mathbb{P}}:\B_{n}\times \mathbb{P}^{n-1}\to \mathbb{P}^{n-1}$ be the canonical projection.   Then $\pi_{\mathbb{P}}$ is a $B$-equivariant open map.  It follows that $\pi_{\mathbb{P}}(\ms*_{1}\mathcal{O}_{\Delta,\fb^{\prime}})=B\cdot[e_{i+1}]$, and part (1) follows.  

For (2), denote by $\tilde{\cZ}$ the $B$-orbit $ \tilde{\cZ}:=m(s_{i+1})*_{R}\mathcal{O}_{\Delta,\fb^{\prime}}^{op}$ on $\B_{n+1}.$ 
 Let $\tilde{\cZ}^{op}$ be the corresponding $G$-orbit on $\B_{n}\times \B_{n}\times \mathbb{P}^{n-1}$ as in (\ref{eq:secondcorresp}).  We want to show that  $\tilde{\cZ}^{op}=m(s_{i})*_{1} \mathcal{O}_{\Delta,\fb^{\prime}}$ (cf. Remark \ref{rem:oppusage}).  We first claim that  
\begin{equation}\label{eq:dimby1}
\dim \tilde{\cZ}^{op}=\dim\mathcal{O}_{\Delta,\fb^{\prime}}+1=\dim\left( m(s_{i})*_{1} \mathcal{O}_{\Delta,\fb^{\prime}} \right).
\end{equation}
We show that for \emph{any} $G$-orbit $\mathcal{O}_{\Delta,\hat{\fb}}=G\cdot(\fb,\hat{\fb}, [e_{j}])$, we have 
\begin{equation}\label{eq:relatedim}
\dim \mathcal{O}_{\Delta,\hat{\fb}}=\dim \mathcal{O}_{\Delta,\hat{\fb}}^{op}+\dim\B_{n}-1.
\end{equation}
 By Equation (\ref{e:dimcorr}), $\dim \mathcal{O}_{\Delta,\hat{\fb}}=\dim \B_{n}+j-1+\dim Q_{\hat{\fb}}.$  By Equation (\ref{eq:opdim}), $\dim Q_{\hat{\fb}} = \dim Q_{\hat{\fb}}^{op}-j,$ which implies Equation (\ref{eq:relatedim}) since $\mathcal{O}_{\Delta,\hat{\fb}}^{op}=Q_{\hat{\fb}}^{op}$ (see Remark \ref{rem:oppusage}).
  Further, $\mathcal{O}_{\Delta,{\fb}^{\prime}}^{op} \in B\backslash Q_{1,i+1}$ by Equation (\ref{eq:firstcorresp}), so $\tilde{\cZ} \in  B\backslash Q_{1,i+2}$ by (\ref{eq:monoidmove}).  Thus, $\dim(\tilde{\cZ})=\dim\mathcal{O}_{\Delta,{\fb}^{\prime}}^{op}+1$ by (\ref{eq:basicmonoid}).
Now applying (\ref{eq:relatedim}) to the orbits $\tilde{\cZ}$ and $\mathcal{O}_{\Delta,\fb^{\prime}}$, we obtain
$$
\dim \tilde{\cZ}^{op}=\dim\tilde{\cZ}+\dim\B_{n}-1=\dim\mathcal{O}_{\Delta,\fb^{\prime}}^{op}+1+\dim\B_{n}-1=\dim\mathcal{O}_{\Delta,\fb^{\prime}}+1, 
$$
yielding the first equality of (\ref{eq:dimby1}).  The second equality of (\ref{eq:dimby1}) follows from Part (1) and (\ref{eq:basicmonoid}).  Hence, by (\ref{eq:firstfactorfirst}) and (\ref{eq:dimby1}), to prove (2) it suffices to show that 
\begin{equation}\label{eq:thebigcheese}
\tilde{\cZ}^{op}\subset G\cdot (P_{\alpha_{i}}\cdot\fb, \fb^{\prime},[e_{i}]).
\end{equation}
Choose $g\in G$ so that $\Ad(g^{-1})\fb=\fb^{\prime}$.
Then it follows from Remark \ref{r:corres} that $\mathcal{O}_{\Delta,\fb^{\prime}}^{op}=B\cdot \Ad(gm)\fb_{n+1}$, where $m\in G_{n+1}$ is such that $\Ad(m)\fb_{n+1}=\fb_{1,i+1}$ with $\fb_{1,i+1}$ the Borel subalgebra stabilizing the flag $\mathcal{E}_{1,i+1}$ in (\ref{eq:Fioneflag}).  We prove (\ref{eq:thebigcheese}) on a case-by-case basis.  By (\ref{eq:monoidmove}), $\tilde{\cZ}=m(s_{i+1})*\mathcal{O}_{\Delta,\fb^{\prime}}^{op}\neq \mathcal{O}_{\Delta,\fb^{\prime}}^{op}$.   Proposition \ref{p:stableandnc} then implies that the root $\alpha_{i+1}$ is either complex stable or non-compact for $\mathcal{O}_{\Delta,\fb^{\prime}}^{op}$.  Suppose first that $\alpha_{i+1}$ is complex stable for the $B$-orbit $\mathcal{O}_{\Delta,\fb^{\prime}}^{op}$.  
By part (1) of Proposition \ref{p:stableandnc}, $\tilde{\cZ}=B\cdot \Ad(gm) s_{i+1}(\fb_{n+1})$.  To compute
$\tilde{\cZ}^{op}$, consider the Borel subalgebra $\tilde{\fb}:=\Ad(m) s_{i+1}(\fb_{n+1})$.  Using Equation (\ref{eq:Fioneflag}), we note that $\tilde{\fb}$ stabilizes the flag in $\C^{n+1}$ given by:
$$
\mathcal{G}:=(\hat{e}_{i}\subset e_{1}\subset\dots \subset e_{i-1}\subset e_{i+1}\subset\underbrace{e_{n+1}}_{i+2}\subset e_{i+2} \subset \dots\subset e_{n}).
$$
Let $\dot{s}_{i}$ be the representative of the simple reflection $s_{i}$ given by the permutation matrix corresponding to the transposition $(i, i+1),$ and note that
 $$
\dot{s}_{i}(\mathcal{G})=(\hat{e}_{i+1}\subset e_{1}\subset\dots \subset e_{i-1}\subset e_{i}\subset\underbrace{e_{n+1}}_{i+2}\subset\dots\subset e_{n})=\mathcal{E}_{1,i+2}.
 $$
 It follows that  $\tilde{\cZ}=B\cdot \Ad(g\dot{s}_{i})\fb_{1,i+2}$.  By Equation (\ref{eq:firstcorresp}) and the fact that $\fb^{\prime}=\Ad(g^{-1})\fb$ we can then compute: 
 \begin{equation}\label{eq:cplxcase}
 \begin{split}
 \tilde{\cZ}^{op}&=G\cdot(\fb, \Ad(\dot{s}_{i}g^{-1})\fb, [e_{i+1}])\\
 &=G\cdot (s_{i}(\fb), \fb^{\prime}, s_{i}\cdot[e_{i+1}])\\
 &=G\cdot (s_{i}(\fb), \fb^{\prime}, [e_{i}])\subset G\cdot (P_{\alpha_{i}}\cdot\fb, \fb^{\prime}, [e_{i}]).
 \end{split}
 \end{equation}
We thus obtain (\ref{eq:thebigcheese}) in this case.
  
 Now suppose that $\alpha_{i+1}$ is non-compact for the $B$-orbit $\mathcal{O}_{\Delta,\fb^{\prime}}^{op}$.  Then by 
 Part (2) of Proposition \ref{p:stableandnc}, the orbit 
 $\tilde{\cZ}=B\cdot \Ad(gmu_{\alpha_{i+1}})\fb_{n+1}$, where $u_{\alpha_{i+1}}\in G_{n+1}$ is the Cayley transform with respect to the root $\alpha_{i+1}$.  By Equation (\ref{eq:Fioneflag}) and Remark \ref{r:Cayley} the Borel subalgebra $\Ad(mu_{\alpha_{i+1}})\fb_{n+1}$ stabilizes the flag 
 $$
 \mathcal{H}:=(\hat{e}_{i}\subset e_{1}\subset \dots\subset e_{i-1}\subset e_{i+1}-e_{i}\subset \underbrace{e_{n+1}}_{i+2}\subset e_{i+2} \subset \dots\subset e_{n}).
 $$
Consider the element $b\in B$ whose action on the standard basis is given by: $b:e_{i+1}\mapsto e_{i+1}+e_{i}$ and $b:e_{\ell}\mapsto e_{\ell}$ for all other $\ell$. 
Computation shows that $\dot{s}_{i}b\cdot \mathcal{H}=\mathcal{E}_{1,i+2}$.  Thus, 
$\tilde{\cZ}=B\cdot \Ad(gb^{-1}\dot{s}_{i})\fb_{1,i+2}$.  Using Equation (\ref{eq:firstcorresp}) and $\fb^{\prime}=\Ad(g^{-1})\fb,$ we obtain 
$$
\tilde{\cZ}^{op}=G\cdot(\fb, \Ad(\dot{s}_{i}^{-1}b)\fb^{\prime}, [e_{i+1}])=G\cdot(b^{-1}\cdot s_{i}(\fb), \fb^{\prime}, b^{-1}s_{i}\cdot [e_{i+1}]).
$$
Computation shows that $b^{-1} s_{i}\cdot[e_{i+1}]=b^{-1}[e_{i}]=[e_{i}]$, and this proves (\ref{eq:thebigcheese}) in this case as well.  Thus, Equation (\ref{eq:cplxstableinter}) holds.

To prove the statement about root types, it suffices to show that 
$\alpha_{i}$ is complex stable for $\mathcal{O}_{\Delta, \fb^{\prime}}$ if and only if $\alpha_{i+1}$ is complex stable for $\mathcal{O}_{\Delta, \fb^{\prime}}^{op}$.  The computations in the complex stable case above demonstrate that if $\alpha_{i+1}$ is complex stable for $\mathcal{O}_{\Delta, \fb^{\prime}}^{op}$, then $\tilde{\cZ}^{op}=m(s_{i})*_{1}\mathcal{O}_{\Delta,\fb^{\prime}}=G\cdot (s_{i}(\fb), \fb^{\prime},[e_{i}])$ (see (\ref{eq:cplxcase})).  It follows from Proposition \ref{p:stableandnc} that $\alpha_{i}$ is complex stable for the $G$-orbit $\mathcal{O}_{\Delta,\fb^{\prime}}$.  The converse works similarly. 

\end{proof}



Recall the definition of the weak order in Definition \ref{d:weak}.  If $Q_1$ and $Q_2$ are two $S_i$-orbits on $\B_{n},$  we say $Q_2$ dominates $Q_1$ if $Q_1$ is less than or equal to $Q_2$ in the weak order.  Using Theorem \ref{thm:monoidcorresp}, we can prove the following result.
\begin{thm}\label{thm:Siweak}
Every $S_{i}$-orbit on $\B_{n}$ dominates a zero dimensional $S_{i}$-orbit in the weak order defined by the monoid action on $S_{i}\backslash\B_{n}$ by the simple roots $\mathfrak{S}_i\coprod \Pi_{\fg}$, where $\mathfrak{S}_i$ is the subset of simple roots given in (\ref{eq:extraroots}).  
\end{thm}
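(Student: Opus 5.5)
Via Theorem \ref{thm:firstbig} and Theorem \ref{thm:monoidcorresp}, we transport the statement to $B\backslash Q_{1,i+1}$ and argue by descending induction on dimension there. Let $Q\in S_i\backslash\B_n$ and $\cZ=Q^{op}\in B\backslash Q_{1,i+1}$. By (\ref{eq:leftinter}) and (\ref{eq:rightinter}), the extended monoid action of $\mathfrak{S}_i\coprod\Pi_{\fg}$ on $S_i\backslash\B_n$ corresponds to the action of $\Pi_{\fg}$ (left) and $\Pi_{n+1,cpt}$ (right, with the index shift $j\mapsto j-1$) on $B\backslash Q_{1,i+1}$. By (\ref{eq:opdim}) these actions raise dimension compatibly, so $Q$ dominates $Q'$ if and only if $\cZ$ dominates $Q'^{op}$ in the corresponding weak order. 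The zero dimensional $S_i$-orbits correspond to the $B$-orbits of dimension $i$ in $Q_{1,i+1}$, which is the minimal dimension there by (\ref{eq:opdim}). It therefore suffices to show the following: if $\dim\cZ>i$, there is a $B$-orbit $\cZ'\subset Q_{1,i+1}$ and a root $\alpha$ in $\Pi_{\fg}$ (acting on the left) or in $\Pi_{n+1,cpt}$ (acting on the right) with $m(s_\alpha)*\cZ'=\cZ$ and $\dim\cZ'=\dim\cZ-1$. Iterating this descent terminates at an $i$-dimensional orbit, which is what we want.

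To produce such a descent, I would use the explicit representatives from \cite{CE21I}, \cite{CE21II}. There, every $B$-orbit on $\B_{n+1}$ is given by a flag built from standard basis vectors and hat vectors, and orbits in a fixed $G$-orbit are described by the combinatorics of these flags. The descent step is an "undo" of a complex stable or noncompact move, and it has two cases. First, if some $\alpha\in\Pi_{\fg}$ is complex unstable or real for $\cZ$ on the left, then by the standard $SL(2)$ analysis (Proposition \ref{p:stableandnc}) the fibre $P_\alpha\cdot\cZ$ contains a lower dimensional orbit $\cZ'$ with $m(s_\alpha)*_L\cZ'=\cZ$. Second, and likewise, if some $\alpha_j\in\Pi_{n+1,cpt}$ is complex unstable or real for $\cZ$ on the right, the same analysis applies. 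So the key claim is that an orbit $\cZ$ for which every root in $\Pi_{\fg}\coprod\Pi_{n+1,cpt}$ is complex stable or noncompact imaginary (a "minimal" orbit) has dimension $i$. Here compact roots do not occur, by Remark \ref{r:onlythree}.

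For this key claim, I would follow the proof in \cite{CE21II} of the analogous statement for $B_{n-1}\backslash\B_n$, which is the case $i=n$ (Remark \ref{r:extended}). A minimal orbit has no left descents for $\Pi_{\fg}$, so it lies over the $B$-fixed points under the relevant projections. Working with the flag model, one checks that minimality in all left roots forces the $U_n$-parts of the flag to be $B$-stable, and minimality in the right compact roots forces the hat vector to sit in the first position with index $i$. Together these force $\cZ=B\cdot\mathcal{E}_{1,i+1}$ up to a $W_n$-twist that fixes the relevant data, and such an orbit has dimension $i$ since $B\cdot\hat e_i$ has dimension $i$ by (\ref{eq:dimfacts}).

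I expect this classification of minimal orbits to be the main obstacle. The right action only has the roots $\Pi_{n+1,cpt}$, which omit $\alpha_1,\alpha_i,\alpha_{i+1}$, so descents for those three roots are unavailable, and I must verify that they are not needed. If a direct flag argument is messy, I would instead try induction on $i$. Proposition \ref{p:orbitchange} relates $Q_{1,i}$ and $Q_{1,i+1}$ via $m(s_{i+1})$, and this would let me reduce to the known case from \cite{CE21II} and the Bruhat case $i=1$ (Remark \ref{r:specialcase}). The catch is that this reduction passes outside the allowed monoid, so it would only help identify the candidate minimal orbits, whose dimension would then be checked directly.
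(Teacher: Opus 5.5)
Your transport step is fine. By Theorem \ref{thm:firstbig}, Theorem \ref{thm:monoidcorresp} and (\ref{eq:opdim}), the statement is equivalent to saying that every $B$-orbit in $Q_{1,i+1}$ that is minimal for the weak order generated by $\Pi_{\fg}$ (left) and $\Pi_{n+1,cpt}$ (right) has dimension $i$. You are also right that a complex unstable or real root always gives a one-step descent. But that ``key claim'' is just a restatement of the theorem, and you do not prove it.

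The sketch you give for it also picks out the wrong orbits. The orbits of dimension $i$ in $Q_{1,i+1}$ are certainly weakly minimal, since nothing lies below them. They are the $i$ closed orbits $\cZ_{1,i+1}(k)=B\cdot\mathcal{E}_{1,i+1}(k)$, $k=1,\dots,i$, of (\ref{eq:firststd}), and their hat vector is $\hat{e}_{k}$, not $\hat{e}_{i}$. For $k<i$ the intersections of the flag with $U_n$ are not all $B$-stable either: for $k=1$ the second subspace meets $U_n$ in $\C e_2$. The twist $\tau=(k,k+1,\dots,i)$ carrying $\mathcal{E}_{1,i+1}$ to $\mathcal{E}_{1,i+1}(k)$ moves the hat index, so ``up to a $W_n$-twist'' does not rescue your description. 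Already in Example \ref{ex:Q13orbits} ($n=3$, $i=2$) there are two minimal orbits, through $(\hat e_2\subset e_1\subset e_4\subset e_3)$ and $(\hat e_1\subset e_2\subset e_4\subset e_3)$.

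What is missing is exactly the input the paper uses. The proof of Theorem 6.5 of \cite{CE21II} constructs, for every $B$-orbit $\cZ\subset Q_{1,i+1}$, an orbit $\cZ'$ closed in $Q_{1,i+1}$ and an explicit chain $m(\vec{s})*\cZ'=\cZ$. Inspecting that construction (Equations (6.15), (6.18), (6.19) of \cite{CE21II} and (4.12) of \cite{CE21I}) shows that the only roots of $\Pi_{\fg_{n+1}}$ it uses lie in $\{\alpha_2,\dots,\alpha_{i-1}\}\subset\Pi_{n+1,cpt}$. That is precisely the point you flag as the main obstacle, namely that $\alpha_1,\alpha_i,\alpha_{i+1}$ are never needed, and you leave it unverified.

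With that input, Theorem \ref{thm:monoidcorresp} transports the chain to $S_i\backslash\B_n$. Then ${\cZ'}^{op}$ is a closed $S_i$-orbit, hence a point because $S_i$ is connected and solvable. Your fallback induction on $i$ through Proposition \ref{p:orbitchange} cannot replace this, since, as you note yourself, $m(s_{i+1})$ is not in the allowed monoid.
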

\begin{proof}
Let ${\cZ}^{op}$ be the $S_{i}$-orbit corresponding to $\cZ\in B\backslash Q_{1,i+1}$ via Theorem \ref{thm:firstbig}.  On p. 289 of the proof of Theorem 6.5 in \cite{CE21II}, we establish the existence of a $B$-orbit ${\cZ}^{\prime}$ that is closed in $Q_{1,i+1}$ and a sequence of simple roots $\alpha_{j_1}, \dots, \alpha_{j_k} \in \Pi_{\fg}\coprod \Pi_{\fg_{n+1}}$ such that $m(\vec{s})*{\cZ}^{\prime}={\cZ}.$  By analyzing the above argument and using Equations (6.15), (6.18), and (6.19) from \cite{CE21II} and Equation (4.12) from \cite{CE21I}, we see that the roots from $\Pi_{\fg_{n+1}}$ are in $\{\alpha_2, \dots, \alpha_{i-1} \}$, which is a subset of the set $\Pi_{n+1,cpt}$ defined in Equation (\ref{eq:compactroots}).  
Hence, by Theorem \ref{thm:monoidcorresp}, there exists a sequence of roots $\widetilde{\alpha_{i_{1}}}, \dots, \widetilde{\alpha_{i_{k}}}\in \mathfrak{S}_i\coprod\Pi_{\fg}$ such that $m(\vec{\widetilde{s}})*{{\cZ}^{\prime}}^{op}={\cZ}^{op}$ (see (\ref{eq:leftinter}) and (\ref{eq:rightinter})).  
To complete the proof, it remains to observe that since ${\cZ}^{\prime}$ is closed in $Q_{1,i+1}$, it follows that ${{\cZ}^{\prime}}^{op}$ is a closed $S_i$-orbit in $\B_n$ by Remark \ref{r:corres}.  However, a closed orbit of a connected solvable group $S$ on a projective variety is zero dimensional, since the stabilizer of a point in the orbit is a parabolic subgroup of $S$, and hence must be all of $S.$   Since $S_i$ is connected and solvable, we conclude that ${{\cZ}^{\prime}}^{op}$ is zero dimensional.
\end{proof}

\begin{cor}\label{c:closedSi}
There are exactly $i$ closed $S_{i}$-orbits on $\B_{n}$ all of which are zero dimensional.  
They are the orbits through the $i$-standard flags
\begin{equation}\label{eq:closedflags}
\mathcal{G}_{k}:=(e_{1}\subset \dots \subset e_{k-1}\subset\underbrace{e_{i}}_{k}\subset e_{k}\subset \dots\subset e_{i-2}\subset e_{i-1}\subset e_{i+1}\subset\dots \subset e_{n})
\end{equation}
for $k=1,\dots, i$.  
\end{cor}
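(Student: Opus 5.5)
The plan is to reduce the count of closed orbits to a count of $S_i$-fixed points, and then identify those fixed points directly by linear algebra.

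\emph{Step 1 (closed orbits are fixed points).} By the argument at the end of the proof of Theorem \ref{thm:Siweak}, any closed orbit of the connected solvable group $S_i$ on the projective variety $\B_n$ is zero dimensional. Conversely, a zero dimensional orbit of a connected group is a single point, and points are closed. Hence the closed $S_i$-orbits are exactly the $S_i$-fixed flags, and it suffices to show that the fixed flags are precisely $\mathcal{G}_1,\dots,\mathcal{G}_i$.

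\emph{Step 2 (fixed flags are unions of coordinate subspaces).} Let $\F=(V_1\subset\dots\subset V_n)$ be fixed by $S_i$. Since $S_i\supset H_n$, each $V_k$ is $H_n$-stable, so it is spanned by standard basis vectors. I will record $\F$ by the ordering $e_{\sigma(1)},\dots,e_{\sigma(n)}$ in which the basis vectors enter the flag.

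\emph{Step 3 (the constraint from root subgroups).} The group $S_i$ consists of the upper triangular invertible matrices whose $i$-th column is a multiple of $e_i$. So $S_i$ contains the root subgroup $U_{ab}=\{1+tE_{ab}\}$ for every $a<b$ with $b\neq i$, and it contains no $U_{ai}$ with $a<i$. Stability of each $V_k$ under $U_{ab}$ with $a<b$, $b\ne i$, says: if $e_b$ enters the flag, then $e_a$ must already have entered. Thus the restriction of the order $\sigma$ to $\{1,\dots,n\}\setminus\{i\}$ must be the increasing order. The vector $e_i$ itself is only constrained by the $U_{ib}$ with $b>i$, since $U_{ib}$ sends $e_b\mapsto e_b+te_i$. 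These force $e_i$ to enter before every $e_b$ with $b>i$.

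\emph{Step 4 (the count).} By Step 3, $e_i$ may be inserted into the increasing sequence $e_1,\dots,e_{i-1},e_{i+1},\dots,e_n$ at any position $k\in\{1,\dots,i\}$, that is, before $e_k$, or after $e_{i-1}$ when $k=i$. These are exactly the flags $\mathcal{G}_k$ of (\ref{eq:closedflags}). Conversely, each $\mathcal{G}_k$ is stable under $H_n$ and under all the $U_{ab}$ in $S_i$. Since $S_i$ is generated by $H_n$ and these root subgroups (it is the connected solvable group with Lie algebra $\fh$ plus the root spaces $\fg_{\epsilon_a-\epsilon_b}$, $a<b$, $b\ne i$), each $\mathcal{G}_k$ is fixed by $S_i$. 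The flags $\mathcal{G}_1,\dots,\mathcal{G}_i$ are distinct, which gives exactly $i$ closed orbits.

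\emph{Main obstacle.} The only delicate point is the description of $S_i$ in Step 3, namely showing that $S_i$ is generated by $H_n$ and the root subgroups $U_{ab}$ with $b\neq i$. This follows from the stabilizer computation $\mathrm{Stab}_B[e_i]=\{b\in B: be_i\in\C e_i\}$ and the standard structure theory of connected solvable groups normalized by a torus. A possible cross-check is to verify the count via Theorem \ref{thm:firstbig}, by counting the closed $B$-orbits in $Q_{1,i+1}$, but I would not need this.
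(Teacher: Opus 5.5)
Your proof is correct and follows the paper's route. Both arguments first show that closed orbits of the connected solvable group $S_i$ are fixed points, then that torus-fixed flags are coordinate flags. The paper only calls the last step ``an easy argument,'' and your root-subgroup analysis in Steps 3--4 is exactly that argument written out.

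For the converse you could skip the generation claim about $S_i$. Any $b\in S_i$ satisfies $be_i\in\C e_i$ and $b\,\mathrm{span}\{e_1,\dots,e_j\}\subset\mathrm{span}\{e_1,\dots,e_j\}$, so it visibly preserves each subspace $\mathrm{span}\{e_1,\dots,e_{j-1},e_i\}$ of $\mathcal{G}_k$.
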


\begin{proof}
By the assertion at the end of the proof of Theorem \ref{thm:Siweak}, each closed $S_i$-orbit is a point fixed by the diagonal torus $H_{n-1}$ of $S_i.$   Since $H_{n-1}$ acts on $\C^n$ with $n$ distinct weights, it follows that each $H_{n-1}$-fixed point
is given by applying a permutation to the standard flag.  An easy argument now shows that the $H_{n-1}$-fixed points that are fixed by $S_i$ are exactly the flags $\mathcal{G}_k$ for $k=1, \dots, i.$
\end{proof}

The standard ordering of Richardson-Springer on $S_i\backslash \B_n$ is characterized as the weakest partial order $\le$ on $S_i\backslash \B_n$ such that if $Q_1, Q_2 \in S_i\backslash \B_n$ and $\alpha \in \mathfrak{S}_i\coprod\Pi_{\fg},$ then (i) $Q_1 \le \ms * Q_1$, (ii) $Q_1 \le Q_2$ implies that $\ms * Q_1 \le \ms * Q_2$, and (iii) $Q_1 \le Q_2$ and $\dim(Q_2) \le \dim(Q_1)$ imply that $Q_1=Q_2$ (see Proposition 5.6 of \cite{RS}).  It can be computed in terms of the combinatorics of the monoid action and the closed orbits.

\begin{cor}\label{c:standardorder}
The closure ordering on $S_{i}\backslash\B_{n}$ is given by the standard ordering 
of Richardson-Springer with respect to the monoid action on $S_{i}\backslash\B_{n}$ via the simple roots $\mathfrak{S}_i\coprod\Pi_{\fg}$, where $\mathfrak{S}_i$ is the subset of roots given in Equation (\ref{eq:extraroots}). 
\end{cor}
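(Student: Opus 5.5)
The plan is to invoke the general criterion of Richardson--Springer, in the form used in Section 6 of \cite{CE21II}. It says the following: suppose a connected group $M$ acts on a flag variety with finitely many orbits, and the monoid action satisfies the expected geometric compatibility. If every $M$-orbit dominates, in the weak order, an orbit that is closed, then the closure ordering coincides with the standard order. Here the "monoid action" is an extended action, and the compatibility we need is that each $m(s)*Q$ is the open orbit in a $\mathbb{P}^1$-bundle-type variety $\mathcal{P}\cdot Q$ containing $Q$. The expected proof is therefore short: apply that criterion to $M=S_i$ with the extended monoid action by $\mathfrak{S}_i\coprod\Pi_{\fg}$, and feed in Theorem \ref{thm:Siweak} together with Corollary \ref{c:closedSi}.

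The key steps, in order, are as follows.

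First, I verify that the standard order is contained in the closure order. The closure order, $Q_1\le Q_2$ iff $Q_1\subset\overline{Q_2}$, satisfies properties (i)--(iii) characterizing the standard order.
\begin{itemize}
\item Property (i) holds because $Q\subset \pi^{-1}\pi(Q)$ and $\ms*Q$ is open dense there. For the right action the relevant map is $\pi_\alpha$. For the left action by $\alpha\in\mathfrak{S}_i$ the relevant variety is $\mbox{Stab}_{P_\alpha}[e_i]\cdot Q$, which is irreducible and contains $Q$.
\item Property (ii) holds because if $Q_1\subset\overline{Q_2}$, then $\mbox{Stab}_{P_\alpha}[e_i]\cdot Q_1\subset \overline{\mbox{Stab}_{P_\alpha}[e_i]\cdot Q_2}$, and the analogous inclusion holds on the right. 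Hence $\ms*Q_1\subset\overline{\ms*Q_2}$.
\item Property (iii) is immediate from dimension.
\end{itemize}
Since the standard order is the weakest such order, it is contained in the closure order.

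For the reverse containment, I follow the Richardson--Springer argument (Theorem 7.11 of \cite{RS}, reproduced as Theorem 6.5 in \cite{CE21II}) by induction on $\dim Q_2$. Suppose $Q_1\subset\overline{Q_2}$. If $Q_2$ is closed, it is a point by Corollary \ref{c:closedSi}, so $Q_1=Q_2$. Otherwise, Theorem \ref{thm:Siweak} lets me write $Q_2=\ms*Q_3$ with $\dim Q_3=\dim Q_2-1$ for some $\alpha\in\mathfrak{S}_i\coprod\Pi_{\fg}$. Then $\overline{Q_2}=\mathcal{P}\cdot\overline{Q_3}$, where $\mathcal{P}$ is $P'_\alpha$-saturation on the right or $\mbox{Stab}_{P_\alpha}[e_i]$ on the left. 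So $Q_1\subset \mathcal{P}\cdot Q_4$ for some orbit $Q_4\subset\overline{Q_3}$. By induction $Q_4\le Q_3$ in the standard order. Then $Q_1$ is either $Q_4$ or lies in $\pi^{-1}\pi(Q_4)$, whose orbits are all $\le \ms*Q_4$; these orbit structures are described by Proposition \ref{p:stableandnc} and Remark \ref{r:onlythree}, which say there are at most three orbits and the open one is $\ms*Q_4$. It follows that $Q_1\le\ms*Q_4\le\ms*Q_3=Q_2$.

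The main obstacle is the left action by $\mathfrak{S}_i$. That action is not a genuine $\mathbb{P}^1$-bundle action on $\B_n$, so I must check two things: that $\mbox{Stab}_{P_\alpha}[e_i]\cdot\overline{Q_3}$ is closed and equals $\overline{Q_2}$, and that all orbits in $\mbox{Stab}_{P_\alpha}[e_i]\cdot Q_4$ lie below $\ms*_LQ_4$. I would avoid doing this directly. Instead I transport it through Theorem \ref{thm:monoidcorresp} and Remark \ref{r:corres} to $B\backslash Q_{1,i+1}$, where this action becomes the right action by compact imaginary roots, a genuine $\mathbb{P}^1$-bundle $\pi_{\alpha_j}$ restricted to $Q_{1,i+1}$. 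Closures are preserved by (\ref{eq:closurecorres}), and root types are preserved by the last assertion of Theorem \ref{thm:monoidcorresp}. The remaining issue is that these $\mathbb{P}^1$-bundles have fibres contained in $Q_{1,i+1}$: for compact imaginary $\alpha_j$, the $G$-orbit $Q_{1,i+1}$ is $\pi_{\alpha_j}$-saturated. Because of this, closure in $Q_{1,i+1}$ commutes with saturation, and the argument goes through.
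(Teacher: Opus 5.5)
Your proposal is correct and follows the paper's route. The paper's proof is the one-line observation that, by Theorem \ref{thm:Siweak}, every weak-order-minimal $S_i$-orbit is zero dimensional, so the criterion of Theorem 6.2 of \cite{CE21II} applies. Your unpacking of that criterion (both containments, with induction on $\dim Q_2$) is sound. The transport through Theorem \ref{thm:monoidcorresp} is optional: closedness of $\mbox{Stab}_{P_\alpha}[e_i]\cdot\overline{Q_3}$ for $\alpha\in\mathfrak{S}_i$ also follows directly from $\mbox{Stab}_{P_\alpha}[e_i]/S_i\cong\mathbb{P}^1$.
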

\begin{proof}
By Theorem \ref{thm:Siweak}, the minimal $S_{i}$-orbits in the weak order are all zero dimensional.  We can therefore apply Theorem 6.2 of \cite{CE21II} to conclude that the closure order agrees with the standard order. 
\end{proof}

\begin{rem}
In \cite{CE21II}, we prove the corresponding result for $B_{n-1}$-orbits on $\B_{n}$, which is one of the main results of that paper (see Corollary 6.10 of \emph{loc. cit.}).  Corollary \ref{c:standardorder} generalizes that result to the setting of $S_{i}$-orbits for \emph{any} $i\in\{1,\dots, n\}$. 
\end{rem}


\section{Shareshian Data and the Correspondence}\label{s:Shareshian}

 
 
 In \cite{Shpairs}, we associate to each $B$-orbit on $\B_{n+1}$ a pair 
 of permutations in $W_{n+1}\times W_{n+1}$ as follows.  
 Let $B_{n+1}\subset G_{n+1}$ be the standard Borel subgroup of invertible $(n+1)\times (n+1)$ upper triangular matrices.  Let $B_{n+1}^{*}$ be the Borel subgroup stabilizing the flag 
 \begin{equation}\label{eq:E*flag}
 \mathcal{E}_{n+1}^{*}:=(e_{n+1}\subset e_{1}\subset \dots\subset e_{n-1}\subset e_{n}). 
 \end{equation}
 In Theorem 1.1 of \cite{Shpairs}, we show that the $B$-orbits on $\B_{n+1}$ are precisely the non-empty intersections of $B_{n+1}$-orbits and $B_{n+1}^{*}$-orbits on $\B_{n+1}.$  
  Thus, using the Bruhat decomposition, we see that for a $B$-orbit $Q$ on $\B_{n+1}$, 
 $Q=B_{n+1}\cdot w(\mathcal{E}_{n+1}) \cap B_{n+1}^{*}\cdot u^{*}(\mathcal{E}_{n+1}^{*})$ for some $w,\, u^{*}\in W_{n+1},$ the symmetric group on $n+1$ letters.
 We define the \emph{Shareshian pair} of the orbit $Q$ to be $\Sh(Q):=(w,u^{*})$.  
 \begin{nota}
 We let $\Sp\subset W_{n+1}\times W_{n+1}$ denote the set of all Shareshian pairs, i.e.,
 \begin{equation}\label{eq:Spdefn}
 \Sp:=\{(w,u^{*})\in W_{n+1}\times W_{n+1}|\; \exists\, Q\in B\backslash\B_{n+1}\mbox{ with } (w, u^{*})=\Sh(Q)\}. 
 \end{equation}
 
 \end{nota}
 
 The Shareshian pair of a $B$-orbit $Q$ can be computed as follows.  In Section 4.1 of \cite{CE21II}, we find a canonical set of representatives for 
elements of $\Borbitspace$ which we call \emph{flags in standard form} by using the notion of hat vector from Equation (\ref{e:hatvectordef}).  

\begin{dfn}\label{d:std}
We say that a flag 
\begin{equation}\label{eq:basicflag}
\mathcal{F}:=(v_{1}\subset \dots \subset v_{i}\subset\dots\subset v_{n} \subset v_{n+1})
\end{equation}
in the flag variety $\B_{n+1}$ for $G_{n+1}$
is in \emph{standard form} if $v_{i}=e_{j_{i}}$ or $v_{i}=\he_{j_{i}}$ for all $i=1,\dots, n+1$, and 
$\mathcal{F}$ satisfies the following three conditions:
\begin{enumerate}
\item $v_i = e_{n+1}$ for some $i$.
\item If $v_{i}=e_{n+1}$, then $v_{k}=e_{j_{k}}$ for all $k>i$.
 \item If $i<k$ with $v_{i}=\he_{j_{i}}$ and $v_{k}=\he_{j_{k}}$, then $j_{i}>j_{k}$. 
\end{enumerate}
\end{dfn}
Theorem 4.7 of \cite{CE21II} asserts that every $B-$orbit in $\B_{n+1}$ has a unique representative in standard form.
 
The Shareshian pair of a $B$-orbit $Q$ can easily be computed in terms of the unique flag $\F$ in standard form contained in the orbit $Q$.  We recall the following notation and results from Section 2 of \cite{Shpairs}. 
\begin{nota}\label{n:tildeandstarflags}
Let $\F\in\B_{n+1}$ be a flag in standard form.  We denote by $\tilde{\F}$ the unique $H_{n+1}$-stable flag in the $B_{n+1}$-orbit $B_{n+1}\cdot \F$ and
by $\F^{*}$ the unique $H_{n+1}$-stable flag in the $B_{n+1}^{*}$-orbit $B_{n+1}^{*}\cdot \F$.
\end{nota}

\begin{rem}\label{r:tildeandstarflags}
Let $Q=B\cdot \F\in B\backslash \B_{n+1}$ with $\F$ a flag in standard form and let $\Sh(Q)=(w, u^{*})$.  Then it follows from definitions that $\Sh(Q)=(w,u^{*})$ if and only if $\tilde{\F}=w(\mathcal{E}_{n+1})$ and $\F^{*}=u^{*}(\mathcal{E}^{*}_{n+1})$, where $\mathcal{E}^{*}_{n+1}$ is the flag in Equation (\ref{eq:E*flag}).
\end{rem}

The following proposition explains how to compute the flags 
$\tilde{\F}$ and $\F^{*}$ of Notation \ref{n:tildeandstarflags} from a flag $\F$ in standard form.   This will be used to compute $\Sh(Q).$

 \begin{prop}\label{p:orbits}(see Proposition 2.7 of \cite{Shpairs})
Let $\F\subset \B_{n+1}$ be a flag in standard form with 
$$
\F=(v_{1}\subset v_{2}\subset \dots\subset v_{p}\subset \dots\subset v_{n+1}).
$$  
(1) If $\F$ contains no hat vectors, then 
$\F=\tilde{\F}=\F^{*}$. 

\noindent (2) If $\F$ has hat vectors, we may assume that $\F$ has the form:
\begin{equation}\label{eq:hatvectorF}
\F=(v_{1}\subset\dots\subset v_{i_{k}-1}\subset\underbrace{ \he_{j_{k}}}_{i_{k}}\subset v_{i_{k}+1}\subset \dots \subset \underbrace{\he_{j_{k-1}}}_{i_{k-1}}\subset \dots \subset \underbrace{\he_{j_{1}}}_{i_{1}}\subset \dots\subset \underbrace{e_{n+1}}_{p}\subset v_{p+1}\subset \dots\subset v_{n+1}),
\end{equation}
with $j_{k}>j_{k-1}>\dots>j_{1}$ and $v_{m}$ a standard basis vector.  
Then
\begin{equation}\label{eq:tildeF}
\tilde{\F}=(v_{1}\subset\dots\subset v_{i_{k}-1}\subset\underbrace{e_{n+1}}_{i_{k}}\subset v_{i_{k}+1}\subset \dots\subset \underbrace{e_{j_{k}}}_{i_{k-1}}\subset\dots\subset \underbrace{e_{j_{2}}}_{i_{1}}\subset \dots\subset\underbrace{e_{j_{1}}}_{p}\subset v_{p+1}\subset\dots\subset v_{n+1}),
\end{equation}
and 
\begin{equation}\label{eq:starF}
\F^{*}=(v_{1}\subset \dots\subset v_{i_{k-1}}\subset \underbrace{e_{j_{k}}}_{i_{k}}\subset v_{i_{k}+1}\subset \dots\subset \underbrace{e_{j_{k-1}}}_{i_{k-1}}\subset \dots\subset \underbrace{e_{j_{1}}}_{i_{1}}\subset \dots\subset\underbrace{e_{n+1}}_{p}\subset v_{p+1}\subset \dots\subset v_{n+1}),
\end{equation}
where the $v_{m}$ are the same vectors that appear in the flag in Equation (\ref{eq:hatvectorF}).
\end{prop}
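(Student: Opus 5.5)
The plan is to compute the two $H_{n+1}$-stable flags directly: find an element of $B_{n+1}$ carrying $\F$ to the flag in (\ref{eq:tildeF}), and an element of $B_{n+1}^{*}$ carrying $\F$ to the flag in (\ref{eq:starF}). Since each orbit $B_{n+1}\cdot\F$ and $B_{n+1}^{*}\cdot\F$ contains a unique $H_{n+1}$-stable flag by the Bruhat decomposition, exhibiting such elements proves the statement.

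Part (1) is immediate. If $\F$ has no hat vectors, it consists of standard basis vectors, so it is $H_{n+1}$-stable and is its own representative in both orbits.

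For the $B_{n+1}^{*}$-computation in (2), the flag $\mathcal{E}_{n+1}^{*}$ has $e_{n+1}$ as its smallest vector. Hence $B_{n+1}^{*}$ contains the unipotent elements $e_{\ell}\mapsto e_{\ell}+c\,e_{n+1}$ (fixing the other basis vectors) for each $\ell\le n$. Apply the product $u$ of these with $c=-1$ for $\ell=j_1,\dots,j_k$. Then $u$ sends $\he_{j_r}$ to $e_{j_r}$ and fixes every other standard basis vector occurring in $\F$. The only catch is that $u$ moves the vectors $e_{j_r}$ themselves, but these do not occur elsewhere in $\F$, since each index $j_r$ already occurs once in $\F$ through $\he_{j_r}$. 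So $u\cdot\F$ is the flag obtained by dropping all hats, which is (\ref{eq:starF}).

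The $B_{n+1}$-computation is the main step. Here $e_{n+1}$ is the largest vector, so only upper-triangular column operations are allowed, i.e. we may add to a vector multiples of basis vectors $e_\ell$ with smaller index. We argue by induction on $k$, working with subspaces $V_m$ rather than with individual vectors.
\begin{itemize}
\item At position $i_k$, the condition $j_k>j_{k-1}>\dots>j_1$ and $j_r<n+1$ force the subspace $V_{i_k}$ to be spanned by $V_{i_k-1}$ together with $\he_{j_k}$.
\item Consider the $B_{n+1}$-orbit of lines in $\C^{n+1}/V_{i_k-1}$. The line through $e_{j_k}+e_{n+1}$ lies in the $B_{n+1}$-orbit of $[e_{n+1}]$, since $e_{n+1}$ has the largest index among the coordinates; explicitly, $e_{n+1}\mapsto e_{n+1}+e_{j_k}$ is in $B_{n+1}$.
\item Applying the inverse element, which sends $e_{n+1}\mapsto e_{n+1}-e_{j_k}$ and fixes the other basis vectors, converts $\he_{j_k}$ into $e_{n+1}$ and each later $\he_{j_r}$ into $e_{j_r}-e_{j_k}+e_{n+1}$. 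Modulo the new $e_{n+1}$, this is $e_{j_r}-e_{j_k}$.
\item Because $j_r<j_k$, the element $e_{j_k}\mapsto e_{j_k}-e_{j_r}$ is not upper triangular, so we cannot simply clear these terms. Instead, the subspace spanned at position $i_{k-1}$ is generated by $e_{j_{k-1}}-e_{j_k}$ modulo earlier vectors. Its $H_{n+1}$-fixed representative in the $B_{n+1}$-orbit is the line of largest index, $e_{j_k}$, realized by adding a multiple of $e_{j_{k-1}}$ to $e_{j_k}$, which is upper triangular.
\end{itemize}
Iterating this, each hat position $i_r$ receives $e_{j_{r+1}}$, and the position $p$ of $e_{n+1}$ receives $e_{j_1}$. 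This gives exactly (\ref{eq:tildeF}).

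The main obstacle is this cascading shift, that is, verifying at each stage that the relevant subspaces $V_m$ coincide with those of the target flag after a single explicit element of $B_{n+1}$ is applied. I would handle it by writing the target flag's $H_{n+1}$-stable representative via the standard rule for the Bruhat cell: the new basis vector at step $m$ is $e_\ell$ with $\ell$ the largest index appearing in $V_m\setminus V_{m-1}$. I would then check this rule inductively on $m$ using condition (3) of Definition \ref{d:std} together with condition (2), which says no hats occur after $e_{n+1}$.
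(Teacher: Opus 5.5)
The paper gives no argument for this statement; it only cites Proposition 2.7 of \cite{Shpairs}. Your strategy is the natural one and it works: exhibit elements of $B_{n+1}^{*}$ and of $B_{n+1}$ carrying $\F$ to the claimed coordinate flags, then invoke uniqueness of the $H_{n+1}$-fixed flag in each orbit. Part (1) and the $\F^{*}$ computation are complete as written. The product of the maps $e_{j_r}\mapsto e_{j_r}-e_{n+1}$ lies in $B^{*}_{n+1}$, removes every hat, and fixes all other vectors of $\F$, because each index occurs only once.

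The $\tilde{\F}$ step is not yet a proof as written, and it contains the following errors.

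\emph{First error.} Your remark that $e_{j_k}\mapsto e_{j_k}-e_{j_r}$ is not upper triangular is false. Since $j_r<j_k$, this map lies in $B_{n+1}$, and it is exactly the kind of element you use in the next sentence. The real obstruction is different: deleting the $e_{j_k}$-term from $e_{j_r}-e_{j_k}$ would require $e_{j_r}\mapsto e_{j_r}+e_{j_k}$, which is lower triangular.

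\emph{Second error.} The ``standard rule'' you propose for the deferred verification is misstated. The index at step $m$ is $\min\{\ell:\ V_m\cap E_\ell\not\subset V_{m-1}\}$, where $E_\ell=\mathrm{span}\{e_1,\dots,e_\ell\}$. Equivalently, it is the \emph{minimum} over $v\in V_m\setminus V_{m-1}$ of the largest index in the support of $v$. Read literally, ``the largest index appearing in $V_m\setminus V_{m-1}$'' gives $n+1$ for every $m\geq i_k$. This is because that set always contains a vector with nonzero $e_{n+1}$-coordinate, namely $\he_{j_k}$ or $v_m+\he_{j_k}$.

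\emph{Omission.} You never track $v_p=e_{n+1}$, and that is where $e_{j_1}$ at position $p$ comes from. If you do track it, the cascade closes cleanly and the rule is not needed.
\begin{enumerate}
\item The map $g_0\colon e_{n+1}\mapsto e_{n+1}-e_{j_k}$ sends $v_p$ to $e_{n+1}-e_{j_k}$. This is $\equiv -e_{j_k}$ modulo the new $V_{p-1}$, which contains $g_0(\he_{j_k})=e_{n+1}$.
\item Then, for $r=k-1,\dots,1$, apply $c_r\colon e_{j_{r+1}}\mapsto e_{j_{r+1}}+e_{j_r}$, which is upper triangular since $j_r<j_{r+1}$. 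It does the following:
\begin{itemize}
\item It turns position $i_r$ from $e_{j_r}-e_{j_{r+1}}$ into $-e_{j_{r+1}}$.
\item It turns each later $e_{j_s}-e_{j_{r+1}}$ (for $s<r$) into $e_{j_s}-e_{j_r}$, and turns position $p$ into $e_{j_r}$, both modulo the just-produced $e_{j_{r+1}}$.
\item It fixes every earlier $V_m$, since each is a coordinate subspace not containing $e_{j_{r+1}}$.
\item It fixes every non-hat vector, since its index lies outside $\{j_1,\dots,j_k,n+1\}$.
\end{itemize}
\end{enumerate}
Hence $c_1\cdots c_{k-1}g_0\in B_{n+1}$ carries $\F$ to (\ref{eq:tildeF}), and uniqueness finishes the argument. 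So no idea is missing, but these repairs are needed for the $\tilde{\F}$ part to be an actual proof.
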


 For Shareshian pairs $\Sh(Q^{\prime})=(x, y^{*})$ and $\Sh(Q)=(w,u^{*})$ we say:
 \begin{equation}\label{eq:oldShorder}
 \Sh(Q^{\prime})\leq \Sh(Q)\Leftrightarrow x\leq w \mbox{ and } y^{*}\leq_{*} u^{*},
 \end{equation}
 where ``$\leq_{*}$" denotes the Bruhat order on $W_{n+1}$ defined by the Borel subalgebra $\fb_{n+1}^{*}$.  We refer to the ordering in (\ref{eq:oldShorder}) as the Bruhat ordering on Shareshian pairs.  
 Theorem 3.4 of \cite{Shpairs} asserts that the closure ordering on $B\backslash \B_{n+1}$ is given by the Bruhat ordering on the set of Shareshian pairs, i.e., 
 \begin{equation}\label{eq:oldShclosure}
 Q^{\prime}\subset\overline{Q}\Leftrightarrow \Sh(Q^{\prime})\leq \Sh(Q).
 \end{equation}
 

 In our most recent work in \cite{Siorbits}, we extend the theory 
 of Shareshian pairs to $S_{i}$-orbits on $\B_{n}$ for any $i\in \{1,\dots, n\}$.  The role of the Borel subgroup $B_{n+1}^{*}$ is replaced by the Borel subgroup $B^{i}$ of $G$ which stabilizes the flag 
  \begin{equation}\label{eq:Eiflag}
 \mathcal{E}^{i}=(e_{i}\subset e_{1}\subset\dots\subset e_{i-1}\subset e_{i+1}\subset\dots\subset e_{n}) = \sigma_{i}(\mathcal{E}_{n}),
 \end{equation}
where 
\begin{equation}\label{e:sigmai}
\sigma_{i} = (i,\,i-1,\dots, 2, \,1)
\end{equation}
is the indicated $i$-cycle.   
 In \cite{Siorbits}, we show that the $S_{i}$-orbits on $\B_{n}$ are precisely the non-empty intersections of $B$ and $B^{i}$-orbits on $\B_{n}$ and define the $i$-\emph{Shareshian pair} of $Q\in S_{i}\backslash\B_{n}$ to be:
 \begin{equation}\label{eq:iShdefn}
 \Sh_{i}(Q):=(w, u^{i})\in \Wn\times \Wn \Leftrightarrow Q=B\cdot w(\mathcal{E}_{n})\cap B^{i}\cdot u^{i}(\mathcal{E}^{i}).  
 \end{equation}
 
  It will at times be convenient to consider a slightly different version of an $i$-Shareshian pair. 
 \begin{dfn}\label{d:istd}
 Let $Q\in S_{i}\backslash\B_{n}$ with $\Sh_{i}(Q)=(w, u^{i})$.  The \emph{standardized} $i$-Shareshian pair of $Q$ is $\widetilde{\Sh}_{i}(Q)=(w, u)$ where $u:=\sigma_{i}^{-1}u^{i}\sigma_{i}$. 
 \end{dfn}
 We can define a Bruhat ordering on $i$-Shareshian pairs analogously to how we defined the one Shareshian pairs given in (\ref{eq:oldShorder}).  On the set of standardized Shareshian pairs this ordering is just the restriction of the product of standard Bruhat orders on $\Wn\times \Wn$, i.e., for $\widetilde{\Sh}_{i}(Q^{\prime})=(x,y)$ and $\widetilde{\Sh}_{i}(Q)=(w,u)$,
 \begin{equation}\label{eq:istdShorder}
 \widetilde{\Sh}_{i}(Q^{\prime})\leq \widetilde{\Sh}_{i}(Q)\Leftrightarrow x\leq w \mbox{ and } y\leq u. 
 \end{equation}
 
 \subsection{Monoid Actions on Shareshian and $i$-Shareshian Pairs}
 \label{ss:monoidSh}
 The theory of Section \ref{ss:monoidbackground} can be applied to the study of $G$-orbits 
 on $\B_{n}\times\B_{n}$ for the diagonal action.  These orbits are well-known to  correspond to $B$-orbits on $\B_{n}$, which we parameterize by elements of $W_{n}$ using the Bruhat decomposition.  We therefore obtain a monoid action via the standard simple roots $\Pi_{\fg}\coprod\Pi_{\fg}$ of $\fg\oplus\fg$ on $\Wn.$  The first factor of $\Pi_{\fg}$ acts on the left of $\Wn$, and the second factor acts on the right of $\Wn$.  In terms of Definition \ref{d:roottype}, a root $\alpha\in\Pi_{\fg}\coprod \Pi_{\fg}$ is always complex for $w\in \Wn$ (see Section 4.3 of \cite{Shpairs}).  Explicitly, for $\alpha \in \Pi_{\fg}$ and $w\in \Wn$,
 \begin{equation}\label{eq:Wmonoidact}
\begin{split}
\mbox{(Left Action) } &m(s_{\alpha})*_{L} w=w \mbox{ if } \ell(s_{\alpha}w)<\ell(w) \mbox{ and } m(s_{\alpha})*_{L}w=s_{\alpha}w\mbox{ if } \ell(s_{\alpha}w)>\ell(w).\\
\mbox{(Right Action) } &m(s_{\alpha})*_{R}w=w \mbox{ if } \ell(ws_{\alpha})<\ell(w) \mbox{ and } m(s_{\alpha})*_{R}w=ws_{\alpha}\mbox{ if } \ell(ws_{\alpha})>\ell(w).\\
\end{split}
\end{equation}

\begin{rem}\label{r:Demazure}
 Note that the left and right monoid actions in (\ref{eq:Wmonoidact}) are just given by the corresponding Demazure products from Equation (\ref{eq:introrightDem}).  In this case, the weak order of Definition \ref{d:weak} is the two-sided weak order on $W_{n}$ (see for example, Exercise 8, Chapter 3 of \cite{BB}). 
\end{rem}

Recall that the Borel subalgebra $\fb^{i}$ stabilizes the flag $\mathcal{E}^{i}$ of Equation (\ref{eq:Eiflag}) and $\fb^{i}=\sigma_{i}(\fb),$ where $\sigma_{i}=(i,i-1,\dots, 2, 1)$ is the $i$-cycle so that $\mathcal{E}^{i}=\sigma_{i}(\mathcal{E}_{n})$.   Let $\Pi^{i}_{\fg}=\{\alpha_{1}^{i},\dots, \alpha_{n-1}^{i}\}$ be the simple roots defined by the Borel subalgebra $\fb^{i}$ with corresponding set of simple reflections $S^{i}:=\{s^{i}_{1},\dots, s^{i}_{n}\}$.  It follows that $\alpha^{i}_{k}=\sigma_{i}(\alpha_{k})$ for $k\in \{1,\dots, n-1\}$.
\begin{nota}\label{n:Worders}
 By using the set  $S^{i}$ as our simple reflections in place of  $S,$ we obtain a new order relation on $W_{n}.$  We denote $\Wn$ with this non-standard order relation by $(\Wn, S^{i})$, and denote $\Wn$ with the standard order relation by $(\Wn, S).$  We denote elements in the poset $(\Wn,S)$ by $w, u, ...$ and denote elements in the poset $(\Wn, S^{i})$ by
 $w^{i}, u^{i}, ...$
\end{nota}

We also obtain left and right monoid actions of $S^{i}$ on $(\Wn, S^{i})$  analogous to the ones in Equation (\ref{eq:Wmonoidact}) with $\alpha\in\Pi_{\fg}$ replaced by $\alpha^{i}\in \Pi_{\fg}^{i}$. Recall $\mathfrak{S}_i$ from (\ref{eq:extraroots}).
 We now use the monoid actions of $S$ and $S^{i}$ on $(\Wn,S)$ and $(\Wn,S^{i})$ to define a monoid action of $\mathfrak{S}_i\coprod\Pi_{\fg}$ on $(\Wn,S)\times (\Wn,S^{i})$.  First, observe that $\mathfrak{S}_i\subset\Pi_{\fg}^{i}$.  Indeed, $\alpha_{k}=\alpha_{k+1}^{i}$ for $k=1,\dots, i-2$ and $\alpha_{k}=\alpha_{k}^{i}$ for $k=i+1,\dots, n-1$.   

\begin{dfn}\label{d:diagonal}\emph{(see Definition 5.6 of \cite{Siorbits})}
Define the \emph{restricted diagonal monoid action} on $(W,S)\times (W,S^{i})$ via simple roots $\mathfrak{S}_i\coprod \Pi_{\fg}^{i}$ as follows. 

  
For $(w,u^{i})\in (W,S)\times (W,S^{i})$, define
\begin{equation}\label{eq:diagonalmonoid}
\begin{split}
\mbox{ (Left action)} &\mbox{ For } \alpha\in\mathfrak{S}_i,\, m(s_{\alpha})*_{L}(w,u^{i})=(m(s_{\alpha})*_{L}w, m(s_{\alpha})*_{L}u^{i}).\\
\mbox{ (Right action)} &\mbox{ For } \alpha\in\Pi_{\fg}, m(s_{\alpha})*_{R}(w,u^{i})=(m(s_{\alpha})*_{R}w, m(s_{\alpha^{i}})*_{R}u^{i}),
\end{split}
\end{equation}
where the monoid actions $*_{L}$ and $*_{R}$ are given in Equation (\ref{eq:Wmonoidact}).
\end{dfn}
Henceforth, we will frequently drop the subscripts $L$ and $R$ on the monoid actions defined in Equation (\ref{eq:diagonalmonoid}) and use the convention that a simple root $\alpha\in\mathfrak{S}_i\coprod \Pi_{\fg}$ acts on the left whenever $\alpha\in \mathfrak{S}_i$ and on the right if $\alpha\in\Pi_{\fg}$. 
 Recall the extended monoid action on $S_{i}\backslash\B_{n}$ defined in Section \ref{ss:Simonoid}.  


\begin{thm}\label{thm:intertwine}\emph{[Theorem 5.7 of \cite{Siorbits}]}
 For $Q\in S_{i}\backslash\B_{n}$ and $\alpha\in\mathfrak{S}_i\coprod \Pi_{\fg}$,
 \begin{equation}\label{eq:Shinter}
 \Sh_{i}(m(s_{\alpha})*Q)=m(s_{\alpha})*\Sh_{i}(Q).  
 \end{equation}
Moreover, if $m(s_{\alpha})*Q\neq Q$, then the type of the root $\alpha$ is determined by the type of $\alpha$ for the corresponding $i$-Shareshian pair $\Sh_{i}(Q)=(w,u^{i})$.  More precisely, for a root $\alpha\in\mathfrak{S}_i\coprod \Pi_{\fg}$, 
\begin{enumerate}
\item The root $\alpha$ is complex stable for $Q$ if and only if it is complex stable for both $w$ and $u^{i}$.
\item The root $\alpha$ is non-compact for $Q$ if and only if $\alpha$ is complex stable for exactly one of $w$ and $u^{i}$ and unstable for the other. 
\item The root $\alpha$ is real or complex unstable for $Q$ if and only if $\alpha$ is complex unstable for both $w$ and $u^{i}$.
\end{enumerate}
\end{thm}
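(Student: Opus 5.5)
The plan is to work fibrewise on $\pi_{\alpha}:\B_n\to\mathcal{P}_{\alpha}$ and exploit the description $Q=B\cdot w(\mathcal{E}_n)\cap B^{i}\cdot u^{i}(\mathcal{E}^{i})$ from (\ref{eq:iShdefn}). Each of the two Bruhat decompositions behaves very simply under the relevant parabolic enlargement. The $S_i$-orbit structure of $\pi_\alpha^{-1}(\pi_\alpha(Q))$ should then be read off as the common refinement of the $B$-orbit and $B^{i}$-orbit structures on a single $\mathbb{P}^1$ fibre.

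\textbf{Right action, $\alpha\in\Pi_{\fg}$.} The set $S_iP'_\alpha\cdot\fb'$ is contained in $BwP_\alpha/B\cap B^{i}u^{i}\sigma_iP_\alpha/B$. Here
\[
BwP_\alpha/B=Bw B/B\cup Bws_\alpha B/B,
\]
and, writing $B^{i}=\sigma_iB\sigma_i^{-1}$,
\[
B^{i}u^{i}\sigma_iP_\alpha/B=\sigma_i\bigl(B(\sigma_i^{-1}u^{i}\sigma_i)P_\alpha\bigr)/B
\]
is the union of the $B^{i}$-orbits of $u^{i}$ and $u^{i}s_{\alpha^{i}}$, because $\sigma_is_\alpha\sigma_i^{-1}=s_{\alpha^{i}}$.

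\textbf{Left action, $\alpha\in\mathfrak{S}_i$.} First I will check that $\mbox{Stab}_{P_\alpha}[e_i]$ lies in both $P_\alpha$ and the $B^{i}$-standard parabolic of type $\alpha$. This uses the identifications $\alpha_k=\alpha^{i}_{k+1}$ for $k\le i-2$ and $\alpha_k=\alpha^{i}_k$ for $k\ge i+1$, together with the fact that such an $\alpha$ never mixes $e_i$ with other basis vectors. Hence $\mbox{Stab}_{P_\alpha}[e_i]\cdot Q$ lies in the union of the $B$-orbits of $w,s_\alpha w$ and the $B^{i}$-orbits of $u^{i},s_\alpha u^{i}$.

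In either case the fibre variety $Y$ lies in the intersection of the two two-orbit unions. Every $S_i$-orbit in $Y$ is such an intersection, since $S_i$-orbits are exactly nonempty intersections of $B$- and $B^i$-orbits by \cite{Siorbits}.

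\textbf{Identifying the open orbit and the root type.} Restrict to one fibre $\mathbb{P}^1=P'_\alpha/B'$. For $w$ with $\alpha$ complex stable (length increases), the fibre meets $B\cdot w(\mathcal{E}_n)$ in one point $p$ and meets the larger orbit in $\mathbb{A}^1$. For $\alpha$ unstable for $w$, the fibre lies entirely in the larger $B$-orbit and meets the smaller one in a point. The same holds for $u^{i}$, with a distinguished point $q$.

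I then split into cases.
\begin{enumerate}
\item Both $w$ and $u^{i}$ are unstable. Then $Q$ meets the fibre in the complement of $\{p,q\}$ or in everything, so $Q$ is already open in $Y$. Thus $m(s_\alpha)*Q=Q$, matching the pair being fixed; this gives (3).
\item Both are stable. Then $Q\cap\mathbb{P}^1=\{p\}=\{q\}$. Here I must argue that the two distinguished points coincide: $Q$ is nonempty on the fibre, so the point of $Q$ lies in both small pieces. The fibre then has two $S_i$-orbits, so the root is complex stable, and the open orbit is $(ws_\alpha,u^{i}s_{\alpha^{i}})$; this gives (1).
\item Exactly one is stable. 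Then the fibre splits into three pieces $\{p\}$, $\{q\}$ and $\mathbb{P}^1\setminus\{p,q\}$, with $p\neq q$. This is the three-orbit torus picture of Remark \ref{r:onlythree}, so the root is non-compact, and the open piece is the intersection of the two larger orbits; this gives (2).
\end{enumerate}
In all cases the open $S_i$-orbit of $Y$ is the intersection of the larger $B$-orbit with the larger $B^{i}$-orbit. That is exactly the Demazure-product formula (\ref{eq:Shinter}), once the dimension count (\ref{eq:basicmonoid}) and the nonemptiness of that intersection are checked.

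\textbf{Expected main obstacle.} The delicate step is the bookkeeping in cases 2 and 3. One must verify that the fibre through a point of $Q$ really realizes the claimed configuration of $p$ and $q$, in particular that $p=q$ exactly in the doubly stable case. One must also match ``stable/unstable for $u^{i}$'' with the $S^{i}$-length convention rather than the standard one. I would settle this first by a direct computation with $H$-fixed flags, using Corollary \ref{c:closedSi}-type representatives in each fibre, before assembling the general argument.
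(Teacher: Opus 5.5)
Your treatment of the right action ($\alpha\in\Pi_{\fg}$) is sound, but the left action by $\alpha\in\mathfrak{S}_i$ is not actually proved: it is analysed on the wrong $\mathbb{P}^1$. The paper imports this theorem from \cite{Siorbits} without proof, so I judged your argument on its own.

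\textbf{The gap: the left action.} After showing that $L\cdot Q$, with $L:=\mbox{Stab}_{P_\alpha}[e_i]$, lies in the two two-orbit unions, you restrict to the fibre $P'_\alpha/B'$. That is the fibre of the \emph{right} action, and $L\cdot x$ is not built from it. Already for $n=3$, $i=1$, $\alpha=\alpha_2\in\mathfrak{S}_1$ one has $L=P_{\alpha_2}$. For $x=s_1(\fb)$ this gives $P_{\alpha_2}\cdot x=B\cdot s_1(\fb)\cup B\cdot s_2s_1(\fb)$, whereas the right $\alpha_2$-fibre through $x$ lies in $B\cdot s_1(\fb)\cup B\cdot s_1s_2(\fb)$. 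So your dichotomy (``the fibre meets $B\cdot w(\mathcal{E}_n)$ in a point iff $ws_\alpha>w$'') computes $m(s_\alpha)*_R$, not $m(s_\alpha)*_L$.

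The correct $\mathbb{P}^1$ is $S_i\backslash L$, mapped onto the $S_i$-orbits in $L\cdot x$ by $S_il\mapsto S_i\cdot lx$. This is the first-factor fibre in (\ref{eq:firstfactorfirst}) transported through (\ref{eq:Sicorres}), so the left root type is read off from it. To run your refinement argument on it you need the following.
\begin{itemize}
\item $L\cap B=L\cap B^i=S_i$.
\item $BL=P_\alpha$ and $B^iL=P^i_\alpha$, where $P^i_\alpha\supset B^i$ is the parabolic of type $\alpha$. This uses that $\alpha\in\mathfrak{S}_i$ is $\fb^i$-simple, and that $\dot s_\alpha$ and $U_{-\alpha}$ fix $e_i$.
\end{itemize}
Then $S_i\backslash L\cong B\backslash P_\alpha\cong B^i\backslash P^i_\alpha$, and the $B$-orbit of $lx$ depends only on $Bl$. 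The line $B\backslash P_\alpha$ splits into one point lying in the smaller of $B\cdot w(\mathcal{E}_n)$, $B\cdot s_\alpha w(\mathcal{E}_n)$, and an $\mathbb{A}^1$ lying in the larger. The identity coset is that point exactly when $s_\alpha w>w$. The same holds for $B^i$, with $s_\alpha u^i$ versus $u^i$ compared in $(W_n,S^i)$. With this, your three cases go through verbatim with left multiplication.

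\textbf{The rest.} For $\alpha\in\Pi_{\fg}$, the two point/complement partitions of $F=gP_\alpha/B$ and their common refinement give (\ref{eq:Shinter}) and (1)--(3) exactly as you say. There are two slips to fix:
\begin{itemize}
\item When $ws_\alpha<w$, the fibre meets the larger orbit $B\cdot w(\mathcal{E}_n)$ in an $\mathbb{A}^1$ and the smaller one in a point. It does not lie entirely in the larger orbit.
\item In case 1, ``or in everything'' should read ``the complement of one or two points''.
\end{itemize}
Your two expected obstacles are not real ones.
\begin{itemize}
\item The coincidence $p=q$ in case 2 is automatic, because $x$ lies in both small orbits.
\item The $S^i$-length matching is just conjugation by $\sigma_i^{-1}$. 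This sends $F$ to a right $\alpha$-fibre and $B^i\cdot u^i(\mathcal{E}^i)$ to $B\cdot u(\mathcal{E}_n)$ with $u=\sigma_i^{-1}u^i\sigma_i$.
\end{itemize}
Finally, (\ref{eq:Shinter}) itself needs no fibre analysis, for either side. As in the proof of Proposition \ref{p:firstweirdmonoid}(2), $B\cdot(m(s_\alpha)*Q)$ is the dense $B$-orbit in $B\cdot Y$, which equals $BwP_\alpha/B$ (right action) or $P_\alpha wB/B$ (left action). Likewise $B^i\cdot(m(s_\alpha)*Q)$ is the dense $B^i$-orbit in $B^i\cdot Y$. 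The fibre picture is needed only for the root-type statements.
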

\begin{rem}\label{r:Theorem4.7}
When $i=n$ the group $S_{n}=B_{n-1}$ (up to centre) and the extended monoid action of Section \ref{ss:Simonoid} is the extended monoid action on $B_{n-1}\backslash\B_{n}$ constructed in Section 4 of \cite{CE21I} (see Remark \ref{r:extended}).  Theorem \ref{thm:intertwine} then specializes to Theorem 4.8 of \cite{Shpairs} with $\Sh_{i}(Q)=(w, u^{i})$ replaced with $\Sh(Q)=(w, u^{*})$ for $Q\in B_{n-1}\backslash\B_{n}$.  In this case we denote the simple roots $\Pi^{n}_{\fg}$ by $\Pi^{*}_{\fg}=\{\alpha_{1}^{*},\dots, \alpha_{n-1}^{*}\}$ and the corresponding simple reflections by $S^{*}=\{s_{1}^{*},\dots, s_{n-1}^{*}\}$. 
\end{rem}
 \begin{rem}\label{r:standardmonoid}
We define the monoid action on standardized $i$-Shareshian pairs.  For $\widetilde{\Sh}_{i}(Q)=(w,u)$, we define the right action by $\alpha\in \Pi_{\fg}$  by 
$$
\ms*(w,u)=(\ms*w, \ms*u).  
$$
For $\alpha\in \mathfrak{S}_{i},$ we define the left monoid action by:  
\begin{equation}\label{eq:twistinsecond}
 \ms*(w,u)=(\ms*w, m(s_{\sigma_{i}^{-1}(\alpha)})* u).
\end{equation}
 Note that for $\alpha\in\mathfrak{S}_{i}$, it follows from (\ref{e:sigmai}) that $\sigma_{i}^{-1}(\alpha)\in \Pi_{\fg}$ so that the operator $m(s_{\sigma_{i}^{-1}}(\alpha))$ is defined.
With this convention, for $Q\in S_{i}\backslash\B_{n}$ and $\alpha\in\mathfrak{S}_i\coprod \Pi_{\fg}$, the identity
\begin{equation}\label{eq:standardshar}
 \widetilde{\Sh}_{i}(m(s_{\alpha})*Q)=m(s_{\alpha})*\widetilde{\Sh}_{i}(Q) 
\end{equation}
is an easy consequence of (\ref{eq:Shinter}), and the analogues of (1)-(3) in Theorem \ref{thm:intertwine} are valid for standardized Shareshian pairs.
\end{rem}


 \subsection{Shareshian Pairs and $G$-orbits on $\B_{n+1}$}
 Given any $B$-orbit $\cZ$ on $\B_{n+1}$, $\cZ$ is contained in the $G$-orbit $Q_{G}:=G\cdot \cZ$ on $\B_{n+1}$.  Recall the classification of 
 $G$-orbits on $\B_{n+1}$ given in Section \ref{s:Gorbits}.  Recall the flag 
 $\mathcal{E}^{*}_{n+1}=(e_{n+1}\subset e_{1}\subset\dots\subset e_{n})$ defined in Equation (\ref{eq:E*flag}) and the Borel subalgebra of $\fb_{n+1}^{*}\subset\fg_{n+1}$ which stabilizes it.  Let $S^{*}=\{s_{1}^{*}, \dots, s_{n}^{*}\}$ be the set of simple reflections defined by $\fb^{*}_{n+1}$.  Then 
 \begin{equation}\label{eq:starreflections}
 s_{1}^{*}=(1,\, n+1)\mbox{ and } s_{j}^{*}=(j-1,\, j)\mbox{ for } j\in\{2,\dots, n\}.  
\end{equation}
The Weyl group $\Wn =\langle s_{1},\dots, s_{n-1}\rangle=\langle s_{2}^{*}, \dots, s_{n}^{*}\rangle.$  
For $i,\, j\in\{1,\dots, n+1\}$, define elements $w_{i}$ and $w_{j}^{*}$ in $W_{n+1}$ by 
\begin{equation}\label{eq:specialWelts}
w_{i}:=s_{n}\dots s_{i}\mbox{ and } w_{j}^{*}=s_{1}^{*}\dots s_{j-1}^{*} \mbox{ with the convention that } w_{n+1}=id\mbox{ and } w_{1}^{*}=id.  
\end{equation}
Note that $w_{i}$ and $w_j^{*}$ are the cycles
\begin{equation}\label{eq:specialexplicit}
w_{i}=(n+1, n, \dots, i+1, i), \ w_j^{*}=(1,2,\dots, j-1, n+1).
\end{equation}
We let $\sigma:=w_{1}$, and note that $\sigma\in W_{n+1}$ is the $n+1$-cycle 
$\sigma=(n+1,\dots, 1)$.

\begin{rem}\label{r:shortestreps}
It is easy to verify that the elements $\{w_{i}:\, i=1,\dots, n+1\}$ are a complete set of shortest length
representatives for the set of right cosets $\Wn\backslash {\cW}_{n+1}$.  Also, the 
set $\{w_{j}^{*}:\, j=1,\dots, n+1\}$ is a set of shortest length representatives for the set of right cosets 
 $\Wn\backslash {\cW}_{n+1}$ provided we define the length function on ${\cW}_{n+1}$  with respect to the simple reflections 
$\{s_{1}^{*}, \dots, s_{n}^{*}\}$.
\end{rem}

\begin{thm}\label{thm:ShandKorbits}
Let $\cZ\in B\backslash\B_{n+1}$ and $Q_{i,j}$ be the $G$-orbit given in Proposition \ref{prop:typeAflag}.   Then 
\begin{equation}\label{eq:ShandK}
\cZ\subset Q_{i,j}\Leftrightarrow \Sh(\cZ)\in \Wn w_{i}\times \Wn w_{j}^{*}.
\end{equation}
\end{thm}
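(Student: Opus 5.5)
Since both sides of (\ref{eq:ShandK}) partition $B\backslash\B_{n+1}$ (the $G$-orbits $Q_{i,j}$ partition $\B_{n+1}$, and by Remark \ref{r:shortestreps} the sets $\Wn w_i\times\Wn w_j^*$ are disjoint), it suffices to prove the forward implication. The plan is to compute $\Sh$ of one point of each $Q_{i,j}$ and then transport along $G$ using the fact that $\Sh(\cZ)$ only records the $B_{n+1}$-cell and the $B^*_{n+1}$-cell of $\cZ$.

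\textbf{Step 1.} Since $B_{n+1}\supset B$ and $G=BW_nB$, we have $G\cdot\F\subset B_{n+1}W_nB_{n+1}\cdot\F$. I will use the $(B_{n+1},B_{n+1})$ double coset structure: $B_{n+1}\,\Wn\, B_{n+1} = P$ is the standard parabolic of $G_{n+1}$ with Levi containing $GL(n)$, and every $B_{n+1}$-orbit meeting $P\cdot w(\mathcal{E}_{n+1})$ has label in $\Wn w$. Thus, for $\cZ\subset Q_{i,j}=G\cdot \mathcal{E}_{i,j}$, the first coordinate $w$ of $\Sh(\cZ)$ lies in $\Wn w'$, where $w'(\mathcal{E}_{n+1})=\widetilde{\mathcal{E}_{i,j}}$. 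The same argument with $P^*:=B^*_{n+1}\Wn B^*_{n+1}$ applies: $B^*_{n+1}\cap G=B$ up to centre, since $B^*_{n+1}$ stabilizes $e_{n+1}$ and $U_n$'s flag. Also, $\Wn=\langle s_2^*,\dots,s_n^*\rangle$ is a standard parabolic for $S^*$. This gives $u^*\in \Wn u'$, where $u'(\mathcal{E}^*_{n+1})=\mathcal{E}_{i,j}^*$.

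\textbf{Step 2.} Compute the two flags for the representative $\mathcal{E}_{i,j}$ using Proposition \ref{p:orbits}. The flag $\mathcal{E}_{i,j}$ in (\ref{eq:flagij}) is in standard form, with a single hat vector $\hat e_{j-1}$ in position $i$ and $e_{n+1}$ in position $j$. Hence $\widetilde{\mathcal{E}_{i,j}}$ puts $e_{n+1}$ in position $i$ and $e_{j-1}$ in position $j$, which is exactly the flag $\mathcal{E}_{i,i}=w_i(\mathcal{E}_{n+1})$ of (\ref{eq:flagi}). Similarly $\mathcal{E}_{i,j}^*$ is $e_1,\dots,e_{i-1},e_{j-1},e_i,\dots,e_{j-2},e_{n+1},\dots$, and I will check that this equals $w^*_j\sigma'(\mathcal{E}^*_{n+1})$ with the extra cycle lying in $\Wn$ acting on the left. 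Concretely, the left factor is the cycle on $\{i,\dots,j-1\}$, so the flag lies in $\Wn w_j^*(\mathcal{E}^*_{n+1})$; here $w_j^*(\mathcal{E}^*_{n+1})=(e_1\subset\cdots\subset e_{j-1}\subset e_{n+1}\subset\cdots)$. For the closed orbits $i=j$ there are no hat vectors, so $\F=\tilde\F=\F^*=\mathcal{E}_{i,i}$. This flag is $w_i(\mathcal{E}_{n+1})$ and also $\Wn w^*_i(\mathcal{E}^*_{n+1})$, where the latter is trivial since both flags equal $e_1,\dots,e_{i-1},e_{n+1},\dots$.

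\textbf{Step 3.} Combine Steps 1 and 2 to get $\cZ\subset Q_{i,j}\Rightarrow\Sh(\cZ)\in\Wn w_i\times\Wn w_j^*$. Disjointness of the right-hand sides over $(i,j)$ then gives the converse.

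The main obstacle is the first step: making rigorous that a $G$-translate only moves the Bruhat label within a right $\Wn$-coset. I would argue via the projection $\B_{n+1}\to G_{n+1}/P_{n+1}$, where $P_{n+1}$ is the standard maximal parabolic stabilizing $U_n$, here acting on the left. More precisely, the $B_{n+1}$-orbit of $g\F$ with $g\in G\subset P_{n+1}$ lies in $P_{n+1}\cdot\F$, and $P_{n+1}\cdot w(\mathcal{E}_{n+1})=\bigcup_{v\in\Wn}B_{n+1}vw(\mathcal{E}_{n+1})$. The analogous statement for $B^*$ requires checking that $G$ lies in the standard parabolic of $B^*_{n+1}$ generated by $s_2^*,\dots,s_n^*$, namely the stabilizer of the line $\C e_{n+1}$, which $G$ fixes.
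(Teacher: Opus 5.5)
Your proof is correct. The converse step is exactly the paper's: the cosets in Remark~\ref{r:shortestreps} are disjoint and the $Q_{i,j}$ partition $\B_{n+1}$. The forward implication, however, goes by a different route.

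The paper argues orbit by orbit:
\begin{itemize}
\item it takes the standard-form representative $\F$ of $\cZ$;
\item it uses Lemma 5.4 of \cite{CE21II} to translate $\F\in Q_{i,j}$ into ``first hat vector in position $i$, $e_{n+1}$ in position $j$'';
\item it computes $\tilde{\F}$ and $\F^{*}$ by Proposition~\ref{p:orbits} to get $w(i)=n+1$ and $u^{*}(j-1)=n+1$;
\item it converts these into coset membership via Lemma~\ref{l:cosets}.
\end{itemize}

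You instead note that $G$ lies in two standard parabolics:
\begin{itemize}
\item $\mbox{Stab}_{G_{n+1}}(U_n)=B_{n+1}\Wn B_{n+1}$;
\item $\mbox{Stab}_{G_{n+1}}(\C e_{n+1})=B^{*}_{n+1}\Wn B^{*}_{n+1}$, the standard parabolic for $S^{*}$ attached to $\{s_2^{*},\dots,s_n^{*}\}$.
\end{itemize}
By $P_J wB=\bigcup_{v\in W_J}BvwB$, both coordinates of the Shareshian pair are constant on a $G$-orbit modulo left multiplication by $\Wn$. So only the base point $\mathcal{E}_{i,j}$ has to be computed.

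Your computations at the base point are right:
\begin{itemize}
\item $\mathcal{E}_{i,j}=b\,\mathcal{E}_{i,i}$, where $b\in B_{n+1}$ sends $e_{n+1}\mapsto e_{n+1}+e_{j-1}$, so the first label is $w_i$.
\item $\mathcal{E}^{*}_{i,j}=v\,w_j^{*}(\mathcal{E}^{*}_{n+1})$, where $v\in\Wn$ is the cycle on $\{i,\dots,j-1\}$ with $v(i)=j-1$ and $v(m)=m-1$ for $i<m\le j-1$.
\end{itemize}
Your expression ``$w^{*}_j\sigma'$'' has the factors in the wrong order. The $\Wn$ factor must sit on the left, as your ``Concretely'' sentence correctly says.

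Your route is more conceptual. It explains why the Shareshian pair is a $G$-orbit invariant modulo $\Wn$: in both Bruhat structures $G$ sits inside a maximal parabolic whose Weyl group is $\Wn$. It also needs neither the standard-form classification of all $B$-orbits nor Lemma 5.4 of \cite{CE21II}.

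The paper's route, in exchange, links the coset data directly to the positions of the first hat vector and of $e_{n+1}$ in the standard form. It produces along the way the numerical criterion $w(i)=n+1$, $u^{*}(j-1)=n+1$, which is used in Corollary~\ref{c:specialcase}(3) and in the later standard-form computations. You could recover this criterion from your result via Lemma~\ref{l:cosets}.
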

\noindent The following two Lemmas are used to prove Theorem \ref{thm:ShandKorbits}.
\begin{lem}\label{l:oldlemma}(Lemma 5.4 of \cite{CE21II})
Let $\F=(v_{1}\subset v_{2}\subset\dots\subset v_{n}\subset v_{n+1})$ be a flag in standard form.  
\begin{enumerate}
\item The flag $\F\in Q_{j}$ if and only if $\F$ contains no hat vectors and $v_{j}=e_{n+1}$. 
\item  The flag $\F\in Q_{i,j}$ with $i<j$ if and only if $v_{i}$ is a hat vector, 
$v_{k}$ is a standard basis vector for all $k<i$, and $v_{j}=e_{n+1}$.  
\end{enumerate}
\end{lem}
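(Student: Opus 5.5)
The plan is to attach to every flag in $\B_{n+1}$ a pair of integers that is constant on $G$-orbits, and to show this pair separates the orbits. Then I read the pair off a flag in standard form. For a flag $\F=(V_{1}\subset\dots\subset V_{n+1})$ in $\C^{n+1}$, set
\[
a(\F):=\min\{k:\ V_{k}\not\subset U_{n}\},\qquad b(\F):=\min\{k:\ e_{n+1}\in V_{k}\}.
\]
Clearly $a(\F)\le b(\F)$. Since every $g\in G$ stabilizes $U_{n}$ and fixes $e_{n+1}$, the condition $V_{k}\subset U_{n}$ is unchanged by $g$, and so is the condition $e_{n+1}\in V_{k}$. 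Hence the pair $(a,b)$ is constant on $G$-orbits.

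\textbf{Step 1: the pair determines the orbit.} Evaluate $(a,b)$ on the representatives in Proposition \ref{prop:typeAflag}.
\begin{itemize}
\item For $\mathcal{E}_{i,i}$, the first $i-1$ vectors lie in $U_{n}$ and the $i$-th is $e_{n+1}$, so $(a,b)=(i,i)$.
\item For $\mathcal{E}_{i,j}$ with $i<j$, the first vector outside $U_{n}$ is $\hat{e}_{j-1}$, in position $i$. Also $e_{n+1}\notin V_{k}$ for $k<j$, because $V_{j-1}$ is spanned by $e_{1},\dots,e_{j-2}$ and $\hat e_{j-1}$. So $(a,b)=(i,j)$.
\end{itemize}
Thus the $\binom{n+2}{2}$ orbits $Q_{i,j}$ with $i\le j$ take pairwise distinct values of $(a,b)$. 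Since these are all the $G$-orbits, a flag lies in $Q_{i,j}$ if and only if $(a(\F),b(\F))=(i,j)$. Here $Q_{j}$ in the lemma denotes the closed orbit $Q_{j,j}$.

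\textbf{Step 2: compute $(a,b)$ for a flag in standard form.} Let $\F$ be in standard form, with $v_{p}=e_{n+1}$ (condition (1) of Definition \ref{d:std}). Each $v_{k}$ is either some $e_{m}$ with $m\le n$, a hat vector $\hat e_{m}$, or $e_{n+1}$. The space $V_{k}$ lies in $U_{n}$ exactly when none of $v_{1},\dots,v_{k}$ is a hat vector or $e_{n+1}$. Therefore $a(\F)$ is the position of the first vector in the list that is a hat vector or equals $e_{n+1}$.

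For $b$, the vectors $v_{1},\dots,v_{n+1}$ form a basis of $\C^{n+1}$. If $e_{n+1}\in V_{k}$ with $k<p$, then $e_{n+1}=v_{p}$ would be a linear combination of $v_{1},\dots,v_{k}$, contradicting linear independence. Hence $b(\F)=p$. This is the step needing most care: one might worry that some combination such as $\hat e_{m}-e_{m}$ produces $e_{n+1}$ early. Independence of the full basis rules this out.

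\textbf{Step 3: conclude.} By Step 1, $\F\in Q_{j}$ if and only if $a=b=j$. By Step 2 this means $v_{j}=e_{n+1}$ and no hat vector occurs before position $j$. Condition (2) of Definition \ref{d:std} says no hat vector occurs after $e_{n+1}$, so this is equivalent to $\F$ containing no hat vectors and $v_{j}=e_{n+1}$. This gives (1).

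Similarly, $\F\in Q_{i,j}$ with $i<j$ if and only if $b=j$ and $a=i<j$. By Step 2 this says $v_{j}=e_{n+1}$, the vector $v_{i}$ is a hat vector (it cannot be $e_{n+1}$, since $i<j$), and $v_{k}$ is a standard basis vector for all $k<i$. This is exactly (2).
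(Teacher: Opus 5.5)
The paper does not prove this lemma here; it quotes it from \cite{CE21II}, so there is no in-paper argument to compare against. Judged on its own, your proof is correct and complete.

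The pair $(a,b)$ is $G$-invariant because $G$ preserves $U_n$ and fixes $e_{n+1}$. It takes the value $(i,j)$ on $\mathcal{E}_{i,j}$ and $(i,i)$ on $\mathcal{E}_{i,i}$. Combined with the completeness of the list of orbits in Proposition \ref{prop:typeAflag}, this shows that $Q_{i,j}$ is exactly the set of all flags, standard or not, with $(a,b)=(i,j)$.

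Your computation of $(a,b)$ for a standard-form flag uses only two things: the linear independence of $v_1,\dots,v_{n+1}$, and condition (2) of Definition \ref{d:std}. The latter is needed only for part (1), and condition (3) is never used.

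The one step you leave implicit is in the converse of part (2). To get $a(\F)=i$, you need the standard basis vectors $v_k$ with $k<i$ to differ from $e_{n+1}$. This holds because $e_{n+1}=v_j$ with $j>i$ and the $v_k$ are independent.
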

\begin{lem}\label{l:cosets}
\begin{enumerate}
\item Let $w\in {\cW}_{n+1}$.  Then $w\in \Wn w_{i}\Leftrightarrow w(i)=n+1$.  
\item Let $u^{*}\in {\cW}_{n+1}$.  Then $u^{*}\in \Wn w_{j}^{*}\Leftrightarrow u^{*}(j-1)=n+1$.  
\end{enumerate}
\end{lem}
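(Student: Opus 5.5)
Both parts follow from one observation: $\Wn\subset \cW_{n+1}$ is exactly the stabilizer of the letter $n+1$. It is the image of $N_{G}(H_{n})$, and $G$ fixes $e_{n+1}$. Consequently, for any fixed $v\in\cW_{n+1}$ and any $k$ with $v(k)=n+1$, the right coset $\Wn v$ is
$$
\Wn v=\{x\in \cW_{n+1}:\ x(k)=n+1\}.
$$
To see this, first let $y\in\Wn$; then $yv(k)=y(n+1)=n+1$. Conversely, if $x(k)=n+1$, then $xv^{-1}(n+1)=x(k)=n+1$. Hence $xv^{-1}\in\Wn$, that is, $x\in \Wn v$. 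So each part reduces to identifying the letter that the coset representative sends to $n+1$.

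For (1), I would apply this with $v=w_{i}$. Using the factorization $w_{i}=s_{n}\cdots s_{i}$ from (\ref{eq:specialWelts}), I would track the letter $i$ through the product, applying the rightmost reflection first. The reflection $s_{i}$ sends $i\mapsto i+1$, then $s_{i+1}$ sends $i+1\mapsto i+2$, and so on, until $s_{n}$ sends $n\mapsto n+1$. Thus $w_{i}(i)=n+1$, in agreement with the cycle description $w_{i}=(n+1,n,\dots,i)$ in (\ref{eq:specialexplicit}). For $i=n+1$ we have $w_{n+1}=id$, which trivially sends $n+1$ to $n+1$. The displayed coset description then gives $w\in \Wn w_{i}\Leftrightarrow w(i)=n+1$.

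For (2), I would apply the same observation with $v=w_{j}^{*}=s_{1}^{*}\cdots s_{j-1}^{*}$, using the formulas (\ref{eq:starreflections}), which apply to $s_2^{*},\dots,s_n^{*}$. I would again track the letter $j-1$ starting from the rightmost factor. The reflection $s_{j-1}^{*}=(j-2,j-1)$ sends $j-1\mapsto j-2$, then $s_{j-2}^{*}$ sends $j-2\mapsto j-3$, and so on, until $s_{2}^{*}$ sends $2\mapsto 1$. Finally $s_{1}^{*}=(1,n+1)$ sends $1\mapsto n+1$. Hence $w_{j}^{*}(j-1)=n+1$, consistent with the cycle $(1,2,\dots,j-1,n+1)$.

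The only point needing care is the degenerate case $j=1$. There $w_{1}^{*}=id$, so the coset is $\Wn$ itself and the correct condition is $u^{*}(n+1)=n+1$. I would therefore state explicitly that, for $j=1$, the index $j-1=0$ is read as $n+1$ (cyclically modulo $n+1$), so that (2) holds in that case as well. Apart from this convention, the argument is a direct computation and I expect no real obstacle.
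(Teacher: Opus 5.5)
Your argument is correct. The forward direction matches the paper: you compute $w_i(i)=n+1$, resp.\ $w_j^*(j-1)=n+1$, and use that $\Wn$ fixes $n+1$.

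The converse is where you differ. The paper takes $w$ with $w(i)=n+1$ and uses Remark \ref{r:shortestreps} to put $w$ in some coset $\Wn w_\ell$. The forward direction then gives $w(\ell)=n+1=w(i)$, so $\ell=i$. You instead observe directly that $xv^{-1}$ fixes $n+1$ and hence lies in $\Wn$. Your route is self-contained and in fact proves the coset-representative half of Remark \ref{r:shortestreps}, which the paper only asserts. The paper's route is shorter once that remark is granted.

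One small point of justification: the inclusion you actually need there is $\mathrm{Stab}(n+1)\subseteq\Wn$. It comes from $\Wn$ being the full symmetric group on $\{1,\dots,n\}$, not from $G$ fixing $e_{n+1}$, which only gives the opposite inclusion. Either way it is immediate.

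Your handling of $j=1$ is also right. Since $w_1^*=\mathrm{id}$, the condition must read $u^*(n+1)=n+1$, consistent with $e_{n+1}$ being the first vector of $\mathcal{E}^*_{n+1}$. The paper's statement, and the case $i=j=1$ in the proof of Theorem \ref{thm:ShandKorbits}, pass over this silently.
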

\begin{proof}
 By (\ref{eq:specialexplicit}), $w_{i}(i)=n+1$.  Since $\Wn$ fixes $n+1$, we conclude that $w(i)=n+1$ for all $w\in {\cW}_{n} w_{i}$.

To prove the converse,  suppose we are given $w\in {\cW}_{n+1}$ with $w(i)=n+1$.  Then by Remark \ref{r:shortestreps}, $w\in \Wn w_{\ell}$ for some $\ell\in\{1,\dots, n+1\}$.  But then by the previous paragraph $w(\ell)=n+1=w(i),$  so that $\ell=i$.  


For part (2), let $u^{*}\in \Wn w_{j}^{*}$.   By (\ref{eq:specialexplicit}),  $w_{j}^{*}(j-1)=n+1,$ so as above, $u^{*}(j-1)=n+1.$  The converse is proved in the same manner as for part (1).

\end{proof}
\begin{proof}[Proof of Theorem \ref{thm:ShandKorbits}]
Suppose $\cZ\subset Q_{i,j}$ with $\cZ=B\cdot \F$ with 
$$\F=(v_{1}\subset \dots\subset v_{i-1}\subset v_{i}\subset\dots\subset v_{j}\subset \dots\subset v_{n+1})$$
a flag in standard form and $\Sh(\cZ)=(w, u^{*})$.  First, suppose that $i=j$.  
Then part (1) of Lemma \ref{l:oldlemma} implies that each $v_{\ell}$ is a standard basis vector and $v_{i}=e_{n+1}$.   It then follows from part (1) of Proposition \ref{p:orbits} that 
$\F=\tilde{\F}=\F^{*}$, so that $w(i)=n+1$ and $u^{*}(i-1)=n+1$.  Next, suppose that $i\neq j$, so that Lemma \ref{l:oldlemma} implies that $v_{i}$ is a hat vector, $v_{k}$ is a standard basis vector for all $k<i$, and $v_{j}=e_{n+1}$.  Therefore, we may assume that $\F$ has the form of the flag in Equation (\ref{eq:hatvectorF}) with $i_{k}=i$ and $p=j$.  Now by part (2) of Proposition \ref{p:orbits}, $w(i)=n+1$ and $u^{*}(j-1)=n+1$ (see (\ref{eq:hatvectorF})-(\ref{eq:starF})).  In both cases, it follows from Lemma \ref{l:cosets} that $w\in \Wn w_{i}$ and $u^{*}\in \Wn w_{j}^{*}$.  Thus, $\Sh(\cZ)\in \Wn w_{i}\times \Wn w_{j}^{*}$. 

To prove the converse of (\ref{eq:ShandK}), suppose that $\cZ\in B\backslash\B_{n+1}$ with $\Sh(\cZ)\in \Wn w_{i}\times \Wn w_{j}^{*}$.  Since $\B_{n+1}=\cup_{1\leq i\leq j\leq n+1} Q_{i,j}$, the orbit $\cZ\subset Q_{k,l}$ for some $Q_{k,l}\in G\backslash\B_{n+1}$.  Then by the forward direction of (\ref{eq:ShandK}), $\Sh(\cZ)\in \Wn w_{k}\times \Wn w_{\ell}^*.$ 
It then follows from Remark \ref{r:shortestreps} that $k=i$ and $\ell=j$. 
\end{proof}

In the next subsection, we specialize to the case $i=1$ and $j=i+1$ and recall that $w_1=\sigma,$ in which case Theorem \ref{thm:ShandKorbits} specializes to the following Corollary.

\begin{cor}\label{c:specialcase}
Let $\cZ\in B\backslash\B_{n+1}$ with $\Sh(\cZ)=(w,u^{*})$.  The following statements 
are equivalent.
\begin{enumerate}
\item $\cZ\subset Q_{1,i+1}$.
\item $\Sh(\cZ)\in \Wn \sigma\times \Wn w_{i+1}^{*}.$
\item $w(1)=n+1$ and $u^{*}(i)=n+1$.  
\end{enumerate}

\end{cor}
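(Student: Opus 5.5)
This is a direct specialization of Theorem \ref{thm:ShandKorbits} combined with Lemma \ref{l:cosets}, and I would organize the argument around those two results.

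First, I would show that (1) and (2) are equivalent. Apply Theorem \ref{thm:ShandKorbits} with the pair of indices $(1,i+1)$ in place of $(i,j)$; this is allowed since $1\le i+1\le n+1$. It gives $\cZ\subset Q_{1,i+1}$ if and only if $\Sh(\cZ)\in \Wn w_{1}\times \Wn w_{i+1}^{*}$. By definition $\sigma=w_{1}$, so this is exactly statement (2).

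Next, I would show that (2) and (3) are equivalent using Lemma \ref{l:cosets}. Part (1) of that lemma with index $1$ gives $w\in \Wn w_{1}=\Wn\sigma$ if and only if $w(1)=n+1$. Part (2) with $j=i+1$ gives $u^{*}\in \Wn w_{i+1}^{*}$ if and only if $u^{*}(i)=n+1$. Putting these together, $(w,u^{*})\in \Wn\sigma\times\Wn w_{i+1}^{*}$ is equivalent to $w(1)=n+1$ and $u^{*}(i)=n+1$, which is (3).

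The only place that needs care is the index bookkeeping. In Lemma \ref{l:cosets}(2) the condition is $u^{*}(j-1)=n+1$, so with $j=i+1$ it becomes $u^{*}(i)=n+1$. One should also check the explicit cycle: by (\ref{eq:specialexplicit}), $w_{i+1}^{*}=(1,2,\dots,i,n+1)$ sends $i$ to $n+1$, which agrees with the lemma. Apart from this check there is no real obstacle.
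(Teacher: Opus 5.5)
Your proposal is correct and follows the paper's route: the paper obtains the corollary by specializing Theorem \ref{thm:ShandKorbits} to the indices $(1,i+1)$ with $w_{1}=\sigma$, and the equivalence with (3) is Lemma \ref{l:cosets} (part (1) with index $1$, part (2) with $j=i+1$), exactly as that lemma is used in the theorem's proof. Your index check that $w_{i+1}^{*}=(1,2,\dots,i,n+1)$ sends $i$ to $n+1$ is also correct.
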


 \subsection{Shareshian Pairs and the Orbit Correspondence}
 
  In this section, we show how the Shareshian pair of a $B$-orbit 
 $\cZ$ contained in $Q_{1,i+1}$ can be used to obtain the standardized $i$-Shareshian pair of the corresponding $S_{i}$-orbit ${\cZ}^{op}$ in $S_{i}\backslash\B_{n}$.  We use this result to prove Corollary \ref{c:closure}, which is the analogue of (\ref{eq:oldShclosure}) for $S_{i}$-orbits and $i$-Shareshian pairs.  This result will also play a substantial role in the proof of one of the main results of the paper, namely Equation (\ref{eq:introclosure}), in the next section.

\begin{thm}\label{thm:localShareshian}
Let $\cZ\in B\backslash Q_{1,i+1}$ with $\Sh(\cZ)=(w,z^{*})$.  Then the \emph{standardized $i$-Shareshian }pair of ${\cZ}^{op}\in S_{i}\backslash\B_{n}$ is:
\begin{equation}\label{eq:localSh}
 \widetilde{Sh}_{i}({\cZ}^{op})=(\sigma w^{-1}, w_{i+1}^{*} {z^{*}}^{-1}).
\end{equation}
\end{thm}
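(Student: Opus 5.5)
We argue by induction along the weak order, following the strategy announced in the introduction: first verify (\ref{eq:localSh}) on the closed $B$-orbits in $Q_{1,i+1}$, then propagate it along monoid actions using Theorem \ref{thm:monoidcorresp} together with the intertwining results for Shareshian pairs (Theorem 4.8 of \cite{Shpairs}, and its $S_i$-analogue Theorem \ref{thm:intertwine} in the standardized form of Remark \ref{r:standardmonoid}).

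\emph{Base case.} By the proof of Theorem \ref{thm:Siweak}, every $\cZ\in B\backslash Q_{1,i+1}$ is obtained from an orbit $\cZ'$ closed in $Q_{1,i+1}$ by a sequence of monoid operators from $\Pi_{\fg}$ acting on the left and from $\{\alpha_2,\dots,\alpha_{i-1}\}\subset\Pi_{n+1,cpt}$ acting on the right. By Corollary \ref{c:closedSi}, the corresponding $S_i$-orbits ${\cZ'}^{op}$ are the $i$ points $S_i\cdot\mathcal{G}_k$, $k=1,\dots,i$.

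For each $k$, we proceed in three steps. First, we write $\mathcal{G}_k=g_k^{-1}\cdot\mathcal{E}_n$ for an explicit permutation matrix $g_k$. Second, we form the flag $g_k\cdot\mathcal{E}_{1,i+1}$ from (\ref{eq:Fioneflag}) and move it by an element of $B$ into standard form (Definition \ref{d:std}). Third, we read off $\Sh(\cZ')=(w,z^*)$ using Proposition \ref{p:orbits}. In parallel, we compute $\Sh_i(S_i\cdot\mathcal{G}_k)$ directly: $\mathcal{G}_k$ is $H$-stable, so $\mathcal{G}_k=w(\mathcal{E}_n)=u^i(\mathcal{E}^i)$, and we then standardize via Definition \ref{d:istd}. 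Finally, we check $(\sigma w^{-1},w^*_{i+1}{z^*}^{-1})$ against this. Corollary \ref{c:specialcase}, which gives $w(1)=n+1$ and $z^*(i)=n+1$, guarantees that $\sigma w^{-1}$ and $w^*_{i+1}{z^*}^{-1}$ fix $n+1$, so both lie in $W_n$. This is a finite, explicit permutation computation.

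\emph{Inductive step.} Suppose (\ref{eq:localSh}) holds for $\cZ$, and let $\cZ_1=m(s)*\cZ\neq\cZ$. There are two cases.

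\emph{Left action by $\alpha\in\Pi_{\fg}$.} By Theorem 4.8 of \cite{Shpairs}, $\Sh$ intertwines this action with the left Demazure action on both coordinates, with $s_\alpha$ acting on $w$ and the corresponding $s^*$ (namely $s_\alpha=s^*_{k+1}$) acting on $z^*$. By (\ref{eq:leftinter}), on the $S_i$ side this becomes the right action by $\alpha$. We must then check the identities
\begin{equation*}
\sigma(s\star w)^{-1}=(\sigma w^{-1})\star s,\qquad w^*_{i+1}(s\star z^*)^{-1}=(w^*_{i+1}{z^*}^{-1})\star s',
\end{equation*}
where $s'$ is the appropriate standard reflection. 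These follow because inversion exchanges left and right Demazure products, and because left multiplication by $\sigma$ (resp.\ $w^*_{i+1}$) converts lengths and root types with respect to $S$ (resp.\ $S^*$) into the standard ones on the relevant coset. Concretely, $\sigma$ sends $\Pi_{\fg}$ to the simple roots $\alpha_2,\dots,\alpha_n$ of $\fg_{n+1}$, so the claim reduces to length-additivity for minimal coset representatives (Remark \ref{r:shortestreps}).

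\emph{Right action by compact $\alpha_j$, $2\le j\le i-1$.} Here (\ref{eq:rightinter}) gives the left action by $\alpha_{j-1}\in\mathfrak{S}_i$ on $S_i$-orbits. By (\ref{eq:twistinsecond}), this acts on the standardized second coordinate through $\sigma_i^{-1}(\alpha_{j-1})$. We then verify that conjugating by $\sigma$ and $w^*_{i+1}$ matches $s_j$ acting on the right of $w$ and $z^*$ with these operators.

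Because Theorem \ref{thm:monoidcorresp} and Theorem \ref{thm:intertwine} preserve root types, the ``stays fixed'' versus ``moves'' dichotomy agrees on both sides, so only the moving case needs checking.

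The main obstacle is this bookkeeping with the twisted reflections, $s^*_j$ versus $s_j$ and the $\sigma_i$-shift, in the inductive step. I would first verify it on small cases ($n=3$) to pin down the index shifts, then prove the general permutation identities. The base-case computation for general $k$ is routine but must be done carefully with standard forms.
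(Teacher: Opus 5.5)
Your proposal is correct and is essentially the paper's proof. The base case is the same computation run in the other direction: the paper starts from the standard-form flags $\mathcal{E}_{1,i+1}(k)=\tau(\mathcal{E}_{1,i+1})$ with $\tau=(k,\dots,i)$, and $\tau^{-1}(\mathcal{E}_n)=\mathcal{G}_k$. The inductive step is the verification that $\Psi(w,z^{*})=(\sigma w^{-1},w_{i+1}^{*}{z^{*}}^{-1})$ intertwines the action of $\alpha$ on Shareshian pairs with the action of $\tilde{\alpha}$ on standardized $i$-Shareshian pairs.

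The one real difference is in how that step is carried out. The paper splits it by root type and uses the root-type preservation of Theorem \ref{thm:monoidcorresp}, together with the inductive hypothesis, to decide which coordinates move. Your coordinatewise Demazure identities handle all cases at once: they rest on length-additivity for the minimal coset representatives $\sigma$ and $w_{i+1}^{*}$, and on the identity $w_{i+1}^{*}s_{j}^{*}{w_{i+1}^{*}}^{-1}=s_{\sigma_{i}^{-1}(\alpha_{j-1})}$. One small correction: it is $\sigma^{-1}$, not $\sigma$, that carries $\Pi_{\fg}$ to $\{\alpha_2,\dots,\alpha_n\}$.
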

\begin{proof}
  We first prove Equation (\ref{eq:localSh})  when $\cZ$ is a closed $B$-orbit on $Q_{1,i+1}$ (or equivalently, $\cZ^{op}$ is a zero-dimensional $S_i$-orbit), and then deduce the general case using properties of the monoid action.  
By Equation (6.22) of \cite{CE21II},  each of these closed orbits  is 
$\cZ={\cZ}_{1,i+1}(k):=B\cdot \mathcal{E}_{1,i+1}(k)$, $k=1,\dots, i$, where $\mathcal{E}_{1,i+1}(k)$ is the flag in standard form 
\begin{equation}\label{eq:firststd}
 \mathcal{E}_{1,i+1}(k):=(\hat{e}_{k}\subset e_{1}\subset \dots\subset e_{k-1}\subset e_{k+1}\subset\dots\subset e_{i-1}\subset e_{i}\subset \underbrace{e_{n+1}}_{i+1}\subset e_{i+1}\subset \dots\subset e_{n}). 
\end{equation}
For brevity, we denote $\mathcal{E}_{1,i+1}(k)$ by $\F$ in our
 computation of $\Sh(\cZ)$.  By Proposition \ref{p:orbits}, the corresponding flags $\tilde{\F}$ and $\F^{*}$ of Notation \ref{n:tildeandstarflags} are given by
\begin{eqnarray}\label{eq:tildeflag}
\begin{split}
\tilde{\F}=(e_{n+1}\subset e_{1}\subset \dots\subset e_{k-1}\subset e_{k+1}\subset\dots\subset e_{i-1}\subset e_{i}\subset &\underbrace{e_{k}}_{i+1}\subset e_{i+1}\subset \dots \subset e_{n})\\
& and\\
\F^{*}=(e_{k}\subset e_{1}\subset \dots\subset e_{k-1}\subset e_{k+1} \subset\dots\subset e_{i-1}\subset e_{i}\subset &\underbrace{e_{n+1}}_{i+1}\subset e_{i+1}\subset \dots \subset e_{n}).
\end{split}
\end{eqnarray}
Thus,  $\Sh({\cZ}_{1,i+1}(k))=(w,z^{*})$, where
\begin{equation}\label{eq:Shdata}
w=(n+1,\, n, \, n-1,\dots, i+2,\, i+1,\, k,\, k-1,\dots, 2,\, 1) \mbox{ and } z^{*}=(k,\, k+1,\, \dots,\, i-1,\, i,\, n+1)
\end{equation}
 are cycles in $W_{n+1}.$

We now compute the standardized $i$-Shareshian pair of ${\cZ}^{op}$.  Recall the flag $\mathcal{E}_{1,i+1}$ in Equation (\ref{eq:Fioneflag}).  Consider the $i-k+1$-cycle $\tau=(k, k+1, \dots, i)$.  Observe that $\F=\tau(\mathcal{E}_{1,i+1}),$
so that $\cZ=B\cdot \tau (\mathcal{E}_{1,i+1}).$  Thus by Equation (\ref{eq:firstcorresp}), ${\cZ}^{op}= S_{i}\cdot\tau^{-1}(\mathcal{E}_{n})$.   Hence, $B\cdot {\cZ}^{op}=B\cdot \tau^{-1}(\mathcal{E}_{n})$ and by (\ref{eq:Eiflag}), 
$$B^{i}\cdot {\cZ}^{op}=B^{i}\cdot \tau^{-1}(\mathcal{E}_{n})=B^{i}\cdot \tau^{-1}\sigma_{i}^{-1}(\mathcal{E}^{i}).$$
Thus, by (\ref{eq:iShdefn}) and Definition \ref{d:istd},
$\Sh_i(\cZ^{op})=(\tau^{-1}, \tau^{-1}\sigma_i^{-1})$ and the standardized $i$-Shareshian pair is  
\begin{equation}\label{eq:twistedmin}
\widetilde{\Sh}_{i}(\cZ^{op})=(\tau^{-1}, \sigma_{i}^{-1}\tau^{-1}).   
\end{equation}
It remains to verify that the expressions in (\ref{eq:twistedmin}) and in (\ref{eq:localSh}) coincide.  For this equality, we verify that 
$\sigma w^{-1} = \tau^{-1}$ and $w_{i+1}^{*}{z^{*}}^{-1}=\sigma_{i}^{-1}\tau^{-1}$,
 using (\ref{eq:Shdata}) and (\ref{eq:specialexplicit}) and explicit computations in $W_{n+1}.$
Thus, Equation (\ref{eq:localSh}) holds for zero dimensional $S_{i}$-orbits ${\cZ}^{op}.$

Recall the set of all Shareshian pairs $\Sp\subset W_{n+1}\times W_{n+1}$ given in Equation (\ref{eq:Spdefn}).  
For the inductive step, we define a subset of Shareshian pairs 
$$\Sp_{1,i+1}:=\{(w, z^{*})\in \Sp|\; (w, z^{*})=\Sh(\cZ)\mbox{ for some } \cZ\in B\backslash Q_{1,i+1}\}.$$  
By Corollary \ref{c:specialcase},  
 \begin{equation}\label{eq:SHcoset}
  \Sp_{1,i+1}=\Sp\cap (\Wn \sigma\times \Wn w_{i+1}^{*}).
 \end{equation}
 Thus, for $(w,z^{*}) \in \Sp_{1,i+1},$ the products $w \sigma^{-1}$ and $z^{*}{w_{i+1}^{*-1}}$ are in $\Wn,$ so we obtain a map 
 \begin{equation}\label{eq:Psidefn}
 \Psi: \Sp_{1,i+1}\mapsto \Wn\times \Wn\mbox{ given by }
 \Psi((w,z^{*}))=(\sigma  w^{-1}, w_{i+1}^{*}{z^{*}}^{-1}).
 \end{equation}
For the remainder of the proof, for $\alpha \in \Pi_{\fg},$ we let $\alphatilde=\alpha$ and for $\alpha=\alpha_j \in \Pi_{n+1,cpt},$ we let $\alphatilde = \alpha_{j-1}.$
 Note that (\ref{eq:localSh}) is equivalent to the assertion that
\begin{equation}\label{eq:tildepsi}
 \widetilde{\Sh}_{i}(\cZ^{op})=\Psi(\Sh(\cZ)), \forall \cZ \in B\backslash Q_{1,i+1}.
\end{equation}
By the proof of Theorem \ref{thm:Siweak}, for each $\cZ$ as above, there is a closed $B$-orbit ${\cZ}_0$ in $Q_{1,i+1}$ and a sequence $\vec{s}$ of simple roots $\alpha_{j_1}, \dots, \alpha_{j_k} \in \Pi_{\fg}\coprod \Pi_{n+1,cpt}$ such that $m(\vec{s})*{\cZ}_0=\cZ.$  Hence, to complete the proof, it suffices to prove the claim that if (\ref{eq:tildepsi}) is true for an orbit $\cZ,$ then (\ref{eq:tildepsi}) holds with $\cZ^{\prime}$ in place of $\cZ,$ where $\cZ^{\prime}:=\ms*\cZ\neq \cZ$ with $\alpha \in\Pi_{\fg}\coprod \Pi_{n+1,cpt}.$ To establish the claim, it suffices to show that for $\alpha$ and $\cZ$ as above, then
 \begin{equation}\label{eq:Psiinter}
\Psi(\ms*\Sh(\cZ))= m(s_{\alphatilde})*\Psi(\Sh(\cZ)).
 \end{equation}
Here for $\alpha\in \Pi_{\fg},$ the monoid action is on the left  on the left hand side of the equation, and on the right on the right hand side of the equation, and for $\alpha \in \Pi_{n+1,cpt}$, the monoid action is on the right on the left hand side of the equation and on the left on the right hand side of the equation. 
 Indeed, assuming (\ref{eq:Psiinter}), then we deduce the following sequence of equalities which give (\ref{eq:tildepsi}) for $\cZ^{\prime}.$
 \begin{equation*}
 \begin{split}
 \Psi(\Sh(\cZ^{\prime}))&=\Psi(\ms*\Sh(\cZ))=m(s_{\alphatilde})*\Psi(\Sh(\cZ))\\
 &=m(s_{\alphatilde})*\widetilde{\Sh}_{i}(\cZ^{op})=\widetilde{\Sh}_{i}(m(s_{\alphatilde})*\cZ^{op})=\widetilde{\Sh}_
{i}((\ms*\cZ)^{op})\\
&= \widetilde{\Sh}_{i}({\cZ^{\prime}}^{op}).
 \end{split}
 \end{equation*}
 Indeed, the first equality follows from Remark \ref{r:Theorem4.7} for the case of $B_n$-orbits on $\B_{n+1},$ since ${\mathfrak{S}}_{n+1}=\Pi_{\fg}$ by Equation 
 (\ref{eq:extraroots}).  The second equality is from (\ref{eq:Psiinter}), and the third equality is by (\ref{eq:tildepsi}).  The fourth equality is by Remark \ref{r:standardmonoid},  which applies since $\alpha \in \Pi_{n+1,cpt}$ implies $\alphatilde \in {\mathfrak{S}}_{i}$ by Equations (\ref{eq:extraroots}) and (\ref{eq:compactroots}).  The fifth equality is by Equation (\ref{eq:leftinter}) for $\alpha \in \Pi_{\fg}$ and by Equation (\ref{eq:rightinter}) for $\alpha \in \Pi_{n+1,cpt}$  and the sixth equality is immediate from the definition of $\cZ^{\prime}.$

Now let $\Sh(\cZ)=(w,z^{*})$ and suppose first that $\alpha\in\Pi_{\fg}$ is complex stable for $\cZ.$  Then by Theorem \ref{thm:intertwine} and Remark \ref{r:Theorem4.7}, $\alpha$ is stable for both factors of $\Sh(\cZ),$ so that $\ms*(w,z^{*})= (s_{\alpha} w, \, s_{\alpha} z^{*}),$ and hence $\Psi(\ms*\Sh(\cZ))=(\sigma w^{-1} s_{\alpha},w_{i+1}^{*} {z^{*}}^{-1}s_{\alpha}).$   By Theorem \ref{thm:monoidcorresp}, $\alpha$ is complex stable for ${\cZ}^{op},$ so by Remark \ref{r:standardmonoid}, $\alpha$ is  stable for both factors of $\widetilde{\Sh}_{i}(\cZ^{op}),$ and hence by (\ref{eq:tildepsi}) for $\cZ$, for both factors of $(\sigma w^{-1}, w_{i+1}^{*} {z^{*}}^{-1}).$  Thus, $\ms*\Psi((w,z^{*}))=(\sigma w^{-1}s_{\alpha}, w_{i+1}^{*} {z^{*}}^{-1}s_{\alpha}),$ which proves (\ref{eq:Psiinter}) in this case.

Now suppose that $\alpha\in\Pi_{\fg}$ is non-compact for the Shareshian pair $(w,z^{*})$.  Then by Theorem \ref{thm:intertwine} and Remark \ref{r:Theorem4.7}, $\alpha$ is stable for either $w$ or $z^{*}$ and unstable for the other.  First, assume that $\alpha$ is stable for $w$, i.e., $s_{\alpha}w>w,$ or equivalently $w^{-1}(\alpha)$ is a positive root.  We claim that $\sigma w^{-1}(\alpha)$ is a positive root.    Indeed, by Corollary \ref{c:specialcase}(3), $w^{-1}(n+1)=1,$ so that for $\alpha \in \Pi_{\fg},$ $w^{-1}(\alpha)$ is in the span of $\{ \alpha_2, \dots, \alpha_n \}.$  Let $\Phi_{\sigma}:= \{ \beta \in \Phi^+ : \sigma(\beta) \in -\Phi^+ \}.$   Since $\sigma = s_n s_{n-1} \dots s_1$ is a reduced decomposition of $\sigma,$  then $\Phi_{\sigma}= \{ \alpha_1, s_{1}(\alpha_2), \dots, s_{1} s_{2} \dots s_{n-1}(\alpha_n) \}.$  It follows that $w^{-1}(\Pi_{\fg})\cap \Phi_{\sigma}$ is empty, which proves the claim. Thus, $\sigma w^{-1} s_{\alpha} > \sigma w^{-1}.$   By Theorem \ref{thm:monoidcorresp}, $\alpha$ is non-compact imaginary for ${\cZ}^{op},$ so by Remark \ref{r:standardmonoid} and (\ref{eq:tildepsi}), $\alpha$ is stable for exactly one factor of $\widetilde{\Sh}_{i}(\cZ)=(\sigma w^{-1}, w_{i+1}^{*} {z^{*}}^{-1}).$  It now follows that
\begin{equation}
\Psi(\ms*(w,z^{*}))=(\sigma w^{-1} s_{\alpha}, w_{i+1}^{*}{z^{*}}^{-1})=\ms*\Psi((w,z^{*})),
\end{equation}
which verifies (\ref{eq:Psiinter}) in this case.

  
Continuing the case where $\alpha\in\Pi_{\fg}$ is non-compact for the Shareshian pair $(w,z^*)$, but that $\alpha\in\Pi_{\fg}$ is unstable for $w$ but $\alpha$ is stable for $z^{*}.$  It follows that $w^{-1}(\alpha)$ is a negative root.  As in the previous paragraph, we deduce that $\sigma w^{-1}(\alpha)$ is a negative root, and the remainder of the verification of (\ref{eq:Psiinter}) in this case is similar to the argument in the previous paragraph.

  
 We now consider the case where $\alpha\in\Pi_{n+1,cpt}$.  Recall that $\Pi_{n+1,cpt}=\{\alpha_{2},\dots, \alpha_{i-1}\}\cup \{\alpha_{i+2},\dots, \alpha_{n}\}$ and by Equation (\ref{eq:rightinter}), letting $\alpha=\alpha_{j}$, then $(m(s_{j})*\cZ)^{op}=m(s_{j-1})*\cZ^{op}$.   By Theorem \ref{thm:intertwine} and Remark \ref{r:Theorem4.7}, $\alpha$ is complex stable for $\cZ$ if and only if $\alpha$ is stable for both factors of $\Sh(\cZ)$ and $\alpha$ is non-compact for $\cZ$ if and only if $\alpha$ is stable for one factor of $\Sh(\cZ)$ and unstable for the other factor.  We first consider the case when $\alpha$ is complex stable for $\cZ$.   Hence, by (\ref{eq:diagonalmonoid}), $\ms*\Sh(\cZ)=(ws_{j},z^*s_{j}^{*})$, so
\begin{equation}\label{eq:extraeq}
\Psi(\ms*\Sh(\cZ))=(\sigma s_jw^{-1},w_{i+1}^{*}s_{j}^{*}{z^{*}}^{-1}).
\end{equation}
 On the other hand, by Theorem \ref{thm:monoidcorresp} and Equation (\ref{eq:rightinter}), $\tilde{\alpha}=\alpha_{j-1}$ is complex stable for ${\cZ}^{op}$ acting on the left, so by Remark \ref{r:standardmonoid}, $\alphatilde$ is complex stable for both factors of $\widetilde{\Sh}_{i}({\cZ}^{op}),$ which coincides with $\Psi(\Sh(\cZ))$ by (\ref {eq:tildepsi}) for $\cZ.$  Thus, 
\begin{equation}\label{eq:firstnew}
m(s_{\alphatilde})*\Psi(\Sh(\cZ))=m(s_{j-1})*\Psi(w,z^*)=m(s_{j-1})*(\sigma w^{-1}, w_{i+1}^{*}{z^{*}}^{-1}). 
\end{equation}
By (\ref{eq:twistinsecond}), this last expression is
\begin{equation}\label{eq:secondnew}
(s_{j-1}\sigma w^{-1}, s_{\sigma_{i}^{-1}(j-1)}w_{i+1}^{*}{z^*}^{-1})=(\sigma s_{\sigma^{-1}(j-1)}w^{-1},w_{i+1}^{*}s_{(\sigma_{i} w_{i+1}^{*})^{-1}(j-1)}{z^{*}}^{-1}).
\end{equation}
However, $\sigma^{-1}(j-1)=j$ by (\ref{eq:specialexplicit}), and $\sigma_{i} w_{i+1}^{*}$ is the transposition $(i, n+1)$ by (\ref{e:sigmai}) and (\ref{eq:specialexplicit}), so $\sigma_{i} w_{i+1}^{*}$ fixes each $\alpha_{j-1}$ since  $j\in \{ 2, \dots, i-2, i-1, i+2, \dots, n+1 \}.$  For such $j,$
$\alpha_{j}^{*}=\alpha_{j-1},$ so combining (\ref{eq:firstnew}) and (\ref{eq:secondnew}), we obtain $m(s_{\alphatilde})*\Psi(\Sh(\cZ))=(\sigma s_{j} w^{-1},w_{i+1}^{*}s_{j}^{*}{z^*}^{-1}),$ which coincides with (\ref{eq:extraeq}).  This verifies (\ref{eq:Psiinter}) when $\alpha$ is complex stable for $\cZ.$

In case $\alpha$ is non-compact for $\cZ$, then by the above, $\alpha$ is complex stable for exactly one of $w$ and $z^*$ and is unstable for the other.   By arguments similar to those used in the the case when $\alpha \in \Pi_{\fg},$ it follows that $\alphatilde$ is non-compact for ${\cZ}^{op}$ and complex stable for exactly one component of $\widetilde{\Sh}_{i}({\cZ}^{op})$ and unstable for the other component.   Suppose first that $\alpha$ is complex stable for $w,$ so that $w(\alpha)\in \Phi^+.$    Then $w\sigma^{-1}(\alphatilde)=w(\alpha) \in \Phi^+$, so $\alphatilde$ is stable for $\sigma w^{-1}.$  The verification of (\ref{eq:Psiinter}) now proceeds in the same way as for the first component for the case where $\alpha$ is complex for $\cZ.$   In case $\alpha$ is unstable for $w$, then a small variant of the above argument shows that $\alphatilde$ is unstable for $\sigma w^{-1}$ and hence stable for $w_{i+1}^{*}{z^{*}}^{-1}.$  The verification of (\ref{eq:Psiinter}) now proceeds in the same way as for the second component in the case where $\alpha$ is complex for $\cZ.$

\end{proof}

We now show that the map in Equation (\ref{eq:localSh}) preserves the Bruhat order on Shareshian pairs.   For this, we use the following combinatorial result, which can be found in Chapter 2, Exercise 21 of \cite{BB}.
 \begin{prop}\label{p:BBresult}
 Let $x,\, y, \, w\in{\cW}_{n+1}$.  Suppose $\ell(xw)=\ell(x)+\ell(w)$ and $\ell(yw)=\ell(y)+\ell(w)$.  Then $xw\leq yw\Leftrightarrow x\leq y.$
\end{prop}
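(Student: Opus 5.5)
We prove the statement in the form: if $\ell(xw)=\ell(x)+\ell(w)$ and $\ell(yw)=\ell(y)+\ell(w)$, then $xw\le yw$ if and only if $x\le y$. The plan is induction on $\ell(w)$, using the lifting property (the ``Z-property'' of Deodhar) of the Bruhat order. The case $\ell(w)=0$ is trivial. For $\ell(w)>0$, we write $w=w's$ with $s$ a simple reflection and $\ell(w')=\ell(w)-1$. Length additivity then passes down to $w'$: since $\ell(xw)=\ell(x)+\ell(w)$, we get
\[
\ell(xw')\ge \ell(xw)-1=\ell(x)+\ell(w').
\]
Combined with the trivial reverse inequality, this gives $\ell(xw')=\ell(x)+\ell(w')$. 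The same holds for $y$, and moreover $xw's>xw'$ and $yw's>yw'$.

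It therefore suffices to prove the following one-step statement: if $us>u$ and $vs>v$, then $us\le vs$ if and only if $u\le v$. Applying it with $u=xw'$ and $v=yw'$ gives $xw\le yw\Leftrightarrow xw'\le yw'$, and the induction hypothesis then gives $xw'\le yw'\Leftrightarrow x\le y$.

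For the one-step statement we use the lifting property (Proposition 2.2.7 of \cite{BB}): if $a<b$ and $bs<b$, then $as\le b$ and $a\le bs$; if moreover $as<a$, then $as<bs$.

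First suppose $u\le v$. Apply lifting with $a=u$, $b=vs$, noting that $u\le v<vs$ and $(vs)s=v<vs$. This gives $us\le vs$.

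Conversely, suppose $us\le vs$. If $us=vs$ then $u=v$ and we are done. Otherwise apply lifting with $a=us$, $b=vs$, noting that $b s=v<vs$ and $a s=u<us$. This gives $u=(us)s<(vs)s=v$.

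The only delicate point is verifying the descent hypotheses at each use of the lifting property. Everything else is bookkeeping with lengths. Since the result is a standard exercise in \cite{BB}, the paper can also simply cite it.
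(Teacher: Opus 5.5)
Your proof is correct. The paper gives no argument of its own for this statement; it only cites Chapter 2, Exercise 21 of \cite{BB}. Your argument is a standard self-contained solution of that exercise. You induct on $\ell(w)$, peel off one simple reflection, check that length additivity passes down to $w'$, and reduce to the one-step claim: if $us>u$ and $vs>v$, then $us\le vs \Leftrightarrow u\le v$. Because you use only the lifting property, the proof works in any Coxeter system. This matters here, since the paper also applies the proposition to $(W_{n+1},S^{*})$ with the order $\leq_{*}$.

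One correction to how you quote the lifting property. As you state it, ``if $a<b$ and $bs<b$, then $as\le b$ and $a\le bs$'' is false without the extra hypothesis $as>a$. In $W_3$, take $s=s_1$, $a=s_1$, $b=s_2s_1$. Then $a<b$ and $bs=s_2<b$, but $s_1\not\le s_2$. The correct right-handed form of Proposition 2.2.7 of \cite{BB} (obtained from the left-handed one by inversion) requires $s$ to be a right descent of $b$ but \emph{not} of $a$.

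Your argument is unaffected by this. In the forward direction you apply the property with $a=u$, where $us>u$ holds, and you only need $as\le b$. The ``moreover'' clause you use in the converse is Deodhar's $Z$-property, not part of Proposition 2.2.7 itself. It follows from the genuine lifting property:
\begin{itemize}
\item Apply lifting to the pair $as<a<b$; here $s$ is a right descent of $b$ but not of $as$.
\item This gives $as\le bs$.
\item Strictness follows from $a\neq b$.
\end{itemize}
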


\begin{thm}\label{thm:Shorder}
Let $\cZ^{\prime},\, \cZ\in B\backslash Q_{1,i+1}$ with ${\cZ^{\prime}}^{op}$ and $\cZ^{op}$ 
the corresponding $S_{i}$-orbits on $\B_{n}$.  Then 
\begin{equation}\label{eq:orderequiv}
\Sh(\cZ^{\prime})\leq \Sh(\cZ)\Leftrightarrow \widetilde{\Sh}_{i}({\cZ^{\prime}}^{op})\leq \widetilde{\Sh}_{i}(\cZ^{op}). 
\end{equation}
\end{thm}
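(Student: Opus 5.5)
By Theorem \ref{thm:localShareshian}, if $\Sh(\cZ^{\prime})=(x,y^{*})$ and $\Sh(\cZ)=(w,z^{*})$, then $\widetilde{\Sh}_{i}({\cZ^{\prime}}^{op})=(\sigma x^{-1}, w_{i+1}^{*}{y^{*}}^{-1})$ and $\widetilde{\Sh}_{i}(\cZ^{op})=(\sigma w^{-1}, w_{i+1}^{*}{z^{*}}^{-1})$. By (\ref{eq:oldShorder}) and (\ref{eq:istdShorder}), the statement therefore splits into two independent claims, one per coordinate:
\begin{equation*}
x\leq w \Leftrightarrow \sigma x^{-1}\leq \sigma w^{-1}, \qquad y^{*}\leq_{*} z^{*}\Leftrightarrow w_{i+1}^{*}{y^{*}}^{-1}\leq w_{i+1}^{*}{z^{*}}^{-1}.
\end{equation*}
Here $x,w\in \Wn\sigma$ and $y^{*},z^{*}\in \Wn w_{i+1}^{*}$ by Corollary \ref{c:specialcase}. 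I will prove each equivalence by combining inversion, which preserves Bruhat order, with Proposition \ref{p:BBresult}.

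\emph{First coordinate.} Write $x=a\sigma$ and $w=b\sigma$ with $a,b\in\Wn$. Since $\sigma=w_{1}$ is the minimal length representative of $\Wn\sigma$ (Remark \ref{r:shortestreps}), we have $\ell(a\sigma)=\ell(a)+\ell(\sigma)$ and $\ell(b\sigma)=\ell(b)+\ell(\sigma)$. Proposition \ref{p:BBresult} then gives $x\leq w\Leftrightarrow a\leq b$. On the other side, $\sigma x^{-1}=\sigma\sigma^{-1}a^{-1}=a^{-1}$, and likewise $\sigma w^{-1}=b^{-1}$. Since inversion is a Bruhat automorphism, $a^{-1}\leq b^{-1}\Leftrightarrow a\leq b$. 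This settles the first coordinate.

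\emph{Second coordinate.} Write $y^{*}=c\,w_{i+1}^{*}$ and $z^{*}=d\,w_{i+1}^{*}$ with $c,d\in\Wn$. The right-hand side becomes $w_{i+1}^{*}{w_{i+1}^{*}}^{-1}c^{-1}=c^{-1}$, respectively $d^{-1}$, compared in the standard Bruhat order. The left-hand side is compared in $\leq_{*}$, the Bruhat order for $S^{*}$. By Remark \ref{r:shortestreps}, $w_{i+1}^{*}$ is the minimal length representative of its coset for the length $\ell_{*}$. So Proposition \ref{p:BBresult}, applied to the Coxeter system $(W_{n+1},S^{*})$, gives $y^{*}\leq_{*}z^{*}\Leftrightarrow c\leq_{*} d$.

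The remaining step, which I expect to be the main subtlety, is to check that on the subgroup $\Wn$ the order $\leq_{*}$ agrees with the standard Bruhat order. The point is that $\Wn=\langle s_{2}^{*},\dots,s_{n}^{*}\rangle$ is a standard parabolic subgroup for $S^{*}$. The Bruhat order restricted to a standard parabolic subgroup is the Bruhat order of that subgroup as a Coxeter group, by the subword property. By (\ref{eq:starreflections}), $s_{j}^{*}=s_{j-1}$ for $2\le j\le n$, so the Coxeter generators of $\Wn$ inside $S^{*}$ are exactly $\{s_{1},\dots,s_{n-1}\}$. Hence both orders restrict on $\Wn$ to the Bruhat order of $(\Wn,\{s_1,\dots,s_{n-1}\})$. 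Therefore $c\leq_{*}d\Leftrightarrow c\leq d\Leftrightarrow c^{-1}\leq d^{-1}$. Combining the two coordinates yields (\ref{eq:orderequiv}).
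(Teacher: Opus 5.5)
Your proposal is correct and follows essentially the same argument as the paper. The paper also writes $x,w\in\Wn\sigma$ and $y^{*},z^{*}\in\Wn w_{i+1}^{*}$ (Corollary \ref{c:specialcase}), uses minimality of $\sigma$ and $w_{i+1}^{*}$ (Remark \ref{r:shortestreps}), applies Proposition \ref{p:BBresult} together with inversion, and notes that $\leq_{*}$ restricts to the standard Bruhat order on $\Wn=\langle s_{2}^{*},\dots,s_{n}^{*}\rangle$. Your subword-property argument for that last restriction only spells out what the paper states in one line.
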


\begin{proof}
Let $\Sh(\cZ^{\prime})=(v,y^{*})$ and let $\Sh(\cZ)=(w,z^{*})$.  Then by
Equation (\ref{eq:localSh}), $\widetilde{\Sh}_{i}({\cZ^{\prime}}^{op})=(\sigma v^{-1}, w_{i+1}^{*} {y^{*}}^{-1})$ and $\widetilde{\Sh}_{i}(\cZ^{op})=(\sigma w^{-1}, w_{i+1}^{*}{z^{*}}^{-1})$.  By Part (2) of Corollary \ref{c:specialcase} $v=\alpha\sigma$ and $w=\beta \sigma$ with $\alpha,\, \beta\in \Wn$.  By Remark \ref{r:shortestreps}, $\sigma$ is a shortest right coset representative for $W_n \subset W_{n+1},$ so that 
$\ell(v)=\ell(\alpha)+\ell(\sigma)$ and $\ell(w)=\ell(\beta)+\ell(\sigma)$.  
Proposition \ref{p:BBresult} and the fact that the Bruhat order is invariant under inversion imply that $v\leq w\Leftrightarrow \alpha\leq\beta\Leftrightarrow \alpha^{-1}\leq \beta^{-1}$.  But $\alpha^{-1}=\sigma v^{-1}$ and $\beta^{-1}=\sigma w^{-1}$, so we obtain 
\begin{equation}\label{eq:firstcpt}
v\leq w\Leftrightarrow \sigma v^{-1}\leq \sigma w^{-1}.
\end{equation}
For the second components of the Shareshian pairs, it follows from (2) of Corollary \ref{c:specialcase} that 
$y^{*}=\gamma^{*}w_{i+1}^{*}$ and $z^{*}=\delta^{*}w_{i+1}^{*}$ 
with $\gamma^{*}, \, \delta^{*}\in \Wn$.  Recall from (\ref{eq:starreflections}) that $s_{i}^{*}=s_{i-1}$ for $2\leq i\leq n$, so that the restriction of the Bruhat order $\leq_{*}$  on $({\cW}_{n+1}, S^{*})$ to 
the subgroup $\Wn=\langle s_{2}^{*},\dots, s_{n}^{*}\rangle$ is just the standard Bruhat order on $\Wn$.  Arguing as above, it follows from Proposition \ref{p:BBresult} and Remark \ref{r:shortestreps} that 
\begin{equation}\label{eq:secondcpt}
y^{*}\leq_{*} z^{*}\Leftrightarrow w_{i+1}^{*} {y^{*}}^{-1}\leq w_{i+1}^{*}{z^{*}}^{-1}.
\end{equation}
Equation (\ref{eq:orderequiv}) now follows from Equations (\ref{eq:firstcpt}) and  (\ref{eq:secondcpt}) and the definition of the Bruhat order on standardized $i$-Shareshian pairs given in (\ref{eq:istdShorder}). 
\end{proof}

We now deduce the following result, which generalizes Theorem 3.4 of \cite{Shpairs} from $B_{n-1}$-orbits to $S_i$-orbits.
\begin{cor}\label{c:closure}
The closure ordering on $S_{i}$-orbits on $\B_{n}$ coincides with the Bruhat order on 
$i$-Shareshian pairs.
\end{cor}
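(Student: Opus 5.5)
The plan is to transport the closure order through the correspondence $\cZ\mapsto\cZ^{op}$ of Theorem \ref{thm:firstbig} and then combine the known description of closures of $B$-orbits on $\B_{n+1}$ with Theorem \ref{thm:Shorder}. Take $S_i$-orbits $Q^{\prime}, Q\in S_i\backslash\B_n$ and write $Q^{\prime}=\cZ^{\prime op}$ and $Q=\cZ^{op}$ for unique $\cZ^{\prime},\cZ\in B\backslash Q_{1,i+1}$.

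The first step shows that $Q^{\prime}\subset\overline{Q}$ if and only if $\cZ^{\prime}\subset\overline{\cZ}$. By Remark \ref{r:corres}, equation (\ref{eq:closurecorres}), we have $(\overline{\cZ}\cap Q_{1,i+1})^{op}=\overline{\cZ^{op}}$. Here the closure of $\cZ$ is taken inside $Q_{1,i+1}$, because the correspondence is built from $G\to Q_{1,i+1}\cong G/\mbox{Stab}_B(e_i)$. The maps $q$, $\psi$, $\pi$ are smooth and surjective, or isomorphisms, so preimages and images of $B$-stable (resp. $S_i$-stable) closed subsets behave well. The set $\overline{\cZ}\cap Q_{1,i+1}$ is a union of $B$-orbits in $Q_{1,i+1}$, and $op$ is a bijection on orbits. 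Hence $\cZ^{\prime}\subset\overline{\cZ}\cap Q_{1,i+1}$ if and only if $\cZ^{\prime op}\subset\overline{\cZ^{op}}$. Since $\cZ^{\prime}\subset Q_{1,i+1}$ already, the condition $\cZ^{\prime}\subset \overline{\cZ}\cap Q_{1,i+1}$ is the same as $\cZ^{\prime}\subset\overline{\cZ}$ in $\B_{n+1}$.

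The second step invokes Theorem 3.4 of \cite{Shpairs}, i.e. equation (\ref{eq:oldShclosure}): $\cZ^{\prime}\subset\overline{\cZ}$ if and only if $\Sh(\cZ^{\prime})\le\Sh(\cZ)$. Theorem \ref{thm:Shorder} then converts this to $\widetilde{\Sh}_i(Q^{\prime})\le\widetilde{\Sh}_i(Q)$, that is, $y\le w$ and $\sigma_i^{-1}v^i\sigma_i\le\sigma_i^{-1}u^i\sigma_i$, where $\Sh_i(Q^{\prime})=(y,v^i)$ and $\Sh_i(Q)=(w,u^i)$.

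The final step identifies this with the Bruhat order on $i$-Shareshian pairs, meaning $y\le w$ and $v^i\le_{i}u^i$ in $(\Wn,S^i)$. Since $S^i=\sigma_i S\sigma_i^{-1}$, conjugation by $\sigma_i^{-1}$ is a Coxeter isomorphism $(\Wn,S^i)\to(\Wn,S)$. It therefore carries $\le_i$ to the standard Bruhat order, and $v^i\le_i u^i$ if and only if $\sigma_i^{-1}v^i\sigma_i\le\sigma_i^{-1}u^i\sigma_i$, as in (\ref{eq:istdShorder}).

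The main obstacle is the first step: being careful that (\ref{eq:closurecorres}) concerns closures relative to $Q_{1,i+1}$ and not to $\B_{n+1}$. If needed, I would justify it directly. Under the identification $Q_{1,i+1}\cong G/\mbox{Stab}_B(e_i)$ and $\B_n=G/B$, both sides correspond to the closure of the $(B\times S_i)$-stable subset $q^{-1}(\cZ)$ in $G$ (after inversion). Closures commute with preimages under the open quotient maps $q$ and $\pi$.
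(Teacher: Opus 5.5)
Your proposal is correct and follows essentially the same route as the paper, whose proof is the chain ${\cZ^{\prime}}^{op}\subset\overline{\cZ^{op}}\Leftrightarrow\cZ^{\prime}\subset\overline{\cZ}\Leftrightarrow\Sh(\cZ^{\prime})\leq\Sh(\cZ)\Leftrightarrow\widetilde{\Sh}_{i}({\cZ^{\prime}}^{op})\leq\widetilde{\Sh}_{i}(\cZ^{op})$, using (\ref{eq:closurecorres}), Theorem 3.4 of \cite{Shpairs} (that is, (\ref{eq:oldShclosure})), and Theorem \ref{thm:Shorder}. The paper leaves two points implicit that you make explicit, and both are correct: that (\ref{eq:closurecorres}) concerns closures relative to $Q_{1,i+1}$, which suffices because $\cZ^{\prime}\subset Q_{1,i+1}$; and that conjugation by $\sigma_i$ is a Coxeter isomorphism $(\Wn,S^{i})\to(\Wn,S)$, so the Bruhat order on $i$-Shareshian pairs agrees with the order (\ref{eq:istdShorder}) on standardized pairs.
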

 \begin{proof}
 Let ${\cZ^{\prime}}^{op}, \, \cZ^{op}\in S_{i}\backslash\B_{n}$ correspond to 
 $\cZ^{\prime},\, \cZ\in B\backslash Q_{1,i+1}$.
 By Equations (\ref{eq:closurecorres}) and (\ref{eq:oldShclosure}) and Theorem \ref{thm:Shorder}, we have the following equivalences:
 $$
 {\cZ^{\prime}}^{op}\subset \overline{\cZ^{op}}\Leftrightarrow \cZ^{\prime}\subset \overline{\cZ}\Leftrightarrow \Sh(\cZ^{\prime})\leq \Sh(\cZ)\Leftrightarrow\widetilde{\Sh}_{i}({\cZ^{\prime}}^{op})\leq \widetilde{\Sh}_{i}(\cZ^{op}). 
 $$
 \end{proof}

Using Corollary \ref{c:closure}, we can prove: 
\begin{cor}\label{c:Schubertintersect}
Let $Q\in S_{i}\backslash\B_{n}$ with $\Sh_{i}(Q)=(w, u^{i})$ so that 
$Q=B\cdot w(\mathcal{E})\cap B^{i}\cdot u^{i}(\mathcal{E}^{i})$ by (\ref{eq:iShdefn}).  Then 
$$
\overline{Q}=\overline{B\cdot w(\mathcal{E})}\cap\overline{B^{i}\cdot u^{i}(\mathcal{E}^{i})}
$$
is the intersection of a $B$-Schubert variety and a $B^{i}$-Schubert variety. 
\end{cor}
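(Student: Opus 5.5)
We prove the two inclusions separately, using Corollary \ref{c:closure} for the hard one. Throughout, $\mathcal{E}=\mathcal{E}_n$.

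\emph{The easy inclusion.} The set $\overline{B\cdot w(\mathcal{E})}\cap\overline{B^{i}\cdot u^{i}(\mathcal{E}^{i})}$ is closed and contains $Q$. Hence it contains $\overline{Q}$.

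\emph{The reverse inclusion.} Let $x\in \overline{B\cdot w(\mathcal{E})}\cap\overline{B^{i}\cdot u^{i}(\mathcal{E}^{i})}$, and let $Q'=S_i\cdot x$, which is an $S_i$-orbit. Since $S_i\subset B\cap B^{i}$, both Schubert varieties are $S_i$-stable, so $Q'$ lies in the intersection. By \cite{Siorbits}, recalled in (\ref{eq:iShdefn}), we can write $Q'=B\cdot y(\mathcal{E})\cap B^{i}\cdot v^{i}(\mathcal{E}^{i})$ with $\Sh_i(Q')=(y,v^{i})$.

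\emph{Comparing the pairs.} The Bruhat decomposition of the Schubert varieties gives $B\cdot y(\mathcal{E})\subset \overline{B\cdot w(\mathcal{E})}$. Hence $y\le w$ in $(W_n,S)$. In the same way, $B^{i}\cdot v^{i}(\mathcal{E}^{i})\subset \overline{B^{i}\cdot u^{i}(\mathcal{E}^{i})}$ gives $v^{i}\le u^{i}$ in the Bruhat order of $(W_n,S^{i})$.

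\emph{Translating to standardized pairs.} We must turn $v^{i}\le u^{i}$ into a statement about $\sigma_i^{-1}v^{i}\sigma_i$ and $\sigma_i^{-1}u^{i}\sigma_i$. Since $B^{i}=\sigma_iB\sigma_i^{-1}$ and $\mathcal{E}^{i}=\sigma_i(\mathcal{E})$, left translation by $\sigma_i^{-1}$ carries $B^{i}\cdot u^{i}(\mathcal{E}^{i})$ to $B\cdot\sigma_i^{-1}u^{i}\sigma_i(\mathcal{E})$. Translation preserves closures, so $v^{i}\le u^{i}$ in $(W_n,S^{i})$ if and only if $\sigma_i^{-1}v^{i}\sigma_i\le\sigma_i^{-1}u^{i}\sigma_i$ in the standard Bruhat order. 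This is exactly the order (\ref{eq:istdShorder}) on standardized $i$-Shareshian pairs, so $\widetilde{\Sh}_i(Q')\le\widetilde{\Sh}_i(Q)$.

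\emph{Conclusion.} Corollary \ref{c:closure} now gives $Q'\subset\overline{Q}$, so $x\in\overline{Q}$. This proves the reverse inclusion.

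The main obstacle is the compatibility of the twisted Bruhat order on $(W_n,S^{i})$ with closures of $B^{i}$-orbits and with the conjugation by $\sigma_i$ in Definition \ref{d:istd}. The conjugation argument above handles it. The rest is formal.
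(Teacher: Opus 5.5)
Your proof is correct and follows the same route as the paper. The closure of $Q$ sits inside the closed intersection, and for the reverse inclusion you take an $S_i$-orbit $Q'$ in the intersection, get $\Sh_i(Q')\le\Sh_i(Q)$ componentwise from the two Schubert varieties, and apply Corollary \ref{c:closure}. Your explicit conjugation by $\sigma_i^{-1}$, which turns $B^{i}$-orbit closure into the standard order on $\sigma_i^{-1}v^{i}\sigma_i$ and $\sigma_i^{-1}u^{i}\sigma_i$ as in (\ref{eq:istdShorder}), is a step the paper leaves implicit; the same goes for your check that the intersection is $S_i$-stable because $S_i\subset B\cap B^{i}$.
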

\begin{proof}
By the definition of an $i$-Shareshian pair in (\ref{eq:iShdefn}), we have $\overline{Q}=\overline{B\cdot w(\mathcal{E})\cap B^{i}\cdot u^{i}(\mathcal{E}^{i})}\subset \overline{B\cdot w(\mathcal{E})}\cap\overline{B^{i}\cdot u^{i}(\mathcal{E}^{i})}$.  For the other inclusion, let $Q^{\prime}\subset  \overline{B\cdot w(\mathcal{E})}\cap\overline{B^{i}\cdot u^{i}(\mathcal{E}^{i})}
.$  Then $B\cdot Q^{\prime}\subset \overline{B\cdot w(\mathcal{E})}$ and $B^{i}\cdot Q^{\prime}\subset \overline{B^{i}\cdot u^{i}(\mathcal{E}^{i})}$.  Thus, $\Sh_{i}(Q^{\prime})\leq \Sh_{i}(Q)$.  It follows from Corollary \ref{c:closure} that $Q^{\prime}\subset \overline{Q}$, and the proof is complete. 
\end{proof}

It would be interesting to apply this observation to the study of the singularities of closures of $S_i$-orbits.


 \subsection{Additional Monoid Actions on $S_{i}$-orbits}
There are two other monoid actions on $S_{i}\backslash\B_{n}$ that are not discussed in Section \ref{ss:Simonoid}.  
These monoid actions were discussed briefly in Remark 5.8 of \cite{Siorbits} and are included here to provide a more complete picture, but are not needed to prove the main results of this paper.

 Consider $\alpha=\alpha_{i-1}\in\Pi_{\fg}$.  Then $P_{\alpha_{i-1}}\cdot[e_{i}]=B\cdot [e_{i-1}]\cup B\cdot[e_{i}]$ with $B\cdot [e_{i}]$ the open orbit.
Consider the $S_{i}$-orbit $Q_{\fb^{\prime}}=S_{i}\cdot\fb^{\prime}$ corresponding to the $G$ and $B$-orbits $\mathcal{O}_{\Delta,\fb^{\prime}}=G\cdot(\fb,\fb^{\prime}, [e_{i}])$ and $\mathcal{O}_{B,\fb^{\prime}}=B\cdot(\fb^{\prime}, [e_{i}])$ respectively as in Equation (\ref{eq:three}).  By Equation (\ref{eq:firstfactorfirst}),
$m(s_{i-1})*_{1} \mathcal{O}_{\Delta,\fb^{\prime}}$ corresponds to the open  $B$-orbit in  $P_{\alpha_{i-1}}\cdot (\fb^{\prime}, [e_{i}])$,
which is easily seen to be contained in the variety 
$\B_{n}\times\mathcal{O}_{i}$ and hence corresponds to a unique $S_{i}$-orbit under the correspondence in (\ref{eq:Sicorres}).  This allows us to define:
\begin{equation}\label{eq:firstexoticdefn}
m(s_{i-1})*_{L} Q_{\fb^{\prime}}:=\mbox{ unique } S_{i}-\mbox{orbit corresponding to open } B-\mbox{orbit in } P_{\alpha_{i-1}}\cdot(\fb^{\prime}, [e_{i}]). 
\end{equation}
We now verify the assertions of Remark 5.8 of \cite{Siorbits}.
\begin{prop}\label{p:firstweirdmonoid}
Let $Q_{\fb^{\prime}}=S_{i}\cdot \fb^{\prime}$ and $m(s_{i-1})*_{L}Q_{\fb^{\prime}}$ be given by Equation (\ref{eq:firstexoticdefn}).  Then 
\begin{enumerate}
\item The root $\alpha_{i-1}$ is never complex stable for $Q_{\fb^{\prime}}$.
\item  If $\Sh_{i}(Q_{\fb^{\prime}})=(w, u^{i})$, then 
 \begin{equation}\label{eq:firstSh}
 \Sh_{i}(m(s_{i-1})*_{L} Q_{\fb^{\prime}})=(m(s_{i-1})*_{L} w, u^{i}),
 \end{equation}
 where $m(s_{i-1})*_{L}w$ denotes the left action of $s_{i-1}$ on $w$ as in (\ref{eq:Wmonoidact}). 
\end{enumerate}
\end{prop}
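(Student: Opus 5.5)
The plan is to replace the $B$-orbit description in (\ref{eq:firstexoticdefn}) by an honest $S_i$-stable set in $\B_n$, and then read off both statements from the $B$- and $B^i$-orbit decompositions. Let $\alpha=\alpha_{i-1}$ and set $S'_\alpha:=\mbox{Stab}_{P_{\alpha}}[e_i]$.

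\textbf{Step 1: rewrite the definition.} Since $P_\alpha\cdot[e_i]=B\cdot[e_{i-1}]\cup B\cdot[e_i]$ with $B\cdot[e_i]$ open, we have $P_\alpha=B\cdot S'_\alpha$. Hence
\[
P_\alpha\cdot(\fb^{\prime},[e_i])=B\cdot\bigl(S'_\alpha\cdot\fb^{\prime},[e_i]\bigr).
\]
Under (\ref{eq:Sicorres}), $m(s_{i-1})*_L Q_{\fb^{\prime}}$ is therefore the open $S_i$-orbit in the irreducible $S_i$-stable set $S'_\alpha\cdot Q_{\fb^{\prime}}$.

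\textbf{Step 2: describe $S'_\alpha$.} Explicitly, an element of $P_\alpha$ fixes $[e_i]$ exactly when its $(i-1,i)$ Levi block is lower triangular. So $S'_\alpha=S_i\cdot U_{-\alpha}$. The key observation is that $-\alpha=\epsilon_i-\epsilon_{i-1}$ is a positive root for $\fb^i$, because $e_i$ precedes $e_{i-1}$ in $\mathcal{E}^i$ (see (\ref{eq:Eiflag})). Thus $U_{-\alpha}\subset B^i$, and so $S'_\alpha\subset B^i\cap P_\alpha$.

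\textbf{Step 3: prove (2).} Write $\Sh_i(Q_{\fb^{\prime}})=(w,u^i)$.
\begin{itemize}
\item Because $S'_\alpha\subset B^i$, every $S_i$-orbit in $S'_\alpha\cdot Q_{\fb^{\prime}}$ lies in $B^i\cdot u^i(\mathcal{E}^i)$. So the second coordinate is unchanged.
\item Because $S'_\alpha\subset P_\alpha$, the set $S'_\alpha\cdot Q_{\fb^{\prime}}$ lies in $P_\alpha w(\mathcal{E}_n)=Bw(\mathcal{E}_n)\cup Bs_{i-1}w(\mathcal{E}_n)$.
\item Moreover $B\cdot S'_\alpha\cdot Q_{\fb^{\prime}}=P_\alpha\cdot Q_{\fb^{\prime}}$, which is $P_\alpha$-stable and contains $w(\mathcal{E}_n)$. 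Hence it contains the open Bruhat cell of $P_\alpha w(\mathcal{E}_n)$, which is $B\cdot (m(s_{i-1})*_L w)(\mathcal{E}_n)$ by (\ref{eq:Wmonoidact}).
\end{itemize}
Irreducibility of $S'_\alpha\cdot Q_{\fb^{\prime}}$ then forces its open $S_i$-orbit to lie in that open cell. By the uniqueness in (\ref{eq:iShdefn}), we get $\Sh_i(m(s_{i-1})*_LQ_{\fb^{\prime}})=(m(s_{i-1})*_Lw,u^i)$, which is (\ref{eq:firstSh}).

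\textbf{Step 4: prove (1), the expected obstacle.} The idea is that $\alpha_{i-1}$ changes only the first coordinate of $\Sh_i$, which is the pattern of a non-compact root in Theorem \ref{thm:intertwine}, never that of a complex stable root.

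Suppose $\alpha$ were complex stable for $Q=Q_{\fb^{\prime}}$. Then $m(s_{i-1})*_LQ\neq Q$, so by Step 3 we have $s_{i-1}w>w$, and the fibre $S'_\alpha\cdot Q$ consists of exactly two orbits, $Q$ and the open one. Over a point of the base, the relevant $\mathbb{P}^1$ is $S'_\alpha\cdot\fb^{\prime}$, and I would compare two decompositions of it:
\begin{itemize}
\item its intersection with $Bw(\mathcal{E}_n)$ is a single point, namely $\fb^{\prime}$, the fixed point required by complex stability;
\item the whole $\mathbb{P}^1$ lies in one $B^i$-orbit.
\end{itemize}
The plan is to use the diagonal torus $H\subset S_i$, which normalizes $U_{-\alpha}$, to exhibit a second $H$-fixed point on this $\mathbb{P}^1$. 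I would take the point obtained from $\fb^{\prime}$ by the $s_{i-1}$-translate of a torus-fixed representative, and check directly with flags that it is not $U_{-\alpha}$-conjugate to a generic point. That produces a third $S_i$-orbit in the fibre, contradicting the two-orbit description in Proposition \ref{p:stableandnc}(1) and Remark \ref{r:onlythree}. Hence $\alpha$ is non-compact whenever $m(s_{i-1})*_LQ\neq Q$.

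If the flag check is awkward, my fallback is to transport the question through Theorem \ref{thm:firstbig}. Under that correspondence the action of $\alpha_{i-1}$ on the first factor should match a right action on $B\backslash Q_{1,i+1}$. I would then argue as in Proposition \ref{p:orbitchange}, using Proposition \ref{p:Kmonoidaction} and the Shareshian-pair criterion of Theorem \ref{thm:intertwine} (via Remark \ref{r:Theorem4.7}) to rule out the complex stable type.
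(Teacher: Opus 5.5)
\textbf{Part (2).} Your argument is sound, and it takes a cleaner route than the paper. The paper first proves (1), uses it to reduce to the non-compact case, and then computes with the Cayley transform $u_{i-1}$, noting that it moves the standard flag like $b_i\in B^i$. You instead observe that $S'_\alpha=S_iU_{-\alpha}\subset B^i$. This gives invariance of the $B^i$-coordinate uniformly, with no need for (1) or Cayley transforms. Your irreducibility argument for the $B$-coordinate then plays the role of the paper's open-projection argument.

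One correction is needed. $P_\alpha\neq B\cdot S'_\alpha$, because $\dot s_{i-1}\cdot[e_i]=[e_{i-1}]\notin B\cdot[e_i]$. Only $BS'_\alpha$ is open dense in $P_\alpha$, so $B\cdot(S'_\alpha\cdot\fb^{\prime},[e_i])$ is just the open piece of $P_\alpha\cdot(\fb^{\prime},[e_i])$ lying over $B\cdot[e_i]$. Steps 1 and 3 only use density, so their conclusions survive. A smaller point: stabilizing $[e_i]$ forces the whole $i$th column above the diagonal to vanish, not only the $(i-1,i)$ entry. Your formula $S'_\alpha=S_iU_{-\alpha}$ is nevertheless right.

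\textbf{Part (1).} Here there is a genuine gap.
\begin{itemize}
\item The fibre of the first-factor $\mathbb{P}^1$-bundle through $\mathcal{O}_{B,\fb^{\prime}}$ is $P_\alpha\cdot(\fb^{\prime},[e_i])$, not $S'_\alpha\cdot Q$. Also, $S'_\alpha\cdot\fb^{\prime}$ is not a $\mathbb{P}^1$ in general.
\item The plan contradicts itself. You note that $S'_\alpha\cdot\fb^{\prime}$ lies in a single $B^i$-orbit. But every $B^i$-orbit on $\B_n$ contains exactly one $H$-fixed point; for instance, $s_{i-1}w(\fb)$ and $w(\fb)$ lie in different $B^i$-orbits. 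So there is no second $H$-fixed point to find.
\item Your fallback needs a type-preserving correspondence between the first-factor action of $\alpha_{i-1}$ and the right action of $\alpha_i$ on $B\backslash Q_{1,i+1}$. No such result is available. Theorem \ref{thm:monoidcorresp} covers only $\Pi_{\fg}$ and $\Pi_{n+1,cpt}$, and $\alpha_i\notin\Pi_{n+1,cpt}$. Proposition \ref{p:unstablecorrespondence}(2) is itself proved using the statement at hand, and it says nothing about root types.
\end{itemize}

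The missing idea is exactly the piece of the fibre that your Step 1 slip hid. Projection to $\mathbb{P}^{n-1}$ is a $B$-equivariant open map from $P_\alpha\cdot(\fb^{\prime},[e_i])$ onto $P_\alpha\cdot[e_i]=B\cdot[e_{i-1}]\cup B\cdot[e_i]$. Hence the open $B$-orbit of the fibre lies over $B\cdot[e_i]$. Now suppose $\alpha_{i-1}$ were complex stable. Proposition \ref{p:stableandnc}(1) would make that open orbit $G\cdot(s_{i-1}(\fb),\fb^{\prime},[e_i])=G\cdot(\fb,\dot s_{i-1}\cdot\fb^{\prime},[e_{i-1}])$, that is, $B\cdot(\dot s_{i-1}\cdot\fb^{\prime},[e_{i-1}])$. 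This orbit lies over $B\cdot[e_{i-1}]$, a contradiction. This is the paper's argument.

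Equivalently: whenever the action is nontrivial, the fibre contains $\mathcal{O}_{B,\fb^{\prime}}$, the open orbit, and some orbit over $B\cdot[e_{i-1}]$. That is at least three $B$-orbits, so $\alpha_{i-1}$ cannot be complex stable.
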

\begin{proof}
By way of contradiction suppose that $\alpha=\alpha_{i-1}$ is complex stable 
for $Q_{\fb^{\prime}}$.  Let $\mathcal{O}_{\Delta, \fb^{\prime}}$ be the corresponding 
$G$-orbit as in Equation (\ref{eq:three}).  It follows from definitions and part (1) of Proposition \ref{p:stableandnc} that
\begin{equation*} 
\begin{split}
m(s_{i-1})*_{1}\mathcal{O}_{\Delta, \fb^{\prime}}&=G\cdot (s_{i-1}(\fb), \fb^{\prime}, [e_{i}])=G\cdot (\fb, \dot{s}_{i-1}\cdot \fb^{\prime}, \dot{s}_{i-1}\cdot[e_{i}])\\
&=G\cdot(\fb, \dot{s}_{i-1}\cdot\fb^{\prime}, [e_{i-1}])\leftrightarrow B\cdot (\dot{s}_{i-1}\cdot\fb^{\prime}, [e_{i-1}]),
\end{split}
\end{equation*} 
where $\dot{s}_{i-1}$ is a representative of $s_{i-1}\in W_{n}$.  But the latter $B$-orbit is not contained in the open $B$-orbit in $P_{\alpha_{i-1}}\cdot (\fb^{\prime}, [e_{i}])$, which gives a contradiction. 

For $Q_{\fb^{\prime}}\in S_{i}\backslash\B_{n}$ let $(Q_{\fb^{\prime}})_{B}:=B\cdot Q_{\fb^{\prime}}$ be the unique $B$-orbit on $\B_{n}$ containing the $S_{i}$-orbit $Q_{\fb^{\prime}}$.  Similarly, let $(Q_{\fb^{\prime}})_{B^{i}}:=B^{i}\cdot Q_{\fb^{\prime}}$.  To prove part (2), note that it suffices to show that 
\begin{equation}\label{eq:leftcpt}
(m(s_{i-1})*_{L}Q_{\fb^{\prime}})_{B}=m(s_{i-1})*_{L}(Q_{\fb^{\prime}})_{B}
\end{equation}
and
\begin{equation}\label{eq:rightcpt}
(m(s_{i-1})*_{L} Q_{\fb^{\prime}})_{B^{i}}=(Q_{\fb^{\prime}})_{B^{i}}. 
\end{equation}
To demonstrate (\ref{eq:leftcpt}), observe that if 
$Q_{\fb^{\prime}}$ corresponds to the $B$-orbit $\mathcal{O}_{B,\fb^{\prime}}=B\cdot (\fb^{\prime}, [e_{i}])$ as in (\ref{eq:three}), then $(Q_{\fb^{\prime}})_{B}=B\cdot \fb^{\prime}=\pi_{1}(\mathcal{O}_{B,\fb^{\prime}})$ where $\pi_{1}:\B_{n}\times\mathbb{P}^{n-1}\to \B_{n}$ is projection onto the first factor.  
Thus, since $\pi_{1}$ is an open map, by Equation (\ref{eq:firstexoticdefn})
\begin{equation*}
\begin{split}
(m(s_{i-1})*_{L}(Q_{\fb^{\prime}}))_{B}&=\pi_{1}(\mbox{open } B-\mbox{orbit in } P_{\alpha_{i-1}} \cdot (\fb^{\prime}, [e_{i}]))\\
&=\mbox{ open } B-\mbox{orbit in } P_{\alpha_{i-1}}\cdot\fb^{\prime}\\&=m(s_{i-1})*_{L}(Q_{\fb^{\prime}})_{B},
\end{split}
\end{equation*}
yielding (\ref{eq:leftcpt}).  

To demonstrate (\ref{eq:rightcpt}), we use part (1) of the Proposition.  
 If $m(s_{i-1})*_{L}Q_{\fb^{\prime}}=Q_{\fb^{\prime}}$, then (\ref{eq:rightcpt}) is clear, so we may assume that $m(s_{i-1})*_{L}Q_{\fb^{\prime}}\neq Q_{\fb^{\prime}}$.  Then by (1), $\alpha_{i-1}$ must be non-compact imaginary for 
$Q_{\fb^{\prime}}$.  Thus, $m(s_{i-1})*_{L}\mathcal{O}_{\Delta,\fb^{\prime}}=G\cdot (\Ad(u_{i-1})\cdot \fb,\fb^{\prime}, [e_i]),$ by Proposition \ref{p:stableandnc}, where $u_{i-1}$ is the Cayley transform with respect to $\alpha_{i-1}$.  Thus, $m(s_{i-1})*_{L}Q_{\fb^{\prime}}=S_{i}\cdot \Ad(u_{i-1}^{-1})\cdot \fb^{\prime}$.  By Remark \ref{r:Cayley}, the element $u_{i-1}$ acts on the standard flag in the same way as the element $b_{i}$ given by $b_{i}:e_{i-1}\mapsto e_{i-1}+e_{i}$ and 
$b_{i}: e_{\ell}\mapsto e_{\ell}$ for all other $\ell$.  Since, $b_{i}\in B^{i}$ (see Equation (\ref{eq:Eiflag})) and (\ref{eq:rightcpt}) follows.
\end{proof}

We now show that the monoid action in (\ref{eq:firstexoticdefn}) corresponds to a natural monoid action on $B\backslash Q_{1,i+1}$ via the correspondence in Theorem \ref{thm:firstbig}.  By Proposition \ref{p:Kmonoidaction}, the root $\alpha_{i}\in\Pi_{\fg_{n+1}}$ with $i>1$ is complex unstable for the $G$-orbit $Q_{1,i+1}$, so that  $m(s_{i})*Q_{1,i+1}=Q_{1,i+1}$.  For $\cZ\in B\backslash Q_{1,i+1}$, Equation (\ref{p:old4.7}) implies that $m(s_{i})*_{R}\cZ\in B\backslash Q_{1,i+1}$.  
 
  \begin{prop}\label{p:unstablecorrespondence}
 Let $\alpha=\alpha_{i}\in\Pi_{\fg_{n+1}}$ with $i>1$ and $\cZ\in B\backslash Q_{1,i+1}$ with $\cZ^{op}\in S_{i}\backslash\B_{n}$ as in Notation \ref{n:op}.
 Then 
 \begin{enumerate}
 \item If $\Sh(\cZ)=(w,u^{*})$, then $\Sh(m(s_{i})*_{R}\cZ)=(m(s_{i})*_{R} w, u^{*})$.  Thus, $m(s_{i})*_{R}\cZ\neq \cZ$ if and only if $\alpha$ is non-compact for $\cZ$.  
 \item The corresponding $S_{i}$-orbit $(m(s_{i})*_{R}\cZ)^{op}=m(s_{i-1})*_{L}\cZ^{op}$, where the monoid action on the right hand side of the equation is defined in Equation (\ref{eq:firstexoticdefn}).
 \end{enumerate}
 \end{prop}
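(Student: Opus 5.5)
For part (1), I will apply Theorem \ref{thm:intertwine} in the case $i=n+1$ (see Remark \ref{r:Theorem4.7}), i.e.\ Theorem 4.8 of \cite{Shpairs} for $B$-orbits on $\B_{n+1}$. This gives $\Sh(m(s_{i})*_{R}\cZ)=(m(s_{i})*_{R}w,\, m(s_{i}^{*})*_{R}u^{*})$, where the second factor is acted on by the simple reflection of $(W_{n+1},S^{*})$ corresponding to $\alpha_i$. So it suffices to show that this second action is trivial, i.e.\ that $\alpha_i$ is always unstable for $u^{*}$.

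By Corollary \ref{c:specialcase}, $u^{*}(i)=n+1$, so $u^{*}=\delta^{*}w_{i+1}^{*}$ with $\delta^{*}\in\Wn$. I would compute the root $\alpha_i^{*}$ attached to $\alpha_i$ for the Borel $\fb_{n+1}^{*}$, using the labelling conventions of (\ref{eq:starreflections}). I expect this to be the transposition $(i,i+1)$ read in the $*$-labelling, which involves the value $u^{*}(i)=n+1$, the largest index. The claim is then that $u^{*}s_{\alpha}$ has smaller $*$-length than $u^{*}$, since $n+1$ is the minimal element in the $*$-order and sits at position $i$. Concretely, $(u^{*})(\alpha)$ should be $\epsilon_{n+1}-\epsilon_{u^{*}(i+1)}$, and $\epsilon_{n+1}-\epsilon_k$ is a $\fb^{*}$-positive root. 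I will need to check the sign convention carefully, because this bookkeeping is the main obstacle: deciding whether the second slot is fixed rather than moved.

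With that done, the second coordinate is unchanged. Theorem \ref{thm:intertwine}(1)--(3) then shows that $\alpha_i$ is never complex stable for $\cZ$, and that $m(s_i)*_R\cZ\ne\cZ$ exactly when $\alpha_i$ is stable for $w$ and unstable for $u^{*}$, i.e.\ when $\alpha_i$ is non-compact. This proves (1).

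For part (2), I will mirror the proof of Proposition \ref{p:orbitchange}. Write $\cZ=B\cdot\Ad(gm)\fb_{n+1}$ with $\cZ^{op}=S_i g^{-1}B/B$. If $m(s_i)*_R\cZ=\cZ$, then by (1) $\alpha_i$ is complex unstable or real, hence not non-compact. Theorem \ref{thm:localShareshian} combined with Proposition \ref{p:firstweirdmonoid}(2) then lets me compare $i$-Shareshian pairs. The claim is that $m(s_{i-1})*_L\sigma w^{-1}=\sigma w^{-1}$ precisely when $m(s_i)*_R w=w$, since $\sigma s_i w^{-1}=s_{i-1}\sigma w^{-1}$ because $\sigma^{-1}(i-1)=i$. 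Both orbits are then fixed.

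In the non-compact case, Proposition \ref{p:stableandnc}(2) gives $m(s_i)*_R\cZ=B\cdot\Ad(gmu_{\alpha_i})\fb_{n+1}$. By Remark \ref{r:Cayley} and (\ref{eq:Fioneflag}), the flag $m u_{\alpha_i}\mathcal{E}_{n+1}$ replaces $e_{i-1}$ by $e_{i-1}+e_{n+1}$ at position $i$. I would find an element $c\in G$ carrying this flag back to $\mathcal{E}_{1,i+1}$; for instance, $c$ sends $e_{i-1}\mapsto e_{i-1}-e_i$ up to adjusting the hat vector. This gives $m(s_i)*_R\cZ=B g c^{-1}\cdot\fb_{1,i+1}$ and hence $(m(s_i)*_R\cZ)^{op}=S_i\cdot \Ad(c g^{-1})\fb$. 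Matching this with $S_i\cdot\Ad(u_{i-1}^{-1})\fb'$ from the proof of Proposition \ref{p:firstweirdmonoid} is the identification to be made.

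As a cleaner alternative, I could compare Shareshian data. By (1) and Theorem \ref{thm:localShareshian}, $\widetilde{\Sh}_i((m(s_i)*_R\cZ)^{op})=(\sigma(m(s_i)*w)^{-1}, w_{i+1}^{*}{u^{*}}^{-1})=(m(s_{i-1})*_L\sigma w^{-1}, \cdot)$. By Proposition \ref{p:firstweirdmonoid}(2) this equals $\widetilde{\Sh}_i(m(s_{i-1})*_L\cZ^{op})$, and injectivity of $\Sh_i$ then finishes the proof. I would use this route as the main argument.
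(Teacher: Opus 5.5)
Your route is the paper's. Part (1) goes through Theorem \ref{thm:intertwine} for $\fg_{n+1}$ (Remark \ref{r:Theorem4.7}) together with the fact that $\alpha_i$ is always unstable on the second slot. Part (2) compares standardized $i$-Shareshian pairs via Theorem \ref{thm:localShareshian} and Proposition \ref{p:firstweirdmonoid}(2), then uses injectivity.

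\textbf{Part (1): the key check is set up wrongly.} The one computation that carries part (1) points the wrong way as written. The reflection acting on $u^{*}$ is $s_{i}^{*}$. By (\ref{eq:starreflections}) this is $(i-1,i)$, i.e.\ $\alpha_{i}^{*}=\sigma(\alpha_{i})=\epsilon_{i-1}-\epsilon_{i}$, not $(i,i+1)$.

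Your check evaluates $u^{*}$ on $\epsilon_{i}-\epsilon_{i+1}$ and gets the $\fb_{n+1}^{*}$-positive root $\epsilon_{n+1}-\epsilon_{u^{*}(i+1)}$. For $i<n$, the root $\epsilon_{i}-\epsilon_{i+1}=\alpha_{i+1}^{*}$ is $\fb_{n+1}^{*}$-simple. A positive image therefore says $u^{*}s_{i+1}^{*}>_{*}u^{*}$, which is stability, the opposite of what you need. You have in fact computed the $\alpha_{i+1}$-action, which does move the second slot and carries $\cZ$ into $Q_{1,i+2}$ (cf.\ Proposition \ref{p:orbitchange}). For $i=n$, $(n,n+1)$ is not even a $\fb_{n+1}^{*}$-simple reflection.

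The correct check is $u^{*}(\alpha_{i}^{*})=\epsilon_{u^{*}(i-1)}-\epsilon_{n+1}$. This is $\fb_{n+1}^{*}$-negative because $e_{n+1}$ comes first in $\mathcal{E}_{n+1}^{*}$, so $u^{*}s_{i}^{*}<_{*}u^{*}$. Equivalently, as in the paper, write $u^{*}=\delta^{*}w_{i+1}^{*}$ with $\delta^{*}\in\Wn$. Here $w_{i+1}^{*}=s_{1}^{*}\cdots s_{i}^{*}$ is reduced and a minimal-length coset representative (Remark \ref{r:shortestreps}), so $\ell^{*}(u^{*})=\ell^{*}(\delta^{*})+i$. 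Meanwhile $u^{*}s_{i}^{*}=\delta^{*}s_{1}^{*}\cdots s_{i-1}^{*}$ has $*$-length at most $\ell^{*}(\delta^{*})+i-1$. With this repaired, the rest of your part (1) matches the paper.

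\textbf{Part (2): one step needs justifying.} Your ``cleaner alternative'' is exactly the paper's argument, with one step left open. The identity $\sigma s_{i}=s_{i-1}\sigma$ shows the moved elements agree. To get $m(s_{i-1})*_{L}(\sigma w^{-1})=\sigma(m(s_{i})*_{R}w)^{-1}$ in both cases, you also need $s_{i-1}\sigma w^{-1}>\sigma w^{-1}\iff ws_{i}>w$. This is immediate from $(\sigma w^{-1})^{-1}(\alpha_{i-1})=w\sigma^{-1}(\alpha_{i-1})=w(\alpha_{i})$. The paper instead writes $w=x\sigma$ and applies Proposition \ref{p:BBresult}. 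Note also that Theorem \ref{thm:localShareshian} applies to $m(s_{i})*_{R}\cZ$ because this orbit stays in $Q_{1,i+1}$ by (\ref{p:old4.7}).
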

 \begin{proof}
 By Theorem \ref{thm:ShandKorbits}, $\Sh(\cZ)\in W\sigma\times W w_{i+1}^{*}$, where $w_{i+1}^{*}=s_{1}^{*}\dots s_{i}^{*}$ by Equation (\ref{eq:specialWelts}).  Hence, if $\Sh(\cZ)=(w,u^{*})$, then $u^{*}s_{i}^{*}<u^{*}$, so 
 $\alpha^{*}$ is necessarily unstable for $u^{*}$.  Therefore, by Theorem \ref{thm:intertwine} for $\fg_{n+1}$ with $i=n+1$ (see Remark \ref{r:Theorem4.7}), the root $\alpha$ can never be complex stable for $\cZ$, and $\alpha$ is non-compact for $\cZ$ if and only if $ws_{i}>w$.  Then $\Sh(m(s_{i})*_{R} \cZ)=m(s_{i})*_{R} \Sh(\cZ)=(m(s_{i})*_{R} w, u^{*})$.  By Theorem \ref{thm:intertwine} and Remark \ref{r:Theorem4.7}, the rest of (1) follows.  
 
Since $\widetilde{\Sh}_{i}:S_i\backslash\B_n \to W_{n}\times W_{n}$ is injective by Corollary 3.18 and Definition 3.22 of \cite{Siorbits}, to prove part (2) of the Proposition, it suffices to show that $\widetilde{\Sh}_{i}((m(s_{i})*_{R}\cZ)^{op})=\widetilde{\Sh}_{i}(m(s_{i-1})*_{L}\cZ^{op}).$   By (\ref{eq:localSh}), $\widetilde{Sh}_{i}({\cZ}^{op})=(\sigma w^{-1}, w_{i+1}^{*} {u^{*}}^{-1}).$   Hence, by (\ref{eq:firstSh}) and Definition \ref{d:istd},  
\begin{equation}\label{e:miminusoneside}
\widetilde{Sh}_{i}(m(s_{i-1})*_{L}{\cZ}^{op})=((m(s_{i-1})*_{L}(\sigma w^{-1}), w_{i+1}^{*} {u^{*}}^{-1}).
\end{equation}
By part (1) and (\ref{eq:localSh}),
\begin{equation}\label{e:miside}
\widetilde{\Sh}_{i}((m(s_{i})*_{R}\cZ)^{op})=(\sigma(m(s_i)*_{R} w)^{-1}, w_{i+1}^{*} {u^{*}}^{-1}).
\end{equation}
Thus,  part (2) of the Proposition follows from the assertion that
\begin{equation}\label{eq:wequality}
 m(s_{i-1})*_{L}(\sigma w^{-1})=\sigma(m(s_i)*_{R} w)^{-1}.
\end{equation}
Let $w=x\sigma$ with $x\in W_{n}$.  Since by Remark \ref{r:shortestreps}, $\sigma$ is a minimal length representative in $\Wn \backslash {\cW}_{n+1},$ Proposition \ref{p:BBresult} and inversion imply, $$s_{i-1}x^{-1} > x^{-1}\iff xs_{i-1} > x\iff xs_{i-1}\sigma > x\sigma.$$  Since $x^{-1}=\sigma w^{-1},$ if $s_{i-1}x^{-1} > x^{-1},$ then  $m(s_{i-1})*_{L}(\sigma w^{-1})=s_{i-1}\sigma w^{-1},$ which equals $\sigma s_i w^{-1}$ by (\ref{eq:specialexplicit}).  But $xs_{i-1}\sigma = w s_i,$ so $w s_i > w,$ and hence $\sigma (m(s_i)*_{R} w)^{-1}=\sigma s_i w^{-1}.$  This verifies (\ref{eq:wequality}) in case $s_{i-1} x^{-1} > x^{-1},$ and the verification of (\ref{eq:wequality}) in case $s_{i-1}x^{-1} < x^{-1}$ is easier and left to the reader.
 
 \end{proof}

 There is one more monoid action on $B\backslash Q_{1,i+1}$.  Let $\alpha=\alpha_{1}\in\Pi_{\fg_{n+1}}$.  Then by Proposition \ref{p:Kmonoidaction}, $\alpha$ is real for the 
 $G$-orbit $Q_{1,i+1}$ if $i=1$, and $\alpha$ is complex unstable otherwise.  Thus, $m(s_{1})*Q_{1,i+1}=Q_{1,i+1}$, so we obtain a right monoid action by $\alpha$ on the orbits $B\backslash Q_{1,i+1}$ by (\ref{p:old4.7}).  The proof of the following result is analogous to the proof of Part (1) of Proposition \ref{p:unstablecorrespondence}.
 \begin{prop}\label{p:alpha1corres}
 Let $\alpha=\alpha_{1}\in\Pi_{\fg_{n+1}}$ and $\cZ\in B\backslash Q_{1,i+1}$ with $\cZ^{op}\in S_{i}\backslash\B_{n}$ as in Notation \ref{n:op}.  
 The orbit $m(s_{1})*\cZ\neq \cZ$ if and only if $\alpha$ is non-compact for $\cZ$.  Further, if $\Sh(\cZ)=(w, u^{*})$, then 
 \begin{equation}\label{eq:2ndcptleft}
 \Sh(m(s_{1})*\cZ)=(w, m(s_{1}^{*})*_{R} u^{*}),
 \end{equation} 
 where $s_{1}^{*}=(1,n+1)$ is the first simple reflection defined by the Borel subalgebra $\fb_{n+1}^{*}$ and given in (\ref{eq:starreflections}).
  
 \end{prop}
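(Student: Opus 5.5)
The plan is to mirror the proof of part (1) of Proposition \ref{p:unstablecorrespondence}, with the roles of the two components of the Shareshian pair interchanged. The key input is the coset description of Shareshian pairs of orbits in $Q_{1,i+1}$. By Corollary \ref{c:specialcase}, if $\cZ\subset Q_{1,i+1}$ and $\Sh(\cZ)=(w,u^{*})$, then $w\in \Wn\sigma$, so $w(1)=n+1$ and $u^{*}(i)=n+1$.

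The first step is to show that $\alpha_{1}$ is always unstable for the first component $w$, that is, $ws_{1}<w$. Since $w=x\sigma$ with $x\in\Wn$ and $\sigma=s_{n}\cdots s_{1}$ is a reduced word ending in $s_{1}$, we get $\ell(\sigma s_{1})<\ell(\sigma)$. Remark \ref{r:shortestreps} gives $\ell(x\sigma)=\ell(x)+\ell(\sigma)$, and hence $\ell(ws_{1})<\ell(w)$. One can also see this directly: $w(1)=n+1>w(2)$, so $1$ is a right descent of $w$.

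The second step applies Theorem \ref{thm:intertwine} in the case of $B_{n}$-orbits on $\B_{n+1}$ (Remark \ref{r:Theorem4.7}). For the right action, the root $\alpha_{1}$ acts on the pair by $m(s_{1})*_{R}(w,u^{*})=(m(s_{1})*_{R}w,\, m(s_{1}^{*})*_{R}u^{*})$, where $s_{1}^{*}=(1,n+1)$ is the corresponding simple reflection for $\fb_{n+1}^{*}$. I will double-check this indexing against Definition \ref{d:diagonal}: with respect to $\fb^{*}_{n+1}$, the first simple root relabels to $\alpha_1^*$, and the right action is twisted accordingly. Since $m(s_{1})*_{R}w=w$ by the first step, we obtain $\Sh(m(s_{1})*\cZ)=(w,m(s_{1}^{*})*_{R}u^{*})$, which is (\ref{eq:2ndcptleft}).

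For the dichotomy, the root types follow from parts (1)--(3) of Theorem \ref{thm:intertwine}. The root $\alpha_{1}$ cannot be complex stable for $\cZ$, since it is unstable for $w$. It is non-compact exactly when it is stable for $u^{*}$, i.e. when $u^{*}s_{1}^{*}>_{*}u^{*}$; otherwise it is real or complex unstable. In the latter cases Proposition \ref{p:stableandnc}(3) gives $m(s_{1})*\cZ=\cZ$. In the non-compact case the second component changes, so $m(s_{1})*\cZ\neq\cZ$, using injectivity of $\Sh$ from Theorem 1.1 of \cite{Shpairs}.

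The main obstacle is bookkeeping rather than geometry. We must make sure that Theorem \ref{thm:intertwine} for $i=n+1$ applies to the right action by $\alpha_{1}\in\Pi_{\fg_{n+1}}$ with $\alpha_{1}$ paired with $s_{1}^{*}$ in the second factor. This pairing comes from the right action in (\ref{eq:diagonalmonoid}) with $\alpha^{i}$ replaced by $\alpha^{*}$. We also need (\ref{p:old4.7}) to guarantee that the resulting orbit stays in $B\backslash Q_{1,i+1}$. This is consistent with Corollary \ref{c:specialcase}: right multiplication of $u^{*}$ by $s_1^*$ does not preserve $u^*(i)=n+1$ in general, so I will verify it via the coset $\Wn w_{i+1}^*$ instead. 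Here $u^*s_1^*$ lies in a different coset unless the action is trivial, which must be reconciled with the non-compact case. Resolving this reconciliation is the point I expect to need the most care, possibly by rechecking which component $s_1^*$ acts on in the $B_n\backslash\B_{n+1}$ convention.
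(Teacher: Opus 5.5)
Your argument is correct and is the one the paper intends: it mirrors part (1) of Proposition \ref{p:unstablecorrespondence} with the two components interchanged, using $w\in\Wn\sigma$ to get $ws_1<w$ and then applying Theorem \ref{thm:intertwine} together with Remark \ref{r:Theorem4.7}. The coset worry in your last paragraph is unfounded, so no reconciliation is needed. For $2\le i\le n$ the transposition $s_1^{*}=(1,n+1)$ fixes $i$, so $u^{*}s_1^{*}(i)=u^{*}(i)=n+1$ and $u^{*}s_1^{*}$ stays in $\Wn w_{i+1}^{*}$. For $i=1$ one has $u^{*}=ys_1^{*}$ with $y\in\Wn$, so $u^{*}s_1^{*}<_{*}u^{*}$; then $\alpha_1$ is unstable for both components and the action is trivial, matching the fact that $\alpha_1$ is real for $Q_{1,2}$. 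In any case, $m(s_1)*\cZ\subset Q_{1,i+1}$ already follows from (\ref{p:old4.7}), since $m(s_1)*Q_{1,i+1}=Q_{1,i+1}$.
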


Using Proposition \ref{p:alpha1corres}, we can define a corresponding monoid action on $S_{i}\backslash \B_{n}$. This is the ``hidden" monoid action discussed in the Introduction.
Let $\cZ\in B\backslash Q_{1,i+1}$ with $\Sh(\cZ)=(w,u^{*})$, so that $\widetilde{\Sh}_{i}(\cZ^{op})=(\sigma w^{-1}, w_{i+1}^{*} {u^{*}}^{-1})$ by (\ref{eq:localSh}).  
We claim that 
\begin{equation}\label{eq:2ndcptlefttoright}
w_{i+1}^{*}(m(s_{1}^{*})*_{R} u^{*})^{-1}=m(s_{1})*_{L} (w_{i+1}^{*} {u^{*}}^{-1}).
\end{equation}

Indeed, decompose $u^{*}=yw_{i+1}^{*}$ with $y\in \Wn$.  Since by Remark \ref{r:shortestreps}, $w_{i+1}^{*}$ is a minimal 
length representative in $\Wn\backslash {\cW}_{n+1}$, it follows from Proposition \ref{p:BBresult} and inversion that
\begin{equation*}
\begin{split}
&yw_{i+1}^{*}s_{1}^{*}>yw_{i+1}^{*}\iff yw_{i+1}^{*}s_{1}^{*}{w_{i+1}^{*-1}} w_{i+1}^{*}>yw_{i+1}^{*}\iff  \\
&yw_{i+1}^{*}s_{1}^{*}{w_{i+1}^{*-1}}>y\iff w_{i+1}^{*}s_{1}^{*}{w_{i+1}^{*-1}}y^{-1}>y^{-1} \iff s_{1}y^{-1}>y^{-1}, 
\end{split}
\end{equation*}
where the last equivalence follows from the fact that $w_{i+1}^{*}s_{1}^{*}{w_{i+1}^{*-1}}=s_{1}$ by (\ref{eq:specialexplicit}).  This means $m(s_{1}^{*})*_{R} u^{*} > u^{*}$ if and only if $m(s_{1})*_{L} (w_{i+1}^{*} {u^{*}}^{-1}) > (w_{i+1}^{*} {u^{*}}^{-1}),$ and (\ref{eq:2ndcptlefttoright}) now follows from direct computation in the cases $m(s_{1}^{*})*_{R} u^{*}=u^{*}s_{1}^{*}$ and $m(s_{1}^{*})*_{R} u^{*}=u^{*}$ separately, using $w_{i+1}^{*}s_{1}^{*}{w_{i+1}^{*-1}}=s_{1}$ again.

It follows from Equations (\ref{eq:localSh}), (\ref{eq:2ndcptleft}), and (\ref{eq:2ndcptlefttoright}) that for $\cZ\in B\backslash Q_{1,i+1}$ with 
$\Sh(\cZ)=(w, u^{*})$ 
\begin{equation}\label{eq:firstrootop}
\widetilde{\Sh}_{i}((m(s_{1})*_{R}\cZ)^{op})=(\sigma w^{-1}, m(s_{1})*_{L}( w_{i+1}^{*}{u^{*}}^{-1})). 
\end{equation}
Let $Q\in S_{i}\backslash\B_{n}$ with $Q^{op}=\cZ\in B\backslash Q_{1,i+1}$ and with $\widetilde{\Sh}_{i}(Q)=(w,u)$.  We can use (\ref{eq:firstrootop}) to define a ``monoid" action on $Q$ via the first simple root 
$\alpha_{1}^{i}=\eps_{i}-\eps_{1}$ defined by the Borel subalgebra $\fb^{i}\subset\fg$ (see (\ref{eq:Eiflag})) as follows:   
\begin{equation}\label{eq:ms1defn}
\mbox{ define } m(s_{1}^{i})*_{L} Q\mbox{ to be the unique } S_{i}-\mbox{orbit with } m(s_{1}^{i})*_{L} Q=(m(s_{1})*_{R} \cZ)^{op}.
\end{equation}
Then it follows from (\ref{eq:firstrootop}) that 
\begin{equation}\label{eq:ms1shpair}
\widetilde{\Sh}_{i}(m(s_{1}^{i})*_{L} Q)=(w, m(s_{1})*_{L} u).
\end{equation}
By Equations (\ref{eq:opdim}) and (\ref{eq:basicmonoid}), we have $Q\neq m(s_{1}^{i})*_{L} Q\iff \dim(m(s_{1}^{i})*_{L} Q)=\dim(Q)+1$.  It follows from (\ref{e:sigmai}) and Definition \ref{d:istd} that $\sigma_{i}s_{1}\sigma_{i}^{-1}=(1,i)=s_{1}^{i}$ and $\Sh_{i}(m(s_{1}^{i})*Q)=(w, m(s_{1}^{i})*_{L} u^{i})$, where $u^{i}=\sigma_{i} u\sigma_{i}^{-1}.$  Since $s_{1}^{i}$ acts only on the second component of the Shareshian pair of $Q$, we say that $m(s_{1}^{i})*Q\neq Q$ if and only if $\alpha_{1}^{i}$ is non-compact for $Q$. 

\begin{rem} We note that the monoid action $m(s_1^{i})$ on $S_{i}\backslash \B_{n}$ given in (\ref{eq:ms1defn}) may be put into the framework of the monoid action of $\Pi_{\fg \oplus \fg}$ on $G$-orbits on a smooth variety with finitely many orbits, using a variant of the construction from Section \ref{ss:Gmonoid}.  However, to do so we must replace $\PR^{n-1}$ with a smooth Schubert variety.  We hope to return to this issue in future work. 
\end{rem}

 \begin{exam}\label{ex:Q13orbits}
 
 Below we give the Bruhat graph for the $B_{3}$-orbits contained in the $GL(3)$-orbit $Q_{1,3}$ in flag variety $\B_{4}$ of $\fgl(4)$.  In this example, $B_{3}$-orbits are labelled by their Shareshian pairs.  In the diagram below, the top row consists of the $B_{3}$-orbits of minimal dimension in $Q_{1,3}$, and the dimension of the orbits increases by $1$ as we descend from row to row.   If $Q$ and $Q^{\prime}$ are two $B_{3}$-orbits, we indicate that $Q^{\prime} \subset \overline{Q}$ by exhibiting a sequence of downward lines from $Q^{\prime}$ to $Q.$  We also indicate the monoid actions. 
 For the $GL(3)$-orbit $Q_{1,3}$, there are no compact imaginary roots, so that the set $\Pi_{4,cpt}$ of Equation (\ref{eq:compactroots}) is empty.  Therefore, the only right monoid actions we have are the ones given in Propositions \ref{p:unstablecorrespondence} and \ref{p:alpha1corres}.  We denote the standard simple roots of $\Pi_{\fg}=\{\alpha,\, \beta,\gamma\}$ and the corresponding simple reflections by $s=s_{\alpha},\, t=s_{\beta}$, and $u=s_{\gamma}$ respectively.  
 For the simple roots $\Pi_{\fg}^{*}=\{\alpha^{*},\, \beta^{*},\, \gamma^{*}\}$ defined with respect to the Borel subalgebra $\fb^{*}_{4}$, we label the corresponding simple reflections as $s^{*}=(1,4)$, $t^{*}=(1,2)=s$, $u^{*}=(2,3)=t$.  Note that ${\cW}_{3}=\langle t^{*},\, u^{*}\rangle$.  Recall the element $w_{3}^{*}=s^{*}t^{*}$ of Equation (\ref{eq:specialWelts}).  For $n=3$,  the cycle $\sigma\in {\cW}_{4}$ is $\sigma=(4,\,3,\, 2,\, 1)$.  Recall also that for any $Q\in B\backslash Q_{1,3}$, $\Sh(Q)\in {\cW}_{3}\sigma \times {\cW}_{3}w_{3}^{*}$ by Theorem \ref{thm:ShandKorbits}.  In this example, we use a solid line to denote right monoid actions as given by Propositions \ref{p:unstablecorrespondence} and \ref{p:alpha1corres} and a dashed line to denote a left monoid action by simple roots $\Pi_{\fgl(3)}$ as in (\ref{eq:BQleftmonoid}).   A red line denotes a non-compact root and a blue line denotes a complex stable root.  A green line
indicates a closure relation that is not obtained from a monoid action.

 \begin{center}
 \begin{tikzpicture}  
 [scale=2.0,auto=center,every node/.style={rectangle,fill=white!20}] 
\node (a1) at  (-1,5) {$(\sigma, t^{*}w_{3}^{*})$};
\node (a2) at (3,5) {$(s\sigma, w_{3}^{*}) $};
\node (a3) at (-2,3) {$(t\sigma,u^{*}t^{*} w_{3}^{*})$};
\node (a4) at (1,3) {$(s\sigma,t^{*}w_{3}^{*})$};
\node (a5) at (4,3) {$(ts\sigma,u^{*}w_{3}^{*})$};
\node (a6) at (-2,1) {$(st\sigma, t^{*}u^{*}t^{*}w_{3}^{*})$};
\node (a7) at (1,1) {$(ts\sigma,u^{*}t^{*}w_{3}^{*})$};
\node (a8) at (4,1) {$(sts\sigma,t^{*}u^{*}w_{3}^{*})$} ;
\node (a9) at (1,-1) {$(sts\sigma,t^{*}u^{*}t^{*}w_{3}^{*})$};
\draw [blue] [dashed] (a1) -- (a3) node[midway, above] {$\beta$}; 
  \draw [red] [dashed] (a1) -- (a4) node[midway, above] {$\alpha$};  
 \draw [red] [dashed] (a2) -- (a4) node[midway, above] {$\alpha$};  
 \draw [blue] [dashed] (a2)--(a5) node[midway, above] {$\beta$};
 \draw [blue] [dashed](a3)--(a6) node[midway, above] {$\alpha$};
  \draw [green] (a3)--(a7) ;
  \draw[blue][dashed] (a4)--(a7) node[midway, above] {$\beta$};
  \draw[red] (a3)--(a7)  node[near start, above] {$\beta$};
  \draw[green] (a4)--(a6);
  \draw[green] (a4)--(a8);
  \draw [red] (a5)--(a7) node[near start, above] {$\alpha$};
  \draw[blue][dashed] (a5)--(a8) node[midway, above] {$\alpha$};
  \draw[red][dashed]  (a6)--(a9) node[midway, above] {$\beta$};
    \draw[blue] [dashed] (a7)--(a9) node[midway, above] {$\alpha$};
       \draw[red] [dashed] (a8)--(a9) node[midway, above] {$\beta$}; 
\end{tikzpicture}
\end{center}

	It is instructive to compare this graph with the Bruhat graph for the  $S_{2}$-orbits on $\B_{3}$ given in Example 5.9 of \cite{Siorbits}, which is equivalent to this graph by Theorem \ref{thm:firstbig} and Remark \ref{r:corres}.  Note also that 
the left monoid actions in this graph correspond to the right monoid actions of Example 5.9 of \emph{loc. cit.}, illustrating Equation (\ref{eq:leftinter}) of Theorem \ref{thm:monoidcorresp}.  Also, the right monoid action by the root $\beta$ corresponds to the left monoid action of the root $\alpha$ in Example 5.9 of \emph{loc. cit.}, illustrating Proposition \ref{p:unstablecorrespondence}.  Finally, the right monoid action by $\alpha$ given in Proposition \ref{p:alpha1corres} corresponds to the left monoid action by $\alpha^{2}$ in Example 5.9 of \emph{loc. cit.} given in (\ref{eq:ms1defn}) and (\ref{eq:ms1shpair}).  Note also that this poset may be identified as the subgraph of either of the equivalent graphs in Example \ref{ex:3by3} given by the blue edges.

\end{exam}

 \section{The Orbit Correspondence and Closure Relations}\label{s:global}
 \subsection{The Orbit Correspondence in Terms of Decorated Permutations}
 In this section, we recall Magyar's parameterization of $B\backslash (\B_{n}\times\mathbb{P}^{n-1})$ in terms of decorated permutations from Magyar's paper \cite{Magyar}.  We then use our results to show that the order relation on decorated permutations studied by Magyar can be described in terms of the well-understood Bruhat order on $\cW_{n+1}.$

 
 \begin{dfn}\label{d:marked}
We define a \emph{decorated permutation} of $\{1,\dots, n\}$ to be a pair $(w,\Delta)$ where $w\in\Wn$ and $\Delta=\{j_{1},\dots, j_{k}\}$ is a nonempty subset of $\{1,\dots,n\}$ which is a decreasing sequence for $w^{-1}$, i.e., $j_1 < \dots < j_{k}$ but $w^{-1}(j_{k})<\dots< w^{-1}(j_{1})$.
\end{dfn}

We denote the set of all decorated permutations of $\{1,\dots, n\}$ by $\widehat{\cW}_{n}$.  Given  $(w,\Delta) \in \widehat{\cW}_{n}$ with 
$\Delta=\{j_{1},\dots, j_{k}\}$ as above,
 we consider the $B$-orbit 
\[
{\mathcal{O}}_{(w,\Delta)}:= B\cdot (w(\fb), [v_{\Delta}]), \ \text{ where } v_{\Delta}=e_{j_1}+\dots + e_{j_k}.
\]

\begin{thm}\label{thm:magyar}\cite{Magyar}
Each $B$-orbit on $\B_{n}\times\mathbb{P}^{n-1}$ is of the form ${\mathcal{O}}_{(w,\Delta)}$ for a unique decorated permutation $(w,\Delta).$
\end{thm}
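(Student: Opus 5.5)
We need to show every $B$-orbit on $\B_n\times\mathbb{P}^{n-1}$ contains a point $(w(\fb),[v_\Delta])$ for a decorated permutation $(w,\Delta)$, and that this pair is unique. The plan is to fiber over the first factor. By the Bruhat decomposition every $B$-orbit meets $\{w(\fb)\}\times\mathbb{P}^{n-1}$ for a unique $w\in\Wn$. So the orbits with first coordinate in $B\cdot w(\fb)$ correspond bijectively to orbits of $B_w:=B\cap wBw^{-1}=\mbox{Stab}_B(w(\fb))$ on $\mathbb{P}^{n-1}$. It therefore suffices to show two things. First, each $B_w$-orbit on $\mathbb{P}^{n-1}$ contains exactly one line $[v_\Delta]$. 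Second, the sets $\Delta$ that arise are exactly those that are decreasing for $w^{-1}$.

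For existence, take a nonzero $v=\sum c_je_j$ and work with the torus $H_n\subset B_w$ together with the root subgroups $U_{\epsilon_a-\epsilon_b}$ in $B_w$. These are the ones with $a<b$ and $w^{-1}(a)<w^{-1}(b)$; such a root subgroup adds a multiple of $e_b$ to $e_a$. I would run a reduction on the support of $v$. Suppose $a<b$ are both in the support and $w^{-1}(a)<w^{-1}(b)$. Then applying $\exp(tE_{ab})$ kills the $e_a$ coordinate, which strictly shrinks the support. Repeating, we reach a vector whose support $\Delta=\{j_1<\dots<j_k\}$ satisfies $w^{-1}(j_k)<\dots<w^{-1}(j_1)$. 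Finally $H_n$ rescales all nonzero coordinates to $1$, giving $v_\Delta$. The vector is nonzero, so $\Delta\neq\emptyset$.

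For uniqueness, suppose $[v_{\Delta'}]=b\cdot[v_\Delta]$ with $b\in B_w$, where both $\Delta$ and $\Delta'$ are $w^{-1}$-decreasing. I would look for an invariant of $B_w$-orbits that recovers $\Delta$. The natural one is this: for each $j$, record whether the line meets the complement of the $B_w$-stable subspace spanned by the $e_m$ with $m\ne j$ that "absorb" $e_j$. Concretely, consider the $B_w$-stable subspaces $V_{\ge j}^{w}$, the span of the $e_m$ with $m\ge j$ and $w^{-1}(m)\ge w^{-1}(j)$, and its variants. Both $B$ and $wBw^{-1}$ preserve spans of coordinate sets that are upward closed in the corresponding orders, so these subspaces are $B_w$-stable. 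The claim to verify is that for a $w^{-1}$-decreasing set $\Delta$, the elements of $\Delta$ are recovered as the indices $j$ for which $v$ lies in $V_{\ge j}^{w}$ but the image of $v$ in $V_{\ge j}^w/V_{>j}^w\cong\C e_j$ is nonzero. This projection condition is $B_w$-invariant up to scalar, because $B_w$ acts on the one-dimensional quotient by a character. An alternative, simpler route is a dimension count combined with the fact that the reduction above is canonical, but the invariant approach is cleaner.

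The main obstacle is making the uniqueness invariant precise. Specifically, we must check that the "leading coordinates" of $v$ with respect to the partial order ($j\preceq m$ iff $j\le m$ and $w^{-1}(j)\le w^{-1}(m)$) are preserved by $B_w$, and that for decreasing $\Delta$ the support of $v_\Delta$ is an antichain, so it equals its own set of minimal elements. First I would check that $B_w$ acts triangularly with respect to $\preceq$. This should follow since a root subgroup $E_{ab}$ lies in $B_w$ exactly when $a\prec b$. Given that, the set of $\preceq$-minimal elements of the support of $v$ is $B_w$-invariant, which finishes uniqueness.
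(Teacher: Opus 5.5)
Your existence argument is correct: the root subgroups of $B_w=B\cap wBw^{-1}$ are exactly the $\exp(tE_{ab})$ with $a\prec b$, and each reduction step deletes one non-maximal element of the support. The gap is in uniqueness, where the direction of triangularity is reversed, so the proposed invariant is not an invariant.

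An element $g\in B_w$ satisfies $g_{jm}\neq 0\Rightarrow j\preceq m$. Upper triangularity gives $j\le m$, and $w^{-1}gw\in B$ gives $w^{-1}(j)\le w^{-1}(m)$. So $g e_m$ lies in the span of the $e_j$ with $j\preceq m$: coordinates are pushed \emph{down} in $\preceq$. Consequently:
\begin{enumerate}
\item Spans of up-sets such as your $V^w_{\ge j}$ are not $B_w$-stable. Already for $w=\mathrm{id}$, $B$ stabilizes $\mathrm{span}\{e_1,\dots,e_k\}$, not $\mathrm{span}\{e_k,\dots,e_n\}$.
\item The set of $\preceq$-minimal elements of the support is not preserved. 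For $n=2$ and $w=\mathrm{id}$, $\exp(E_{12})\cdot e_2=e_1+e_2$, so $[e_2]$ and $[e_1+e_2]$ lie in one $B$-orbit, yet the minimal elements of their supports are $2$ and $1$.
\item Your quotient criterion fails even apart from stability. The condition $v\in V^w_{\ge j}$ with nonzero $e_j$-component says $j$ is the unique $\preceq$-minimum of $\mathrm{supp}(v)$. That never happens for $v_\Delta$ when $|\Delta|\ge 2$.
\item The fallback of a dimension count plus ``canonicity'' of the reduction does not close the gap. The reduction involves choices, and proving its output depends only on the orbit is exactly the missing invariant. A dimension count cannot separate distinct orbits of the same dimension.
\end{enumerate}

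The repair is to reverse the order throughout. For every down-set $D$ of $(\{1,\dots,n\},\preceq)$, the span of $\{e_j: j\in D\}$ is $B_w$-stable. Hence $\mathrm{supp}(gv)$ lies in the down-closure ${\downarrow}\mathrm{supp}(v)$ for every $g\in B_w$. Applying this to $g^{-1}$ as well gives ${\downarrow}\mathrm{supp}(gv)={\downarrow}\mathrm{supp}(v)$. So the set of $\preceq$-\emph{maximal} elements of the support is an invariant of $B_w\cdot[v]$; rescaling does not change supports.

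In your quotient language, use the down-sets $\{m: m\not\succ j\}\supset\{m: m\not\succeq j\}$. Their spans are $B_w$-stable with one-dimensional quotient $\C e_j$. Then $j$ is maximal in $\mathrm{supp}(v)$ if and only if $v$ lies in the first span but not the second, and this condition is preserved by $B_w$.

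A $w^{-1}$-decreasing $\Delta$ is an antichain for $\preceq$, so the maximal elements of $\mathrm{supp}(v_\Delta)$ are $\Delta$ itself, which gives uniqueness. This also explains why your reduction always ends at the set of maximal elements of the original support.
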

 
 \noindent See Theorem 3.8 in \cite{Siorbits} for a simple proof of Theorem \ref{thm:magyar}.

 \begin{nota} \label{n:marked}  The orbit ${\mathcal O}_{(w,\Delta)}\in B\backslash(\B_{n}\times\mathcal{O}_{i})$ if and only if $j_{k}=i$.  In this case, we refer to the decorated permutation $(w,\Delta)$ as an $i$-decorated permutation.  We denote the set of $i$-decorated permutations by $\widehat{\cW}_{n}(i)$.  For $(w,\Delta)\in \widehat{\cW}_{n}(i)$, we denote by $Q_{(w,\Delta)}$ the  $S_{i}$-orbit on $\B_{n}$ corresponding to the $B$-orbit $\mathcal{O}_{(w,\Delta)}$ given by the rule in (\ref{eq:Sicorres}).  Explicitly, this means
\begin{equation}\label{eq:markedcorres}
(w,\Delta)\leftrightarrow \calO_{(w,\Delta)}=B\cdot (w(\fb), [v_{\Delta}])\leftrightarrow Q_{(w,\Delta)}= S_{i} b\cdot w(\fb) \in S_{i}\backslash\B_{n},
\end{equation}
where $b\in B$ is such that $b\cdot [v_{\Delta}]=[e_{i}]$.

\end{nota}

\begin{rem}\label{rem:iShform}
Suppose $(w,\Delta)\in \widehat{\cW}_{n}(i)$ is an $i$-decorated permutation with $\Delta=\{j_{1}<j_{2}<\dots< j_{k-1}<i\}$ and $Q=Q_{(w,\Delta)}$ as in Equation (\ref{eq:markedcorres}).  Then by Remark 3.21 of \cite{Siorbits}, $\Sh_{i}(Q)=(w, \nu_{\Delta} w\sigma_{i}^{-1}),$ where 
$\nu_{\Delta}=(i, \, j_{k-1},\dots, j_{2},\, j_{1})$ and $\sigma_{i}=(i,\, i-1,\, \dots, 2,\, 1)$ are cycles of $\Wn.$  Conversely, if $Q\in S_{i}\backslash\B_{n}$ with $\Sh_{i}(Q)=(w,u^{i})$, then the permutation $\nu_{\Delta}:=u^{i}\sigma_{i}w^{-1}$ is a cycle, and $Q=Q_{(w,\Delta)}$ with $\Delta=\mbox{support }({\nu_{\Delta}})$.
\end{rem}
 
 \begin{nota}\label{nota:biggerop}
Let $(w,\Delta)\in\widehat{\cW}_{n}$ with the corresponding $B$-orbit $\mathcal{O}_{(w,\Delta)}$ on $\B_{n}\times\mathbb{P}^{n-1}$ as in Theorem \ref{thm:magyar}.  We denote  by $\mathcal{O}_{(w,\Delta)}^{op}$ the orbit in $B\backslash\mathfrak{X}$ given by the second correspondence in Equation (\ref{eq:secondcorresp}). 
\end{nota}
 

\begin{thm}\label{thm:globalSh}
Let $\mathcal{O}_{(w,\Delta)} $ be the $B$-orbit on $\B_{n}\times \mathbb{P}^{n-1}$ corresponding to the decorated permutation $(w,\Delta)\in\widehat{W}_{n}$ as in Theorem \ref{thm:magyar}.  Suppose that $\Delta=\{j_{1}<j_{2}<\dots<j_{k}\}$ and consider the $k+1$-cycle $\kappa_{\Delta}:=(j_{1}, j_{2},\dots, j_{k}, n+1)\in \cW_{n+1}$.  Let $\mathcal{O}_{(w,\Delta)}^{op}$ be the $B$-orbit on $\mathfrak{X}$ given by the correspondence in Equation (\ref{eq:secondcorresp}) as in Notation \ref{nota:biggerop}. Then 
\begin{equation}\label{eq:globalSh}
\Sh(\mathcal{O}_{(w,\Delta)}^{op})=(w^{-1}\sigma, w^{-1}\kappa_{\Delta}).  
\end{equation}
\end{thm}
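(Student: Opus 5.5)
The plan is to reduce the statement to Theorem \ref{thm:localShareshian} together with Remark \ref{rem:iShform}, so that everything comes down to a computation in $\cW_{n+1}$. Write $\Delta=\{j_1<\dots<j_k\}$ and set $i:=j_k$. By Notation \ref{n:marked}, $\mathcal{O}_{(w,\Delta)}\in B\backslash(\B_n\times\mathcal{O}_i)$, and it corresponds to the $S_i$-orbit $Q:=Q_{(w,\Delta)}$. By construction of the correspondence (\ref{eq:secondcorresp}), namely (\ref{eq:biggercorresp}) composed with (\ref{eq:Sicorres}), we have $\mathcal{O}_{(w,\Delta)}^{op}=Q^{op}=:\cZ\in B\backslash Q_{1,i+1}$, so that $\cZ^{op}=Q$. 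Let $\Sh(\cZ)=(x,z^{*})$. It then suffices to show $x=w^{-1}\sigma$ and $z^{*}=w^{-1}\kappa_\Delta$.

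First I compute the standardized $i$-Shareshian pair of $Q$ from Remark \ref{rem:iShform}. That remark gives $\Sh_i(Q)=(w,\nu_\Delta w\sigma_i^{-1})$ with $\nu_\Delta=(i,j_{k-1},\dots,j_1)$. Definition \ref{d:istd} then yields
\[
\widetilde{\Sh}_i(Q)=(w,\ \sigma_i^{-1}\nu_\Delta w).
\]
On the other hand, Theorem \ref{thm:localShareshian} gives $\widetilde{\Sh}_i(Q)=(\sigma x^{-1},\,w_{i+1}^{*}{z^{*}}^{-1})$.

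Next I compare the two expressions componentwise. From the first component, $\sigma x^{-1}=w$, hence $x=w^{-1}\sigma$. From the second component, $w_{i+1}^{*}{z^{*}}^{-1}=\sigma_i^{-1}\nu_\Delta w$, hence
\[
z^{*}=w^{-1}\nu_\Delta^{-1}\sigma_i w_{i+1}^{*}.
\]
In the proof of Theorem \ref{thm:localShareshian} it was noted that $\sigma_i w_{i+1}^{*}$ is the transposition $(i,n+1)$. It therefore remains to check the identity
\[
\nu_\Delta^{-1}(i,n+1)=(j_1,j_2,\dots,j_{k-1},i,n+1)=\kappa_\Delta .
\]
Composing right to left: $n+1\mapsto i\mapsto j_1$, then $j_r\mapsto j_{r+1}$ for $r\le k-2$, then $j_{k-1}\mapsto i$, and finally $i\mapsto n+1$, since $n+1$ is fixed by $\nu_\Delta^{-1}$. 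This is exactly $\kappa_\Delta$ because $j_k=i$. The degenerate case $k=1$, where $\nu_\Delta=\mathrm{id}$ and $\kappa_\Delta=(i,n+1)$, is covered by the same computation.

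The only delicate step I expect is bookkeeping. Two things need checking. The composition convention for cycles must be the one used in (\ref{eq:specialexplicit}) and in Remark \ref{rem:iShform}. The orientation of the orbit correspondence must also match, that is, $\mathcal{O}_{(w,\Delta)}^{op}$ must be the $B$-orbit whose $op$ is $Q_{(w,\Delta)}$ and not its inverse image under some other twist. I would confirm both against the zero-dimensional case computed in the proof of Theorem \ref{thm:localShareshian}. There one has $\cZ^{op}=S_i\cdot\tau^{-1}(\mathcal{E}_n)$ with $\tau=(k,\dots,i)$; this should be $Q_{(\tau^{-1},\Delta)}$ for $\Delta=\{k,\dots,i\}$, and the formula should return the pair (\ref{eq:Shdata}).
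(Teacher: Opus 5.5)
Your argument is correct and follows the paper's proof. You standardize the pair from Remark \ref{rem:iShform} to $(w,\sigma_i^{-1}\nu_\Delta w)$, equate it with Theorem \ref{thm:localShareshian} to get $x=w^{-1}\sigma$ and $z^{*}=w^{-1}\nu_\Delta^{-1}\sigma_i w_{i+1}^{*}$, and verify $\nu_\Delta^{-1}\sigma_i w_{i+1}^{*}=\kappa_\Delta$; your explicit computation of this via $\sigma_i w_{i+1}^{*}=(i,n+1)$ is right, and the paper only asserts it.

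One correction to your optional sanity check. The zero-dimensional orbit $S_i\cdot\tau^{-1}(\mathcal{E}_n)$ corresponds to $B\cdot(\tau^{-1}(\fb),[e_i])$, so it is $Q_{(\tau^{-1},\{i\})}$, not $Q_{(\tau^{-1},\{k,\dots,i\})}$; for $i-k\geq 2$ the latter is not even a decorated permutation. With $\Delta=\{i\}$ your formula gives $(\tau\sigma,\tau(i,n+1))$, which is exactly the pair in (\ref{eq:Shdata}). With $\{k,\dots,i\}$ the check would fail and wrongly suggest a convention mismatch.
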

\begin{proof}
By Notation \ref{n:marked}, $\mathcal{O}_{(w,\Delta)}\in B\backslash (\B_{n}\times \mathcal{O}_{i})$, where $i=j_{k}$.  By  Equation (\ref{eq:markedcorres}), $\mathcal{O}_{(w,\Delta)}$ corresponds to an $S_{i}-$orbit $Q_{(w,\Delta)}$ on $\B_{n}.$  By  Remark \ref{rem:iShform}, $\Sh_{i}(Q_{(w,\Delta)})=(w, \nu_{\Delta} w\sigma_{i}^{-1}).$
  By Definition \ref{d:istd}, 
 the standardized $i$-Shareshian pair 
\begin{equation}\label{eq:firsttwisted}
\widetilde{\Sh}_{i}(Q_{(w,\Delta)})=(w, \sigma_{i}^{-1}\nu_{\Delta} w).
\end{equation}
 
Let $\Sh(\mathcal{O}_{(w,\Delta)}^{op})=(u,z)$.  To prove the theorem, we must show that the Shareshian pair $(u,z)$ is given by the right hand side of Equation (\ref{eq:globalSh}).  By Theorem \ref{thm:localShareshian}, 
 \begin{equation}\label{eq:secondtwisted}
 \widetilde{\Sh}_{i}(Q_{(w,\Delta)})=(\sigma u^{-1}, w_{i+1}^{*} z^{-1}).
 \end{equation}
  Using (\ref{eq:firsttwisted}), we deduce that $u=w^{-1}\sigma$ and  $z=w^{-1}\nu_{\Delta}^{-1} \sigma_{i} w_{i+1}^{*}$.  
By Equations (\ref{e:sigmai}) and (\ref{eq:specialexplicit}), we have $\nu_{\Delta}^{-1} \sigma_{i} w_{i+1}^{*}=\kappa_{\Delta}.$
Thus $z=w^{-1}\kappa_{\Delta}$, which completes the proof.
\end{proof}


\begin{rem} For completeness, we state several formulas for $\dim(\mathcal{O}_{(w,\Delta)}).$  The first was known by Magyar and is given directly in terms of $w$ and $\Delta.$   For a decorated permutation $(w,\Delta)$ with $\Delta = \{ j_1 < \dots < j_k\},$ let 
\[
m(w,\Delta)=|\{ r \not\in \{j_1, \dots, j_k \} : \exists j_s \in \{ j_1, \dots, j_k \} \mbox { with } r < j_s \mbox{ and } w^{-1}(r) < w^{-1}(j_s)\}|.
\]
Then 
\begin{equation} \label{e:dimfirst}
\dim(\mathcal{O}_{(w,\Delta)})=\ell(w)+m(w,\Delta) + |\Delta| - 1.
\end{equation}
This may be proved by projecting $\mathcal{O}_{(w,\Delta)}$ to $\B_n$, and computing the dimension of the fiber, which is identified with $(B\cap w(B))\cdot [v_{\Delta}].$ For the second formula, recall that if a $B$-orbit ${\cZ}$ has Shareshian pair $(y,u^{*}),$ then by Definition 4.13 of \cite{Shpairs}, its standardized Shareshian pair $\widetilde{\Sh}(\cZ)$ is $(y,\sigma^{-1}u^{*}\sigma).$  By Corollary 4.14 of loc. cit., if the standardized Shareshian pair of  $\mathcal{O}_{(w,\Delta)}^{op}$ is $(y,v),$ then $\dim(\mathcal{O}_{(w,\Delta)}^{op})=\frac{\ell(y)+\ell(v)-\ell(vy^{-1})}{2}.$  By applying Equations (\ref{eq:opdim}) and (\ref{e:dimcorr}) with $i=j_k,$
\[
\dim(\mathcal{O}_{(w,\Delta)})=\dim(Q_{(w,\Delta)})+i-1=\dim(\mathcal{O}_{(w,\Delta)}^{op})-1.
\]
Hence,
\begin{equation}\label{e:second}
\dim(\mathcal{O}_{(w,\Delta)})=\frac{\ell(y)+\ell(v)-\ell(vy^{-1})}{2} - 1, \mbox{ if } \widetilde{\Sh}(\mathcal{O}_{(w,\Delta)}^{op})=(y,v).
\end{equation}
To apply (\ref{e:second}), note that by Theorem \ref{thm:globalSh}, $\widetilde{\Sh}(\mathcal{O}_{(w,\Delta)}^{op})=(w^{-1}\sigma, \sigma^{-1}w^{-1}\kappa_{\Delta}\sigma).$
\end{rem}

The correspondence in (\ref{eq:secondcorresp}) can be used to construct an embedding of orbit sets 
\begin{equation}\label{eq:orbitsembedding}
G\backslash (\B_{n}\times\B_{n}\times \mathbb{P}^{n-1})\hookrightarrow G_{n+1}\backslash (\B_{n+1}\times\B_{n+1}\times \mathbb{P}^{n}).
\end{equation}
as follows.  Equation (\ref{eq:secondcorresp}) gives us a bijection 
$$
G\backslash (\B_{n}\times\B_{n}\times \mathbb{P}^{n-1})\leftrightarrow B\backslash\mathfrak{X} \subset \B_{n+1},
$$
where $\mathfrak{X}$ is the open subvariety of $\B_{n+1}$ defined in (\ref{eq:frakX}).
Now $B$-orbits on $\B_{n+1}$ correspond to $B_{n+1}$-orbits on $\B_{n+1}\times\mathcal{O}_{n+1}$, which in turn correspond to a subset of $G_{n+1}$-orbits on the triple product $\B_{n+1}\times\B_{n+1}\times\mathbb{P}^{n}$.  So we can embed $B\backslash \mathfrak{X}$ into  $G_{n+1}\backslash (\B_{n+1}\times\B_{n+1}\times \mathbb{P}^{n})$ to obtain the embedding of (\ref{eq:orbitsembedding}).

We now describe the map on decorated permutations induced by (\ref{eq:orbitsembedding}).  For $J=\{ j_1 < j_2 < \cdots < j_k \}$ an increasing sequence of integers in $\{ 1, \dots, n \},$ let $[J:n+1]:=\{ j_1 < j_2 < \cdots < j_k < n+1 \}$ be the corresponding sequence in $\{ 1, \dots, n+1 \}.$
\begin{prop}\label{p:marked}
Let $(w,\Delta)\in\widehat{\cW}_{n}$ and consider the $n+1$-cycle $\sigma=(n+1, n, n-1,\dots, 2, 1)$ of $\cW_{n+1}.$    Let $\phi: \widehat{\cW}_{n}\hookrightarrow \widehat{\cW}_{n+1}$ be the embedding on decorated permutations corresponding to the orbit embedding in Equation (\ref{eq:orbitsembedding}).  Then 
\begin{equation}\label{eq:permutation}
\phi((w,\Delta))=(w^{-1}\sigma, [w^{-1}(\Delta):n+1]).
\end{equation}
In particular, 
\begin{equation}\label{eq:opdecorated}
\mathcal{O}_{(w,\Delta)}^{op}=\mathcal{O}_{\phi(w,\Delta)}.
\end{equation}
\end{prop}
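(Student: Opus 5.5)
The plan is to compute both sides through Shareshian pairs. For $GL(n+1)$ we have $\Sh_{n+1}=\Sh$, and Theorem \ref{thm:globalSh} already gives $\Sh(\mathcal{O}_{(w,\Delta)}^{op})$. Since $i$-Shareshian pairs determine orbits, this should pin down $\phi$.

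First I make the embedding (\ref{eq:orbitsembedding}) explicit at the level of $B$-orbits on $\B_{n+1}$. Apply (\ref{eq:Sicorres}) to $G_{n+1}$ with $i=n+1$. Then $S_{n+1}=\mbox{Stab}_{B_{n+1}}[e_{n+1}]$, which is $Z\cdot B$ with $Z$ the centre of $G_{n+1}$, so its orbits on $\B_{n+1}$ are exactly the $B$-orbits. Under this identification a $B$-orbit $\cZ=B\cdot\F$ goes to the $B_{n+1}$-orbit $B_{n+1}\cdot(\F,[e_{n+1}])$ on $\B_{n+1}\times\mathcal{O}_{n+1}$. By Notation \ref{n:marked} this orbit is $\mathcal{O}_{(x,\Delta^{\prime})}$ for a unique $(n+1)$-decorated permutation $(x,\Delta^{\prime})\in\widehat{\cW}_{n+1}(n+1)$, and $\cZ=Q_{(x,\Delta^{\prime})}$. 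Moreover, with $i=n+1$ the flag $\mathcal{E}^{n+1}=\sigma_{n+1}(\mathcal{E}_{n+1})$ of (\ref{eq:Eiflag}) is the flag $\mathcal{E}^{*}_{n+1}$ of (\ref{eq:E*flag}), and $\sigma_{n+1}=(n+1,n,\dots,1)=\sigma$. So the $(n+1)$-Shareshian pair of $\cZ$ is its Shareshian pair $\Sh(\cZ)$. I expect this identification, and keeping the conventions consistent across the two uses of (\ref{eq:Sicorres}), to be the only real obstacle; the rest is bookkeeping with cycles.

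Next I take $\cZ=\mathcal{O}_{(w,\Delta)}^{op}$ and set $i=j_k$. Theorem \ref{thm:globalSh} gives $\Sh(\cZ)=(w^{-1}\sigma,\,w^{-1}\kappa_{\Delta})$. Applying the converse half of Remark \ref{rem:iShform} for $G_{n+1}$ with $i=n+1$, the first component gives $x=w^{-1}\sigma$. The associated cycle is
$$\nu_{\Delta^{\prime}}=u^{n+1}\sigma x^{-1}=w^{-1}\kappa_{\Delta}\sigma\sigma^{-1}w=w^{-1}\kappa_{\Delta}w.$$
Because $w\in\Wn$ fixes $n+1$, this is the cycle $(w^{-1}(j_1),\dots,w^{-1}(j_k),n+1)$. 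By Remark \ref{rem:iShform}, $\Delta^{\prime}$ is the support of this cycle, namely $w^{-1}(\Delta)\cup\{n+1\}=[w^{-1}(\Delta):n+1]$ after reordering increasingly.

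This yields (\ref{eq:permutation}), and (\ref{eq:opdecorated}) is the same statement read through the identification of $B$-orbits on $\B_{n+1}$ with $B_{n+1}$-orbits on $\B_{n+1}\times\mathcal{O}_{n+1}$. The decreasing-sequence condition of Definition \ref{d:marked} does not need to be checked by hand: it holds automatically because $(x,\Delta^{\prime})$ comes from an actual orbit via Theorem \ref{thm:magyar}. As a sanity check, I would still verify it in a small case such as $n=2$.
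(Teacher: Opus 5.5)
Your proposal is correct and follows the paper's own proof essentially step for step. Theorem \ref{thm:globalSh} gives $\Sh(\mathcal{O}_{(w,\Delta)}^{op})=(w^{-1}\sigma, w^{-1}\kappa_{\Delta})$, and the converse half of Remark \ref{rem:iShform}, applied to $G_{n+1}$ with $i=n+1$, then yields $x=w^{-1}\sigma$ and $\nu=w^{-1}\kappa_{\Delta}w$, whose support is $[w^{-1}(\Delta):n+1]$. Your explicit check that $\sigma_{n+1}=\sigma$ and $\mathcal{E}^{n+1}=\mathcal{E}^{*}_{n+1}$, so that $\Sh_{n+1}=\Sh$, just spells out what the paper compresses into ``with $B=S_{n+1}$ (up to centre).''
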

\begin{proof}
Let $\mathcal{O}_{(w,\Delta)}$ be the $B$-orbit on $\B_{n}\times \mathbb{P}^{n-1}$ corresponding to the decorated permutation $(w, \Delta),$ and let $\mathcal{O}_{(w,\Delta)}^{op}=\mathcal{O}_{(u,\tilde{\Delta})}$ for some $(u,\tilde{\Delta}) \in\widehat{\cW}_{n+1}$.  
 Then by Theorem \ref{thm:globalSh}, $\Sh(\mathcal{O}_{(w,\Delta)}^{op})=(w^{-1}\sigma, w^{-1}\kappa_{\Delta}),$ so by Remark  \ref{rem:iShform}, $u=w^{-1}\sigma$.  By Remark \ref{rem:iShform} again with $B=S_{n+1}$ (up to centre) we obtain  
\[
\nu_{\tilde{\Delta}}= w^{-1}\kappa_{\Delta} \sigma (w^{-1}\sigma)^{-1} = 
w^{-1}\kappa_{\Delta} w=(n+1, w^{-1}(j_{1}), \dots, w^{-1}(j_{k})).
\]
  The result follows, noting that $(w^{-1}\sigma, [w^{-1}(\Delta):n+1])$ is a decorated permutation.
\end{proof} 

\begin{thm}\label{thm:globalclosure}
Let $(w,\Delta),\, (y,\Gamma)\in\widehat{W}_{n}$, and let 
$\mathcal{O}_{(w,\Delta)}$ and $\mathcal{O}_{(y,\Gamma)}$ be the associated $B$-orbits on $\B_{n}\times\mathbb{P}^{n-1}$.  Let $\mathcal{O}_{(w,\Delta)}^{op}$ and 
$\mathcal{O}_{(y,\Gamma)}^{op}$ be the corresponding $B$-orbits on $\mathfrak{X}$.  
Then 
\begin{equation}\label{eq:preserveclosure}
\mathcal{O}_{(w,\Delta)}\subset\overline{\mathcal{O}_{(y,\Gamma)}}\Leftrightarrow \mathcal{O}_{(w,\Delta)}^{op}\subset\overline{\mathcal{O}_{(y,\Gamma)}^{op}}.
\end{equation}
\end{thm}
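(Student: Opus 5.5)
The plan is to reduce both sides of (\ref{eq:preserveclosure}) to combinatorial conditions on $(w,\Delta)$ and $(y,\Gamma)$ and compare them. On the right-hand side, (\ref{eq:oldShclosure}) together with Theorem \ref{thm:globalSh} gives
\[
\mathcal{O}_{(w,\Delta)}^{op}\subset\overline{\mathcal{O}_{(y,\Gamma)}^{op}}\iff w^{-1}\sigma\le y^{-1}\sigma \mbox{ and } w^{-1}\kappa_{\Delta}\le_{*} y^{-1}\kappa_{\Gamma}.
\]
The argument for (\ref{eq:firstcpt}) applies verbatim, using Proposition \ref{p:BBresult}, the fact that $\sigma$ is a minimal length coset representative, and invariance of the Bruhat order under inversion. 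So the first condition is equivalent to $w\le y$. On the left-hand side, I will use Magyar's closure criterion from \cite{Magyar}. It states that $\mathcal{O}_{(w,\Delta)}\subset\overline{\mathcal{O}_{(y,\Gamma)}}$ if and only if $w\le y$ and a family of numerical inequalities holds. These inequalities compare, for each pair of indices, counts involving $\Delta$ and $\Gamma$ relative to $w$ and $y$; this is the statistic in (\ref{eq:littledelta}). Projection to $\B_n$ already shows that $w\le y$ is necessary, and projection to $\mathbb{P}^{n-1}$ shows that $\max\Delta\le\max\Gamma$ is necessary. The latter matches $Q_{1,\max\Delta+1}\subset\overline{Q_{1,\max\Gamma+1}}$ in Remark \ref{r:Kclosure}.

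The core step is to show that, given $w\le y$, Magyar's numerical conditions are equivalent to $w^{-1}\kappa_{\Delta}\le_{*}y^{-1}\kappa_{\Gamma}$. Here $\le_{*}$ is the Bruhat order with respect to $S^{*}$, so it is conjugate by $\sigma$ to the standard Bruhat order. I will therefore rewrite the second condition as a standard Bruhat comparison of $\sigma^{-1}w^{-1}\kappa_{\Delta}\sigma$ and $\sigma^{-1}y^{-1}\kappa_{\Gamma}\sigma$. Equivalently, by Proposition \ref{p:marked}, this is a comparison of the permutations attached to $\phi(w,\Delta)$ and $\phi(y,\Gamma)$. I will then apply the tableau (rank-matrix) criterion for Bruhat order in $W_{n+1}$. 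The one-line notation of $w^{-1}\kappa_{\Delta}$ agrees with $w^{-1}$ except on the positions of the cycle $(j_1,\dots,j_k,n+1)$. Hence its rank numbers $r_{p,q}=|\{a\le p: w^{-1}\kappa_{\Delta}(a)\le q\}|$ equal the rank numbers of $w^{-1}$ corrected by a term counting the elements of $\Delta$ in a certain range relative to $w^{-1}$. I expect these correction terms to reproduce exactly the quantities in Magyar's inequalities, and the rank numbers of $w^{-1}$ itself to be controlled by $w\le y$.

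I expect the main obstacle to be this bookkeeping: matching the rank-matrix inequalities for the cycle-twisted permutations with Magyar's statistic, and showing that the inequalities involving row or column $n+1$ reduce to $\max\Delta\le\max\Gamma$ together with $w\le y$. If the direct translation becomes unwieldy, I will handle the forward implication by a different route. I will first reduce to covering relations in Magyar's order. Each covering relation either stays inside a single $B\backslash(\B_n\times\mathcal{O}_i)$ or changes $i$. Within a single $\mathcal{O}_i$, preservation of closure follows from (\ref{eq:closurecorres}) and (\ref{eq:Sicorres}). When $i$ changes, the closure relation is realized by $m(s_i)*_1$ or $m(s_{i-1})*_1$, and I will use Propositions \ref{p:orbitchange} and \ref{p:unstablecorrespondence}. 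These intertwine those actions with $m(s_{i+1})*_R$ and $m(s_i)*_R$ on $B\backslash\mathfrak{X}$, and the monoid actions on $B\backslash\mathfrak{X}$ respect closure. I will then derive the reverse implication from the Shareshian-pair description by the rank computation above, which is the easier direction.
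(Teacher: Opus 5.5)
Your main route is correct, but it is not the paper's. The paper never compares Magyar's order with the Bruhat order on Shareshian pairs in this proof. It identifies $\mathcal{O}^{op}_{(w,\Delta)}$ with the orbit of $\phi(w,\Delta)=(w^{-1}\sigma,[w^{-1}(\Delta):n+1])\in\widehat{\cW}_{n+1}$ (Proposition \ref{p:marked}). It then applies Magyar's criterion a second time, for $GL(n+1)$, and checks that $\phi$ preserves $\unlhd$ via the index shift $\overline{r}_{p,q}(\phi(w,\Delta))=\overline{r}_{q-1,p}(w,\Delta)$ of Lemma \ref{l:compare}. Theorem 3.4 of \cite{Shpairs} is used only afterwards, for Corollary \ref{c:canuseSh}. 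You use Magyar once, and Theorem 3.4 of \cite{Shpairs} with Theorem \ref{thm:globalSh} on the other side. So your core step amounts to proving Corollary \ref{c:canuseSh} directly and deducing the theorem from it. The paper's shift lemma is nearly trivial and keeps the argument independent of \cite{Shpairs}. Your route leans on \cite{Shpairs} but shows that Magyar's statistic is simply a Bruhat rank matrix in $\cW_{n+1}$.

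The core step works, and more cleanly than you expect, but you must use the rank numbers of $\tilde{x}_{(w,\Delta)}:=\sigma^{-1}w^{-1}\kappa_{\Delta}\sigma$, i.e.\ order the basis with $n+1$ first. The numbers $|\{a\le p: w^{-1}\kappa_{\Delta}(a)\le q\}|$ you wrote do not work. Already for $\Delta=\{j_1\}$ and $p,q\le n$ they equal $r_{p,q}(w)-[\,j_1\le p \mbox{ and } w^{-1}(j_1)\le q\,]$, not $\overline{r}_{p,q}(w,\Delta)$. Indeed $(1,3)\not\le(2,3)$ in the standard order on $\cW_3$, although $(id,\{1\})\unlhd(id,\{2\})$. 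Since $\sigma(\{1,\dots,p\})=\{n+1,1,\dots,p-1\}$, for $1\le p,q\le n+1$, $P=p-1$, $Q=q-1$ one gets
\[
|\{a\le p:\ \tilde{x}_{(w,\Delta)}(a)\le q\}|=[\,w^{-1}(j_1)\le Q\,]+[\,j_k\le P\,]+|\{b\le P,\ b\neq j_k:\ w^{-1}\kappa_{\Delta}(b)\le Q\}|,
\]
where $[\,\cdot\,]$ is $1$ or $0$ according as the condition holds. Here $\{s: j_s\le P\}$ is an initial segment and $\{s: w^{-1}(j_s)\le Q\}$ a final segment of $\{1,\dots,k\}$. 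Hence this exceeds $r_{P,Q}(w)=|\{b\le P: w^{-1}(b)\le Q\}|$ by $1$ if the two segments cover $\{1,\dots,k\}$ and by $0$ otherwise, i.e.\ by exactly $\delta_{P,Q}(w,\Delta)$.

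So the rank matrix of $\tilde{x}_{(w,\Delta)}$ is exactly $(\overline{r}_{P,Q}(w,\Delta))_{0\le P,Q\le n}$. Condition (2) of (\ref{eq:Morder}) is therefore equivalent to $w^{-1}\kappa_{\Delta}\le_{*}y^{-1}\kappa_{\Gamma}$ on its own. You need neither the hypothesis $w\le y$ nor a separate analysis of boundary rows and columns.

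Drop the fallback. Covering relations that change $i$ need not come from a monoid action; see the green edge joining $(id,\{1\})$ and $(id,\{2\})$ in Example \ref{eq:2x2example}. So that argument would not close.
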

We will prove this Theorem using Magyar's combinatorial characterization of the closure ordering on $B\backslash(\B_{n}\times\mathbb{P}^{n-1})$ using $\widehat{\cW}_{n}$ from \cite{Magyar}.
We now recall Magyar's ordering $\unlhd$ on $\widehat{\cW}_{n}.$  Consider the subspaces $\mathcal{E}_{p}=\mbox{span}\{e_{1},\dots, e_{p}\}$ for $p=0, \dots, n$ so ${\mathcal{E}}_{0}=0.$  Let 
$(w,\Delta)\in\widehat{\cW}_{n}$ and associate to $(w,\Delta)$ the following statistics.
\begin{equation}\label{eq:firststat}
\mbox{For } p,\, q\in\{0,\dots, n\}\mbox{ define } r_{p,q}(w):=\dim(\mathcal{E}_{p}\cap w(\mathcal{E}_{q}))=|\{1,\dots, p\}\cap \{w(1),\dots, w(q)\}|.
\end{equation}    

\begin{equation}\label{eq:littledelta}
\mbox{ For } p,\,q\in\{0,1,\dots, n\}, \mbox{ define } \delta_{p, q} (w,\Delta):=\left\{\begin{array}{ccc} 1 & \mbox { if for each } \ell\in\Delta, & \ell\leq p \mbox{ or } w^{-1}(\ell)\leq q \\
0 &\mbox{ otherwise } & \end{array}\right\}_{.}
\end{equation}
Define:
\begin{equation}\label{eq:secondstat}
\overline{r}_{p,q}(w,\Delta):=r_{p,q}(w)+\delta_{p,q}(w,\Delta)\mbox{ for any } p,\, q\in\{0,1,\dots, n\}.
\end{equation}
Define an ordering on $\widehat{\cW}_{n}$ by declaring 
\begin{equation}\label{eq:Morder}
\begin{split}
&(w,\Delta)\unlhd (y,\Gamma)\Leftrightarrow \\
&(1)\; r_{p,q}(w)\geq r_{p,q}(y) \; \forall\, p,\, q\in\{1,\dots, n\} \mbox{ and } \\
&(2)\; \overline{r}_{p,q}(w,\Delta)\geq \overline{r}_{p,q}(y,\Gamma)\;\forall\; p,\, q\in\{0,1, \dots, n\}.
\end{split}
\end{equation} 

\begin{rem}\label{r:isBruhat}
Note that condition (1) in (\ref{eq:Morder}) is equivalent to the condition
that $w\leq y$ in the Bruhat order on $\Wn$ by Proposition 7 in
Section 10.5 of \cite{Fulton}.  
\end{rem}

\begin{thm}\label{thm:Magyarclosure}\emph{[Theorem 2.2 of \cite{Magyar}]}
Let $(w,\Delta),\, (y,\Gamma)\in \widehat{\cW}_{n}$ and let $\mathcal{O}_{(w,\Delta)}$ and $\mathcal{O}_{(y,\Gamma)}$ be the corresponding $B$-orbits on $\B_{n}\times\mathbb{P}^{n-1}.$ Then 
$$
\mathcal{O}_{(w,\Delta)}\subset\overline{\mathcal{O}_{(y,\Gamma)}}\Leftrightarrow (w,\Delta)\unlhd (y,\Gamma),
$$
where $\unlhd$ is the order on $\widehat{\cW}_{n}$ defined in (\ref{eq:Morder}).
\end{thm}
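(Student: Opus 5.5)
The statement to prove is Theorem \ref{thm:Magyarclosure}. I would prove the two implications separately: the forward direction by semicontinuity of rank functions, and the reverse direction by explicit degenerations combined with a combinatorial chain lemma.

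\emph{Geometric meaning of the statistics.} For $(\F,[v])\in\B_{n}\times\mathbb{P}^{n-1}$ with $\F=(F_1\subset\dots\subset F_n)$, consider the two functions
$$
\dim(\mathcal{E}_p\cap F_q)\quad\text{and}\quad p+q+1-\dim(\mathcal{E}_p+F_q+\C v).
$$
Both are $B$-invariant, since $B$ stabilizes each $\mathcal{E}_p$. I would evaluate them at the base point $(w(\fb),[v_\Delta])$. There $F_q=w(\mathcal{E}_q)$ is spanned by the basis vectors $e_{w(1)},\dots,e_{w(q)}$, so the first function equals $r_{p,q}(w)$. For the second, $v_\Delta$ lies in the coordinate subspace $\mathcal{E}_p+w(\mathcal{E}_q)$ exactly when every $\ell\in\Delta$ satisfies $\ell\le p$ or $w^{-1}(\ell)\le q$. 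Hence
$$
\dim(\mathcal{E}_p+F_q+\C v)=p+q-r_{p,q}(w)+1-\delta_{p,q}(w,\Delta),
$$
and the second function equals $\overline{r}_{p,q}(w,\Delta)$. So both statistics are constant on $\mathcal{O}_{(w,\Delta)}$ and are given by linear-algebra ranks.

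\emph{Forward direction ($\Rightarrow$).} $\dim(\mathcal{E}_p\cap F_q)$ is upper semicontinuous on $\B_n$, and the dimension of a span, $\dim(\mathcal{E}_p+F_q+\C v)$, is lower semicontinuous. Therefore both $r_{p,q}$ and $\overline{r}_{p,q}$ can only increase on passing to the closure. If $\mathcal{O}_{(w,\Delta)}\subset\overline{\mathcal{O}_{(y,\Gamma)}}$, this gives conditions (1) and (2) of (\ref{eq:Morder}), i.e. $(w,\Delta)\unlhd(y,\Gamma)$. This direction is routine.

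\emph{Reverse direction ($\Leftarrow$).} By transitivity of closure containment, it suffices to connect $(w,\Delta)\unlhd(y,\Gamma)$ by a chain of elementary relations, each realized by a degeneration. I would use three kinds of elementary moves.
\begin{enumerate}
\item Change $w$ by multiplication by a transposition, decreasing length, while keeping $\Delta$, provided the result is still decorated.
\item Delete an element of $\Delta$.
\item Exchange an element of $\Delta$ for a smaller index compatible with the decreasing condition.
\end{enumerate}
Each such move is realized by an explicit one-parameter family inside $\mathcal{O}_{(y,\Gamma)}$ whose limit lies in the smaller orbit. The limits are computed with root subgroups $U_{\beta}\subset B$ acting on the pair $(\F,[v])$; for example, $v_\Gamma+te_j$ rescaled, and the standard Bruhat degeneration in the first factor, adapted so that $v$ is carried along. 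Alternatively, closure relations coming from the monoid action (Proposition \ref{p:stableandnc}) supply many of these moves: whenever $m(s)*Q'=Q$, we get $Q'\subset\overline{Q}$. This step is checking examples and should be manageable.

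\emph{Main obstacle.} The hard step is the combinatorial chain lemma: if $(w,\Delta)\lhd(y,\Gamma)$ strictly, then there is a decorated permutation $(w',\Delta')$ obtained from $(y,\Gamma)$ by one elementary move with $(w,\Delta)\unlhd(w',\Delta')$. I would argue as for the Bruhat order via rank matrices (Fulton, \S10.5). Pick a pair $(p,q)$ at which the inequality of ranks, or of $\overline{r}$, is strict and which is extremal in a suitable sense, say with $p+q$ minimal among the strict ones. Then perform the move that raises exactly the ranks in the corresponding rectangle by one, and verify that no other inequality is violated. The case analysis splits by whether the strict inequality occurs only in the $\delta$-part (change $\Gamma$ alone) or already in $r_{p,q}$ (change $y$, and then repair $\Gamma$ so that it stays decreasing for the new $y^{-1}$). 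I expect this repair of the decoration to be the delicate point.
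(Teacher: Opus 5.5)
Your forward direction is correct. On $\mathcal{O}_{(w,\Delta)}$ you have $r_{p,q}(w)=\dim(\mathcal{E}_p\cap F_q)$ and $\overline{r}_{p,q}(w,\Delta)=p+q+1-\dim(\mathcal{E}_p+F_q+\C v)$, and upper semicontinuity then gives $\Rightarrow$. The paper gives no argument for this theorem; it imports it from Magyar, whose substance is the reverse implication. That reverse implication is not established in your proposal, and the strategy as set up fails.

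\textbf{Moves (1) and (3) are not degenerations.} Take $n=2$, so $\B_2\times\mathbb{P}^1=\mathbb{P}^1\times\mathbb{P}^1$.
\begin{itemize}
\item The orbit $\mathcal{O}_{(s,\{2\})}=\{(L,L): L\neq\C e_1\}$ has closure equal to the diagonal. The only other orbit on the diagonal is the point $\mathcal{O}_{(id,\{1\})}=\{(\C e_1,[e_1])\}$.
\item Move (1) sends $(s,\{2\})$ to $(id,\{2\})$, and move (3) sends it to $(s,\{1\})$. Both are decorated permutations, but neither orbit lies on the diagonal.
\item Your own invariant detects this: $\overline{r}_{0,1}(s,\{2\})=1$, while $\overline{r}_{0,1}(id,\{2\})=\overline{r}_{0,1}(s,\{1\})=0$.
\item Move (2) is unavailable, since $\Delta$ must remain nonempty.
\end{itemize}
So the cover $\mathcal{O}_{(id,\{1\})}\subset\overline{\mathcal{O}_{(s,\{2\})}}$ is not a composite of valid moves from your list. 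The actual degeneration is $\exp(cE_{12})\cdot(\C e_2,[e_2])\to(\C e_1,[e_1])$ as $c\to\infty$. It changes $w$ and $\Delta$ at the same time, because the root subgroup drags the line $[v]$ along with the flag.

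Consequently your claim that every elementary move is realized by a one-parameter family is false. The chain lemma also cannot hold for this set of moves.

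What is missing:
\begin{itemize}
\item A correct list of elementary degenerations, in which a transposition acting on $w$ also acts on the decoration according to how it meets $\Delta$.
\item An actual proof of the chain lemma for that list.
\end{itemize}
Phrases such as ``should be manageable'' and ``I expect this to be the delicate point'' leave exactly the content of Magyar's Theorem 2.2 unproved.

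Monoid-action relations cannot close the gap by themselves. In Example \ref{eq:2x2example}, the relation $\mathcal{O}_{(id,\{1\})}\subset\overline{\mathcal{O}_{(id,\{2\})}}$ does not come from any monoid action. By Remark \ref{r:minimal}, the weak order has $n$ minimal elements but there is only one closed orbit. As written, your proposal proves only the implication $\Rightarrow$.
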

\begin{prop}\label{p:orderpreserving}
The map $\phi: \widehat{\cW}_{n}\hookrightarrow\widehat{\cW}_{n+1}$ given in Equation (\ref{eq:permutation}) preserves the ordering on decorated permutations defined in (\ref{eq:Morder}), i.e., 
\begin{equation}\label{eq:orderpreserve}
(w,\Delta)\unlhd (y,\Gamma)\Leftrightarrow \phi((w,\Delta))\unlhd \phi((y,\Gamma)).
\end{equation}
\end{prop}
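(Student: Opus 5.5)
The plan is to compute Magyar's statistics for $\phi((w,\Delta))$ directly in terms of those for $(w,\Delta)$. I expect the two families of inequalities in (\ref{eq:Morder}) for $\phi((w,\Delta))$ and $\phi((y,\Gamma))$ to be literally the same inequalities as for $(w,\Delta)$ and $(y,\Gamma)$, after the index change $(p,q)\mapsto(q-1,p)$, plus some inequalities that hold trivially. Write $u:=w^{-1}\sigma$ and $\Delta':=[w^{-1}(\Delta):n+1]$. Since $\sigma=(n+1,n,\dots,1)$, we have $\sigma(1)=n+1$ and $\sigma(k)=k-1$ for $k\geq 2$. As $w^{-1}$ fixes $n+1$, this gives $u(1)=n+1$ and $u(k)=w^{-1}(k-1)$ for $k\geq 2$. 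Dually, $u^{-1}(n+1)=1$ and $u^{-1}(w^{-1}(j))=j+1$ for $j\leq n$.

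\emph{Step 1 (the rank numbers).} For $p,q\in\{0,\dots,n+1\}$ with $q\geq 1$, we get $\{u(1),\dots,u(q)\}=\{n+1\}\cup w^{-1}(\{1,\dots,q-1\})$. Hence
$$r_{p,q}(u)=|\{1,\dots,p\}\cap w^{-1}\{1,\dots,q-1\}|+[p=n+1]=r_{q-1,p}(w)+[p=n+1],$$
using $|A\cap w^{-1}C|=|wA\cap C|$. The boundary values are constant: $r_{p,0}(u)=0$ and $r_{n+1,q}(u)=q$. For $p\leq n$ and $1\leq q\leq n+1$, the index pair $(q-1,p)$ runs over $\{0,\dots,n\}\times\{0,\dots,n\}$, and $r_{0,p}$ and $r_{q-1,0}$ vanish. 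So condition (1) of (\ref{eq:Morder}) for the images is equivalent to condition (1) for $w,y$. This is also consistent with Remark \ref{r:isBruhat}, since $w^{-1}\sigma\leq y^{-1}\sigma\iff w\leq y$ by (\ref{eq:firstcpt}).

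\emph{Step 2 (the $\delta$ statistic).} For $\ell=n+1\in\Delta'$, the defining condition of $\delta_{p,q}(u,\Delta')$ reads "$p=n+1$ or $q\geq 1$". For $\ell=w^{-1}(j)$ with $j\in\Delta$, it reads "$w^{-1}(j)\leq p$ or $j\leq q-1$". This is exactly the condition defining $\delta_{q-1,p}(w,\Delta)$ for the element $j$. Therefore:
\begin{itemize}
\item for $p\leq n$ and $q\geq 1$, we have $\delta_{p,q}(u,\Delta')=\delta_{q-1,p}(w,\Delta)$;
\item for $p\leq n$ and $q=0$, we have $\delta=0$;
\item for $p=n+1$, we have $\delta=1$.
\end{itemize}
Combining with Step 1, $\overline{r}_{p,q}(\phi(w,\Delta))=\overline{r}_{q-1,p}(w,\Delta)$ on the main range. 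On the boundary range, the value of $\overline{r}$ depends only on $(p,q)$ and not on $(w,\Delta)$: it is $0$ when $q=0$, $p\leq n$, and $q+1$ when $p=n+1$.

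Condition (2) for $\phi$-images is therefore the same system of inequalities as condition (2) for $(w,\Delta),(y,\Gamma)$, which proves (\ref{eq:orderpreserve}). The main obstacle is bookkeeping rather than ideas. The two points to get right are the exact identification $u^{-1}(w^{-1}(j))=j+1$, which converts Magyar's condition $w^{-1}(\ell)\leq q$ into $j\leq q-1$, and checking that every boundary case ($p$ or $q$ equal to $0$, or $p=n+1$) gives a value of $\overline{r}$ independent of the decorated permutation. I would verify these boundary cases one by one before assembling the equivalence.
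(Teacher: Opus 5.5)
Your proposal is correct and is essentially the paper's proof. Your Steps 1--2 reproduce Lemma \ref{l:compare}, $\overline{r}_{p,q}(\phi(w,\Delta))=\overline{r}_{q-1,p}(w,\Delta)$ for $p\le n$, $q\ge 1$, and then check that the rows $p=n+1$ and $q=0$ do not depend on the decorated permutation; the only difference is that the paper handles condition (1) through Proposition \ref{p:BBresult} ($w\le y\iff w^{-1}\sigma\le y^{-1}\sigma$) rather than your direct rank identity, and your boundary values are in fact more accurate than the paper's statement that $\delta_{p,0}(\phi(w,\Delta))=0$ for all $p$, since $\delta_{n+1,0}=1$ as you have it (this slip is harmless to both arguments).
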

\noindent 
  To prove Proposition \ref{p:orderpreserving},  the following observation is crucial.   
\begin{lem}\label{l:compare}
Let $p\in\{0,\dots, n\}$ and $q\in\{1,\dots, n+1\}$.  
\begin{equation}\label{eq:shift}
\overline{r}_{p,q}(\phi(w,\Delta))=\overline{r}_{q-1,p} (w,\Delta).
\end{equation}
\end{lem}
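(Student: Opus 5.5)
The identity \eqref{eq:shift} is a direct computation. I will compute the two summands of $\overline{r}$ in \eqref{eq:secondstat} separately: the rank term $r_{p,q}$ and the term $\delta_{p,q}$. The only input is the explicit formula \eqref{eq:permutation}, namely $\phi(w,\Delta)=(u,\tilde{\Delta})$ with
\[
u=w^{-1}\sigma, \qquad \tilde{\Delta}=[w^{-1}(\Delta):n+1].
\]

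\emph{Step 1: the values of $u$ and $u^{-1}$.} Since $\sigma=(n+1,n,\dots,1)$, we have $\sigma(1)=n+1$ and $\sigma(k)=k-1$ for $k\geq 2$. The element $w\in\Wn$ fixes $n+1$, so
\[
u(1)=n+1, \qquad u(k)=w^{-1}(k-1)\ \text{ for } 2\le k\le n+1 .
\]
Inverting, $u^{-1}(n+1)=1$ and $u^{-1}(\ell)=w(\ell)+1$ for $\ell\le n$.

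\emph{Step 2: the rank term.} By Step 1, $\{u(1),\dots,u(q)\}=\{n+1\}\cup w^{-1}(\{1,\dots,q-1\})$. Because $p\le n$, the element $n+1$ never lies in $\{1,\dots,p\}$. Hence
\[
r_{p,q}(u)=\bigl|\{1,\dots,p\}\cap w^{-1}\{1,\dots,q-1\}\bigr| .
\]
Applying $w$ to both sets does not change the size of their intersection, so this equals $|w\{1,\dots,p\}\cap\{1,\dots,q-1\}|=r_{q-1,p}(w)$. The degenerate case $q=1$ gives $0$ on both sides, consistent with $\mathcal{E}_0=0$.

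\emph{Step 3: the $\delta$ term.} I treat the elements of $\tilde{\Delta}$ in two groups.
\begin{itemize}
\item For $\ell=n+1$: we have $\ell>p$, but $u^{-1}(n+1)=1\le q$ because $q\ge 1$. So $\ell=n+1$ never violates the condition in \eqref{eq:littledelta}.
\item For $\ell=w^{-1}(j)$ with $j\in\Delta$: the condition reads $w^{-1}(j)\le p$ or $u^{-1}(\ell)=j+1\le q$. Equivalently, $j\le q-1$ or $w^{-1}(j)\le p$.
\end{itemize}
The second condition, required for all $j\in\Delta$, is exactly the defining condition of $\delta_{q-1,p}(w,\Delta)=1$. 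Therefore $\delta_{p,q}(u,\tilde{\Delta})=\delta_{q-1,p}(w,\Delta)$.

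Adding the equalities from Steps 2 and 3 gives \eqref{eq:shift}.

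The only delicate point is the bookkeeping in Step 3: one must check that the extra element $n+1$ of $\tilde{\Delta}$ imposes no condition, and this is where the hypothesis $q\ge 1$ is used. The shift $j\mapsto j+1$ coming from $\sigma^{-1}$ is precisely what turns $q$ into $q-1$ in \eqref{eq:shift}.
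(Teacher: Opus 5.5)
Your proposal is correct and follows the paper's proof. Both arguments split $\overline{r}$ into the rank term and the $\delta$ term, use $\sigma^{-1}(n+1)=1$ and $\sigma^{-1}(k)=k+1$ for $k\le n$, and observe that the extra element $n+1\in\tilde{\Delta}$ imposes no condition because $q\ge 1$. The only cosmetic difference is in the rank term: the paper combines the shift $r_{p,q}(w)=r_{p+1,q}(\sigma^{-1}w)$ with the symmetry $r_{p,q}(y)=r_{q,p}(y^{-1})$, whereas you compute $\{u(1),\dots,u(q)\}$ directly, which amounts to the same thing.
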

\begin{proof}
 For $1\le k \le n+1,$ let ${\mathcal{E}}^{n+1}_{k}$ be the span of $e_1, \dots, e_k$ in $\C^{n+1}.$  Recall from Proposition \ref{p:marked} the element $\sigma^{-1} w \in {\cW}_{n+1}.$  We claim that
\begin{equation}\label{eq:rpqshift}
r_{p,q}(w)=r_{p+1, q}(\sigma^{-1} w)\mbox{ for } p, \, q\in\{0,\dots, n\},
\end{equation}
where as above, $r_{p+1, q}(\sigma^{-1}w)=\dim(\mathcal{E}^{n+1}_{p+1}\cap \sigma^{-1}w(\mathcal{E}^{n+1}_{q})).$  Since $w\in \Wn$, for any index $\ell\in\{1,\dots, q\}$, 
$1\leq w(\ell)\leq n$, so that $\sigma^{-1}w(\ell)=w(\ell)+1$.  
Therefore, for $p\geq 1$, we have a bijection between the sets $\{1,\dots, p\}\cap \{w(1),\dots, w(q)\}$ and $\{1,\dots, p+1\}\cap \sigma^{-1}w (\{1,\dots, q\})$ given by $x\mapsto x+1,$ which verifies (\ref{eq:rpqshift}) when $p \ge 1$ and $q \ge 1.$  In case $p=0,$ then $r_{0,q}(w)=0$, and since $\sigma^{-1}w(\ell)\geq 2$ for all $\ell\in\{1,\dots, n\}$, $r_{1,q}(\sigma^{-1} w)=0.$  For $q=0$, both sides of (\ref{eq:rpqshift}) are zero and the claim is trivial, so that (\ref{eq:rpqshift}) is verified in all cases.   

Suppose now that $p\in\{0,\dots, n\}$ and $q\in\{1,\dots, n+1\}.$  We assert that 
\begin{equation} \label{eq:deltashift}
\delta_{p,q}(\phi(w,\Delta))=\delta_{p,q}(w^{-1}\sigma, [w^{-1}(\Delta): n+1])=\delta_{q-1, p}(w,\Delta).
\end{equation}
The first equality is from Proposition \ref{p:marked}, and for the second equality, by Equation (\ref{eq:littledelta}), we have 
\begin{equation}\label{eq:firstdelta}
\delta_{p,q}(w^{-1}\sigma, [w^{-1}(\Delta): n+1])=\left\{\begin{array}{ccc} 1 & \mbox { if for each } \ell\in\Delta, & w^{-1}(\ell)\leq p \mbox{ or } \sigma^{-1}w(w^{-1}(\ell))\leq q \\
&\;\;\;\;\;\;\;\;\;\;\;\;\;\;\;\;\;\;\;  & \mbox{ and } n+1\leq p\mbox{ or } \sigma^{-1}w(n+1)\leq q\\
0 &\mbox{ otherwise } & \end{array}\right\}_{.}
\end{equation}
Because $\Delta\subset\{1,\dots, n\}$, $w\in\Wn,$ and $\sigma^{-1}(n+1)=1,$  (\ref{eq:firstdelta}) becomes
\begin{equation}\label{eq:seconddelta}
\delta_{p,q}(w^{-1}\sigma, [w^{-1}(\Delta): n+1])=\left\{\begin{array}{ccc} 1 & \mbox { if for each } \ell\in\Delta, & w^{-1}(\ell)\leq p \mbox{ or } \ell\leq q-1 \\
&\;\;\;\;\;\;\;\;\;\;\;\;\;\;\;\;\;\;\;   & \mbox{ and } 1\leq q\;\;\;\;\;\;\;\;\;\;\;\;\;\;\;\;\;\;\;\;\;\;\;\;\;\\
0 &\mbox{ otherwise } & \end{array}\right\}_{.}
\end{equation}
Since $q\geq 1$, the second condition is always satisfied and the second equality in (\ref{eq:deltashift}) now follows from the definition of $\delta_{p,q}$ in (\ref{eq:littledelta}). 
Note that for any $y\in \cW_{n+1}$ and any indices $p,\, q\in \{0,\dots, n+1\}$, it follows from the definition of $r_{p,q}(y)$ in (\ref{eq:firststat}) 
 that 
\begin{equation}\label{eq:rpqinv}
r_{p,q}(y)=r_{q,p}(y^{-1}).
\end{equation}  
For all $p, q$ as above,  
\begin{equation*}
\begin{split}
 \overline{r}_{p,q}(w^{-1}\sigma, [w^{-1}(\Delta): n+1])&=r_{p,q}(w^{-1}\sigma)+\delta_{p,q}(w^{-1}\sigma,  [w^{-1}(\Delta): n+1])\\
&=r_{q,p}(\sigma^{-1}w)+\delta_{p,q}(w^{-1}\sigma,  [w^{-1}(\Delta): n+1]) \mbox{ by } (\ref{eq:rpqinv})\\
&=r_{q-1, p}(w)+\delta_{q-1,p}(w,\Delta)\mbox{ by } (\ref{eq:rpqshift}) \mbox{ and }(\ref{eq:deltashift})\\
&=\overline{r}_{q-1, p}(w, \Delta),
\end{split}
\end{equation*}
which by Proposition \ref{p:marked}, establishes Equation (\ref{eq:shift}) and the Lemma.
\end{proof}
\begin{proof}[Proof of Proposition \ref{p:orderpreserving}]
Let $(w,\Delta),\, (y,\Gamma)\in\widehat{\cW}_{n}$.  By definition
$(w,\Delta) \unlhd (y,\Gamma)$ if and only if both conditions (1) and (2) of (\ref{eq:Morder}) are satisfied and similarly for $\phi(w,\Delta)$ and $\phi(y,\Gamma)$.  By Remark \ref{r:isBruhat}, the condition (1) in (\ref{eq:Morder}) is equivalent to $w\leq y$ in the usual Bruhat ordering on $\Wn$. But by Proposition \ref{p:BBresult} and invariance of the Bruhat ordering under inversion,
\begin{equation}\label{eq:firstequiv}
w\leq y\Leftrightarrow w^{-1} \leq y^{-1} \Leftrightarrow w^{-1}\sigma\leq y^{-1}\sigma.
\end{equation} 
  By the formula for $\phi$ in (\ref{eq:permutation}), Equation (\ref{eq:firstequiv}) states that condition (1) of (\ref{eq:Morder}) is satisfied for the marked permutations $(w,\Delta)$ and $(y,\Gamma)$ if and only if its analogue is satisfied for the marked permutations $\phi(w,\Delta)$ and $\phi(y,\Gamma)$.  

 We now consider the condition (2) in (\ref{eq:Morder}): if $(w,\Delta)\unlhd (y,\Gamma)$, then $\overline{r}_{p,q}(w,\Delta)\geq \overline{r}_{p,q}(y,\Gamma)\; \forall\, p,\,q\in\{0,\dots, n\}.$  But by Lemma \ref{l:compare}, we have 
\begin{equation}\label{eq:overlinecompare}
\overline{r}_{p,q}(w,\Delta)\geq \overline{r}_{p,q}(y,\Gamma)\Leftrightarrow \overline{r}_{q,p+1}(\phi(w,\Delta))\geq \overline{r}_{q,p+1}(\phi(y,\Gamma))\; \forall \, p,\,q \in\{0, 1,\dots, n\}. 
\end{equation}
We need to consider the cases $\overline{r}_{n+1, q}(\phi(w,\Delta))$ and 
$\overline{r}_{p,0}(\phi(w,\Delta))$ for $p \le n$ separately.  Using Equation (\ref{eq:seconddelta}), we 
see that $\delta_{n+1,q}(\phi(w,\Delta))=1$ for all $q\in\{1,\dots, n+1\}$, and that 
$\delta_{p,0}(\phi(w,\Delta))=0$ for any $p$.     Since $r_{n+1,q}(w^{-1}\sigma)=q$ for any $q\geq 1$, and $r_{p,0}(w^{-1}\sigma)=0$ for any $p$, it follows from the definition of 
$\overline{r}_{p,q}(w,\Delta)$ in (\ref{eq:secondstat}) that for any two decorated permutations $(w,\Delta)$ and $(y,\Gamma)$ 
\begin{equation}\label{eq:extremecase}
\overline{r}_{n+1,q}(\phi(w,\Delta))=\overline{r}_{n+1, q}(\phi(y,\Gamma))\;\forall\, q\in \{1,\dots, n+1\},
\end{equation}
and 
\begin{equation}\label{eq:seccase}
\overline{r}_{p,0}(\phi(w,\Delta))=\overline{r}_{p, 0}(\phi(y,\Gamma))\;\forall\, p\in \{0,1,\dots, n+1\},
\end{equation}
It follows from Equations (\ref{eq:overlinecompare})-(\ref{eq:seccase}) that 
$$
\overline{r}_{p,q}(w,\Delta)\geq \overline{r}_{p,q}(y,\Gamma)\; \forall\, p,\,q \in\{0,\dots, n\}\Leftrightarrow \overline{r}_{p,q}(\phi(w,\Delta))\geq \overline{r}_{p,q}(\phi(y,\Gamma))\; \forall\, p,\,q \in\{0,\dots, n+1\}.
$$
Thus, the condition (2) of (\ref{eq:Morder}) is satisfied for the decorated permutations 
$(w,\Delta)$ and $(y,\Gamma)$ if and only if it is satisfied for the decorated permutations
$\phi(w,\Delta)$ and $\phi(y,\Gamma)$.  Equation (\ref{eq:orderpreserve}) now follows, and the Proposition is proven.
\end{proof}

We now prove our main result.

\begin{proof}[Proof of Theorem \ref{thm:globalclosure}]
By Theorem \ref{thm:Magyarclosure}, $\mathcal{O}_{(w,\Delta)}\subset\overline{\mathcal{O}_{(y,\Gamma)}}$ if and only if $(w,\Delta)\unlhd (y,\Gamma),$ which is equivalent to $\phi(w,\Delta)\unlhd \phi(y,\Gamma)$ by Proposition \ref{p:orderpreserving}.  This last condition is equivalent to $\mathcal{O}_{\phi(w,\Delta)} \subset \overline{\mathcal{O}_{\phi(y,\Gamma)}}$ using Theorem \ref{thm:Magyarclosure} again, which in turn is equivalent to $\mathcal{O}_{(w,\Delta)}^{op}\subset\overline{\mathcal{O}_{(y,\Gamma)}^{op}}$ by (\ref{eq:opdecorated}).
\end{proof}

\begin{cor}\label{c:canuseSh}
Let $\mathcal{O}_{(w,\Delta)}$ and $\mathcal{O}_{(y,\Gamma)}$ be $B$-orbits on 
$\B_{n}\times\mathbb{P}^{n-1}$ corresponding to the decorated permutations $(w,\Delta)$ and $(y,\Gamma)$ in $\widehat{\cW}_{n}$.   Then 
$$
\mathcal{O}_{(w,\Delta)}\subset\overline{\mathcal{O}_{(y,\Gamma)}}\Leftrightarrow \Sh(\mathcal{O}^{op}_{(w,\Delta)})\leq \Sh(\mathcal{O}_{(y,\Gamma)}^{op}).
$$
\end{cor}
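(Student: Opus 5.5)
This is a chain of equivalences that combines Theorem \ref{thm:globalclosure} with the closure result (\ref{eq:oldShclosure}) for $B$-orbits on $\B_{n+1}$.

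Since $\mathcal{O}_{(w,\Delta)}^{op}$ and $\mathcal{O}_{(y,\Gamma)}^{op}$ both lie in $\mathfrak{X}\subset\B_{n+1}$, they are in particular elements of $B\backslash\B_{n+1}$. The steps, in order, are:

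\begin{enumerate}
\item By Theorem \ref{thm:globalclosure}, $\mathcal{O}_{(w,\Delta)}\subset\overline{\mathcal{O}_{(y,\Gamma)}}$ holds if and only if $\mathcal{O}_{(w,\Delta)}^{op}\subset\overline{\mathcal{O}_{(y,\Gamma)}^{op}}$. Here closure may be taken in $\B_{n+1}$ or in $\mathfrak{X}$. Because $\mathfrak{X}$ is open, the closure in $\mathfrak{X}$ is the intersection of the closure in $\B_{n+1}$ with $\mathfrak{X}$. Both orbits lie in $\mathfrak{X}$, so the two inclusion statements agree.
\item Theorem 3.4 of \cite{Shpairs}, recorded in (\ref{eq:oldShclosure}), says that inclusion in the closure for $B$-orbits on $\B_{n+1}$ is equivalent to $\Sh(\mathcal{O}_{(w,\Delta)}^{op})\leq\Sh(\mathcal{O}_{(y,\Gamma)}^{op})$ in the Bruhat order on Shareshian pairs (\ref{eq:oldShorder}).
\item Composing the two equivalences gives the statement.
\end{enumerate}

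If one wants the comparison explicitly, Theorem \ref{thm:globalSh} gives the pairs as $(w^{-1}\sigma,w^{-1}\kappa_\Delta)$ and $(y^{-1}\sigma,y^{-1}\kappa_\Gamma)$. This is not needed for the argument.

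The only point needing care is the compatibility of closures in step 1, and the openness of $\mathfrak{X}$ handles it. There is no genuine obstacle: all the substantive work is contained in Theorem \ref{thm:globalclosure}.
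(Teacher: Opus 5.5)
Your proposal is correct and follows the paper's proof, which simply combines Theorem \ref{thm:globalclosure} with Theorem 3.4 of \cite{Shpairs}. One small remark: the identity that the closure in $\mathfrak{X}$ equals the closure in $\B_{n+1}$ intersected with $\mathfrak{X}$ holds for any subspace, so the openness of $\mathfrak{X}$ is not actually needed there.
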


\begin{proof}
This follows from Theorem \ref{thm:globalclosure} and Theorem 3.4 of \cite{Shpairs}.

\end{proof}
Corollary \ref{c:canuseSh} states that the closure ordering on 
$B\backslash (\B_{n}\times\mathbb{P}^{n-1})$ can be computed using the correspondence of Theorem \ref{thm:firstbig} and the Bruhat ordering on Shareshian pairs in $\cW_{n+1} \times \cW_{n+1}$ discussed in \cite{Shpairs}.  As was observed in \emph{loc.cit.}, the Bruhat ordering on Shareshian pairs is only slightly more complicated than the product of Bruhat orders on $\cW_{n+1}\times\cW_{n+1}$ and seems  simpler to us than the order $``\unlhd"$ on $\widehat{W}_{n}$ given in (\ref{eq:Morder}).  See Example 4.15 of \cite{Shpairs} and Examples \ref{eq:2x2example} and  \ref{ex:3by3} below.

\begin{cor}
Let $(w,\Delta)\in\widehat{\cW}_{n}$ with $\Delta=\{j_{1}<\dots<j_{k}\}$.  Recall the cycle $\kappa_{\Delta}=(j_{1},\, j_{2},\dots, j_{k},\, n+1)\in\cW_{n+1}$ of Theorem \ref{thm:globalSh}.  The map $\psi: \widehat{\cW}_{n}\to \Sp$ given by 
$$
\psi(w,\Delta)=(w^{-1}\sigma, w^{-1}\kappa_{\Delta}),
$$
intertwines the order on $\widehat{\cW}_{n}$ given in (\ref{eq:Morder}) with the Bruhat order on Shareshian pairs.
\end{cor}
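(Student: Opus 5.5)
This corollary is essentially a repackaging of results already proved, so the plan is to chain together Theorem \ref{thm:globalSh}, Theorem \ref{thm:globalclosure}, Theorem \ref{thm:Magyarclosure} and Theorem 3.4 of \cite{Shpairs} (Equation (\ref{eq:oldShclosure})).

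First I will check that $\psi$ actually lands in $\Sp$. By Theorem \ref{thm:globalSh}, $\psi(w,\Delta)=\Sh(\mathcal{O}_{(w,\Delta)}^{op})$. Hence $\psi$ equals the composite of three maps:
\begin{itemize}
\item[(a)] $(w,\Delta)\mapsto \mathcal{O}_{(w,\Delta)}$, which is a bijection by Theorem \ref{thm:magyar};
\item[(b)] the correspondence $\mathcal{O}\mapsto \mathcal{O}^{op}$ of (\ref{eq:secondcorresp}), which is a bijection onto $B\backslash\mathfrak{X}$;
\item[(c)] the map $\Sh$ on $B\backslash\B_{n+1}$, which is injective by Theorem 1.1 of \cite{Shpairs}.
\end{itemize}
In particular $\psi$ is injective and its image is $\Sh(B\backslash\mathfrak{X})\subset\Sp$.

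Next comes the order statement. Let $(w,\Delta),(y,\Gamma)\in\widehat{\cW}_n$. Theorem \ref{thm:Magyarclosure} gives $(w,\Delta)\unlhd(y,\Gamma)$ iff $\mathcal{O}_{(w,\Delta)}\subset\overline{\mathcal{O}_{(y,\Gamma)}}$. Corollary \ref{c:canuseSh}, which combines Theorem \ref{thm:globalclosure} with (\ref{eq:oldShclosure}), says this holds iff $\Sh(\mathcal{O}^{op}_{(w,\Delta)})\le\Sh(\mathcal{O}^{op}_{(y,\Gamma)})$. By the identification in the first step, that last condition is exactly $\psi(w,\Delta)\le\psi(y,\Gamma)$ in the Bruhat order on Shareshian pairs (\ref{eq:oldShorder}). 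Therefore $(w,\Delta)\unlhd(y,\Gamma)$ iff $\psi(w,\Delta)\le\psi(y,\Gamma)$, so $\psi$ is an order embedding.

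I expect no real obstacle. The only point needing care is bookkeeping: one must confirm that Theorem \ref{thm:globalSh} applies to every decorated permutation, taking $i=j_k$ for each $\Delta$. One must also confirm that the order on $\Sp$ is (\ref{eq:oldShorder}) with $\le_*$ in the second coordinate, and not the standardized version. Both follow directly from the statements as given.
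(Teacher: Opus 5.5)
Your proposal is correct and follows the paper's proof, which simply observes that the statement is Corollary \ref{c:canuseSh} rewritten using Equation (\ref{eq:globalSh}). You additionally make explicit what the paper leaves implicit: the appeal to Theorem \ref{thm:Magyarclosure} to turn closure relations into $\unlhd$, and the check that $\psi$ is injective with image $\Sh(B\backslash\mathfrak{X})\subset\Sp$.
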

\begin{proof}
This is a restatement of Corollary \ref{c:canuseSh} using Equation (\ref{eq:globalSh}).
\end{proof}

\subsection{Examples}

We conclude the paper by presenting two examples which illustrate our results.  The first example gives the poset graph for the $B_{2}$-orbits on $\B_{2}\times\mathbb{P}^{1}$ and the second example gives the poset graph graph for $B_{3}$-orbits on $\B_{3}\times\mathbb{P}^{2}.$  In each example, the zero-dimensional orbit is the top row, the one-dimensional orbits are in the next row, etc.

\begin{exam}\label{eq:2x2example}
In this example, we give the Bruhat graphs of the $B_{2}$-orbits on 
$\B_{2}\times\mathbb{P}^{1}$ and the $B_{2}$-orbits on the open subvariety $\mathfrak{X}$ of $\B_{3}$.  The former are labelled by their corresponding decorated permutations and the latter by their Shareshian pairs.  We also indicate all of the monoid actions.  The colour conventions are the same as in Example \ref{ex:Q13orbits}: A red line indicates a non-compact root, and a blue line indicates a complex stable root, etc.


We begin with the Bruhat graph for $B_{2}\backslash (\B_{2}\times\mathbb{P}^{1})$.  There is only one simple root which we label by $\alpha$.   The superscript on the root below indicates whether the monoid action is defined using the first factor of $\B_{2}$ in the triple product $\B_{2}\times\B_{2}\times \mathbb{P}^{1}$ or the second factor.  

\begin{center}
\begin{tikzpicture}
[scale=2.0,auto=center,every node/.style={rectangle,fill=white!20}]
\node (a1) at  (0,5) {$(id, \{1\})$};
\node (a2) at (-1.5,3.5) {$(s, \{1\})$};
\node (a3) at (0,3.5) {$(s, \{2\})$};
\node (a4) at (1.5,3.5) {$(id, \{2\})$};
\node (a5) at (0,2) {$ (s, \{1,2\})$};

\draw  [blue] (a1) -- (a2) node[midway, above] {$\alpha^{2}$};
\draw  [blue] (a1)--(a3) node[midway, above] {$\alpha^{1}$};
\draw [green] (a1) -- (a4) ;
\draw [red] (a2) -- (a5) node[midway, below] {$\alpha^{1}$};
\draw  [red] (a3) -- (a5) node[near end, above] {$\alpha^{2}$};
\draw  [red] (a4) -- (a5) node[midway, below] {$\alpha^{2}$};
\end{tikzpicture}
\end{center}

We compare the above Bruhat graph with the one for $B_{2}\backslash\mathfrak{X}$.  We let $\Pi_{\fg_{3}}=\{\alpha,\beta\}$ and $\Pi_{\fg_{2}}=\{\alpha\}$.  A dashed line indicates the left monoid action by a root of $\Pi_{\fg_{2}}=\{\alpha\}$ and a solid line indicates a right monoid action by a root of $\Pi_{\fg_{3}}=\{\alpha,\beta\}$ as in Example \ref{ex:Q13orbits}.  We also maintain the other notational conventions of Example \ref{ex:Q13orbits}, i.e., $s=s_{\alpha}$, $s^{*}=s_{\alpha^{*}},\, t=s_{\beta}$, etc.

\begin{center}
\begin{tikzpicture}
[scale=2.0,auto=center,every node/.style={rectangle,fill=white!20}]
\node (a1) at  (0,5) {$(ts,s^{*})$};
\node (a2) at (-1.5,3.5) {$(sts,t^{*}s^{*})$};
\node (a3) at (0,3.5) {$(tst, s^{*}t^{*})$};
\node (a4) at (1.5,3.5) {$(ts, s^{*}t^{*}s^{*})$};
\node (a5) at (0,2) {$ (sts, s^{*}t^{*}s^{*})$};

\draw [dashed] [blue] (a1) -- (a2) node[midway, above] {$\alpha$};
\draw  [blue] (a1)--(a3) node[midway, above] {$\beta$};
\draw [green] (a1) -- (a4) ;
\draw [red] (a2) -- (a5) node[midway, below] {$\beta$};
\draw [dashed] [red] (a3) -- (a5) node[near end, above] {$\alpha$};
\draw [dashed] [red] (a4) -- (a5) node[midway, below] {$\alpha$};
\end{tikzpicture}
\end{center}

A comparison of these graphs illustrates Theorems \ref{thm:globalclosure} and \ref{thm:intromonoid}.  The closure order in the second graph is simple and intuitive from the Bruhat order for $W_3,$ while the closure order in the first graph is less intuitive.   Analysis of the monoid action also shows the advantage of using Shareshian pairs.   For example, consider the orbit $B_{2}$-orbit $\cZ$ on $\mathfrak{X}$ whose Shareshian pair is $(ts, s^{*})$.  Since $tst>ts$ and $s^{*}t^{*}>s^{*},$ it follows from Theorem \ref{thm:intertwine} and Remark \ref{r:Theorem4.7} that the root $\beta\in\Pi_{\fg_{3}}$ is complex stable for $\cZ$ and by Theorem \ref{thm:globalSh}, $\cZ^{op}=\mathcal{O}_{(w,\Delta)}$ where $(w,\Delta)=(id, \{1\})$.  We see in the first graph that $\alpha^{1}$ is complex stable for $\cZ^{op}$ with $\ms*_{1} \cZ^{op}=(m(s_{\beta})*\cZ)^{op}$, illustrating Proposition \ref{p:orbitchange}.  Similarly, Proposition \ref{p:orbitchange} can also be used to see that that monoid action $m(s_{\alpha})*_{1}(s,\{1\})=(s,\{1,2\})$ in the first graph corresponds to the action 
$m(s_{\beta})*(sts, t^{*}s^{*})=(sts, s^{*}t^{*}s^{*})$ in the second graph.  The monoid actions by $\alpha^{2}$ in the first graph correspond to left monoid actions by $\alpha$ on $B_{2}\backslash\mathfrak{X}$ as asserted in Equation (\ref{eq:leftinter}) of Theorem \ref{thm:monoidcorresp}.   
\end{exam}

 \begin{rem}\label{r:minimal}
 Note in Example \ref{eq:2x2example} that although $\mathcal{O}_{id,\{ 1\}}$ is the unique closed orbit, there are two minimal orbits with respect to the weak order (see Definition \ref{d:weak}).  This can be easily seen from the graph of the orbits in terms of their Shareshian pairs.  There is no Demazure product on either the left or the right which maps $s^{*}\to s^{*}t^{*}s^{*}$.  For general $n$, although it is easy to see that  $\mathcal{O}_{id,\{ 1\}}$ is the unique closed orbit, and one can show that there are exactly $n$ minimal elements for the weak order labelled by the decorated permutations $(id, \{1\}),\, (id, \{2\}),\dots, (id,\{n\})$.
\end{rem}

\begin{exam}\label{ex:3by3}
In this example, we present Bruhat graphs for $B_{3}\backslash (\B_{3}\times\mathbb{P}^{2})$ and $B_{3}\backslash\mathfrak{X}\subset \B_{4}$.  The $B_{3}$-orbits on $\B_{3}\times\mathbb{P}^{2}$ are labelled by their corresponding decorated permutations (see Theorem \ref{thm:magyar}) and the $B_{3}$-orbits on $\mathfrak{X}$ are labelled by their Shareshian pairs.  For the Shareshian pairs we use the same notation as in Example \ref{ex:Q13orbits}.  However, we do not label monoid actions, and the colour scheme in these graphs has a different meaning than the one in Example \ref{ex:Q13orbits}.  In the graph of $B_{3}\backslash( \B_{3}\times\mathbb{P}^{2})$ two orbits $\mathcal{O}_{(w,\Delta)}$ and $\mathcal{O}_{(y,\Delta^{\prime})}$ with $\mathcal{O}_{(w,\Delta)}\subset \overline{ \mathcal{O}_{(y,\Delta^{\prime})}}$ are joined by a coloured line if $(w,\Delta),\, (y, \Delta^{\prime})\in\widehat{\cW}_{n}(i)$, where $\widehat{\cW}_{n}(i)$ denotes the set of $i$-decorated permutations (see Notation \ref{n:marked}).  In other words, the $B$-orbits $\mathcal{O}_{(w,\Delta)}$ and $\mathcal{O}_{(y,\Delta^{\prime})}$ with $\mathcal{O}_{(w,\Delta)}\subset \overline{ \mathcal{O}_{(y,\Delta^{\prime})}}$ are joined by a coloured line if they both correspond to $S_{i}$-orbits with the same value of $i$ (see Equation (\ref{eq:markedcorres})).  When $i=1$, we use a red line, a blue line for $i=2$, and a green line for $i=3$.  Other closure relations are simply indicated by a black line.  We use the same colour scheme for $B_{3}$-orbits on $\mathfrak{X}$.  Closure relations amongst $B_{3}$-orbits in $Q_{1,2}$ are indicated by red lines, we use blue lines for $B_{3}\backslash Q_{1,3}$, and green lines for $B_{3}\backslash Q_{1,4}$.  All other closure relations are indicated by black lines.   
In each case there are a total of $28$ orbits.  This follows from Example 4.10 of \cite{Siorbits}.  We first present the Bruhat graph of $B_{3}\backslash(\B_{3}\times \B_{3}\times\mathbb{P}^{2})$.  Here the closure relation is given by the ordering on decorated permutations defined in Equation (\ref{eq:Morder}).  For the second graph of the poset $B_{3}\backslash\mathfrak{X}$, the closure relation is given by the Bruhat order on Shareshian pairs defined in Equation (\ref{eq:oldShorder}).  Using Proposition \ref{p:BBresult} and the fact that $w_{3}^{*}=s^{*}t^{*}$ and $\sigma^{-1}=s^{*}t^{*}u^{*}=w_{4}^{*}$, it is easy to verify the relations in the Bruhat graph for $B_{3}\backslash\mathfrak{X}$.  Using Theorem \ref{thm:globalclosure}, we can then easily deduce the closure relations amongst $B_{3}$-orbits on $\B_{3}\times\mathbb{P}^{2},$ showing again the advantage of parametrizing orbits using Shareshian pairs.  



\newpage
\begin{center}
\begin{tikzpicture}  
 [scale=0.825,auto=center,every node/.style={scale=0.825}] 
\node (a1) at  (-2,5) {$(id, \{1\})$};
\node (b1) at (-7,0) {$(s,\{1\})$};

\node (b2) at (-3,0) {$(t,\{1\})$};

\node (b3) at (1,0) {$ (id, \{2\})$};

\node (b4) at (5,0) {$(s, \{2\})$};
\node (c1) at (-8,-5) { $(ts, \{1\})$};

\node (c2) at (-6,-5) { $(st, \{1\})$};

\node (c3) at (-4,-5) { $(t,\{2\})$};

\node (c4) at (-2, -5) {$(s,\{1,2\})$};

\node (c5) at (0, -5) { $(st,\{2\})$}; 

\node (c6) at (2,-5) { $(id,\{3\})$}; 

\node (c7) at (4, -5) { $(t,\{3\})$}; 

\node (c8) at (6, -5) { $(ts, \{3\})$}; 

\node (d1) at (-10, -10) {$(sts, \{1\})$};

\node (d2) at (-7.5, -10) {$(ts, \{2\})$};

\node (d3) at (-5, -10) {$(st,\{1,2\})$}; 

\node (d4) at (-2.5, -10) { $(sts, \{2\})$};

\node (d5) at (0, -10) { $(s, \{3\})$};

\node (d6) at (2.5, -10) { $(t, \{2,3\})$}; 

\node (d7) at (5, -10) {$(st,\{3\})$ };

\node (d8) at (7.5, -10) {$ (ts, \{1,3\})$ };

\node (d9) at (10, -10) { $(sts,\{3\})$ };


\node (e1) at (-8, -15) { $(sts,\{1,2\})$};

\node (e2) at (-4.5, -15) {$(st, \{1,3\})$};

\node (e3) at (-1, -15) {$(ts, \{2,3\})$};

\node (e4) at (2.5, -15) {$(sts,\{1,3\})$};

\node (e5) at (6, -15) {$(sts, \{2,3\})$};


\node (f1) at (-2, -20) {$(sts, \{1,2,3\})$};

\draw [red]  (a1) -- (b1);
  \draw [red] (a1)--(b2);
  \draw  (a1)--(b3);
  \draw (a1)--(b4); 
  
 \draw [red] (b1)--(c1);
 \draw [red](b1)--(c2);
 \draw  (b1)--(c4); 
 
 \draw [red] (b2)--(c1);
 \draw [red] (b2)--(c2); 
 \draw (b2)--(c3); 
 \draw (b2)--(c5); 
 \draw (b2)--(c7);
 \draw (b2)--(c8);
 
  \draw [blue] (b3)--(c3);
  \draw [blue] (b3)--(c4);
  \draw (b3)--(c6);
  \draw (b3)--(c7);
 
  \draw [blue] (b4)--(c4);
  \draw [blue] (b4)--(c5); 
  \draw (b4)--(c8);

 
 \draw [red] (c1)--(d1);
 \draw (c1)--(d2);
 \draw (c1)--(d4);
 \draw (c1)--(d8);
 
 \draw [red] (c2)--(d1);
 \draw (c2)--(d3);
 
 \draw [blue] (c3)--(d2);
 \draw [blue] (c3)--(d3);
 \draw (c3)--(d6);
 
 \draw [blue] (c4)--(d2);
 \draw [blue] (c4)--(d3); 
 \draw [blue] (c4)--(d4);
 \draw (c4)--(d5); 
 \draw (c4)--(d7);
 \draw (c4)--(d8);
 
 \draw [blue] (c5)--(d3);
 \draw [blue] (c5)--(d4);
 \draw (c5)--(d7);
 \draw (c5)--(d9);
 
 \draw [green] (c6)--(d5);
 \draw [green] (c6)--(d6);
 
 \draw [green] (c7)--(d6);
 \draw [green] (c7)--(d7);
\draw [green] (c7)--(d8);

\draw [green] (c8)--(d8);
\draw [green] (c8)--(d9);


\draw (d1)--(e1);

\draw [blue] (d2)--(e1); 
\draw (d2)--(e3); 

\draw [blue] (d3)--(e1); 
\draw (d3)--(e2);
\draw (d3)--(e4); 

\draw [blue] (d4)--(e1);
\draw(d4)--(e5); 

\draw [green] (d5)--(e2);
\draw [green] (d5)--(e3);

\draw [green] (d6)--(e2);
\draw [green] (d6)--(e3);
\draw [green] (d6)--(e4);

\draw [green] (d7)--(e2);
\draw [green] (d7)--(e5);

\draw [green] (d8)--(e3); 
\draw [green] (d8)--(e4); 
\draw [green] (d8)--(e5);

\draw [green] (d9)--(e4);
\draw [green] (d9)--(e5); 


\draw (e1)--(f1);

\draw [green] (e2)--(f1);

\draw [green] (e3)--(f1);

\draw [green] (e4)--(f1);

\draw [green] (e5)--(f1);

\end{tikzpicture}
\end{center}

\newpage 

\begin{center}
 \begin{tikzpicture}  
 [scale=0.825,auto=center,every node/.style={scale=0.825}] 
\node (a1) at  (-2,5) {$(\sigma, s^{*})$};
\node (b1) at (-7,0) {$(s\sigma,t^{*}s^{*})$};

\node (b2) at (-3,0) {$(t\sigma,u^{*}s^{*})$};

\node (b3) at (1,0) {$(\sigma, t^{*}w_{3}^{*})$};

\node (b4) at (5,0) {$(s\sigma,w_{3}^{*})$};
\node (c1) at (-8,-5) {\tiny $(st\sigma, t^{*}u^{*}s^{*})$};

\node (c2) at (-6,-5) {\tiny $(ts\sigma, u^{*}t^{*}s^{*})$};

\node (c3) at (-4,-5) {\tiny $(t\sigma, u^{*}t^{*}w_{3}^{*})$};

\node (c4) at (-2, -5) {\tiny $(s\sigma, t^{*}w_{3}^{*})$};

\node (c5) at (0, -5) {\tiny $(ts\sigma, u^{*}w_{3}^{*})$}; 

\node (c6) at (2,-5) {\tiny $(\sigma, u^{*}t^{*}\sigma^{-1})$}; 

\node (c7) at (4, -5) {\tiny $(t\sigma, t^{*}\sigma^{-1})$}; 

\node (c8) at (6, -5) {\tiny $(st\sigma, \sigma^{-1})$}; 

\node (d1) at (-10, -10) {\tiny $(sts\sigma, t^{*}u^{*}t^{*}s^{*})$};

\node (d2) at (-7.5, -10) {\tiny $(st\sigma, t^{*}u^{*}t^{*}w_{3}^{*})$};

\node (d3) at (-5, -10) {\tiny $(ts\sigma, u^{*}t^{*}w_{3}^{*})$}; 

\node (d4) at (-2.5, -10) {\tiny $(sts\sigma, t^{*}u^{*} w_{3}^{*})$};

\node (d5) at (0, -10) {\tiny $(s\sigma, t^{*}u^{*}t^{*}\sigma^{-1})$};

\node (d6) at (2.5, -10) {\tiny $(t\sigma, u^{*}t^{*}\sigma^{-1})$}; 

\node (d7) at (5, -10) {\tiny $(ts\sigma, t^{*}u^{*}\sigma^{-1})$ };

\node (d8) at (7.5, -10) {\tiny $ (st\sigma, t^{*} \sigma^{-1})$ };

\node (d9) at (10, -10) {\tiny $(sts\sigma, u^{*}\sigma^{-1})$ };


\node (e1) at (-8, -15) { $(sts\sigma, t^{*}u^{*}t^{*} w_{3}^{*})$};

\node (e2) at (-4.5, -15) {$(ts\sigma, u^{*}t^{*}u^{*} \sigma^{-1})$};

\node (e3) at (-1, -15) {$(st\sigma, t^{*}u^{*}t^{*} \sigma^{-1})$};

\node (e4) at (2.5, -15) {$(sts\sigma, u^{*}t^{*} \sigma^{-1})$};

\node (e5) at (6, -15) {$(sts\sigma, t^{*}u^{*} \sigma^{-1})$};


\node (f1) at (-2, -20) {$(sts\sigma, t^{*}u^{*}t^{*}\sigma^{-1})$};

\draw [red]  (a1) -- (b1);
  \draw [red] (a1)--(b2);
  \draw  (a1)--(b3);
  \draw (a1)--(b4); 
  
 \draw [red] (b1)--(c1);
 \draw [red](b1)--(c2);
 \draw  (b1)--(c4); 
 
 \draw [red] (b2)--(c1);
 \draw [red] (b2)--(c2); 
 \draw (b2)--(c3); 
 \draw (b2)--(c5); 
 \draw (b2)--(c7);
 \draw (b2)--(c8);
 
  \draw [blue] (b3)--(c3);
  \draw [blue] (b3)--(c4);
  \draw (b3)--(c6);
  \draw (b3)--(c7);
 
  \draw [blue] (b4)--(c4);
  \draw [blue] (b4)--(c5); 
  \draw (b4)--(c8);

 
 \draw [red] (c1)--(d1);
 \draw (c1)--(d2);
 \draw (c1)--(d4);
 \draw (c1)--(d8);
 
 \draw [red] (c2)--(d1);
 \draw (c2)--(d3);
 
 \draw [blue] (c3)--(d2);
 \draw [blue] (c3)--(d3);
 \draw (c3)--(d6);
 
 \draw [blue] (c4)--(d2);
 \draw [blue] (c4)--(d3); 
 \draw [blue] (c4)--(d4);
 \draw (c4)--(d5); 
 \draw (c4)--(d7);
 \draw (c4)--(d8);
 
 \draw [blue] (c5)--(d3);
 \draw [blue] (c5)--(d4);
 \draw (c5)--(d7);
 \draw (c5)--(d9);
 
 \draw [green] (c6)--(d5);
 \draw [green] (c6)--(d6);
 
 \draw [green] (c7)--(d6);
 \draw [green] (c7)--(d7);
\draw [green] (c7)--(d8);

\draw [green] (c8)--(d8);
\draw [green] (c8)--(d9);


\draw (d1)--(e1);

\draw [blue] (d2)--(e1); 
\draw (d2)--(e3); 

\draw [blue] (d3)--(e1); 
\draw (d3)--(e2);
\draw (d3)--(e4); 

\draw [blue] (d4)--(e1);
\draw(d4)--(e5); 

\draw [green] (d5)--(e2);
\draw [green] (d5)--(e3);

\draw [green] (d6)--(e2);
\draw [green] (d6)--(e3);
\draw [green] (d6)--(e4);

\draw [green] (d7)--(e2);
\draw [green] (d7)--(e5);

\draw [green] (d8)--(e3); 
\draw [green] (d8)--(e4); 
\draw [green] (d8)--(e5);

\draw [green] (d9)--(e4);
\draw [green] (d9)--(e5); 


\draw (e1)--(f1);

\draw [green] (e2)--(f1);

\draw [green] (e3)--(f1);

\draw [green] (e4)--(f1);

\draw [green] (e5)--(f1);

\end{tikzpicture}
\end{center}
If we consider the subgraph of either of the above two graphs consisting only of the red edges and adjoining vertices, then we obtain the graph of the Bruhat order on $\cW_{3}$.  This follows easily from the fact that the posets of orbits $S_{1}\backslash\B_{3}=B_{3}\backslash\B_{3}=B_{3}\backslash Q_{1,2}$.  If we consider the subgraph consisting of the blue edges and adjoining vertices, then we obtain the Bruhat graph of the posets $S_{2}\backslash\B_{3}=B_{3}\backslash Q_{1,3}$ as given in Example \ref{ex:Q13orbits}.  Finally, if we consider the subgraph consisting of the green edges and adjoining vertices, we see the Bruhat graph of posets $B_{2}\backslash\B_{3}=B_{3}\backslash Q_{1,4}$ as given in Example 7.1 of \cite{CE21II}. 

\end{exam}

\bibliographystyle{amsalpha.bst}

\bibliography{bibliography-1}

\end{document}